\title{Roots of Alexander polynomials of random positive 3-braids}
\author{Nathan M. Dunfield}
\address{ Dept.~of Math., MC-382 \\
          University of Illinois \\
          1409 W. Green St. \\
          Urbana, IL 61801 \\
          USA
}
\email{nathan@dunfield.info}
\urladdr{http://dunfield.info}
\author{Giulio Tiozzo}
\address{Department of Mathematics\\
         University of Toronto\\
         40 St. George Street \\
         Toronto, ON M5S 2E4, Canada}
\email{tiozzo@math.utoronto.ca}
\urladdr{http://www.math.toronto.edu/tiozzo}
\let\Re\undefined
\DeclareMathOperator{\Re}{Re}
\DeclareMathOperator{\Mod}{Mod}
\newcommand{\Radem}{{\mathfrak{R}}}
\newcommand{\HRadem}{{\widetilde{\mathfrak{R}}}}
\newcommand{\matr}[4]{%
  \left( \begin{array}{cc} #1 & #2 \\ #3 & #4 \end{array} \right)}
\newcommand{\smallvec}[2]{\left(\begin{smallmatrix}
        \scriptstyle #1  \\ \scriptstyle #2
        \end{smallmatrix}\right)}
\newcommand{\Braid}[1]{\mathrm{Br}_{#1}}
\newcommand{\D}{\mathbb{D}}
\renewcommand{\P}{\mathbb{P}}
\newcommand{\norminf}[1]{{\norm{#1}_\infty}}
\DeclareMathOperator{\Sym}{Sym}
\newcommand{\dt}{\mathit{dt}}
\DeclareMathOperator{\conj}{conj}
\newcommand{\bif}{\mathit{bif}}
\renewcommand{\Dbar}{\kernoverline{0}{\mathbb{D}}{2.5}}
\newcommand{\cAbar}{\kernoverline{4}{\cA}{2.5}}
\newcommand{\odd}{\mathit{odd}}
\newcommand{\even}{\mathit{even}}
\definecolor{giuliocomment}{rgb}{0.067, 0.412, 0.067}
\begin{document}

\begin{abstract}
Motivated by an observation of Dehornoy, we study the roots of
Alexander polynomials of knots and links that are closures of positive
$3$-strand braids.  We give experimental data on random such braids
and find that the roots exhibit marked patterns, which we refine into
precise conjectures.  We then prove several results along those lines,
for example that generically at least 69\% of the roots are on the
unit circle, which appears to be sharp. We also show there is a large
root-free region near the origin.  We further study the
equidistribution properties of such roots by introducing a Lyapunov
exponent of the Burau representation of random positive braids, and a
corresponding bifurcation measure.  In the spirit of Deroin and
Dujardin, we conjecture that the bifurcation measure gives the
limiting measure for such roots, and prove this on a region with
positive limiting mass.  We use tools including work of Gambaudo and
Ghys on the signature function of links, for which we prove a central
limit theorem.
\end{abstract}

\maketitle

\vspace{-0.5cm}

\setcounter{tocdepth}{1}
\setlength{\cftbeforesecskip}{0pt}
\tableofcontents

\section{Introduction}
\label{sec: intro}

\begin{figure}
  \centering
  \begin{tikzoverlay}[height=0.45\textwidth]{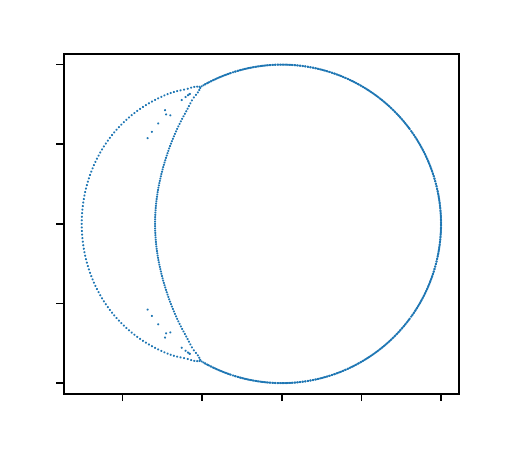}
  \begin{scope}[font=\scriptsize]
    \draw (24.032419, 8.169935) node[below] {$-1.0$};
    \draw (39.643748, 8.169935) node[below] {$-0.5$};
    \draw (55.255078, 8.169935) node[below] {$0.0$};
    \draw (55.255078, 2) node[below, font=\footnotesize] {(a)};
    \draw (70.866408, 8.169935) node[below] {$0.5$};
    \draw (86.477737, 8.169935) node[below] {$1.0$};
    \draw (9.640523, 13.115576) node[left] {$-1.0$};
    \draw (9.640523, 28.726906) node[left] {$-0.5$};
    \draw (9.640523, 44.338235) node[left] {$0.0$};
    \draw (9.640523, 59.949565) node[left] {$0.5$};
    \draw (9.640523, 75.560895) node[left] {$1.0$};
    \begin{scope}[shift={(55.25507798, 44.33823529)},
      xscale=31.22265937, yscale=31.22265937]
    \end{scope}
  \end{scope}
\end{tikzoverlay}
  \begin{tikzoverlay}[height=0.45\textwidth]{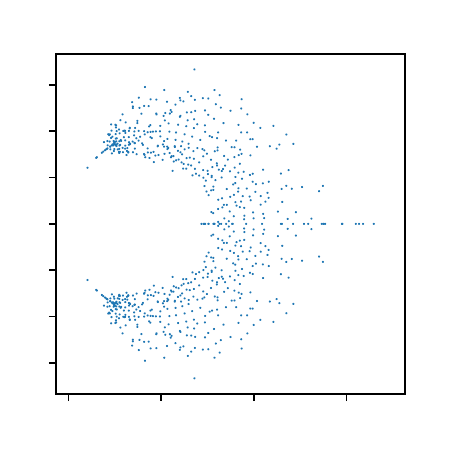}
  \begin{scope}[font=\scriptsize]
    \draw (15.231815, 9.259259) node[below] {$-1$};
    \draw (35.810801, 9.259259) node[below] {$0$};
    \draw (50, 2) node[below, font=\footnotesize] {(b)};
    \draw (56.389788, 9.259259) node[below] {$1$};
    \draw (76.968774, 9.259259) node[below] {$2$};

    \draw (9.259259, 19.381521) node[left] {$-1.5$};
    \draw (9.259259, 29.671014) node[left] {$-1.0$};
    \draw (9.259259, 39.960507) node[left] {$-0.5$};
    \draw (9.259259, 50.250000) node[left] {$0.0$};
    \draw (9.259259, 60.539493) node[left] {$0.5$};
    \draw (9.259259, 70.828986) node[left] {$1.0$};
    \draw (9.259259, 81.118479) node[left] {$1.5$};
    
    \begin{scope}[shift={(35.81080137, 50.25000000)},
      xscale=20.57898614, yscale=20.57898614]
    \end{scope}
  \end{scope}
\end{tikzoverlay}
  
  \caption{Plots of the roots of the Alexander polynomials of two
    knots which are closures of 3-strand braids.  In both cases, the
    polynomials have degree 764, but the one at left comes from a
    positive braid (of length 762) whereas the one at right is from an
    arbitrary braid (of length 1,598).  The two braids were chosen
    randomly using the uniform measure on the monoid generators
    $\{\sigma_1, \sigma_2\}$ and
    $\{\sigma_1, \sigma_1^{-1}, \sigma_2, \sigma_2^{-1}\}$
    respectively. For the
    positive braid at left, 69.3\% of the roots are on the unit circle,
    including all 504 roots where $\Re(z) > -0.5$, only four of which
    are roots of unity; this is compatible with Conjecture~\ref{conj:
      circle}, which predicts asymptotically a value of 69.1\%. In
    contrast, the arbitrary braid at right has 10.2\% of its roots on the unit
    circle.}
  \label{fig: dehornoy}
\end{figure}

\begin{figure}
  \centering
  \begin{tikzoverlay}[width=0.8\textwidth]{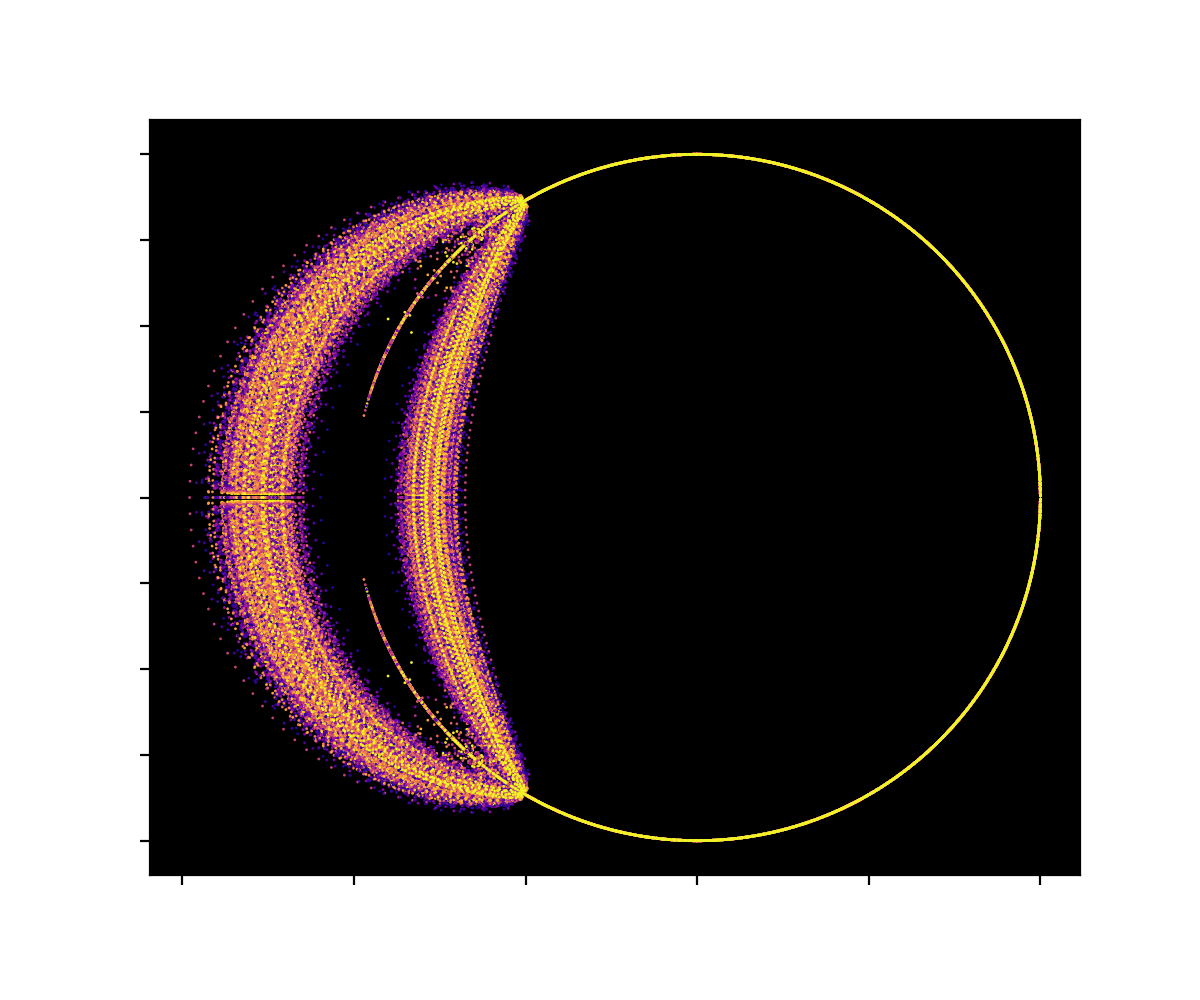}
  \begin{scope}[font=\scriptsize]
    \draw (15.201246, 8.796296) node[below] {$-1.5$};
    \draw (29.500499, 8.796296) node[below] {$-1.0$};
    \draw (43.799753, 8.796296) node[below] {$-0.5$};
    \draw (58.099006, 8.796296) node[below] {$0.0$};
    \draw (72.398259, 8.796296) node[below] {$0.5$};
    \draw (86.697512, 8.796296) node[below] {$1.0$};
    \draw (10.879630, 13.276493) node[left] {$-1.0$};
    \draw (10.879630, 27.575747) node[left] {$-0.5$};
    \draw (10.879630, 41.875000) node[left] {$0.0$};
    \draw (10.879630, 56.174253) node[left] {$0.5$};
    \draw (10.879630, 70.473507) node[left] {$1.0$};
  \end{scope}

  \begin{scope}[shift={(58.09900583, 41.87500000)},
                xscale=28.59850654, yscale=28.59850654]
  \end{scope}
\end{tikzoverlay}

  \caption{The roots of $\Delta_K(t)$ for 2,500 knots $K$
    coming from random positive 3-braids, chosen so the word lengths
    have mean 500 and standard deviation 170; some 1.2 million roots
    are plotted.  The color of each point indicates the degree of
    $\Delta_K(t)$ where higher degrees are lighter colors.}
  \label{fig: pos many}
\end{figure}

\begin{figure}
  \centering
  \begin{tikzoverlay}[width=0.7\textwidth]{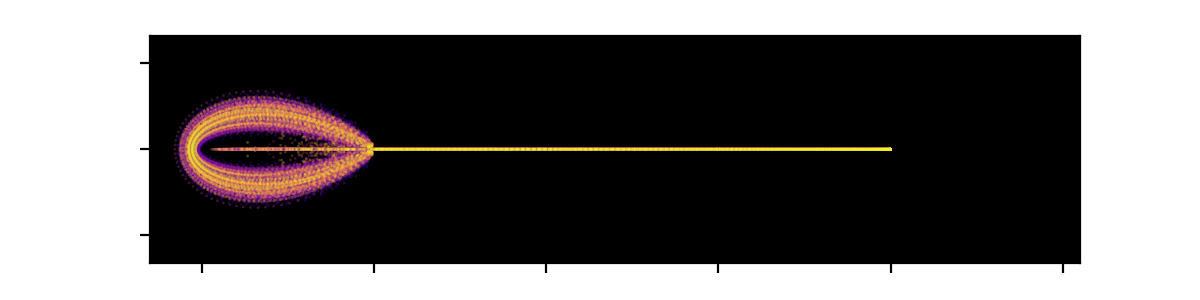}
  \begin{scope}[font=\scriptsize]
    \draw (16.805556, 1.504630) node[below] {$-2$};
    \draw (31.157407, 1.504630) node[below] {$-1$};
    \draw (45.509259, 1.504630) node[below] {$0$};
    \draw (59.861111, 1.504630) node[below] {$1$};
    \draw (74.212963, 1.504630) node[below] {$2$};
    \draw (88.564815, 1.504630) node[below] {$3$};
    \draw (10.879630, 5.386574) node[left] {$-0.5$};
    \draw (10.879630, 12.562500) node[left] {$0.0$};
    \draw (10.879630, 19.738426) node[left] {$0.5$};
    \begin{scope}[shift={(45.50925926, 12.56250000)},
      xscale=14.35185185, yscale=14.35185185]
    \end{scope}
  \end{scope}
\end{tikzoverlay}

  \begin{tikzoverlay}[width=0.7\textwidth]{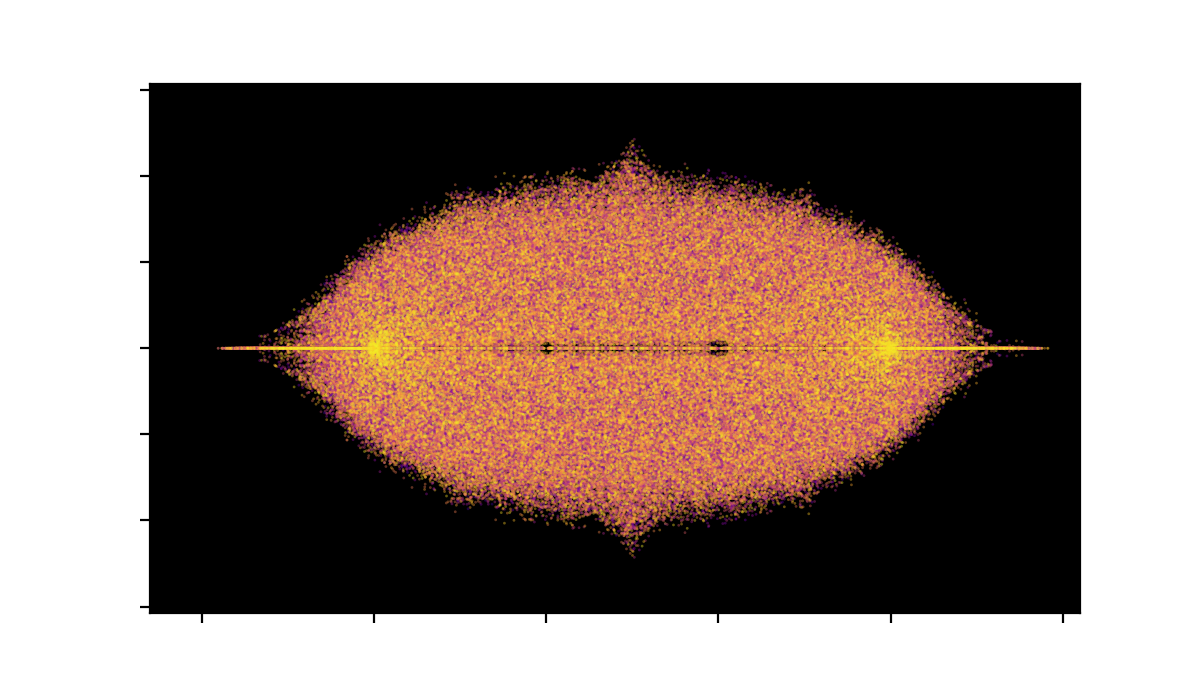}
  \begin{scope}[font=\scriptsize]
    \draw (16.805556, 5.671296) node[below] {$-2$};
    \draw (31.157407, 5.671296) node[below] {$-1$};
    \draw (45.509259, 5.671296) node[below] {$0$};
    \draw (59.861111, 5.671296) node[below] {$1$};
    \draw (74.212963, 5.671296) node[below] {$2$};
    \draw (88.564815, 5.671296) node[below] {$3$};
    \draw (10.879630, 7.784722) node[left] {$-1.5$};
    \draw (10.879630, 14.960648) node[left] {$-1.0$};
    \draw (10.879630, 22.136574) node[left] {$-0.5$};
    \draw (10.879630, 29.312500) node[left] {$0.0$};
    \draw (10.879630, 36.488426) node[left] {$0.5$};
    \draw (10.879630, 43.664352) node[left] {$1.0$};
    \draw (10.879630, 50.840278) node[left] {$1.5$};
    \begin{scope}[shift={(45.50925926, 29.31250000)},
      xscale=14.35185185, yscale=14.35185185]
    \end{scope}
  \end{scope}
\end{tikzoverlay}

  \caption{Another picture contrasting positive (top) versus generic
    (bottom) 3-braids, this time in the ``trace coordinates'' of
    Section~\ref{sec: alex props}.  Both plots are based on 2,500
    random braid words chosen so that $\deg \Delta_K(t)$ has mean
    about 500 and standard deviation 170; the words in
    $\sigma_1, \sigma_2$ themselves have mean lengths of about 500 and
    1,000 respectively.  In the positive case, the words used are the
    same as in Figure~\ref{fig: pos many}.  }
  \label{fig: contrast}
\end{figure}

\begin{figure}
  \centering
  \begin{tikzoverlay}[width=0.7\textwidth]{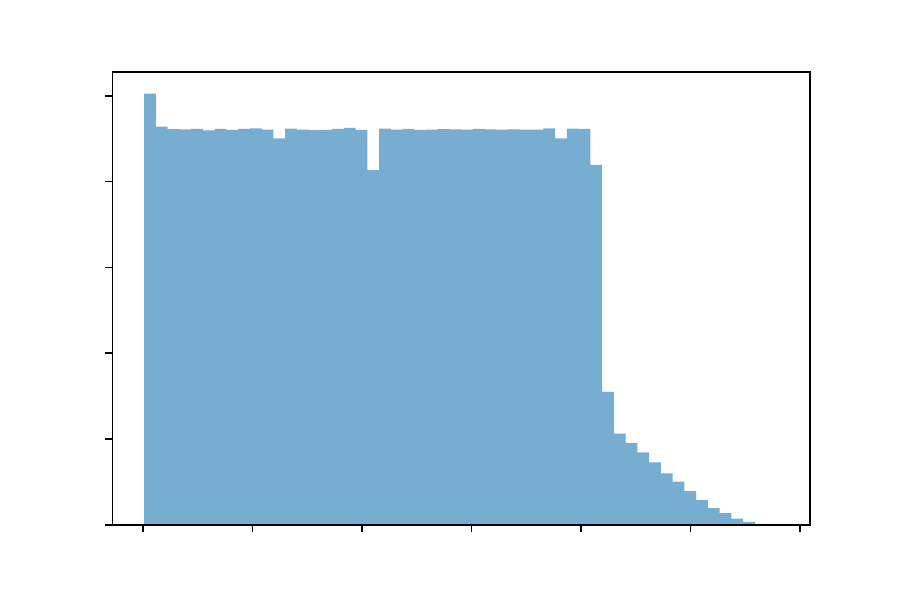}
  \begin{scope}[font=\scriptsize]
    \draw (50, 2) node[below, font=\footnotesize] {Angle $\theta$};
    
    \draw (15.896807, 6.712963) node[below] {$0.0$};
    \draw (28.064075, 6.712963) node[below] {$0.5$};
    \draw (40.231342, 6.712963) node[below] {$1.0$};
    \draw (52.398609, 6.712963) node[below] {$1.5$};
    \draw (64.565877, 6.712963) node[below] {$2.0$};
    \draw (76.733144, 6.712963) node[below] {$2.5$};
    \draw (88.900411, 6.712963) node[below] {$3.0$};
    \draw (10.879630, 8.333333) node[left] {$0.0$};
    \draw (10.879630, 17.872870) node[left] {$0.1$};
    \draw (10.879630, 27.412406) node[left] {$0.2$};
    \draw (10.879630, 36.951943) node[left] {$0.3$};
    \draw (10.879630, 46.491479) node[left] {$0.4$};
    \draw (10.879630, 56.031016) node[left] {$0.5$};
    
    \begin{scope}[shift={(15.89680744, 8.33333333)},
      xscale=24.33453461, yscale=95.39536435, dashed,
      color=blue!50!black, line cap=round, line width=0.5pt]
      \coordinate (pi on three) at (1.0471975511965976, 0);
      
      \draw (pi on three) -- +(0, 0.489) node[above=-0.5, color=black] {$\pi/3$};
      \draw ($2*(pi on three)$) -- +(0, 0.489) node[above=-0.5, color=black] {$2\pi/3$};
    \end{scope}
  \end{scope}
\end{tikzoverlay}

  \caption{A histogram of the roots on the upper half
    of the unit circle from Figure~\ref{fig: pos many}, in terms of
    the usual polar angle $\theta$.
    }
  \label{fig: on circle}
\end{figure}

\begin{figure}
  \centering
  \hspace{0.7cm} \begin{tikzpicture}[nmdstd]
  \node[above right] at (0, 0) {%
    \begin{tikzoverlay}[height=5.89cm]{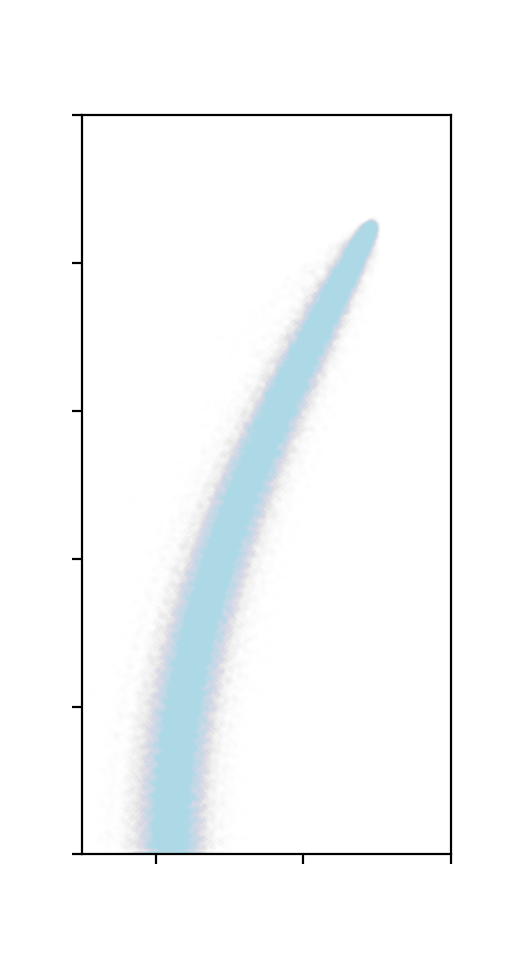}
      \begin{scope}[font=\scriptsize]
        \draw (29.926923, 15.8) node[below] {$-0.8$};
        \draw (58.357692, 15.8) node[below] {$-0.6$};
        \draw (86.788462, 15.8) node[below] {$-0.4$};
        \draw (11.972222, 20.307692) node[left] {$0.0$};
        \draw (11.972222, 48.738462) node[left] {$0.2$};
        \draw (11.972222, 77.169231) node[left] {$0.4$};
        \draw (11.972222, 105.600000) node[left] {$0.6$};
        \draw (11.972222, 134.030769) node[left] {$0.8$};
        \draw (11.972222, 162.461538) node[left] {$1.0$};
        \begin{scope}[shift={(143.65000000, 20.30769231)},
                xscale=142.15384615, yscale=142.15384615]
           \node[above left] at (120:0.98) {$\zeta_3$};
           \draw[black] (-0.782, 0) arc (180:144.25:1.470787);
           \draw[nmdmedium, dashed] ([shift=(154.15807:1)] 0, 0) arc
                (154.158067:113.57817:1);
         \end{scope}
      \end{scope}
    \end{tikzoverlay}};
  \node[above right] at (3.5, -0.1) {%
    \begin{tikzoverlay}[height=6cm]{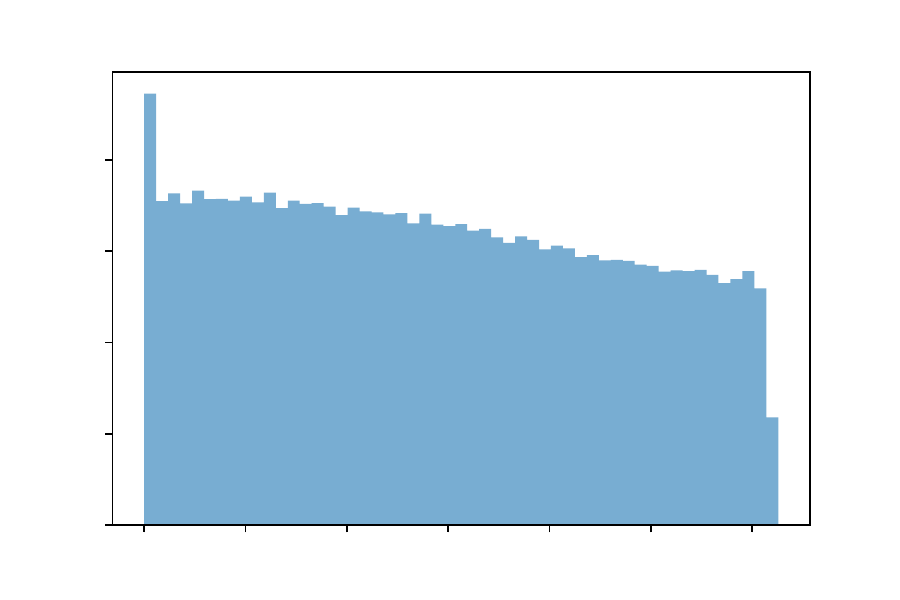}
      \begin{scope}[font=\scriptsize]
        \draw (16.022727, 6.712963) node[below] {$0.0$};
        \draw (27.279850, 6.712963) node[below] {$0.1$};
        \draw (38.536972, 6.712963) node[below] {$0.2$};
        \draw (49.794094, 6.712963) node[below] {$0.3$};
        \draw (61.051217, 6.712963) node[below] {$0.4$};
        \draw (72.308339, 6.712963) node[below] {$0.5$};
        \draw (83.565462, 6.712963) node[below] {$0.6$};
        \draw (10.879630, 8.333333) node[left] {$0.0$};
        \draw (10.879630, 18.478539) node[left] {$0.5$};
        \draw (10.879630, 28.623745) node[left] {$1.0$};
        \draw (10.879630, 38.768951) node[left] {$1.5$};
        \draw (10.879630, 48.914157) node[left] {$2.0$};
        \begin{scope}[shift={(16.02272727, 8.33333333)},
          xscale=112.57122404, yscale=20.29041198]
        \end{scope}
      \end{scope}
    \end{tikzoverlay}};
\end{tikzpicture}

  \caption{To study what is happening in Figure~\ref{fig: pos many} in
    the \emph{interior} of the disc, consider those roots shown in the
    close-up at left.  We radially project these onto the circular arc
    shown joining $-0.782$ to $\zeta_3$ (the choice of
    this arc is somewhat arbitrary). The resulting histogram is
    at right, where the horizontal axis is the angle in radians along
    said arc, measured clockwise from the real axis. }
    \label{fig: on arc}
\end{figure}

Dehornoy observed in \cite[\S 3.2]{Dehornoy2015} that the Alexander
polynomial $\Delta_K(t)$ of the closure of a \emph{positive} braid has
roots that ``seem to accumulate on very particular curves'' in the
complex plane; see for example Figure~\ref{fig: dehornoy}(a).  Here,
we explore this phenomena in the simplest case of positive 3-strand
braids, and develop detailed conjectures about its cause and
underlying structure.  We are able to prove some of these conjectures,
but are unable to resolve them all.  Throughout this introduction, see
Section~\ref{sec: background} for precise definitions.

We found the following phenomena for random positive 3-strand
braids based on a detailed analysis of the data plotted in
Figure~\ref{fig: pos many}. Because of the symmetry of the roots
coming from Lemma~\ref{lem: sym roots}, we focus on the closed unit
disk $\Dbar$ in $\C$ where $\abs{z} \leq 1$.
 
\begin{enumerate}[(P1)]
\item
  \label{item: on curves}
  The roots of the Alexander polynomial $\Delta_K(t)$ \emph{nearly}
  all lie either on the unit circle or on a single continuous curve
  joining the roots of unity $\zeta_3 = e^{2 \pi i/3}$ and
  $\zetabar_3$; see Figure~\ref{fig: dehornoy}(a) for a typical
  example.  Said curve meets the real axis in a single point in
  $(-1, -1/2)$.

\item
  \label{item: two-thirds circle}

  Asymptotically, $2/3$ of the roots lie on the arc of the unit
  circle where $-1/2 < \Re(z)$ and those roots are evenly
  distributed there. Compare Figure~\ref{fig: on circle}.

\item
  \label{item: other arc}

  The roots on the curve from $\zeta_3$ to $\zetabar_3$ are close
  to equidistributed with respect to arc length, see Figure~\ref{fig:
    on arc}.

\item
  \label{item: big zero-free region}
  There are no roots in the region with $\abs{z} < 1$ and $-1/2 < \Re(z)$.
  In fact, there are no roots with $\abs{z} < 1$ to the right of the
  curve from $\zeta_3$ to $\zetabar_3$.
  
\end{enumerate}

Motivated by the notion of bifurcation current from
\cite{DeroinDujardin2012}, we combine these phenomena into an
overarching conjecture as follows. Let $\Braid{3}$ be the \3-strand
braid group.  Any $w \in \Braid{3}$ has a braid closure $\what$ which
is a link (a disjoint union of knots) of at most three components.
Consider the product space $\Omega := \{\sigma_1, \sigma_2\}^\N$ with
the measure $\mu^\N$ where $\mu(\sigma_1) = \mu(\sigma_2) = 1/2$.
Choosing $(g_i)_{i \in \N} \in \Omega$ with respect to $\mu^{\N}$ gives a
random walk $w_n := g_1 g_2 \cdots g_n$ and an associated sequence of
links $\what_n$.

Ma showed in \cite{Ma2014}, building on
\cite{Maher2010}, that the link exteriors $S^3 \setminus \what_n$ are
hyperbolic with probability tending to 1 as $n \to \infty$.  Because
of parity issues, $\what_n$ always has two components when $n$ is odd,
and either one or three components when $n$ is even; in the latter
case, one gets a knot with probability $2/3$ asymptotically
\cite{Ma2013}.

To state our conjecture about the roots of $\Delta_{\what_n}$, for any
$w \in \Braid{3}$ we define the set
$Z_w := \setdef{z \in \C}{\Delta_{\what}(z) = 0}$, whose elements are
considered with multiplicity.  Consider the uniform probability
measure supported on $Z_w$, that is
\[
  \nu_w: = \frac{1}{\# Z_w} \sum_{z \in Z_w}{\delta_z}.
\]
We propose that the measures $\nu_{w_n}$ almost surely have a limit,
which is moreover independent of the sample path:
\begin{conjecture}
  \label{conj: main}
  There is a compactly supported measure $\nu_\infty$ on
  $\C$ such that for almost every sequence
  $(g_i)_{i \in \N} \in \Omega$ one has
  $\nu_{w_n} \to \nu_\infty$ weakly as $n \to \infty$.
  Moreover:
  \begin{enumerate}[(C1)]
  \item
    \label{conj: support}
    The support of $\nu_\infty$ is contained in the union of the unit
    circle, a continuous curve from $\zeta_3$ to $\zetabar_3$, and the
    image of said curve under $z \mapsto 1/z$.  The curve crosses the real
    axis at about $-0.78$.
    
  \item
    \label{conj: circle}
    Divide the circle into left and right arcs at $\zeta_3$ and
    $\zetabar_3$: 
    \begin{equation}
      \label{eq: arcs}
      \cA_L := \setdef{ t= e^{i \theta}}{|\theta - \pi| < \pi/3}
      \mtext{and}
      \cA_R := \setdef{ t = e^{i \theta}}{| \theta| < 2 \pi /3}.
    \end{equation}
    As in \ref{item: two-thirds circle}, exactly 2/3 of the mass of
    $\nu_\infty$ is on $\cAbar_R$, and $\nu_\infty$ is a multiple of
    Lebesgue measure on that arc.  Moreover, $\nu_\infty$ is
    absolutely continuous with respect to Lebesgue measure on $\cA_L$,
    with total mass $(7 - 3 \sqrt{5})/12 \approx 0.024316$. As a
    consequence, the total mass of the unit circle is
    $(5 - \sqrt{5})/4 \approx 0.690983$.
    
  \item
    \label{conj: other arc}
    As in \ref{item: other arc}, the measures along the curves joining
    $\zeta_3$ to $\zetabar_3$ are absolutely continuous with respect
    to Lebesgue measure on them.
  \end{enumerate}
\end{conjecture}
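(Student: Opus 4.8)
The plan is to translate the problem into one about random products of $2 \times 2$ matrices, and then run the potential-theoretic machinery of Deroin and Dujardin \cite{DeroinDujardin2012}. For $\Braid{3}$ the reduced Burau representation $\bar\beta_t\colon \Braid{3} \to GL_2\bigl(\Z[t^{\pm1}]\bigr)$ is two-dimensional, with (in suitable coordinates) $\det\bar\beta_t(\sigma_1) = \det\bar\beta_t(\sigma_2) = -t$. Burau's formula for the Alexander polynomial of a braid closure, applied to a positive word $w_n = g_1 \cdots g_n$ with $M_n(z) := \bar\beta_z(w_n) = X_1(z)\cdots X_n(z)$ (where $X_i(z) \in \{\bar\beta_z(\sigma_1), \bar\beta_z(\sigma_2)\}$ is chosen by the random walk), then gives
\[
  \det\bigl(M_n(z) - I_2\bigr) \;=\; 1 - \mathrm{tr}\,M_n(z) + (-z)^n \;=\; -P_n(z) \;=\; \pm(1 + z + z^2)\,\Delta_{\what_n}(z),
\]
where $P_n(z) := \mathrm{tr}\,M_n(z) - 1 - (-z)^n$ is a polynomial of degree $n$ and the last equality is a degree count. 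So the $n - 2$ roots of $\Delta_{\what_n}$ are exactly the roots of $P_n$ other than $\zeta_3, \zetabar_3$.

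Next I would build the limiting potential. For each fixed $z$ the subadditive ergodic theorem gives an almost sure limit $L(z) := \lim_n \tfrac1n \log\norm{M_n(z)}$, the complexified top Lyapunov exponent at $z$; it is subharmonic on $\C$, satisfies $L(z) \ge \tfrac12\log\abs{z}$, and $L \ge 0$ on the region of interest. Since $\tfrac1n\log\abs{(-z)^n} = \log\abs{z}$, and since for a proximal, strongly irreducible walk one also has $\tfrac1n\log\abs{\mathrm{tr}\,M_n(z)} \to L(z)$, the two terms of $P_n$ do not cancel on the exponential scale off a thin set, so one expects
\[
  \tfrac1n \log\abs{P_n(z)} \;\longrightarrow\; u(z) := \max\bigl(L(z),\,\log\abs{z}\bigr) \quad\text{in } L^1_{\mathrm{loc}}(\C), \text{ almost surely.}
\]
Granting this, from $\tfrac1n\sum_{P_n(z)=0}\delta_z = \tfrac1{2\pi n}\Delta\log\abs{P_n}$ (distributional Laplacian) and $\tfrac1n\deg\Delta_{\what_n} = \tfrac{n-2}{n}\to 1$, one gets $\nu_{w_n} \to \nu_\infty := \tfrac1{2\pi}\Delta u$ weakly and almost surely, with $\nu_\infty$ compactly supported (the leading term of $P_n$ is $\pm z^n$, so the roots stay bounded) and manifestly path-independent. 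The almost sure $L^1_{\mathrm{loc}}$ statement comes from a.s.\ pointwise convergence off a null set of $z$ (Fubini), the uniform bound $\tfrac1n\log\abs{P_n(z)} \le \max_i \log\norm{\bar\beta_z(\sigma_i)} + o(1)$, and a standard compactness property of subharmonic functions --- this is the argument of \cite{DeroinDujardin2012}, with $z$ in the role of their parameter.

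Next I would pin down $\operatorname{supp}\nu_\infty$ and the masses using the Squier Hermitian form preserved by $\bar\beta_{e^{i\theta}}$, whose signature is the source of all the asymmetry: it is positive definite for $e^{i\theta} \in \cA_R$ and indefinite for $e^{i\theta} \in \cA_L$, with the transition exactly at $\zeta_3, \zetabar_3$. On $\cA_R$ the matrices $M_n(e^{i\theta})$ therefore lie in a conjugate of $U(2)$, so $\norm{M_n(e^{i\theta})}$ is bounded and $L(e^{i\theta}) = 0$; moreover $L$ is harmonic throughout the open region inside $\Dbar$ adjacent to $\cA_R$ where $P_n$ has no roots (the zero-free region \ref{item: big zero-free region}), so $u = \max(L, \log\abs{z})$ is harmonic there. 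Hence near $\cAbar_R$ we get $\operatorname{supp}\nu_\infty \subseteq \cA_R$, with $\nu_\infty$ a multiple of Lebesgue measure on $\cA_R$ (the jump of the normal derivative of $u$ across it). Its mass is $2/3$ by a direct winding count: since $\abs{\mathrm{tr}\,M_n(e^{i\theta})} \le 2$ on $\cA_R$, we have $P_n(e^{i\theta}) = -(-e^{i\theta})^n + O(1)$, so the number of roots of $P_n$ in a thin lens about $\cA_R$ equals the winding of $z \mapsto z^n$ over $\cA_R$, namely $n\cdot\tfrac{\abs{\cA_R}}{2\pi} = \tfrac{2n}{3}$; dividing by $\deg\Delta_{\what_n} \sim n$ gives $2/3$, and the even spacing of those roots gives the equidistribution in \ref{item: two-thirds circle}. (The Gambaudo--Ghys description of the Levine--Tristram signature and its central limit theorem control the fluctuations visible in Figure~\ref{fig: dehornoy}.) On $\cA_L$, by contrast, $M_n(e^{i\theta})$ lies in a conjugate of $U(1,1)$ and the random walk is genuinely hyperbolic, so $L(e^{i\theta}) > 0$; then $\operatorname{supp}\nu_\infty$ near $\cA_L$ consists of $\cA_L$ together with the curve $\partial\{L > 0\} \cap \Dbar$ from $\zeta_3$ to $\zetabar_3$ (and, by the $z \mapsto 1/z$ symmetry of the root set, its reflection outside $\Dbar$), all carrying densities absolutely continuous with respect to arc length by the real-analyticity of $L$ off its zero set. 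The crossing point $\approx -0.78$ and the exact mass $(7 - 3\sqrt5)/12$ on $\cA_L$ --- whence the golden-ratio flavor of the constants --- would come from an \emph{explicit} determination of $L$, equivalently of the Furstenberg stationary measure on $\P^1$ for this particular pair of matrices.

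The main obstacle, and the reason the full conjecture is out of reach, is twofold. First, the \emph{shape} of the bifurcation locus: that $\{L = 0\}$ meets $\Dbar$ in a region bounded by a single real-analytic arc crossing the real axis near $-0.78$, with no further region where $L$ is positive but non-harmonic, goes well beyond the Furstenberg positivity $L > 0$ a.e.\ on $\cA_L$ that one gets for free. Second, the sharp masses --- especially $(7 - 3\sqrt5)/12$ on $\cA_L$ and the densities on the interior curves --- require exact evaluation of Lyapunov-type quantities, which is essentially never possible. What is robust and unconditional is the part supported on a neighborhood of $\cAbar_R$: there the compact-group (conjugate of $U(2)$) structure and the winding count need no conjectures, and this is the region of positive limiting mass on which the conjecture can be established.
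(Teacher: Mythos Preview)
The statement is a conjecture; the paper does not prove it, but establishes partial results using exactly the potential-theoretic framework you outline. Your sketch---$\tfrac{1}{n}\log|P_n| \to \chi := \max(\lambda,\log^+|z|)$ in $L^1_{\mathrm{loc}}$, then take $dd^c$---is the content of Proposition~\ref{P:conv} and Theorem~\ref{T:equid}, with the same gap you flag: the $L^1_{\mathrm{loc}}$ convergence is proved only on the open set $U$ where $\lambda \neq \log^+|t|$, since on the coincidence locus the two exponential-scale terms may cancel. Extending to all of $\C$ is reduced to showing $\{\lambda = 0\}\cap\D$ has Lebesgue measure zero (Conjecture~\ref{C:measure-zero}), which remains open.

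One substantive correction: you say the mass $(7-3\sqrt{5})/12$ on $\cA_L$ ``would come from an explicit determination of $L$'' and that such evaluations are ``essentially never possible''. The paper \emph{does} obtain this number, as a lower bound, by a route that bypasses the Lyapunov exponent entirely. Via Gambaudo--Ghys (Lemma~\ref{lem: sig from Radem}), $\sigma_{\what}(-1)$ differs by $O(1)$ from $-\tfrac{1}{3}\Radem\bigl(B_{-1}(w)\bigr) - \tfrac{2}{3}\#w$, where $\Radem$ is the Rademacher quasimorphism on $\PSL{2}{\Z}$. The drift of $\Radem$ along the walk is then computed \emph{exactly} (Theorem~\ref{thm: exact drift}): one determines the hitting measure on the Gromov boundary of $\PSL{2}{\Z}$ from stationarity plus a stopping-time identity (Lemma~\ref{L:independence}), and the value $\tfrac{1}{4}(7-3\sqrt{5})$ drops out of a quadratic (Lemma~\ref{L:shadow-measure}). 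Combined with $\#\{\text{roots on arc}\} \ge \tfrac{1}{2}|\text{signature jump}|$, this gives the lower bound on $\cA_L$. What remains open is the matching \emph{upper} bound, and that is what would require the Lyapunov-side control you describe.

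Two smaller points. You have the geography of the unconditional region reversed: Theorem~\ref{T:equid} holds on $U$, which is shown to contain $\cA_L$ but only conjecturally contains $\cA_R$; it is the signature-based lower bounds (Corollaries~\ref{cor: arc lower bound} and~\ref{cor: 2/3 of roots}) that are unconditional on $\cA_R$. And your appeal to ``real-analyticity of $L$ off its zero set'' for the absolute-continuity claims is itself unproven---H\"older continuity of $\lambda$ is known, but smoothness can fail for walks of this type, as the paper notes.
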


\subsection{The spectral radius of the Burau representation}

As described in Section~\ref{sec: background}, the Alexander
polynomial of any closed \3-braid can be computed via the (reduced)
Burau representation
$B_t \maps \Braid{3} \to \GL{2}{\big(\Z[t^{\pm 1}]\big)}$.  To each
\emph{positive} braid $w \in \Braid{3}$, we define in
Section~\ref{sec: spec} the \emph{spectral radius function}
$\rho_w \maps \Dbar \to \R_{\geq 0}$ to be the absolute value of the
largest eigenvalue of $B_t(w)$.  (For arbitrary braids, this makes
sense except possibly at $t = 0$, where $\rho_w$ can have a pole.)
The function $\rho_w$ turns out to be continuous and subharmonic, and
takes the value $1$ at any root of $\Delta_w(t)$; see Lemmas~\ref{lem:
  obs} and~\ref{lem: spec basics}.

\begin{figure}
  \centering
  \begin{tikzpicture}
    \node at (0, 0) {\begin{tikzoverlay}[width=6.6cm]{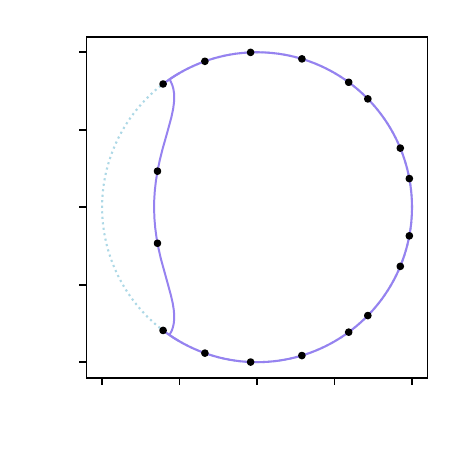}
  \begin{scope}[nmdstd, font=\scriptsize]
  \draw (22.684343, 12.833333) node[below] {$-1.0$};
  \draw (39.902357, 12.833333) node[below] {$-0.5$};
  \draw (57.120370, 12.833333) node[below] {$0.0$};
  \draw (74.338384, 12.833333) node[below] {$0.5$};
  \draw (91.556397, 12.833333) node[below] {$1.0$};
  \draw (16.000000, 19.517677) node[left] {$-1.0$};
  \draw (16.000000, 36.735690) node[left] {$-0.5$};
  \draw (16.000000, 53.953704) node[left] {$0.0$};
  \draw (16.000000, 71.171717) node[left] {$0.5$};
  \draw (16.000000, 88.389731) node[left] {$1.0$};
  \draw (42.750000, 54) node[font=\small] {$R_w$};
  \begin{scope}[shift={(57.12037037, 53.95370370)},
    xscale=34.43602694, yscale=34.43602694]
  \end{scope}
\end{scope}
\end{tikzoverlay} };
    \node at (8.0, 0.29) {\begin{tikzoverlay}[height=8.1cm]{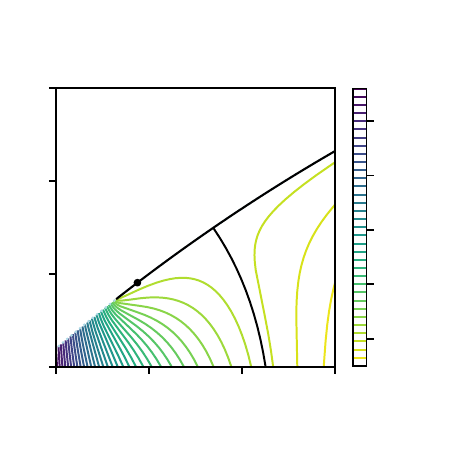}
  \begin{scope}[nmdstd, font=\scriptsize]
  \draw (12.500000, 15.259259) node[below] {$-0.65$};
  \draw (33.166667, 15.259259) node[below] {$-0.60$};
  \draw (53.833333, 15.259259) node[below] {$-0.55$};
  \draw (74.500000, 15.259259) node[below] {$-0.50$};
  \draw (9.259259, 18.500000) node[left] {$0.75$};
  \draw (9.259259, 39.166667) node[left] {$0.80$};
  \draw (9.259259, 59.833333) node[left] {$0.85$};
  \draw (9.259259, 80.500000) node[left] {$0.90$};
  \draw (84.695741, 24.739216) node[right] {$1.0$};
  \draw (84.695741, 36.817647) node[right] {$1.5$};
  \draw (84.695741, 48.896078) node[right] {$2.0$};
  \draw (84.695741, 60.974510) node[right] {$2.5$};
  \draw (84.695741, 73.052941) node[right] {$3.0$};
  \draw (50, 50) node[above left, font=\small] {$R_w$};
  \begin{scope}[shift={(281.16666667, -291.50000000)},
                xscale=413.33333333, yscale=413.33333333]
  \end{scope}
  \begin{scope}[shift={(78.37500000, -42.90000000)},
                xscale=3.08000000, yscale=67.63921569]
  \end{scope}
  \end{scope}
\end{tikzoverlay} };
  \end{tikzpicture}

  \vspace{-0.5cm}

  \caption{At left is the closed set $R_w$ in $\Dbar$ for the braid
    $w = (\sigma_1 \sigma_2 \sigma_1)^3 \sigma_2^4 \sigma_1^6
    \sigma_2^3$. The roots of $\Delta_\what$ are indicated by the
    black dots.  At right is a closeup of part of $R_w$, which is shown
    in black, against a contour plot of the restriction of $\rho_w$ to
    $\Dbar$.}
  \label{fig: R_w}
\end{figure}

Set $R_w := \setdef{z \in \Dbar}{\rho_w(z) = 1}$. As noted, this
contains the roots $Z_w \cap \Dbar$, and a motivating example is shown
in Figure~\ref{fig: R_w}.  We now describe our results about $R_w$
which bear on our conjectures.  First, we prove in
Lemma~\ref{lem:compact} that the arc $\cA_R$ from \eqref{eq: arcs} is
contained in $R_w$, consistent with \ref{item: two-thirds circle}.
The intersection $R_w \cap \bdry \D$ sometimes consists of just
this arc, but it can be bigger and even disconnected, as in
Figure~\ref{fig: spec on circle}.

  
For the intersection between $R_w$ and the real axis, we prove the
following as part of Theorem~\ref{thm: rho neg reals}:
\begin{theorem}
  Suppose $\what$ is a knot, and 
  $w$ gives a pseudo-Anosov map of the thrice-punctured disc.  Then the
  set $R_w$ intersects the real axis in exactly two points: $1$ and a
  point in $\big(-1, (\sqrt{5} - 3)/2\big]$ where
  $(\sqrt{5} - 3)/2 \approx -0.38$.
\end{theorem}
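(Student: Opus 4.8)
The plan is to work directly with the reduced Burau matrix $B_t(w) \in \GL{2}\big(\Z[t^{\pm 1}]\big)$ restricted to $t \in (-1, 1)$ real, and to track the two quantities that govern $\rho_w$: the trace $\tau(t) := \operatorname{tr} B_t(w)$ and the determinant $\delta(t) := \det B_t(w)$. For a closed $3$-braid, $\delta(t)$ is a power of $t$ up to sign — specifically $\det B_t(w) = t^{e}$ where $e$ is the exponent sum — so on the negative real axis $\delta(t) = \pm |t|^e$ has a definite sign and absolute value $|t|^e$ which is $> 1$ for $t \in (-1,0)$ and $e$ of the appropriate parity. The eigenvalues $\lambda_\pm$ satisfy $\lambda^2 - \tau\lambda + \delta = 0$, and $\rho_w(t) = 1$ happens in exactly two regimes: either the eigenvalues are a complex-conjugate pair on the unit circle (so $|\delta(t)| = 1$ and $|\tau(t)| \le 2\sqrt{|\delta(t)|}$), or they are real with the larger one having absolute value $1$. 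Since $\what$ is a knot, parity forces the exponent sum $e$ to be even and nonzero, so $|\delta(t)| = |t|^e = 1$ only at $t = \pm 1$; hence on $(-1,1)$ other than near $\pm 1$ we are always in the real-eigenvalue regime, and $\rho_w(t) = 1$ forces $|t|^e \le 1$ paired with one eigenvalue of modulus $1$, which pins down a quadratic condition on $\tau(t)$.

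Next I would exploit positivity of the braid (after conjugating so that $w$ is a positive word, using that $R_w$ is a conjugacy invariant, which follows from Lemma~\ref{lem: obs}) to control the sign and monotonicity of the entries of $B_t(w)$ on $(-1, 0)$. The generators $B_t(\sigma_1)$ and $B_t(\sigma_2)$ are explicit $2\times 2$ matrices with entries in $\{0, \pm 1, \pm t, t\}$; on the interval $t \in (-1,0)$ one checks that products of these, after an appropriate diagonal sign change, have all entries of one sign, so $\tau(t)$ is (up to a sign depending on parity) a positive combination of monomials $\pm t^k$ with controlled signs — effectively $|\tau(t)|$ is a polynomial in $|t|$ with nonnegative coefficients. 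This Perron–Frobenius–type positivity is what makes $\rho_w$ real-analytic and strictly monotone on $(-1, 0)$ away from the pseudo-Anosov-forced zero, and it is also where the pseudo-Anosov hypothesis enters: pseudo-Anosov-ness guarantees $B_t(w)$ is not reducible/periodic, so $\tau$ is genuinely nonconstant and the relevant polynomial has the strict positivity needed. The equation $\rho_w(t) = 1$ then becomes $\tau(t)^2 = \big(1 + |t|^e\big)^2 / |t|^e$ or a similar closed form; monotonicity of the left side (a positive-coefficient polynomial in $|t|$, increasing away from $0$) versus the right side gives a unique solution in $(-1,0)$, and it remains only to locate it.

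To get the explicit bound $\big(-1, (\sqrt{5}-3)/2\big]$ I would extremize over all positive pseudo-Anosov words: the endpoint $(\sqrt{5}-3)/2$ is $-1 + 1/\varphi^2$ where $\varphi$ is the golden ratio, which strongly suggests it is attained by the word whose Burau trace is minimal in the relevant sense — most likely $w = \sigma_1\sigma_2$ (or a small power/variant that closes to a knot), whose reduced Burau matrix at a real $t$ has characteristic equation reducing to a Fibonacci-type recursion whose associated fixed-point equation is $x = 1 + x/(\text{something})$ solved by $\varphi$. Concretely: compute $B_t(\sigma_1\sigma_2)$, form the scalar $\rho = 1$ equation on $(-1,0)$, simplify using $e = 2$, and verify the root is exactly $(\sqrt{5}-3)/2$; then show that for any longer positive word the positivity of the added generators only pushes $|\tau(t)|$ up, moving the unique root of $\rho_w(t)=1$ to the \emph{right} (closer to $0$ it cannot go past — so actually the extreme word gives the value closest to $-1/2$, hence this is the supremum of the attained points, matching the closed-bracket endpoint). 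The main obstacle I anticipate is \textbf{the bookkeeping that makes the positivity argument uniform}: proving that for every positive word the sign-normalized Burau product genuinely has one-signed entries on all of $(-1,0)$ — the naive diagonal conjugation that works for a single generator must be shown to be compatible across products, and one must handle the fact that $B_t(\sigma_2)$ has a $t$ in an off-diagonal position with a sign that interacts with the parity of the partial exponent sums. Getting this combinatorial sign-coherence airtight — rather than the eigenvalue analysis, which is routine once positivity is in hand — is where the real work lies, and it is also what forces the pseudo-Anosov (equivalently, non-degenerate, non-periodic) hypothesis so that the monomial in question does not accidentally vanish.
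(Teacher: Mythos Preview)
Your Perron--Frobenius strategy on $(-1,0)$ is correct and matches the paper: after conjugating within positive braids (Theorem~\ref{thm:goodsign}) so that $\pm B_{-x}(w')$ has all entries positive polynomials in $x$, one gets that $\rho_w$ is real-analytic and strictly monotone on $[-1,0]$ (Lemma~\ref{lem:monotone}), and the pseudo-Anosov hypothesis enters exactly to force $\rho_w(-1)>1$ so that the intermediate value theorem produces a unique crossing. You have also correctly flagged that the sign-coherence bookkeeping is where the work lies.

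There are, however, two genuine gaps. First, you never treat the interval $(0,1)$. Your dichotomy shows that any $t\in(-1,1)$ with $\rho_w(t)=1$ must have a real eigenvalue equal to $\pm 1$, but nothing in your positivity argument prevents this on $(0,1)$: the sign normalization that makes Perron--Frobenius work on $(-1,0)$ fails there, since $B_t(\sigma_1)$ already has entries of mixed sign for $t>0$. The paper handles $(0,1)$ by a completely different route (Lemma~\ref{lem: conway pos}): for a positive-braid knot the Conway polynomial $\nabla_K(x)$ has all positive coefficients (Van Buskirk/Cromwell), which rules out roots of $\Delta_\what$ on $(0,1)$, and then a squaring trick excludes eigenvalue $-1$ as well. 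This ingredient is not visible from the Burau side alone and your outline is missing it.

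Second, your mechanism for the bound $(\sqrt{5}-3)/2$ is wrong. The candidate $w=\sigma_1\sigma_2$ is periodic, not pseudo-Anosov, and in fact $\rho_{\sigma_1\sigma_2}(t)=|t|$ never equals $1$ on $(-1,0)$; no individual positive pseudo-Anosov word attains the endpoint. The paper obtains the bound uniformly, not by extremizing: in the $u$-coordinate with $u^2=-t$, one has $\norminf{\sigma_i}=1$ and $\norminf{\sigma_1\sigma_2}=\norminf{\sigma_2\sigma_1}=|u|+|u|^2$ on $|u|<(\sqrt{5}-1)/2$, so submultiplicativity gives $\rho_w(t)\le\norminf{B_t(w)}<1$ for every positive $w$ that is not a power of a single generator (Lemma~\ref{lem: zero-free disk}). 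The golden-ratio constant is the radius where $|u|+|u|^2=1$, not the location of a root for some extremal word; it is approached only in the limit $\sigma_1^n\sigma_2^n$ as $n\to\infty$.
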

This is consistent with our conjecture of an arc of $R_w$ joining
$\zeta_3$ to $\zetabar_3$.  Recall that the condition that $w$ is
pseudo-Anosov is generic: the probability that a random word of length
$n$ is not pseudo-Anosov tends to $0$ exponentially in $n$ \cite{Maher2012}.

\begin{figure}
  \centering
  \begin{tikzoverlay}[width=8cm]{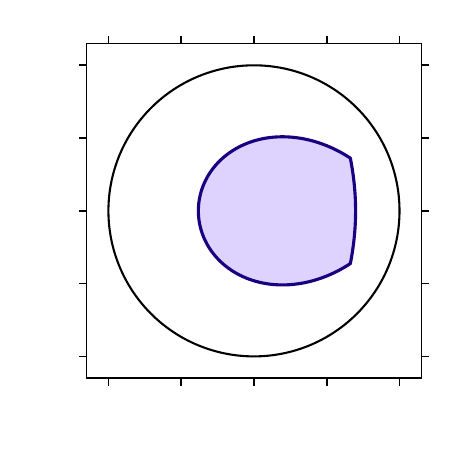}
  \begin{scope}[nmdstd, font=\scriptsize]
  \draw (24.092190, 12.708333) node[below] {$-1.0$};
  \draw (40.263688, 12.708333) node[below] {$-0.5$};
  \draw (56.435185, 12.708333) node[below] {$0.0$};
  \draw (72.606683, 12.708333) node[below] {$0.5$};
  \draw (88.778180, 12.708333) node[below] {$1.0$};
  \draw (16.000000, 20.800523) node[left] {$-1.0$};
  \draw (16.000000, 36.972021) node[left] {$-0.5$};
  \draw (16.000000, 53.143519) node[left] {$0.0$};
  \draw (16.000000, 69.315016) node[left] {$0.5$};
  \draw (16.000000, 85.486514) node[left] {$1.0$};
  \draw (62.7, 53) node[font=\small] {$\cT$};
\end{scope}
\end{tikzoverlay}
  \caption{The region $\cT$ inside the unit disk from
    Theorem~\ref{thm: root-free intro}.  If $w \in \Braid{3}$ is
    positive and not a power of $\sigma_1$ or $\sigma_2$, then $\cT$ 
    is disjoint from $R_w$ and hence does not contain any root of
    $\Delta_\what$.  Compare with Figure~\ref{fig: pos many}.
    }
  \label{fig: new T intro}
\end{figure}

We show the existence of a root-free region around $t = 0$ in
Theorem~\ref{thm: root-free}, which we restate here as:
\begin{theorem}
  \label{thm: root-free intro}
  The region $\cT \subset \D$ of area $\approx 0.91$ shown in
  Figure~\ref{fig: new T intro} is disjoint from $R_w$ for all positive $w$
  that are not a power of $\sigma_1$ or $\sigma_2$.
\end{theorem}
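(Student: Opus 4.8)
To prove this I would work entirely with the spectral radius function $\rho_w$. Since $R_w = \setdef{z \in \Dbar}{\rho_w(z) = 1}$ and $Z_w \cap \Dbar \subseteq R_w$, it suffices to show that $\rho_w(t) < 1$ for every $t \in \cT$ and every positive $w$ that is not a power of $\sigma_1$ or $\sigma_2$. Two reductions simplify this. First, $\rho_w(0) = 0$ for all such $w$: the matrices $B_0(\sigma_1)$ and $B_0(\sigma_2)$ have rank one with square zero, and a short computation shows that any positive word using both generators represents a braid whose cyclically reduced word contains the block $\sigma_1\sigma_2$, which forces $\operatorname{tr} B_0(w) = 0$ and hence makes both eigenvalues of $B_0(w)$ vanish. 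Second, since $\rho_w$ is continuous and $\cT$ is connected and contains $0$, it is enough to show only that $\rho_w(t) \neq 1$ on $\cT$.

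The key tool is a formula for $\rho_w$ coming from the action of the Burau matrices on $\mathbb{CP}^1 = \C \cup \{\infty\}$. For $t \neq 0$, $B_t(\sigma_1)$ acts as the affine map $\phi_1(z) = 1 - tz$ (constant derivative $-t$) and $B_t(\sigma_2)$ as $\phi_2(z) = z/\big(t(z-1)\big)$ (derivative $-1/\big(t(z-1)^2\big)$); the eigenvalue-$1$ eigenvectors give fixed points $v^{(1)} = 1/(1+t)$ and $v^{(2)} = 1 + 1/t$, the latter lying on the circle $|t|\,|z-1| = 1$. When $B_t(w)$ is loxodromic with dominant eigenvalue $\lambda$ and attracting fixed point $p$, one has $\rho_w(t) = |\lambda|$, and combining $\lambda^2 m_w = \det B_t(w) = (-t)^n$ with the chain-rule expansion $m_w = \prod_j \phi_{i_j}'(q_j)$ of the multiplier along the orbit $q_j := \phi_{i_{j+1}}\circ\cdots\circ\phi_{i_n}(p)$ gives the identity
\[
  \rho_w(t) \;=\; \prod_{j\,:\,i_j = \sigma_2} |t|\,\bigl|q_j - 1\bigr|,
\]
which also holds, with value $|t|^{n/2}$, in the borderline elliptic case. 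Note that a power of $\sigma_1$ yields the empty product $1$ and a power of $\sigma_2$ yields all $q_j = v^{(2)}$, hence all factors $1$ — precisely why such words must be excluded.

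I would then define $\cT$ as (the interior of) the set of $t \in \D$ admitting a closed topological disk $\Lambda \subset \mathbb{CP}^1$ that contains $v^{(1)}$ and $v^{(2)}$, is mapped into itself by both $\phi_1$ and $\phi_2$, lies inside $\setdef{z}{|t|\,|z-1| \le 1}$, and meets the boundary circle $|t|\,|z-1| = 1$ only at $v^{(2)}$. Given such a $\Lambda$: the attracting fixed point $p$ of any $B_t(w)$ lies in $\Lambda$, the orbit $(q_j)$ stays in $\Lambda$, so every factor above is $\le 1$ and $\rho_w(t) \le 1$; and the inequality is strict when $w$ is not a power of a generator, because the orbit of $p$ then cannot stay pinned at $v^{(2)}$ — here one uses that $\phi_1(v^{(2)}) = -t$ lands strictly inside $\setdef{z}{|t|\,|z-1| \le 1}$ throughout $\cT$ (there $|t|\,|1+t| < 1$) and that $v^{(2)}$ is not a common eigenvector of $B_t(\sigma_1)$ and $B_t(\sigma_2)$ away from $t = \zeta_3^{\pm 1}$.

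The main obstacle is the geometry: constructing the optimal $\Lambda$ and identifying the resulting region. The conditions on $\Lambda$ unpack into competing but elementary inequalities in $\Re t$ and $\Im t$ — for instance $v^{(1)} \in \setdef{z}{|t|\,|z-1| \le 1}$ reads $|t|^2 \le |1+t|$, a curve through $\zeta_3^{\pm 1}$, while $\phi_1$ preserving the round disk $\setdef{z}{|t|\,|z-1| \le 1}$ forces $|t| \le (\sqrt5 - 1)/2$; that round disk itself fails, since $\phi_2$ has a pole at its center, so $\Lambda$ must be taken smaller, also dodging the repelling point $0$. Extracting the exact boundary curves of $\cT$, matching them to Figure~\ref{fig: new T intro}, and computing the area $\approx 0.91$ is the delicate, though still elementary, part; one must also treat the borderline elliptic configurations and the behaviour on $\partial\cT$ with care. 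Finally $t = 0$ is handled by the trace computation above, and $\rho_w < 1$ on all of $\cT$ then follows from the connectedness remark.
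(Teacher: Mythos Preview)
Your approach is genuinely different from the paper's, and there is a real gap at the step you yourself flag as ``delicate.''

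The paper's proof (Theorem~\ref{thm: root-free}) is direct and algebraic. After the substitution $t=-u^2$ it uses the Garside-type normal form of Corollary~\ref{cor: norm with power(s)} to conjugate $w$ into one of a few explicit shapes, the generic ones being $\Omega^k\sigma_1^{a_1}\sigma_2^{a_2}\cdots$ with all $a_i\ge 2$. It then bounds the $L^\infty$ operator norm: one computes that $\norminf{\sigma_1^a}=1$ on $\cU$ for $a\ge 2$, that $\norminf{\sigma_1^a\sigma_2^b}<1$ on $\cU$ for $a,b\ge 2$, and that $\norminf{\Omega^k}<1$; submultiplicativity then gives $\rho_w\le\norminf{w'}<1$ for the conjugate $w'$. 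The two inequalities defining $\cT$ are exactly the conditions $\norminf{\sigma_1^2}\le 1$ and $\norminf{\sigma_1^3}\le 1$ in the $u$-coordinate --- they are outputs of the computation, not inputs.

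Your product formula $\rho_w(t)=\prod_{j:\,i_j=2}|t|\,|q_j-1|$ is correct in the loxodromic case, and a forward-invariant $\Lambda$ inside $\{|t|\,|z-1|\le 1\}$ touching that circle only at $v^{(2)}$ would indeed yield $\rho_w<1$ as you describe. The problem is that you never construct $\Lambda$, and you give no argument that the locus of $t$ admitting such a $\Lambda$ coincides with the paper's explicit region
\[
\cT=\bigl\{|t|^{1/2}|1-t|+|t|^2<1\bigr\}\cap\bigl\{|t|^{1/2}|1-t+t^2|+|t|^3<1\bigr\}.
\]
Those curves arise in the paper from the norms of $\sigma_1^2$ and $\sigma_1^3$ after a coordinate change and a conjugacy step that your argument never invokes; there is no visible mechanism by which a $\mathbb{CP}^1$ trapping condition on $\phi_1,\phi_2$ would reproduce precisely these two polynomial inequalities. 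So as written you have at best a strategy for producing \emph{some} root-free region, not a proof that the stated $\cT$ is root-free. The step you dismiss as ``delicate but elementary'' is the entire content of the theorem, and it is not at all clear it can be completed along these lines to recover exactly $\cT$.
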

In the exceptional case of $w = \sigma_i^n$, one has that $ \rho_w$ is
identically $1$ on $\Dbar$ and hence $R_w = \Dbar$; here,
$\Delta_w = 0$ for such links.  In Section~\ref{S:right-plane}, we
give a concrete conjecture about $B_t(w)$ that implies that $\cT$
could be enlarged to include the part of $\D$ where $\Re(z) > 0$, at
least when $\what$ is a knot. 

A key technical tool for studying the roots is the following
statement, which may be of independent interest.  A Laurent polynomial
$p(t) \in \Z[t^{\pm1}]$ is \emph{definite} when all its coefficients
have the same sign; for example, $t^3 + t + 2$, $-2t^2 - 1$, and $0$
are all definite, but $t^3 - 2t + 1$ is not.  Combining Theorem
\ref{thm:possign} and Corollary \ref{cor:tracedef}, we show:
\begin{theorem}\label{thm:intro-possign} 
  For $w \in \Braid{3}$, all entries of $B_{-t}(w)$ are definite.
  Moreover the trace $\tr\big(B_{-t}(w)\big)$ is definite.
\end{theorem}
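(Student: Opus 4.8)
The plan is to induct on the word length of $w$ in the monoid generators $\{\sigma_1, \sigma_2, \sigma_1^{-1}, \sigma_2^{-1}\}$, or — to handle signs cleanly — to first prove the statement for positive $w$ and then extend. First I would fix an explicit form of the reduced Burau representation at $-t$: after a suitable basis choice one has
\[
  B_{-t}(\sigma_1) = \begin{pmatrix} t & 0 \\ 1 & 1 \end{pmatrix}, \qquad
  B_{-t}(\sigma_2) = \begin{pmatrix} 1 & t \\ 0 & t \end{pmatrix},
\]
(or some conjugate thereof with the same sign pattern), and one checks that both of these matrices, and their inverses multiplied by the appropriate power of $t$ to clear denominators, have all entries in $\Z_{\geq 0}[t]$. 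The key structural point is that the monoid generated by these two matrices inside $\GL{2}{(\Z[t^{\pm1}])}$ — up to scaling rows/columns by units $\pm t^k$ — preserves the cone of matrices with nonnegative-coefficient entries, since a product of two such matrices has entries that are sums of products of such entries, hence again nonnegative combinations of monomials. That handles the ``all entries are definite'' claim for positive $w$: the entries of $B_{-t}(w)$ all lie in $\Z_{\geq 0}[t]$.

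For general $w \in \Braid{3}$ the sign is no longer uniformly positive, but the claim is only that each \emph{individual} entry is definite (all its own coefficients share a sign, which may vary from entry to entry and from word to word). Here I would track not just positivity but a $\Z/2$-grading: assign to each entry position a parity so that $B_{-t}(\sigma_i)$ and $B_{-t}(\sigma_i^{-1})$ each map matrices respecting a checkerboard sign pattern to matrices respecting a (possibly shifted) checkerboard sign pattern, where ``checkerboard'' means the four entries have signs governed by a single global sign $\epsilon \in \{\pm1\}$ times a fixed pattern depending only on the parity of the exponent sum of the prefix. Concretely, one shows by the same bilinear-combination argument that the set of $2\times 2$ matrices of the form $\epsilon \cdot (\text{nonneg. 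Laurent poly. matrix with a fixed } \pm \text{ pattern})$ is closed under right multiplication by each $B_{-t}(\sigma_i^{\pm1})$, with $\epsilon$ and the pattern transforming in a controlled way. Induction on word length then gives that every entry of $B_{-t}(w)$ is $\pm$ a nonnegative-coefficient Laurent polynomial, i.e.\ definite.

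The trace statement does not follow formally from the entrywise one, because adding two definite polynomials of opposite sign need not be definite. For this I would use the sign-pattern bookkeeping from the previous step: the two diagonal entries of $B_{-t}(w)$ turn out to carry the \emph{same} global sign $\epsilon$ (they sit on the diagonal of the checkerboard, which is monochromatic), so their sum — a sum of two nonnegative-coefficient Laurent polynomials, up to the common factor $\epsilon$ — is again definite. Equivalently, one verifies directly that $\tr B_{-t}(\sigma_1) = t+1$ and $\tr B_{-t}(\sigma_2) = 1+t$ are definite and that the recursion expressing $\tr B_{-t}(w \sigma_i)$ in terms of the entries of $B_{-t}(w)$ (via $\tr(AM) = $ a nonnegative combination of the entries of $M$ when $A = B_{-t}(\sigma_i)$, using the positivity from the first step) preserves definiteness.

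The main obstacle is the trace: the entrywise result is a fairly soft cone-invariance argument, but the trace requires the finer observation that the two diagonal entries are not sign-independent — they inherit a common sign from the underlying checkerboard grading — and setting up that grading so it is genuinely preserved under both $\sigma_i$ and $\sigma_i^{-1}$ (including the bookkeeping of the power of $t$ needed to clear denominators in the inverse generators) is the delicate part. I expect the cleanest route is to phase-normalize: multiply $B_{-t}(w)$ by the monomial $t^{-k}$ that makes its entries genuine polynomials with nonzero constant-or-linear term, prove invariance of the polynomial cone with its sign pattern under that normalization, and read off both conclusions at the end.
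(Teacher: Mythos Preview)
There are two genuine gaps, one in each half of the argument.

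\textbf{Entrywise definiteness.} Your starting claim is that after a basis change both $B_{-t}(\sigma_1)$ and $B_{-t}(\sigma_2)$ have all entries in $\Z_{\geq 0}[t]$. First, the specific matrices you wrote do not satisfy the braid relation: with your choices one computes $\sigma_1\sigma_2\sigma_1 \mapsto \mysmallmatrix{2t^2}{t^2}{3t}{2t}$ but $\sigma_2\sigma_1\sigma_2 \mapsto \mysmallmatrix{2t}{3t^2}{t}{2t^2}$. More importantly, no such basis exists: the element $\Omega^2 = (\sigma_1\sigma_2\sigma_1)^2$ is central and maps to the scalar matrix $-t^3 I$, which remains $-t^3 I$ in every basis. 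If both generators had nonnegative entries, every positive word would too, contradicting this. So the ``soft cone-invariance'' argument cannot work as stated. The paper instead shows that $B_{-t}(\sigma_1)$ preserves the set $V_1$ of vectors with both entries positive or both negative, while $B_{-t}(\sigma_2)$ preserves the complementary set $V_2$; one then uses the Garside left-greedy normal form $w = \Omega^k w_1\cdots w_n$ and the transition rules among the syllables $\sigma_1,\sigma_2,\sigma_1\sigma_2,\sigma_2\sigma_1$ to thread the argument through. Your ``checkerboard $\Z/2$-grading'' idea is in the right spirit but needs to be tied to this normal form, not to a naive induction on word length.

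\textbf{Trace definiteness.} Your key claim here --- that the two diagonal entries of $B_{-t}(w)$ always carry the same sign --- is false. For $w = \sigma_1^2\sigma_2^2$ one gets
\[
  B_{-t}(\sigma_1^2\sigma_2^2) = \twobytwomatrix{-t^3-t^2-t}{t^3+t^2}{-t-t^2}{t^2},
\]
whose diagonal entries are negative and positive respectively (the trace $-t^3 - t$ is definite, but only after cancellation). So definiteness of the trace is \emph{not} a formal consequence of any sign pattern on the entries of $B_{-t}(w)$ itself. The paper's route is to first conjugate $w$ to a $w'$, chosen via the Garside summit-set machinery, whose normal form has first and last syllables compatible in a way that forces all four entries of $B_{-t}(w')$ to share a sign; the trace of $w$ equals the trace of $w'$, which is then definite for the trivial reason. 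The finitely many conjugacy classes not covered by this reduction are handled by direct computation.
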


\subsection{Roots on the unit circle}

For a link $L$, the roots of $\Delta_L$ on the unit circle $S^1$ are
closely related to the much-studied \emph{signature function}
$\sigma_L \maps S^1 \to \Z$; see Section~\ref{sec: sig fn}.  When
$L = \what$ for $w \in \Braid{3}$, the function $\sigma_L$ can be
analyzed using the work of Gambaudo and Ghys \cite{GambaudoGhys2005}.
We use this to prove two types of lower bounds on the number of roots
of $\Delta_L$ on the unit circle.  

The strongest results are for the arc $\cA_R$.  
Specifically, Corollary~\ref{cor: 2/3 of roots} shows that, for essentially all positive braids, at least
$\frac{2}{3}\big(\deg(\Delta_L) - 1\big)$ of the roots of $\Delta_L$ occur on this arc, 
while 
Corollary~\ref{cor: arc lower bound} gives that any
sub-arc has at least the expected number of roots given its length; 
these results confirm part of \ref{conj: circle}, that predicts the limiting measure on $\cA_R$ to be Lebesgue measure.  

On the other hand, the results on the arc $\cA_L$ are perhaps the most surprising. 
On that arc, we analyze the number of roots of 
\emph{random} positive braid generated by the uniform measure on
$\{\sigma_1, \sigma_2\}$.  Note that, on the arc $\cA_L$, the image of
the Burau representation $B_t$ is a non-elementary semigroup, hence
the drift (Lyapunov exponent) of the random walk $B_t(w_n)$ is
positive, and the leading eigenvalue of $B_t(w_n)$ grows exponentially
fast in $n$: thus, one could naively conjecture that asymptotically
there are no roots on $\cAbar_L$.  However, remarkably this is not the
case: in fact, we prove (Corollary \ref{cor: circle root summary})
that a \emph{positive} proportion of roots lies on the arc $\cAbar_L$.
In particular, we show that almost surely at least
$(7 - 3 \sqrt{5})/12 \approx 2.4\%$ of the roots of $\Delta_\what$ are
on this arc, partially confirming \ref{conj: circle}.  This is a
corollary of our analysis of $\abs{ \sigma_\what(-1) }$ for such a
random positive word.  By \cite{GambaudoGhys2005}, the value
$\sigma_\what(-1)$ is essentially a quasimorphism on $\Braid{3}$.
Using Bj\"orklund-Hartnick's central limit theorem for quasimorphisms
\cite{BjorklundHartnick2011}, we prove the following as
Theorem~\ref{T:clt}:

\begin{theorem}
  \label{T:clt-intro}
  Consider the random walk
  $w_n := g_1 \cdots g_n$ on $\Braid{3}$ generated by the uniform
  measure on $\{\sigma_1, \sigma_2\}$.  The signature
  $\abs{\sigma_{\widehat{w}_n}(-1)}$ follows a central limit theorem,
  with positive drift and positive variance.  Namely, setting $\ell_0 = (5 - \sqrt{5})/4$, there exists
  $\nu_0 > 0$ such that for any $a < b$ we have
  \[
    \P\left(\frac{\abs{\sigma_{\widehat{w}_n}(-1)} - \ell_0 n}{\nu_0
        \sqrt{n}} \in [a, b] \right) \to \frac{1}{ \sqrt{2 \pi}}
    \int_a^b e^{-t^2/2} \dt.
  \]
\end{theorem}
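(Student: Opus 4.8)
The plan is to recognize $\lvert\sigma_{\widehat{w}}(-1)\rvert$ as a homogeneous quasimorphism on $\Braid{3}$, up to a bounded error, and then feed that quasimorphism into the central limit theorem of Bj\"orklund and Hartnick \cite{BjorklundHartnick2011}; what then remains is to identify the drift as $\ell_0$ and to verify that the limiting Gaussian is non-degenerate. First I would set $\phi(w) := -\sigma_{\widehat{w}}(-1)$ for $w \in \Braid{3}$. By Gambaudo--Ghys \cite{GambaudoGhys2005}, the defect $\phi(uv) - \phi(u) - \phi(v)$ is a value of the Meyer cocycle of the relevant symplectic representation and hence is uniformly bounded, so $\phi$ is a quasimorphism; its homogenization $\widetilde{\phi}(w) = \lim_{k}\phi(w^k)/k$ is then a homogeneous quasimorphism with $\sup_w \lvert\phi(w) - \widetilde{\phi}(w)\rvert \le D(\phi) < \infty$. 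A bounded perturbation of $\phi(w_n)$ leaves a CLT unchanged after dividing by $\sqrt{n}$, so it suffices to prove the statement with $\widetilde{\phi}(w_n)$ in place of $\lvert\sigma_{\widehat{w}_n}(-1)\rvert$. The absolute value is harmless: with the standard sign convention the signature of a positive-braid link is $\le 0$ (and $<0$ once the link is non-split, which holds almost surely for large $n$), so $\lvert\sigma_{\widehat{w}_n}(-1)\rvert = \phi(w_n)$, and since the limiting normal law is symmetric, a CLT for $\phi(w_n)$ with centering $\ell_0 n$ and scale $\nu_0\sqrt{n}$ is equivalent to the asserted one.

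Next I would apply \cite{BjorklundHartnick2011}. The step distribution $\mu$ is finitely supported on $\{\sigma_1,\sigma_2\}$, so all moment hypotheses hold trivially and $\widetilde{\phi}\in L^2$. Their theorem produces a drift $\ell = \lim_n \widetilde{\phi}(w_n)/n$ (existing almost surely, e.g.\ by a subadditive ergodic argument applied to $\widetilde{\phi}(w_n) + D(\widetilde{\phi})\,n$) and a variance $\nu^2 \ge 0$ with $\big(\widetilde{\phi}(w_n) - \ell n\big)/\sqrt{n}$ converging in law to the centered Gaussian of variance $\nu^2$. To compute the drift it is convenient to subtract the exponent-sum homomorphism $e\maps\Braid{3}\to\Z$: one can write $\widetilde{\phi} = \widetilde{\psi} + c\,e$, where $\widetilde{\psi}$ is a homogeneous quasimorphism descending to $\Braid{3}/Z \cong \mathrm{PSL}_2(\Z)$ --- essentially a multiple of the Rademacher function. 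Along our walk $e(w_n) = n$ deterministically, so this summand contributes $c\,n$ exactly, with no fluctuation, and the drift of $\widetilde{\phi}$ equals that of $\widetilde{\psi}$ shifted by $c$. Using Murasugi's formula for the signature of closed \3-braids (equivalently, the explicit form of $\widetilde{\psi}$ and the continued-fraction combinatorics of the projected walk in $\mathrm{PSL}_2(\Z)$), $\widetilde{\psi}(w_n)$ becomes an additive functional of a finite-state Markov chain, and the ergodic theorem evaluates its limiting average against the stationary distribution; this is the computation that outputs $\ell = \ell_0 = (5 - \sqrt 5)/4$, the $\sqrt 5$ arising from that stationary distribution. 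In particular $\ell_0 > 0$.

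The main obstacle is the strict positivity of the variance, $\nu > 0$: the Bj\"orklund--Hartnick limit is a priori possibly degenerate. To exclude this I would invoke the (delicate) non-degeneracy part of the theory, namely that a vanishing variance would force $\widetilde{\phi}$ to be within bounded distance of a homomorphism $\Braid{3}\to\R$ --- this uses that the support of $\mu$ generates $\Braid{3}$. Now every homomorphism $\Braid{3}\to\R$ is a multiple of $e$, since $\Braid{3}^{\mathrm{ab}}\cong\Z$; a bounded homogeneous quasimorphism is identically zero; and a homogeneous quasimorphism vanishing on $[\Braid{3},\Braid{3}]$ is itself a homomorphism (a short argument from conjugation-invariance of homogeneous quasimorphisms together with $(xy)^k \equiv x^k y^k$ modulo $[\Braid{3},\Braid{3}]$). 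Chaining these, $\nu = 0$ would force $\widetilde{\phi} = c\,e$ with $c = \widetilde{\phi}(\sigma_1)$. This is false: the closures $\widehat{\sigma_1^{\,k}}$ and $\widehat{(\sigma_1\sigma_2)^k}$ are the torus links $T(2,k)$ and $T(3,k)$, so $\widetilde{\phi}(\sigma_1) = \lim_k (k-1)/k = 1$ while $\widetilde{\phi}(\sigma_1\sigma_2) = \lim_k \lvert\sigma_{T(3,k)}(-1)\rvert/k = 4/3 \ne 2 = e(\sigma_1\sigma_2)$, since the asymptotic signature slope of $T(3,k)$ is $4/3$ rather than $2$ (for instance $\sigma(T(3,6n+1)) = -8n$). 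Equivalently, the class of $\widetilde{\phi}$ in $H^2_b(\Braid{3};\R)$ is nonzero, as Gambaudo--Ghys already observe. Hence $\nu = \nu_0 > 0$, completing the proof. The one genuinely external and delicate ingredient is the precise characterization of the degenerate case ("variance zero $\iff$ bounded distance from a homomorphism along the walk") underpinning this last step; everything else is soft.
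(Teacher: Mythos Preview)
Your overall architecture is the paper's: relate $\sigma_{\what}(-1)$ to a quasimorphism via Gambaudo--Ghys (the paper makes this explicit as $\sigma_{\what}(-1) = -\tfrac{1}{3}\Radem\big(B_{-1}(w)\big) - \tfrac{2}{3}\,\#w + O(1)$, where $\Radem$ is the Rademacher function on $\PSL{2}{\Z}$), then invoke Bj\"orklund--Hartnick. Your non-degeneracy argument is likewise the paper's in disguise: they check that $\Radem$ is not $\mu$-tame by exhibiting two positive words, $\sigma_2^n$ and $\Omega^n=(\sigma_1\sigma_2\sigma_1)^n$, with incompatible linear growth, which is the same mechanism as your test on $\sigma_1^k$ versus $(\sigma_1\sigma_2)^k$.

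The genuine gap is the drift. You assert that $\widetilde{\psi}(w_n)$ ``becomes an additive functional of a finite-state Markov chain'' whose ergodic average one reads off a stationary distribution, but $\Radem$ counts $b$'s minus $b^{-1}$'s in the \emph{reduced} word in $\PSL{2}{\Z}=\langle a,b\mid a^2=b^3=1\rangle$, and right-multiplying a reduced word ending in $\ldots Ba$ by $ab$ cancels back to $\ldots a$, so the new suffix depends on a letter arbitrarily far back; no fixed finite suffix window gives a Markov chain for the increments. The paper does not attempt a finite chain. Instead it works with the hitting measure $\nu$ on the Gromov boundary of $\PSL{2}{\Z}$, proves via a stopping-time argument the identity $\nu(G_{BaB*})=2\,\nu(G_{B*})^2$, combines this with stationarity and the symmetry $\nu(G_{b*})=\nu(G_{ab*})$ to obtain the quadratic $x^2-\tfrac{3}{2}x+\tfrac{1}{4}=0$ for $x=\nu(G_{B*})$, and then shows the drift of $\Radem$ equals $1-3\,\nu(G_{b*})=\tfrac{1}{4}(7-3\sqrt{5})$, whence $\ell_0=\tfrac{2}{3}+\tfrac{1}{3}\cdot\tfrac{1}{4}(7-3\sqrt{5})=\tfrac{5-\sqrt{5}}{4}$. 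This computation is the substantive content of the theorem, and your proposal does not supply it.
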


From Theorem~\ref{T:clt-intro}, specifically the existence and value of the
drift $\ell_0$, and the previously mentioned
Corollaries~\ref{cor: 2/3 of roots} and \ref{cor: arc lower bound}, we
have:

\begin{corollary}
  \label{cor: circle root summary}
   Consider the random walk
  $w_n$ generated by the uniform
  measure on $\{\sigma_1, \sigma_2\}$. 
Then for almost every sample path $(w_n)$:
\begin{enumerate}
\item The proportion of roots in the unit circle satisfies
  \[
    \liminf_{n \to \infty}
       \frac{\#\setdef{t \in S^1}{\Delta_{\what_n}(t) = 0}}{n}
    \geq \frac{5 - \sqrt{5}}{4} > \frac{2}{3}.
  \]
  Moreover, 
  \[
    \liminf_{n \to \infty} 
    \frac{
      \#\setdef{t \in \cAbar_L}{\Delta_{\what_n}(t) = 0} }
      {n}
    \geq \frac{7 - 3\sqrt{5}}{12} > 0.
  \]
  
\item For any $0 \leq \theta_1 < \theta_2 < 2\pi/3$, we have
  \[
    \liminf_{n \to \infty} \frac{
      \#\setdef{t = e^{i \theta}}{\mbox{
          $\theta \in [\theta_1, \theta_2]$ and $\Delta_{\what_n}(t) = 0$}}}
      {n}
    \geq \frac{\theta_2 - \theta_1}{2 \pi}. 
    \]
    As a consequence, 
     \[ \liminf_{n \to \infty} 
    \frac{
      \#\setdef{t \in \cA_R}{\Delta_{\what_n}(t) = 0} }
      {n}
    \geq \frac{2}{3}.
 \]
 \end{enumerate}
\end{corollary}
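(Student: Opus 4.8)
The plan is to assemble the corollary from Theorem~\ref{T:clt-intro}, Corollary~\ref{cor: 2/3 of roots}, and Corollary~\ref{cor: arc lower bound}, combined with a few elementary facts about the Levine--Tristram signature function $\sigma_n := \sigma_{\what_n} \maps S^1 \to \Z$ from Section~\ref{sec: sig fn}: it is locally constant away from the roots of $\Delta_{\what_n}$ on $S^1$; at a root of multiplicity $m$ it jumps by an integer of absolute value at most $2m$; it satisfies $\sigma_n(\bar\omega) = \sigma_n(\omega)$ and $\sigma_n(1) = 0$; and the roots of $\Delta_{\what_n}$ come in complex-conjugate pairs (Lemma~\ref{lem: sym roots}). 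I will also use that $\deg \Delta_{\what_n} = n - 2$ once $n$ is large (for $n$ large the braid word almost surely contains both $\sigma_1$ and $\sigma_2$, so the fiber surface of the fibered link $\what_n$ is connected with Euler characteristic $3 - n$), and that $\abs{\sigma_n(-1)}/n \to \ell_0 = (5 - \sqrt 5)/4$ almost surely. Theorem~\ref{T:clt-intro} gives the last limit in probability; upgrading to an almost-sure limit is the strong law of large numbers for the quasimorphism $w \mapsto \sigma_{\what}(-1)$ on $\Braid{3}$, which follows from its bounded defect via Kingman's subadditive ergodic theorem (and is built into the Bj\"orklund--Hartnick framework behind that theorem).

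\emph{Part (2).} Corollary~\ref{cor: arc lower bound} says any sub-arc of $S^1$ carries at least its length-proportional share of the roots; applied to $\setdef{e^{i\theta}}{\theta \in [\theta_1,\theta_2]}$ with $0 \le \theta_1 < \theta_2 < 2\pi/3$, a fraction $(\theta_2-\theta_1)/2\pi$ of the circle, it yields at least $\frac{\theta_2-\theta_1}{2\pi}\deg\Delta_{\what_n}$ roots there, hence the claimed $\liminf \ge (\theta_2-\theta_1)/2\pi$. Letting $\theta_1 \downarrow 0$, $\theta_2 \uparrow 2\pi/3$ and adjoining the mirror-image arc (whose overlap with the first, at $t = 1$, has bounded multiplicity) gives $\liminf \#\setdef{t \in \cA_R}{\Delta_{\what_n}(t)=0}/n \ge 2/3$; this also follows directly from Corollary~\ref{cor: 2/3 of roots}.

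\emph{Part (1).} For the first inequality, traverse the upper semicircle $\setdef{e^{i\theta}}{0 < \theta < \pi}$: since $\sigma_n$ goes from $\sigma_n(1) = 0$ to $\sigma_n(-1)$ changing by at most $2$ per root with multiplicity, it contains at least $\frac12\abs{\sigma_n(-1)}$ roots (if $-1$ is itself a root, replace $\pi$ by $\pi\pm\epsilon$, harmlessly in the limit). By conjugation symmetry the lower semicircle contains at least as many, and the two arcs meet only at $\pm1$, so the circle carries at least $\abs{\sigma_n(-1)}$ roots; dividing by $n$ gives $\liminf \ge (5-\sqrt5)/4 > 2/3$. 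For the second inequality, split the upper semicircle at $\zeta_3 = e^{2\pi i/3}$: the sub-arc $\setdef{e^{i\theta}}{2\pi/3 < \theta < \pi} \subset \cAbar_L$ contains at least $\frac12\abs{\sigma_n(-1) - \sigma_n(\zeta_3)}$ roots, so with its mirror image $\cAbar_L$ carries at least $\abs{\sigma_n(-1) - \sigma_n(\zeta_3)} \ge \abs{\sigma_n(-1)} - \abs{\sigma_n(\zeta_3)}$ roots. It remains to see $\abs{\sigma_n(\zeta_3)} \le \frac23 n + o(n)$. Because $\cA_R \subseteq R_w$ by Lemma~\ref{lem:compact}, both eigenvalues of $B_t(w_n)$ lie on the unit circle along $\cA_R$, and the reduced Burau determinant $(-t)^n$ forces each of them to make $\frac16 n + O(1)$ turns along the half-arc from $1$ to $\zeta_3$, crossing the value $1$ about $\frac16 n$ times with a consistent sign of $\sigma_n$-jump; hence $\abs{\sigma_n(\zeta_3)} = \frac23 n + O(1)$, which is exactly the estimate underlying Corollary~\ref{cor: 2/3 of roots}. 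Combining, $\liminf \#\setdef{t \in \cAbar_L}{\Delta_{\what_n}(t)=0}/n \ge \ell_0 - \frac23 = (7 - 3\sqrt5)/12 > 0$.

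\emph{Main obstacle.} The substantive inputs — the central limit theorem for $\sigma_{\what_n}(-1)$ and the sub-arc root counts on $\cA_R$ — are established earlier, so within this argument the one delicate point is the bound $\abs{\sigma_n(\zeta_3)} \le \frac23 n + o(n)$: one needs to know that the signature accumulated along $\cA_R$ does not overshoot its expected value $\frac23 n$, equivalently that along $\cA_R$ the Burau eigenvalues stay on the unit circle and rotate monotonically, so that the signature's variation on $\cA_R$ matches the root count there up to $O(1)$ rather than being diminished by cancellation (as happens, say, for torus knots on $\cA_L$). This is precisely where the positivity and spectral-radius structure of $B_t$ (Theorem~\ref{thm:intro-possign} and $\cA_R \subseteq R_w$) is essential. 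Everything else — the semicircle bookkeeping, the conjugation symmetry, and passing from convergence in probability to an almost-sure limit — is routine.
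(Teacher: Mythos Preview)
Your overall architecture matches the paper's: Part~(2) comes straight from Corollary~\ref{cor: arc lower bound} (and Corollary~\ref{cor: 2/3 of roots}); the first inequality in Part~(1) is Proposition~\ref{prop: roots on arc} plus the almost-sure drift $|\sigma_{\what_n}(-1)|/n \to \ell_0$ (which the paper also obtains from Kingman applied to the quasimorphism, just as you say); and the $\cAbar_L$ bound follows by splitting the semicircle at $\zeta_3$ and controlling $|\sigma_n(e^{i\theta})|$ for $\theta$ just below $2\pi/3$.

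The one real gap is in your justification of $|\sigma_n(\zeta_3)| \le \tfrac{2}{3}n + O(1)$. The paper gets this instantly from Proposition~\ref{prop: sig for 3 braid} (Gambaudo--Ghys), which gives the two-sided estimate $\big|\sigma_\what(e^{i\theta}) + \tfrac{\theta}{\pi}\,\#w\big| \le 2$ for $0 \le \theta < 2\pi/3$. Your eigenvalue-winding sketch is morally this result, but the key assertion---that the crossings of $1$ by the Burau eigenvalues along $\cA_R$ each contribute a signature jump \emph{of consistent sign}---does not follow from $\cA_R \subset R_w$ and Theorem~\ref{thm:intro-possign} alone; that consistency is precisely what the Gambaudo--Ghys analysis establishes. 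Also, Corollary~\ref{cor: 2/3 of roots} only uses the \emph{lower} bound $|\sigma_L(\omega_2)| \ge \tfrac{2}{3}\#w - O(1)$, whereas here you need the \emph{upper} bound; both come from Proposition~\ref{prop: sig for 3 braid}, so you should cite it directly rather than routing through Corollary~\ref{cor: 2/3 of roots}. With that substitution your argument is complete and essentially identical to the paper's (see the Proposition just before Section~5.3).
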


The most difficult part of Theorem~\ref{T:clt-intro} is computing the
drift $\ell_0$.  The predictions of \ref{conj: circle} include that
the lower bounds in Corollary~\ref{cor: circle root summary} are all
sharp, and that the limit infimums can be replaced by limits.

\subsection{Lyapunov exponents and bifurcation measures}
\label{sec: lyap intro}

\begin{figure}
  \centering
  \begin{tikzoverlay}[width=12cm]{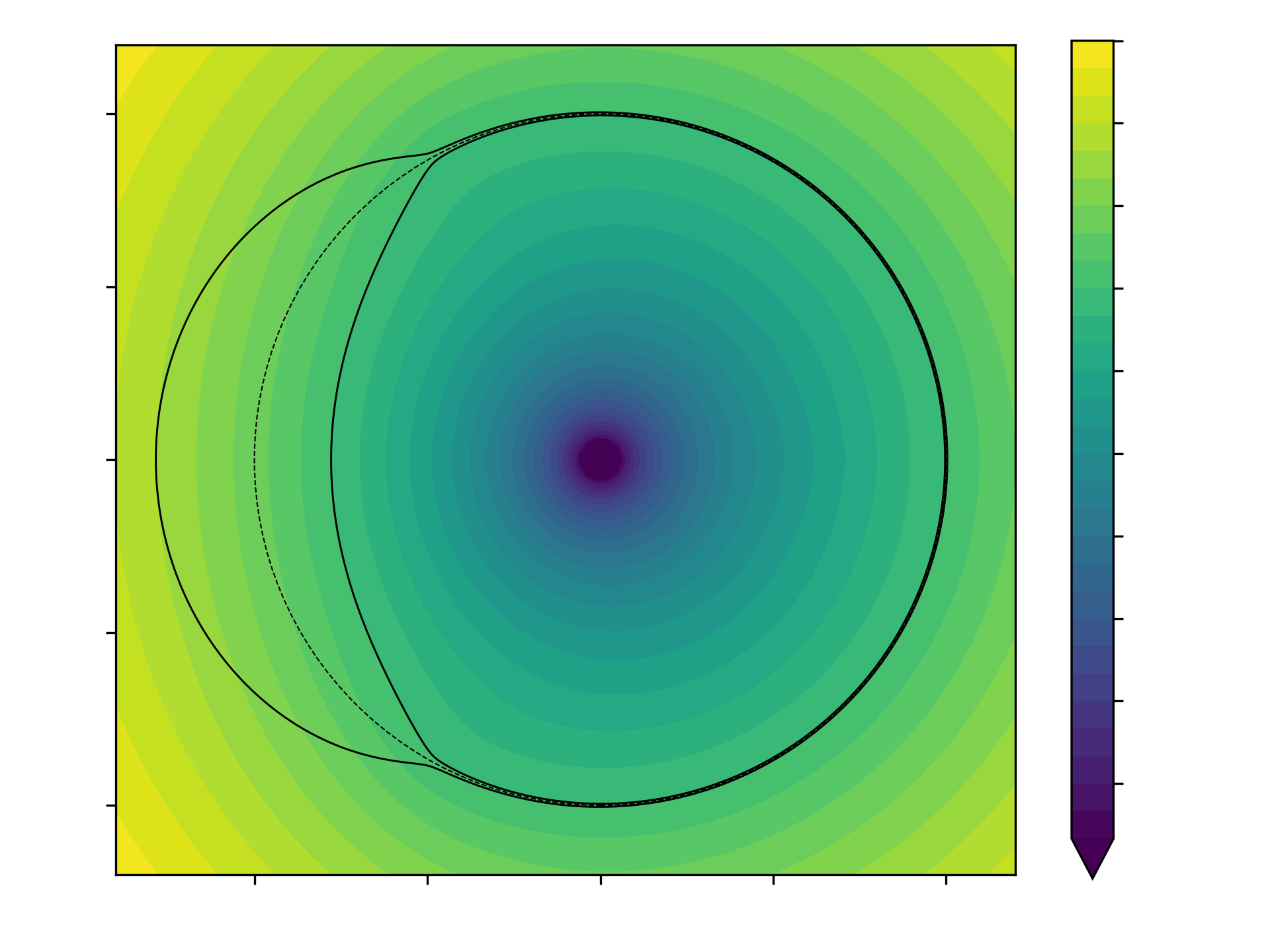}
  \begin{scope}[nmdstd, font=\scriptsize]
  \draw (20.031050, 4.590411) node[below] {$-1.0$};

  \draw (33.649506, 4.590411) node[below] {$-0.5$};

  \draw (47.267962, 4.590411) node[below] {$0.0$};

  \draw (60.886418, 4.590411) node[below] {$0.5$};

  \draw (74.504874, 4.590411) node[below] {$1.0$};

  \draw (7.617188, 11.556891) node[left] {$-1.0$};

  \draw (7.617188, 25.175347) node[left] {$-0.5$};

  \draw (7.617188, 38.793803) node[left] {$0.0$};

  \draw (7.617188, 52.412260) node[left] {$0.5$};

  \draw (7.617188, 66.030716) node[left] {$1.0$};
  
  \draw (89.197660, 13.269260) node[right] {$-0.90$};

  \draw (89.197660, 19.772329) node[right] {$-0.75$};

  \draw (89.197660, 26.275397) node[right] {$-0.60$};

  \draw (89.197660, 32.778465) node[right] {$-0.45$};

  \draw (89.197660, 39.281534) node[right] {$-0.30$};

  \draw (89.197660, 45.784602) node[right] {$-0.15$};

  \draw (89.197660, 52.287670) node[right] {$0.00$};

  \draw (89.197660, 58.790738) node[right] {$0.15$};

  \draw (89.197660, 65.293807) node[right] {$0.30$};

  \draw (89.197660, 71.796875) node[right] {$0.45$};
  \begin{scope}[shift={(47.26796207, 38.79380342)},
                xscale=27.23691239, yscale=27.23691239]
  \end{scope}
  \begin{scope}[shift={(84.37825521, 52.28767012)},
                xscale=3.30030716, yscale=19.50920488]
              \end{scope}
  \end{scope}
\end{tikzoverlay}

  \vspace{0.2cm}
  
  \caption{A contour plot of a numerical approximation to the Lyapunov
    exponent function $\lambda(t)$, together with the locus
    $\lambda(t) = \log^+ \abs{t}$ drawn in black, which includes
    $\Dbar \cap \{\lambda(t) = 0\}$. For scale, the unit circle is
    shown as a dotted line. Note the similarity with Figure~\ref{fig:
      pos many}.  The approximation of $\lambda$ at each $t$ is based
    on averaging the result of 10,000 walks of length 300, and this
    was computed at 90,000 distinct points.  }
  \label{fig: lyap}
\end{figure}

For random positive braids $w_n = g_1 \cdots g_n$ in $\Braid{3}$
generated by a finitely supported measure $\mu$, we define for each
$t \in \C$ the \emph{Lyapunov exponent} of the random walk as
\begin{equation*}
  \lambda(t) := \lim_{n \to \infty} \frac{1}{n} \int_{\Braid{3}}
    \log \big\Vert B_t(g) \big\Vert \ d \mu^{* n}(g) 
\end{equation*}
where $\norm{\cdotspaced}$ is e.g.~the operator norm.  As a function
$\C \to [-\infty, \infty)$, we show in Lemma~\ref{lem: weak sub} that
$\lambda$ is continuous and subharmonic on $\C \setminus \cAbar_R$.
See Figure~\ref{fig: lyap} for a plot of $\lambda$.

Because $\abs{\det B_t(g)} = \abs{t}^{\#{g}}$ is not $1$, where $\#{g}$ the
length of the positive word $g$, we consider
\[
  \chi(t) := \max \big\{ \lambda(t), \, \log^+ |t| \big\}
  \mtext{where $\log^+ x := \max \big\{ 0, \, \log x \big\}$.}
\]
The \emph{bifurcation measure} is then
\[
  \nu_\bif := \frac{1}{2 \pi} d d^c \chi,
\]
where $d d^c$ in this case equals the usual Laplacian (but we avoid
the notation $\Delta$, already denoting the Alexander polynomial).  We
posit that $\nu_\bif$ is the limiting measure $\nu_\infty$ in
Conjecture~\ref{conj: main}, and we prove the following partial result
towards this.

Let us define
$F := \overline{\setdef{ t \notin \cA_R }{\lambda(t) = \log^+|t|}}$
and $U := \mathbb{C} \setminus F$.  The set $U$ contains $\cA_L$, and
we conjecture it is the complement of a continuous loop in $\C$
intersecting the unit circle at $\zeta_3$ and $\zetabar_3$, namely the
union of two curves posited in \ref{conj: support}.  Hence $U$ would
contain $S^1 \setminus \{\zeta_3, \zetabar_3\}$ and it should have
$\nu_\infty$-measure approximately $69.1\%$.
We prove the following equidistribution result as
Theorem~\ref{T:equid} and Proposition~\ref{prop: last conj}:

\begin{theorem} \label{T:equid-intro}
  Let $\mu$ be a generating, finitely supported measure on the
  positive semigroup $\Braid{3}^+$, and let $w_n$ be the random walk
  driven by $\mu$.  Then for almost every sample path $(w_n)$, we have
  the convergence
  \[
    \lim_{n \to \infty} \nu_{w_n}\vert_{U} = \nu_\bif\vert_{U}
  \]
  as measures on $U$, i.e.~as elements of the dual of
  $C^0_c(U)$. Moreover, if the set $\setdef{t \in \D}{\lambda(t) = 0}$
  has zero Lebesgue measure, then the above convergence holds with
  $U = \C$.
\end{theorem}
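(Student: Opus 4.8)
The plan is to realize $\nu_{w_n}$ and $\nu_\bif$ as normalized Laplacians of suitable potentials and then apply a standard pluripotential-theoretic convergence argument, localized to $U$. Concretely, for each $w \in \Braid{3}$ let $p_{w}(t) := \log\bigl|\Delta_{\what}(t)\bigr|$; since $\Delta_{\what}(t)$ is (up to units and a normalizing factor) the characteristic-polynomial expression $\det\bigl(B_t(w) - I\bigr)$ divided by $t - 1$ type terms, and $Z_w$ is its zero set, we have $\tfrac{1}{2\pi}\, dd^c p_{w} = \sum_{z \in Z_w}\delta_z$. Hence $\nu_{w_n} = \tfrac{1}{2\pi \# Z_{w_n}}\, dd^c p_{w_n}$, and since $\# Z_{w_n}/n \to \ell_0' > 0$ along almost every sample path (this is the degree count, which grows linearly by Theorem~\ref{T:clt-intro}-type estimates and the fact that $\deg \Delta_{\what_n}$ is comparable to $n$), it suffices to prove that $\tfrac{1}{n} p_{w_n} \to 2\pi\, c\, \chi$ in $L^1_{loc}(U)$ for the appropriate constant $c$, and then take $dd^c$ of both sides, which is continuous for distributional convergence.

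First I would establish the pointwise/a.e.\ convergence $\tfrac{1}{n}\log\bigl|\Delta_{\what_n}(t)\bigr| \to c\,\chi(t)$ for each fixed $t \in U$. On $U \setminus \{|t| > 1 \text{ region}\}$, $\chi(t) = \lambda(t)$, and the key mechanism is that $\Delta_{\what_n}(t)$ differs from $\tr B_t(w_n)$ (or a fixed linear combination of matrix entries of $B_t(w_n)$) by bounded multiplicative error; then $\tfrac1n \log|\tr B_t(w_n)|$, $\tfrac1n\log\|B_t(w_n)\|$, and $\lambda(t)$ all coincide in the limit by Oseledets/Furstenberg theory together with the non-elementarity of the Burau image on $U$ (which forces the random product to align with the top Oseledets direction, so the trace does not exhibit cancellation). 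Where $|t| > 1$ one uses instead that $\det B_t(w_n)$ is an explicit power of $t$, so $\log|\Delta| \sim \log|\det|$ dominates and gives $\log^+|t|$ times the degree. I would then upgrade pointwise a.e.\ convergence to $L^1_{loc}(U)$ convergence using the standard Brelot--Cartan / Hartogs-type lemma: the functions $\tfrac1n p_{w_n}$ are subharmonic (away from $\cAbar_R$, by Lemma~\ref{lem: weak sub}-type reasoning, matching where $\lambda$ is subharmonic) and locally uniformly bounded above on $U$ — the upper bound coming from $\|B_t(w_n)\| \le C^n$ for $t$ in a compact subset of $U$ — so any $L^1_{loc}$ limit point is $\le c\chi$, and since they converge a.e.\ to $c\chi$, they converge in $L^1_{loc}$; one must also rule out a limit that is identically $-\infty$ on a component, which follows from the a.e.\ finiteness of $\chi$ on $U$. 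Applying $dd^c$ then yields $\nu_{w_n}|_U \to \nu_\bif|_U$.

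For the final sentence — extending to $U = \C$ when $\setdef{t \in \D}{\lambda(t) = 0}$ has zero Lebesgue measure — the point is that the only obstruction to running the argument globally is the set $F$, and $F \cap \D \subset \setdef{t \in \D}{\lambda(t) = 0} \cup \cAbar_R$ up to measure zero (outside $\cA_R$ one has $\lambda = \log^+|t| = 0$ on $F \cap \D$, i.e.\ $\lambda = 0$ there); if that set is Lebesgue-null then $F$ has empty interior and measure zero, so $\tfrac1n p_{w_n} \to c\chi$ a.e.\ on all of $\C$, and because subharmonicity and the uniform upper bound now hold on all of $\C \setminus \cAbar_R$ (and $\cAbar_R$ is polar), the $L^1_{loc}$ convergence and hence the $dd^c$ convergence extend globally; the total-mass bookkeeping ($\#Z_{w_n}/n$ versus the mass of $dd^c\chi$) forces $c$ to be exactly the normalizing constant making $\nu_\bif$ a probability measure.

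The main obstacle I expect is the lower-bound / no-cancellation half of the pointwise statement on $U$: showing $\tfrac1n\log\bigl|\Delta_{\what_n}(t)\bigr| \ge \lambda(t) - o(n)$ almost surely, i.e.\ that the Alexander polynomial (or the relevant matrix entry) does not drop in size relative to the operator norm. This requires controlling the angle between $B_t(w_n)$ applied to a fixed vector and the expanding Oseledets direction — a large-deviation estimate for the random matrix product in the semigroup generated by $B_t(\sigma_1), B_t(\sigma_2)$, uniform enough in $t$ on compacta of $U$ to survive integration and the $L^1_{loc}$ upgrade. The positivity (Theorem~\ref{thm:intro-possign}) should help here: after the substitution $t \mapsto -t$ the matrices have definite entries, so the relevant cocycle acts by (projectively) contracting a cone, which rules out the pathological cancellation and makes the top Lyapunov direction attract uniformly — this is where I would concentrate the technical work.
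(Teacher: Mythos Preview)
Your overall framework---take log-potentials, prove $L^1_{loc}$ convergence, apply $dd^c$---matches the paper's. But the execution diverges in ways that leave real gaps.

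First, the relation between $\Delta_{\what_n}$ and $\tr B_t(w_n)$ is \emph{additive}, not multiplicative: one has $(t^2+t+1)\,\Delta_{\what_n}(t) = 1 + (-t)^n - \tr B_t(w_n)$. The paper exploits this by setting $a_n(t) = 1 + (-t)^n$ and $b_n(t) = -\tr B_t(w_n)$, proving separately that $\frac{1}{n}\log|a_n| \to \log^+|t|$ and $\frac{1}{n}\log|b_n| \to \lambda(t)$ in $L^1_{loc}(\C)$, and then showing that on $U$ one term dominates the other by a factor of $2$ outside a set $U_n^c$ of vanishing measure. This is what produces $\chi = \max\{\lambda,\log^+|t|\}$; your ``bounded multiplicative error from the trace'' is false on the region where $\lambda(t)<0$ inside $\D$ (there the constant $1$ dominates) and does not address the transition region at all.

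Second, the lower bound on the trace is not obtained via positivity or cone contraction. The paper quotes Guivarc'h's theorem (Lemma~\ref{L:RW}): for a non-elementary measure on $\PSL{2}{\C}$ one has $\frac{1}{n}\log|\tr w_n| \to \ell > 0$ almost surely. Since $\Gamma_t^+$ is non-elementary exactly on $V = \C\setminus(\{0\}\cup\cAbar_R)$ by Proposition~\ref{P:non-elt}, this gives a.e.\ convergence of $\frac{1}{n}\log|\tr B_t(w_n)|$ for each fixed $t\in V$, and a lemma from Deroin--Dujardin upgrades this to $L^1_{loc}$. Your proposed route through Theorem~\ref{thm:intro-possign} is not used here and would not obviously suffice: cone contraction controls the norm, not the trace.

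Third, and most seriously, your Hartogs-type upgrade glosses over the possibility that $|a_n+b_n|$ is small by cancellation even where each summand is large. On the tie region $U_n^c=\{\tfrac12|a_n|\le|b_n|\le 2|a_n|\}$ the potential $\frac{1}{n}\log|p_n|$ could plunge toward $-\infty$. The paper handles this with a uniform-integrability lemma (Lemma~\ref{L:log-}): for any nonzero integer polynomials $p_n$ of degree $\le n$, the family $\frac{1}{n}\log^-|p_n|$ is uniformly integrable, so its contribution on a set of measure $\to 0$ vanishes. This is the technical crux, and nothing in your sketch substitutes for it; the standard compactness theorems for subharmonic functions bounded above do not by themselves rule out loss of $L^1$ mass to $-\infty$ on a shrinking set.

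For the final clause your reasoning is essentially right: if $\{\lambda=0\}\cap\D$ is Lebesgue-null then $U^+\cup U^-$ has full measure in $\C$, and the same decomposition-and-domination argument (Proposition~\ref{P:conv}) runs globally. No appeal to polarity of $\cAbar_R$ is needed---it already has measure zero.
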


The theorem in particular applies if $\mu$ is the uniform measure on
$\{ \sigma_1, \sigma_2 \}$, as considered so far.  As a consequence
(Corollary~\ref{C:zero-density}), the limiting measure assigns zero
mass to the locus where $\lambda$ is negative:

\begin{corollary}
Let $\mu$ be as above, and let $K$ be a compact set contained in
  $\setdef{t \in \D }{\lambda(t) < 0}$. Then for almost every sample
  path $(w_n)$,
$$\lim_{n \to \infty} \frac{1}{n} \# \setdef{ t \in K}{\Delta_{\what_n}(t) = 0} = \nu_\bif(K) = 0.$$
\end{corollary}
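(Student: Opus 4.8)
\emph{Plan.} This should be a formal consequence of Theorem~\ref{T:equid-intro}. The plan is to locate $K$ inside the good set $U$, observe that $\nu_\bif$ vanishes identically on an open set containing $K$, and then convert the weak-$*$ convergence of Theorem~\ref{T:equid-intro} into the stated statement about the normalized count of zeros in $K$. I do not expect any serious difficulty once Theorem~\ref{T:equid-intro} is in hand.

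First I would show the stronger fact that $\nu_\bif$ vanishes on the open set $V := \setdef{t \in \D}{\lambda(t) < 0}$, which contains $K$ by hypothesis and is open because $\lambda$ is continuous on $\D$ (Lemma~\ref{lem: weak sub}). For $t \in V$ we have $\abs{t} < 1$, so $\log^+\abs{t} = 0 > \lambda(t)$ and hence $\chi(t) = \max\bigl\{\lambda(t), \log^+\abs{t}\bigr\} = 0$. Thus $\chi \equiv 0$ on $V$, so $d d^c \chi = 0$ there and $\nu_\bif\vert_V = 0$; in particular $\nu_\bif(K) = 0$, the second equality asserted in the corollary. I would also record that $V \subset U$: the set $V$ is open, lies in $\D$ (hence misses $\cA_R \subset S^1$), and satisfies $\lambda(t) < \log^+\abs{t}$ on it, so $V$ is disjoint from $\setdef{t \notin \cA_R}{\lambda(t) = \log^+\abs{t}}$; an open set disjoint from a set is disjoint from its closure, so $V$ is disjoint from $F$, i.e.\ $V \subset \C \setminus F = U$.

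Next I would invoke Theorem~\ref{T:equid-intro}: for almost every sample path $(w_n)$, $\nu_{w_n}\vert_U \to \nu_\bif\vert_U$ in the dual of $C^0_c(U)$. Fix such a path. Since $K$ is compact and $V \supseteq K$ is open, Urysohn's lemma supplies $\varphi \in C^0_c(V) \subset C^0_c(U)$ with $0 \le \varphi \le 1$ and $\varphi \equiv 1$ on $K$. Writing $Z_{w_n}$ for the multiset of zeros of $\Delta_{\what_n}$, so that $\# Z_{w_n} = \deg \Delta_{\what_n}$, we get
\[
  \frac{1}{n}\,\#\!\bigl(Z_{w_n} \cap K\bigr)\ \le\ \frac{1}{n} \sum_{z \in Z_{w_n}} \varphi(z)\ =\ \frac{\# Z_{w_n}}{n} \int \varphi \, d\nu_{w_n},
\]
and as $n \to \infty$ the integral tends to $\int \varphi \, d\nu_\bif = 0$, using $\operatorname{supp}\varphi \subset V$ and $\nu_\bif\vert_V = 0$ from the previous step. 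Finally, $\# Z_{w_n} = \deg \Delta_{\what_n} = O(n)$: this is immediate from the computation of $\Delta_{\what}$ via the Burau representation in Section~\ref{sec: background}, since the entries of $B_t(\sigma_1)$ and $B_t(\sigma_2)$ are polynomials in $t$ of degree at most one, so those of $B_t(w_n)$ have degree $O(n)$. Hence the prefactor $\# Z_{w_n}/n$ is bounded and the left-hand side tends to $0 = \nu_\bif(K)$, which proves the corollary (the set-theoretic count of zeros in $K$ is no larger than $\#(Z_{w_n}\cap K)$, so the same conclusion holds for it).

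I expect no genuine obstacle here; the analytic content is entirely contained in Theorem~\ref{T:equid-intro}. The two points that require a little care are the topological inclusion $K \subset U$, which is what makes the equidistribution theorem applicable near $K$, and the upgrade from testing against $C^0_c(U)$ to counting zeros inside $K$. For the latter it is essential that $\nu_\bif$ vanishes on a full open neighborhood $V$ of $K$ rather than merely on $K$ itself: this is exactly why a single Urysohn bump function squeezes the count to zero and no boundary-regularity hypothesis on $K$ (such as $\nu_\bif(\partial K) = 0$) is needed.
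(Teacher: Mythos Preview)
Your argument is correct and follows essentially the same route as the paper: choose a compactly supported bump function $\varphi$ that is $1$ on $K$ and supported where $\chi \equiv 0$, then use the weak convergence of Theorem~\ref{T:equid-intro} to conclude. Your version is in fact slightly more careful than the paper's, since you explicitly verify the inclusion $V \subset U$ needed to apply the theorem and you handle the normalization of $\nu_{w_n}$ by bounding $\#Z_{w_n}/n$.
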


\subsection{Further open questions}
It is natural to conjecture that the set
$\setdef{t \in \D}{\lambda(t) = 0}$ has zero Lebesgue measure, which
would imply that the equidistribution of Theorem~\ref{T:equid-intro}
holds everywhere on $\C$. Let us note that in general the zero set of
a subharmonic function need not have zero measure; moreover, it would
be easier to prove such a statement knowing that $\lambda(t)$ is
smooth in $\D$.  However, one should note that in general the function
$\lambda(t)$ is H\"older continuous \cite{LePage}, but there are
examples of random walks, even in two generators with polynomial
entries in $t$, for which $\lambda(t)$ is not smooth
\cite{Simon-Taylor}. Note that, assuming $\nu_{\bif}$ charges the arc
$\cA_L$, the derivative of $\lambda(t)$ in the radial direction cannot
be continuous across $\cA_L$, then $\lambda(t)$ cannot be globally
smooth.

Many similar phenomena can be observed for braid groups with more than
three strands, as already remarked in \cite{Dehornoy2015}. Our
techniques here should be useful for studying these phenomena, though
some methods (e.g.~Section~\ref{S:definite}) are very specific to the
\3-strand case.  Regardless, we do not attempt doing so in this paper.
Finally, it would be interesting to study what happens for
non-positive braids, as the bottom part of Figure~\ref{fig: contrast}
is quite intriguing. While one can still study the spectral radius in
this setting, the coefficients of the Burau representation are no
longer holomorphic in $t$.

\subsection{Acknowledgements} Dunfield was partially supported by US
National Science Foundation grants DMS-1811156 and DMS-2303572, and by
a fellowship from the Simons Foundation (673067, Dunfield).  Tiozzo
was partially supported by NSERC grant RGPIN 2017-06521 and an Ontario
Early Researcher Award.  We thank Partha Dey for helpful conversations
related to Section~\ref{sec: on circle}.  Finally, we thank the
referee for their very helpful suggestions, and especially for
pointing us to \cite{BockerViana2017} which improved Lemma~\ref{lem:
  weak sub}.

\section{Background}
\label{sec: background}

We consider the 3-strand braid group
\[
  \Braid{3} = \spandef{\sigma_1, \sigma_2}{\sigma_1 \sigma_2 \sigma_1
    = \sigma_2 \sigma_1 \sigma_2}
\]
and its reduced Burau representation
$B_t \maps \Braid{3} \to \GL{2}{\big(\Z[t^{\pm 1}]\big)}$ defined by
\begin{align*}
  B_t(\sigma_1) &:= \matr{-t}{1}{0}{1}  & B_t(\sigma_1^{-1}) &:=
  \matr{-1/t}{1/t}{0}{1} \\
  B_t(\sigma_2) &:= \matr{1}{0}{t}{-t}  &B_t(\sigma_2^{-1}) &:= \matr{1}{0}{1}{-1/t}.
\end{align*}
A \emph{knot} is a smooth embedding of $S^1$ into $S^3$; a \emph{link}
is a disjoint union of knots.  An \emph{orientation} of a link is a
choice of preferred direction on each component. Given a braid
$w \in \Braid{3}$, let $\what$ denote the oriented link in $S^3$ that
is its braid closure.  The Alexander polynomial $\Delta_{\what}(t)$ of
$\what$ is an element of $\Z[t^{\pm 1}]$ that is well-defined up to
multiplication by a unit, i.e.~by $\pm t^n$ for $n \in \Z$. Burau
showed, see e.g.~\cite[Theorem~3.11]{Birman1974}, that for
$w \in \Braid{3}$ we have:
\begin{equation}\label{eq:alex}
\Delta_{\what}(t) = \frac{\det( B_t(w) - 1)}{t^2 + t + 1}.
\end{equation}
In particular, $t^2 + t + 1$ always divides the Laurent polynomial
$\det( B_t(w) - 1)$.

Consider the homomorphism $\Braid{3} \to \Z$ which sends each
$\sigma_i$ to $1$.  Concretely, given a word $w$ in the $\sigma_i$,
its image is the number of times $\sigma_1$ and $\sigma_2$ appear
minus the number of times $\sigma_1^{-1}$ and $\sigma_2^{-1}$ appear;
we denote this value by $\#w$.  Note that:
\begin{equation}\label{eq: B det}
  \det\big( B_t(w) \big) = (-t)^{\# w}.
\end{equation}

\subsection{Roots of the Alexander polynomial}
\label{sec: alex props}

As noted, the Alexander polynomial $\Delta_L(t)$ of an oriented link
$L$ is only defined up to multiplication by $\pm t^n$ for $n \in \Z$.
Hence, whether $0$ is a root of $\Delta_L(t)$ is ambiguous; our
convention is that $0$ is a root of $\Delta_L(t)$ if and only if the
latter is itself $0$.  For any link $L$, the Alexander polynomial is
symmetric with respect to $t \mapsto t^{-1}$ in that one has
$\Delta_{L}(1/t) = \pm t^e \Delta_L (t)$ for some $e \in \Z$, see
e.g.~\cite[Theorem~6.10]{Lickorish1997}. Hence the roots of
$\Delta_L(t)$ are invariant under $z \mapsto z^{-1}$ as is evident in
Figure~\ref{fig: dehornoy}, and we record this fact as:

\begin{lemma}
  \label{lem: sym roots}
  The set of roots $Z_L := \setdef{z \in \C}{\Delta_{L}(z) = 0}$
  is invariant under $z \mapsto 1/z$ and is determined by $Z_L \cap
  \Dbar$, where $\Dbar = \setdef{z \in \C}{\abs{z} \leq 1}$.
\end{lemma}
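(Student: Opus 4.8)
The plan is to deduce both assertions directly from the reciprocal symmetry of the Alexander polynomial recalled just above the statement: for any oriented link $L$ there is an $e \in \Z$ with $\Delta_L(1/t) = \pm t^e\, \Delta_L(t)$ as elements of $\Z[t^{\pm 1}]$ (see \cite[Theorem~6.10]{Lickorish1997}).

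First I would dispose of the conventions at the origin. If $\Delta_L \equiv 0$, then by the convention fixed at the start of Section~\ref{sec: alex props} one has $Z_L = \C$, which is trivially recovered from $Z_L \cap \Dbar = \Dbar$ and, away from the origin, is invariant under $z \mapsto 1/z$; so I may assume $\Delta_L \not\equiv 0$, in which case $0 \notin Z_L$ (again by that convention) and $Z_L \subset \C \setminus \{0\}$, where $z \mapsto 1/z$ is a well-defined involution. For the invariance, fix $z \in \C \setminus \{0\}$: since $z^e \neq 0$, evaluating the symmetry relation at $z$ gives $\Delta_L(1/z) = \pm z^e \Delta_L(z)$, so $\Delta_L(z) = 0 \iff \Delta_L(1/z) = 0$, i.e.\ $z \in Z_L \iff 1/z \in Z_L$. (If one also wishes to track multiplicities, write $\Delta_L(t) = t^{-m}P(t)$ with $P \in \Z[t]$ and $P(0) \neq 0$; the relation forces the reciprocal polynomial of $P$ to equal $\pm P$ up to a power of $t$, and comparing the two nonzero constant terms shows that power is $0$, whence the root multiset is $z \mapsto 1/z$-invariant as well.)

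Finally, for the claim that $Z_L$ is determined by $Z_L \cap \Dbar$: every $z \in \C \setminus \{0\}$ satisfies $\abs{z} \leq 1$ or $\abs{1/z} \leq 1$, so at least one of $z$, $1/z$ lies in $\Dbar$; combining this with the invariance just shown yields $Z_L = \big(Z_L \cap \Dbar\big) \cup \setdef{1/z}{z \in Z_L \cap \Dbar,\ z \neq 0}$, and the membership of $0$ is itself dictated by whether $\Delta_L \equiv 0$, i.e.\ by whether $\Dbar \subseteq Z_L$. I do not expect any genuine obstacle here: the statement is essentially immediate from the cited reciprocal symmetry, and the only point needing care is the bookkeeping at $t = 0$, which is entirely governed by the convention chosen in Section~\ref{sec: alex props}.
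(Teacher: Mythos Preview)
Your proof is correct and follows the same approach as the paper: the lemma is presented there as an immediate consequence of the reciprocal symmetry $\Delta_L(1/t) = \pm t^e \Delta_L(t)$ cited from \cite[Theorem~6.10]{Lickorish1997}, with no further argument given. Your handling of the convention at $t=0$ is more careful than the paper's one-line justification, but otherwise the reasoning is identical.
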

For the special case of a knot $K$, one has that $\Delta_K(t)$ is
independent of the orientation of $K$, and that $\Delta_K(1) = \pm 1$
by \cite[Theorem~6.10(ii)]{Lickorish1997}.  Also, for knots, 
$\deg \Delta_K$ is even and $\Delta_{K}(1/t) = +t^e \Delta_K (t)$
\cite[Corollary~6.11]{Lickorish1997}; hence $\Delta_K$ can be
normalized so that $\Delta_K(t) = t^d g(t + 1/t)$ where
$g(x) \in \Z[x]$ has degree $d$ with $g(0) \neq 0$. The roots of $g$
determine those of $\Delta_K$, and are what is plotted in
Figure~\ref{fig: contrast} in what we call the \emph{trace
  coordinates}.

\subsection{Alexander polynomials of positive braids}

A braid $w \in \Braid{n}$ is positive when it is a product of only
positive powers of the $\sigma_i$.
Burau's original application in 1935 of his representation was to
compute the degree of $\Delta_\what$ when $w \in \Braid{n}$ is
positive.  In our special setting, the result is:

\begin{theorem}[\cite{Burau1935}]
  \label{thm: burau pos}
  If $w$ is a positive 3-braid other than $\sigma_i^n$, then
  $\Delta_\what$ is monic of degree $\# w - 2$.
\end{theorem}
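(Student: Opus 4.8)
The plan is to compute $\det(B_t(w) - 1)$ explicitly enough to read off its top-degree behavior, then divide by $t^2 + t + 1$ using \eqref{eq:alex}. Write $B_t(w) = \matr{a}{b}{c}{d}$ with $a,b,c,d \in \Z[t^{\pm 1}]$. Then $\det(B_t(w) - 1) = (a-1)(d-1) - bc = \det(B_t(w)) - \tr(B_t(w)) + 1 = (-t)^{\#w} - \tr(B_t(w)) + 1$ by \eqref{eq: B det}. So everything reduces to understanding $\tr(B_t(w))$, and in particular its degree, when $w$ is positive and not a power of a single generator.

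First I would set up an induction (on word length, or on the number of ``syllables'' in the normal form $\sigma_1^{a_1}\sigma_2^{b_1}\sigma_1^{a_2}\cdots$) to track the degrees and leading coefficients of the four entries $a,b,c,d$ of $B_t(w)$. Since $w$ is a positive word in $\sigma_1, \sigma_2$, each multiplication is by $B_t(\sigma_1) = \matr{-t}{1}{0}{1}$ or $B_t(\sigma_2) = \matr{1}{0}{t}{-t}$, both of which have polynomial (not Laurent) entries, so $B_t(w)$ has entries in $\Z[t]$. A natural invariant to propagate is the vector of $(\deg, \text{leading coeff})$ for each entry, and one checks that multiplying on the right by $B_t(\sigma_1)$ or $B_t(\sigma_2)$ transforms these in a controlled way with no cancellation in the top degree, \emph{provided} $w$ uses both generators. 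The key structural fact making this clean is Theorem~\ref{thm:intro-possign}: all entries of $B_{-t}(w)$ are definite, so after the substitution $t \mapsto -t$ there is genuinely no cancellation anywhere, and in particular the leading coefficient of each entry is $\pm$ (its one-norm) and is nonzero as soon as the entry is nonzero. This lets me pin down $\deg a + \deg d$ versus $\deg b + \deg c$ and conclude that $\deg \tr(B_t(w)) = \deg(a) = \#w$ when $w$ starts and ends appropriately, with leading coefficient $\pm 1$ — indeed the product of the leading coefficients of $B_t(\sigma_1)^{k_1}$-type blocks, each of which is $(-t)^{k_i}$ on the diagonal, forces the leading term of $a$ (or $d$) to be $(-t)^{\#w}$ up to sign.

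Once $\deg \tr(B_t(w)) = \#w$ with leading coefficient matching $(-t)^{\#w}$, the difference $(-t)^{\#w} - \tr(B_t(w)) + 1$ has its degree-$\#w$ terms cancel, and I would show the next-highest surviving term has degree exactly $\#w - 2$ (here is where excluding $w = \sigma_i^n$ matters: for a pure power, $B_t(\sigma_i^n)$ is triangular, $\tr = (-t)^n + 1$, and $\det(B_t(w)-1) \equiv 0$; the hypothesis that $w$ is a genuine 3-braid involving both generators is exactly what guarantees the subleading term survives). Then \eqref{eq:alex} gives $\Delta_{\what}(t) = \det(B_t(w)-1)/(t^2+t+1)$, a polynomial of degree $(\#w) - 2$ if the numerator had degree $\#w$, or $(\#w - 2) - 2$ otherwise; the degree count and the monic claim then follow from tracking the top coefficient through the division, using that the numerator is monic (up to sign) of degree $\#w - 2$ after the leading cancellation. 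To get monic of degree \emph{exactly} $\#w - 2$, I would combine the degree bound with the lower bound coming from Burau's theorem's general-$n$ statement, or re-derive the lower bound directly from the no-cancellation structure; the symmetry $\Delta_{\what}(1/t) = \pm t^e \Delta_{\what}(t)$ from Lemma~\ref{lem: sym roots} then forces the bottom coefficient to be a unit as well, which pins the normalization.

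The main obstacle I anticipate is the bookkeeping in the induction: proving that the top-degree term of $\tr(B_t(w))$ is exactly $(-t)^{\#w}$ with \emph{no} cancellation requires carefully tracking how the ``syllable structure'' of a positive word controls which entry of the $2\times2$ matrix carries the maximal degree, and showing the relevant leading coefficients never conspire to vanish. The definiteness result (Theorem~\ref{thm:intro-possign}) is the crucial lever that removes all sign ambiguity — without it, one would have to worry about cancellations at every step — so I would lean on it heavily, essentially reducing the whole degree computation to counting, in $B_{-t}(w)$, which entry is the ``biggest'' and what its leading coefficient (a positive integer, namely a one-norm) is. The residual subtlety is the sharpness at degree $\#w - 2$ rather than something smaller, which is genuinely where the hypothesis $w \neq \sigma_i^n$ enters and must be used, not merely stated.
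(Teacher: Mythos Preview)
Your central mechanism is wrong. You claim that $\deg \tr B_t(w) = \#w$ with leading term $(-t)^{\#w}$, so that the two top terms in $(-t)^{\#w} - \tr B_t(w) + 1$ cancel. But this is false for any positive $w$ that is \emph{not} a power of a single $\sigma_i$: for instance $\tr B_t(\sigma_1\sigma_2) = -t$ (degree $1$, while $\#w=2$), $\tr B_t(\sigma_1^2\sigma_2) = 0$, and $\tr B_t(\sigma_1^2\sigma_2^2) = t^3 + t$ (degree $3$, while $\#w=4$). Since the trace is conjugation-invariant, no choice of ``starts and ends appropriately'' can save this. The only words for which $\deg \tr B_t(w) = \#w$ are precisely the $\sigma_i^n$ you are told to exclude. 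So there is no cancellation at degree $\#w$; rather, $(-t)^{\#w}$ is the \emph{uncontested} leading term of $\det(B_t(w)-1)$, which is therefore monic of degree $\#w$, and dividing by $t^2+t+1$ yields degree $\#w-2$. Your subsequent arithmetic (``degree $(\#w-2)-2$ otherwise'') shows the confusion propagating.

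The paper obtains the needed bound $\deg \tr B_t(w) \le \#w - 1$ with a one-line trick that replaces your entire induction. Since $\what$ depends only on the conjugacy class, cyclically permute so that $w = \sigma_1\sigma_2\, w'$ with $w'$ positive of length $\#w - 2$. Now $B_t(\sigma_1\sigma_2) = \mysmallmatrix{0}{-t}{t}{-t}$ has degree $1$ (not $2$), so $\deg B_t(w) \le 1 + (\#w - 2) = \#w - 1$, hence $\deg \tr B_t(w) \le \#w - 1$. The same matrix has every entry divisible by $t$, hence so does $B_t(w)$, hence $t \mid \tr B_t(w)$, giving constant term $1$ in $\det(B_t(w)-1)$. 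That is the whole proof; no definiteness theorem, no syllable induction, no tracking of leading coefficients. Your appeal to Theorem~\ref{thm:intro-possign} is unnecessary here (and in the paper that result is developed later, for different purposes).
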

By Stallings, one knows in this situation that moreover $\what$ is a
fibered link \cite{Stallings1978}.

\begin{proof}[Proof of Theorem~\ref{thm: burau pos}]
  The link $\what$ only depends on $w$ up to conjugacy, which allows us to
  cyclically permute its letters without changing $\what$.  Therefore, we
  can assume $w = \sigma_1 \sigma_2 w'$ where $w'$ is a positive word
  of length $\# w - 2$.  If $M$ is a matrix of polynomials in $t$,
  define $\deg M = \max \big(\deg M_{i,j}(t) \big)$ and note $\deg(M
  N) \leq \deg(M) + \deg(N)$ for any pair of such matrices.  Now
  $B_t(\sigma_i)$ both have degree 1, as does $B_t(\sigma_1
  \sigma_2)$.  Hence $\deg(B_t(w)) \leq \# w - 1$.  We have
  \begin{equation}
    \label{eq: burau det}
    \det(B_t(w) - 1) = \det(B_t(w)) - \tr(B_t(w)) + 1 = (-t)^{\#w}
    - \tr(B_t(w)) + 1
  \end{equation}
  and since the degree of $\tr(B_t(w))$ is at most $\deg(B_t(w))$,
  we see the right hand side has leading term $(-t)^{\#w}$.  Since $t$
  divides every entry of $B_t(\sigma_1 \sigma_2)$, it divides every
  entry of $B_t(w)$ and hence divides $\tr(B_t(w))$ as well.  In
  particular, the constant term of the right hand side is indeed $1$.
  The conclusion is now immediate from the formula that
  $\Delta_\what(t) = \det(B_t(w) - 1)/(t^2 + t + 1)$.
\end{proof}

\subsection{Squier's form}
\label{sec: squier}

Squier in \cite{Squier1984} discovered that the Burau representation
preserves a bilinear form which is in a certain sense Hermitian; see
also \cite[Section~2.3]{GambaudoGhys2005} for a geometric
interpretation.  Consider a formal variable $s$ where $s^2 = t$ and
regard $\Z[t^{\pm 1}]$ as a subring of $\Z[s^{\pm 1}]$.  For a matrix
$A$ with entries in $\Z[s^{\pm 1}]$, let $A^h$ denote taking the
transpose and applying the ring involution induced by $s \mapsto s^{-1}$.
Then for all $w \in \Braid{3}$ one has:
\begin{equation}\label{eq:form}
  B_t(w)^h \cdot J \cdot B_t(w) =
    J \mtext{where} J = \matr{s + s^{-1}}{-s^{-1}}{-s}{s+s^{-1}}.
\end{equation}
Thus the Burau representation preserves the ``Hermitian'' form
associated to $J$.

If we replace $s$ with a complex number of norm 1, say
$s = e^{i \theta/2}$ for $\theta \in \R$ so that $t = e^{i \theta}$,
then each matrix $B_t(w)^h$ is the ordinary complex conjugate
transpose $B_t(w)^*$ of $B_t(w)$ and $J^* = J$.  Thus, one gets a
genuine Hermitian form $H_s$ on $\C^2$ from $J$, and the
representation $B_t \maps \Braid{3} \to \GL{2}{\C}$ is unitary with
respect to $H_s$. 

\begin{lemma}
  \label{lem: sig on circle}
  For $t \in \C$ of norm 1, the Hermitian form $H_s$ is positive
  definite when $t \in \cA_R$, degenerate but nontrivial when
  $t \in \{\zeta_3, \zetabar_3\}$, and of signature $(1, 1)$ for
  $t \in \cA_L$.  Equivalently, the image of $B_t$ is conjugate into
  $\mathrm{U}(2)$, $\mathrm{U}{(1, 0)}$, or $\mathrm{U}(1, 1)$
  respectively.
\end{lemma}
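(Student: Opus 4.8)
The plan is to read off the signature of $H_s$ directly from the matrix $J$, since $B_t$ is unitary with respect to $H_s$ and hence the signature of $H_s$ is a conjugacy invariant that controls which group the image lands in. So first I would compute the defining quantities of $J$ as a Hermitian matrix when $s = e^{i\theta/2}$ and $t = e^{i\theta}$. The diagonal entries are $s + s^{-1} = 2\cos(\theta/2)$, and the determinant is
\[
  \det J = (s + s^{-1})^2 - (-s^{-1})(-s) = (s+s^{-1})^2 - 1 = 4\cos^2(\theta/2) - 1.
\]
The trace of $J$ is $2(s+s^{-1}) = 4\cos(\theta/2)$. A $2\times 2$ Hermitian matrix is positive definite iff its trace and determinant are both positive; it is indefinite of signature $(1,1)$ iff its determinant is negative; and it is positive semidefinite but degenerate iff $\det = 0$ and $\tr > 0$ (and nontrivial since $J \neq 0$).

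Next I would translate these sign conditions into conditions on $\theta$, using the arcs from \eqref{eq: arcs}. The determinant $4\cos^2(\theta/2) - 1$ is positive exactly when $\abs{\cos(\theta/2)} > 1/2$, i.e.\ when $\theta/2 \in (-\pi/3, \pi/3) \pmod{2\pi}$ in the relevant range, which is precisely $\abs{\theta} < 2\pi/3$, i.e.\ $t \in \cA_R$; it vanishes exactly at $\theta = \pm 2\pi/3$, i.e.\ $t \in \{\zeta_3, \zetabar_3\}$; and it is negative for $2\pi/3 < \abs{\theta} < \pi$, i.e.\ $t \in \cA_L$. In the positive-definite regime one checks the trace $4\cos(\theta/2)$ is automatically positive since $\theta/2 \in (-\pi/3,\pi/3)$. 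At $\theta = \pm 2\pi/3$ we have $\tr J = 4\cos(\pi/3) = 2 > 0$, so $J$ is positive semidefinite of rank one, hence $H_s$ is degenerate but nontrivial. This establishes the three cases of the signature claim.

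Finally, for the ``equivalently'' clause: since $B_t(w)^h J B_t(w) = J$ by \eqref{eq:form} and $B_t(w)^h = B_t(w)^*$ for $\abs{t}=1$, the image of $B_t$ preserves the Hermitian form $H_s$ attached to $J$. Choosing a basis of $\C^2$ that diagonalizes $H_s$ (via some $P \in \GL{2}{\C}$ with $P^* J P$ equal to $\mathrm{diag}(1,1)$, $\mathrm{diag}(1,0)$, or $\mathrm{diag}(1,-1)$ according to the case), conjugation by $P^{-1}$ carries the image of $B_t$ into the corresponding unitary group $\mathrm{U}(2)$, $\mathrm{U}(1,0)$, or $\mathrm{U}(1,1)$. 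In the degenerate case one should note that ``$\mathrm{U}(1,0)$'' means linear maps preserving a rank-one positive semidefinite form, which is the intended reading. I do not anticipate a serious obstacle here: the only mild subtlety is bookkeeping the range of $\theta/2$ so that the cosine sign conditions land on exactly the arcs $\cA_L$, $\cA_R$ as defined, and confirming the trace sign in the degenerate and definite cases so as to rule out negative (semi)definiteness.
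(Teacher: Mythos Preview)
Your proof is correct and follows essentially the same approach as the paper: both compute $\det J$ and $\tr J$ and read off the signature from their signs, with the paper writing $\det J = t + t^{-1} + 1 = 2\cos\theta + 1$ while you use the equivalent form $4\cos^2(\theta/2) - 1$. The only cosmetic slip is that your range $2\pi/3 < |\theta| < \pi$ for $\cA_L$ should include $\theta = \pi$ (i.e.\ $t = -1$), but the determinant argument covers that point without change.
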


\begin{proof}
We need to determine the signs of the eigenvalues of the Hermitian
matrix $J$.  Set $t = e^{i \theta}$ for $\theta \in (-\pi, \pi]$.  As
$\det J = t + t^{-1} + 1$, we have $\det J > 0$ and $J$ is definite if
and only if $2\cos \theta > -1$, or equivalently $t \in \cA_R$.
Moreover, $J$ is positive definite in that range as
$\tr J = 4 \cos(\theta/2) > 0$.  When $t \in \cA_L$, we have
$\det J < 0$ and so $J$ has signature $(1, 1)$.  When
$\theta = \pm\frac{2 \pi}{3}$, we have $\det J = 0$ and $\tr J = 2$,
which gives the remaining claim.
\end{proof}

\subsection{Classification of semigroups generated by $B_t(\sigma_i)$}
The results of this subsection are not used until Section~\ref{sec:
  Lyapunov}, and so you may initially wish to skip ahead to
Section~\ref{sec: MCG}.  Let $t \in \C^\times$, and let $u$ be such
that $u^2 = -t$.  The map
$\sigma_i \mapsto \tau_i := u^{-1} B_{-u^2}(\sigma_i)$ induces a
homomorphism $\Braid{3} \to \PSL{2}{\C} = \Isom^+(\H^3)$.  For each
$t$, let $\Gamma^+_t$ denote the semigroup of $\PSL{2}{\C}$ generated
by $\tau_1, \tau_2$; note that $\Gamma_t^+$ depends only on $t$ and
not on the choice of $u$, since swapping $u$ by $-u$ multiplies the
matrix by $-1$, yielding the same element in $\PSL{2}{\C}$.

We obtain the following classification, depending on $t$.

\begin{proposition} \label{P:non-elt}
Let $\Gamma^+_t$ be the semigroup generated by $\tau_1, \tau_2$. We have: 
\begin{enumerate}
\item
  \label{item: nonelem}
  if $|t| \neq 0, 1$, then $\tau_1$ and $\tau_2$ are two
  independent loxodromic elements, so $\Gamma_t^+$ is non-elementary;

\item
  \label{item: loxo}
  if $t \in \cA_L$, then $\Gamma^+_t$ contains two
  independent loxodromic elements;
 
\item
  \label{item: elliptic}
  if $t \in \cA_R$, then every element of $\Gamma^+_t$ is elliptic,
  with a common fixed point inside $\H^3$;
 
\item
  \label{item: 3rd roots}
  if $t$ is $\zeta_3$ or $\zetabar_3$, then both $\tau_1$ and $\tau_2$
  are elliptic, of order $6$, with a unique common fixed point in
  $\bdry \H^3$. 
\end{enumerate}
\end{proposition}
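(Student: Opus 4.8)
The plan is to analyze each of the four matrices $\tau_1, \tau_2 \in \PSL{2}{\C}$ directly by computing traces, since in $\PSL{2}{\C}$ the conjugacy type (elliptic / parabolic / loxodromic) of a non-identity element $g$ is detected by $\tr(g)^2 \in \C$: real and in $[0,4)$ means elliptic, equal to $4$ means parabolic (or identity), and anything else means loxodromic. With $u^2 = -t$, we have $\tau_i = u^{-1} B_{-u^2}(\sigma_i)$, so $\tr(\tau_i) = u^{-1}\tr\big(B_{t}(\sigma_i)\big)$. From the definition $B_t(\sigma_1) = \matr{-t}{1}{0}{1}$ and $B_t(\sigma_2) = \matr{1}{0}{t}{-t}$, both have trace $1 - t$, so $\tr(\tau_i)^2 = (1-t)^2/(-t) = -\,t - t^{-1} + 2 = 2 - 2\Re(t) - i\cdot 2\Im(t)$ when $|t| = 1$, and more generally $(1-t)^2/(-t)$.

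First I would handle the "generic" parts. For \eqref{item: nonelem}, with $|t| \neq 0,1$: one computes $\tr(\tau_i)^2 = -(t + t^{-1}) + 2$; writing $t = re^{i\phi}$ this has imaginary part $-(r - r^{-1})\sin\phi$, which vanishes only when $\sin\phi = 0$, i.e. $t$ real. If $t$ is real and negative, $-(t+t^{-1}) > 2$ so $\tr^2 > 4$; if $t$ is real and positive with $t \neq 1$, then $-(t+t^{-1}) + 2 < 0$, so $\tr^2 < 0$. Either way each $\tau_i$ is loxodromic. Independence (no shared fixed point, so they generate a non-elementary — in particular free — semigroup) I would get by checking that $\tau_1$ and $\tau_2$ do not have a common eigenvector: an eigenvector of $B_t(\sigma_1)$ other than $(1,0)^T$ would need the $(2,1)$-entry story to work out, and $(1,0)^T$ is visibly not an eigenvector of $B_t(\sigma_2) = \matr{1}{0}{t}{-t}$ when $t \neq 0$; symmetrically from the other side. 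Hence $\Gamma_t^+$ is non-elementary.

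Next, \eqref{item: elliptic}: by Lemma~\ref{lem: sig on circle}, for $t \in \cA_R$ the image of $B_t$ lies (after conjugation) in $\mathrm{U}(2)$, and $\mathrm{SU}(2)/\{\pm 1\} \hookrightarrow \PSL{2}{\C}$ is exactly the stabilizer of a point of $\H^3$; so every element of $\Gamma_t^+$ fixes that common point and is elliptic. (One should note $\det B_t(\sigma_i) = -t$ has norm $1$, so after scaling by $u^{-1}$ we land in $\mathrm{U}(2)$ with determinant a unit, and projectively in $\mathrm{PSU}(2)$.) For \eqref{item: 3rd roots}, $t = \zeta_3$ or $\zetabar_3$: here $\tr(\tau_i)^2 = -(t + t^{-1}) + 2 = -(-1) + 2 = 3$, which lies in $[0,4)$, so $\tau_i$ is elliptic; the rotation angle $\theta$ satisfies $2\cos(\theta/2) = \pm\sqrt{3}$, giving order $6$ in $\PSL{2}{\C}$. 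For the common fixed point on $\bdry\H^3$: at $t = \zeta_3$ the form $J$ of \eqref{eq:form} is degenerate (Lemma~\ref{lem: sig on circle}), so $B_t$ preserves a degenerate Hermitian form, i.e. a single point on $\partial\H^3$ corresponding to the null line of $J$; I would check this null line is a common fixed point and that it is the *unique* common fixed point by verifying $(1,0)^T$ is not fixed by $B_t(\sigma_2)$ (as above).

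The genuinely delicate case is \eqref{item: loxo}: $t \in \cA_L$. Here each individual $\tau_i$ is still elliptic — indeed $\tr(\tau_i)^2 = -(t+t^{-1})+2 = 2 - 2\cos\phi \in (2,4]$ for $t = e^{i\phi}$ with $|\phi - \pi| < \pi/3$ — so one cannot just look at generators; one must exhibit a *word* in $\tau_1,\tau_2$ that is loxodromic, and then a second independent one. The structural input is that on $\cA_L$ the form $H_s$ has signature $(1,1)$, so $\Gamma_t^+$ sits in $\mathrm{PU}(1,1) \cong \PSL{2}{\R}$ acting on the hyperbolic plane $\H^2$; the two generators are elliptic rotations about two distinct points $p_1, p_2 \in \H^2$ (distinct because $\tau_1,\tau_2$ have no common fixed point, again by the eigenvector computation). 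It is a classical fact that the product of two elliptic rotations of $\H^2$ about distinct centers, through angles $\alpha$ and $\beta$, is loxodromic (hyperbolic) when $\alpha + \beta$ together with the distance $d(p_1,p_2)$ is large enough — concretely one computes $\tr$ of the product and shows $|\tr| > 2$. I expect \textbf{this step — producing the explicit loxodromic words on $\cA_L$ and then a second independent one — to be the main obstacle}, since it requires either an honest trace estimate uniform over the open arc $\cA_L$ (the angles of $\tau_i$ vary with $t$, degenerating as $t \to \zeta_3$ where everything has order $6$ and the semigroup becomes elementary) or a clean geometric argument. A workable route: compute $\tr\big(B_t(\sigma_1\sigma_2)\big)$ and $\tr\big(B_t(\sigma_1^2\sigma_2)\big)$ (or similar short words) explicitly as Laurent polynomials in $t$, substitute $t = e^{i\phi}$, and verify $\tr^2 \notin [0,4]$ on $\cA_L$ — using that $\sigma_1\sigma_2$ and, say, $\sigma_2\sigma_1$ (or $\sigma_1^2\sigma_2$) give loxodromics with distinct axes, which again reduces to a no-common-eigenvector check. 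Independence of the two loxodromic elements then follows from distinctness of axes plus, if needed, passing to high powers to invoke a ping-pong argument.
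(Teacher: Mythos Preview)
Your overall strategy matches the paper's: trace computations for the type of each element, Squier's form for case~(\ref{item: elliptic}), and explicit words for case~(\ref{item: loxo}). Two points deserve comment.

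First, the candidate words you propose for~(\ref{item: loxo}) do not work. One computes $\tr(\tau_1\tau_2) = u^{-2}\tr\big(B_t(\sigma_1\sigma_2)\big) = u^{-2}(-t) = 1$ and $\tr(\tau_1^2\tau_2) = u^{-3}\tr\big(B_t(\sigma_1^2\sigma_2)\big) = 0$, so both are elliptic for every $t$ on the circle. The paper instead takes $\alpha = \tau_1^5\tau_2$ and its conjugate $\beta = \tau_1^{-1}\alpha\tau_1 = \tau_1^4\tau_2\tau_1$, for which a direct computation gives
\[
  \tr(\alpha) = \tr(\beta) = -1 + t + t^{-1} = -1 + 2\Re(t),
\]
and this is $< -2$ precisely when $\Re(t) < -\tfrac12$, i.e.\ on all of $\cA_L$. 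Choosing $\beta$ as a conjugate of $\alpha$ is what makes the trace identical and the estimate uniform; independence is then checked via $\tr\big([\alpha,\beta]\big) = t^3 + t^{-3}$, which equals $2$ only if $t^3 = 1$, impossible on $\cA_L$.

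Second, the paper streamlines all of the ``no common fixed point'' checks (your eigenvector arguments in~(\ref{item: nonelem}) and~(\ref{item: 3rd roots})) into a single computation: a two-generator subgroup of $\PSL{2}{\C}$ is reducible iff the commutator has trace $2$, and here $\tr\big([\tau_1,\tau_2]\big) = 1 - t - t^{-1}$, which equals $2$ exactly when $t \in \{\zeta_3,\zetabar_3\}$. This disposes of independence in~(\ref{item: nonelem}) immediately and pins down the reducible locus for~(\ref{item: 3rd roots}) without separate eigenvector case-checks; your direct eigenvector approach is correct but requires examining all four pairings rather than just~$(1,0)^T$.
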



\begin{proof}
Throughout, we exclude $t = 0$. Recall a subgroup of $\PSL{2}{\C}$ is
\emph{reducible} when it fixes a point in $\bdry \H^3$. A subgroup
generated by two elements is reducible if and only if the trace of
their commutator is $2$; in our case
$\tr\big([\tau_1, \tau_2]\big) = 1 - t - 1/t$, so this happens exactly
when $t$ is $\zeta_3$ or $\zetabar_3$.  In the latter case, one easily
checks the conclusions in (\ref{item: 3rd roots}) by noting that a
common eigenvector of the $\tau_i$ is $(1, t + 1)$.  As
$\tr(\tau_1) = \tr(\tau_2) = u + 1/u$, both are hyperbolic provided
$u + 1/u \notin [-2, 2]$, which is equivalent to $\abs{u} \neq 1$ and
hence $\abs{t} \neq 1$. Thus in case (\ref{item: nonelem}), the
$\tau_i$ are loxodromic and must be independent, as if they shared a
fixed point then the whole group would be reducible.

So we have reduced to the case of $\abs{t} = 1$ and $t$ not $\zeta_3$
or $\zetabar_3$.  First suppose $t \in \cA_L$ and so $\Re(t) <
-1/2$. Set $\alpha = \tau_1^5 \tau_2$ and
$\beta = \tau_1^4 \tau_2 \tau_1 = \tau_1^{-1} \alpha \tau_1$.  We
calculate
\[
  \tr(\alpha) = \tr(\beta) = -1 + t + t^{-1} = -1 + 2 \Re(t)
\]
which is less than $-2$ and hence $\alpha$ and $\beta$ are loxodromic.
They are independent as if
$\tr\big([\alpha, \beta]\big) = t^3 + t^{-3}$ is $2$ then $t^3 = 1$,
contradicting $\Re(t) < -1/2$. This establishes case~(\ref{item: loxo}).

Finally, if $t \in \cA_R$, Lemma~\ref{lem: sig on circle} shows that
$\Gamma^+_t$ lies in a conjugate of $\mathrm{PU}_2$, which is a
maximal compact subgroup of $\PSL{2}{\C}$; such compact subgroups are
precisely the stabilizers of points in $\H^3$, proving
case~(\ref{item: elliptic}).
\end{proof}

\subsection{Associated mapping classes}
\label{sec: MCG}

We now recall some connections between braid groups and various
mapping class groups, following \cite{FarbMargalit2012}.  Let
$S^b_{g, n}$ denote the orientable surface of genus $g$ with $b$
boundary components and $n$ punctures/marked points.  The
\emph{mapping class group} $\Mod(S)$ of a surface $S$ consists of
orientation-preserving homeomorphisms of $S$ that are the identity on
$\partial S$, modulo isotopies of such.  A 3-strand braid gives a
mapping class of the 3-punctured disc, leading to the group
isomorphism $\Braid{3} \cong \Mod(S^1_{0, 3})$ as per
\cite[Chapter~9]{FarbMargalit2012}.  Consider the order-two
homeomorphism $\iota$ of $S^1_1 := S^1_{1, 0}$ which rotates the
boundary of $S^1_{1}$ by angle $\pi$ and has three fixed points in the
interior of $S^1_{1}$.  Quotienting by $\iota$ gives a double cover
$S^1_{1} \to S^1_{0, 3}$ that is branched at the marked points of
$S^1_{0, 3}$.  By Theorem~9.2 of \cite{FarbMargalit2012}, this cover
induces an isomorphism $\Mod(S^1_{1}) \to \Braid{3}$; note here that
$\iota$ itself is \emph{not} an element of $\Mod(S^1_{1})$ since it
does not fix the boundary component pointwise, and hence is not in the
kernel of this map.

Capping off the boundary of $S^1_{1}$ with a punctured disk turns
$S^1_{1}$ into $S_{1,1} := S^0_{1,1}$.  This gives a
surjective homomorphism $\Mod(S^1_1) \to \Mod(S_{1,1})$ whose kernel
is generated by a Dehn twist parallel to the boundary of $S^1_{1}$
\cite[Chapter~9]{FarbMargalit2012}.  The map
$\Mod(S_{1, 1}) \to \SL{2}{\Z}$ induced by taking the action on
$H_1(S_{1, 1}; \Z)$ is an isomorphism
\cite[Section~2.2.4]{FarbMargalit2012}.  The Burau representation at
$t = -1$ also has image in $\SL{2}{\Z}$, and by
e.g.~\cite[Proposition~2.1]{GambaudoGhys2005} is in fact the
composition:
\[
  \Braid{3} \overset{\cong} \longrightarrow
  \Mod(S^1_1) \twoheadrightarrow \Mod(S_{1, 1})
  \overset{\cong} \longrightarrow \SL{2}{\Z}.
\]
If we set
$\Omega = \sigma_1 \sigma_2 \sigma_1 = \sigma_2 \sigma_1 \sigma_2$,
then by \cite[Section 9.2]{FarbMargalit2012} the center of $\Braid{3}$
is $\pair{\Omega^2}$ and the kernel of
$B_{-1} \maps \Braid{3} \twoheadrightarrow \SL{2}{\Z}$ is
$\pair{\Omega^4}$.

Given a $w \in \Braid{3} \cong \Mod(S^1_{0, 3})$, we can cap off the
unique boundary component with a punctured disc to get an element of
$\Mod(S^0_{0, 4})$. We say $w$ is periodic, reducible, or
pseudo-Anosov depending on the Nielsen-Thurston type of the associated
mapping class of $S_{0, 4}^0$, the sense of Theorem 13.2 of
\cite{FarbMargalit2012}.  Using that forgetting the puncture gives an
isomorphism $\Mod(S^1_1) \to \Mod(S_1)$ by
\cite[Section~2.2.4]{FarbMargalit2012} and the concrete description of
the elements of $\Mod(S_1)$ in \cite[Section~13.1]{FarbMargalit2012},
one deduces the well-known:

\begin{proposition}
  \label{prop: braid trichotomy}
  Suppose $w \in \Braid{3}$ and let $A = B_{-1}(w)$, which is in $\SL{2}{\Z}$.  Then:
  \begin{enumerate}
    \item If $\tr A \in \{-1, 0, 1\}$ or $A = \pm I$ then $w$ is
      periodic.

    \item If $\tr A = \pm 2$ and $A \neq \pm I$ then $w$ is reducible.

    \item \label{item: pA}
      If $\abs{\tr A} > 2$ then $w$ is pseudo-Anosov.  The
      invariant foliations have 1-prong singularities at all punctures
      of $S_{0, 4}^0$ and no singularities in the interior.  Finally,
      the pseudo-Anosov stretch factor is the spectral radius of $A$.
  \end{enumerate}
\end{proposition}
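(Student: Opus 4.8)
The plan is to reduce everything to the classification of elements of $\Mod(S_1) \cong \SL{2}{\Z}$ and transport it along the isomorphisms recalled just above the statement. Concretely, given $w \in \Braid{3}$, we have the chain $\Braid{3} \cong \Mod(S^1_{0,3}) \to \Mod(S^0_{0,4})$ (capping the boundary with a punctured disc), and separately the isomorphism $\Braid{3} \cong \Mod(S^1_1) \to \Mod(S_1) \cong \SL{2}{\Z}$, where the composition $\Braid{3} \to \SL{2}{\Z}$ is exactly $B_{-1}$. The Nielsen--Thurston type of $w$ is \emph{defined} via the image in $\Mod(S^0_{0,4})$, so the first step is to observe that the branched double cover $S_1 \to S^0_{0,4}$ (obtained from the cover $S^1_1 \to S^1_{0,3}$ by capping) is compatible with both quotients, and that a mapping class of $S^0_{0,4}$ and any lift to $\Mod(S_1)$ have the same Nielsen--Thurston type, up to finite order ambiguity. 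This is standard equivariant Nielsen--Thurston theory and is spelled out in \cite[Section~13.1]{FarbMargalit2012}: an element is periodic/reducible/pseudo-Anosov downstairs iff a lift is upstairs (the issue of the deck transformation $\iota$ being invisible in $\SL{2}{\Z}$ is exactly why one works with traces up to sign).

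Next I would quote the classification in $\Mod(S_1) \cong \SL{2}{\Z}$ directly. An element $A \in \SL{2}{\Z}$ acting on the torus $S_1$ is: periodic exactly when $A$ has finite order, i.e.\ when its eigenvalues are roots of unity, which over $\SL{2}{\Z}$ means $\abs{\tr A} \le 1$ together with the central cases $A = \pm I$ (eigenvalues $1, 1$ or $-1, -1$); reducible but not periodic exactly when $A$ is parabolic or $\pm$(parabolic), i.e.\ $\tr A = \pm 2$ with $A \ne \pm I$ (the invariant curve being the eigendirection, which is rational since the eigenvalue $\pm 1$ is); and Anosov exactly when $A$ is hyperbolic, i.e.\ $\abs{\tr A} > 2$, in which case the stretch factor is the larger eigenvalue, which is the spectral radius of $A$. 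All of this is \cite[Section~13.1]{FarbMargalit2012}. The eigenvalue arithmetic for the periodic case is the routine check that an $\SL{2}{\Z}$ matrix with $\abs{\tr} \le 1$ has characteristic polynomial $x^2 - (\tr A)x + 1$ with roots on the unit circle that are primitive $n$-th roots of unity for $n \in \{3, 4, 6\}$ (or $n \in \{1,2\}$ for $\pm I$), hence $A$ has finite order; conversely finite order forces $\abs{\tr A} \le 2$, and $\tr A = \pm 2$ with finite order forces $A = \pm I$.

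Finally, for part~(\ref{item: pA}) I would record the singularity data: passing the Anosov foliations on $S_1$ down through the branched cover $S_1 \to S^0_{0,4}$, a regular (nonsingular) point upstairs away from the four branch points stays regular, while each of the four branch points of order $2$ produces a $1$-prong singularity of the quotient foliation — this is the standard local computation for foliations under a degree-two branched cover (a smooth foliation pulled back along $z \mapsto z^2$ has a $1$-prong at the branch point, so downstairs the quotient of a regular foliation has $1$-prongs there), matching the statement that the four punctures of $S^0_{0,4}$ are the $1$-prong singularities and there are none in the interior. The stretch factor is invariant under the finite cover, so it equals the spectral radius of $A$ as claimed.

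The main obstacle, and the only genuinely nontrivial point, is the compatibility of Nielsen--Thurston type under the branched cover $S_1 \to S^0_{0,4}$: one must be careful that the ambiguity introduced by the hyperelliptic involution $\iota$ (which acts trivially on $H_1(S_1;\Z)$, hence is invisible in $\SL{2}{\Z}$) does not change the type, which is why the trace conditions are phrased symmetrically in sign and why $A = \pm I$ both count as periodic. Once that equivariance statement is in hand — and it is exactly the content of \cite[Section~13.1]{FarbMargalit2012} together with the description of $\Mod(S^1_1) \to \Mod(S_1)$ in \cite[Section~2.2.4]{FarbMargalit2012} — the rest is the elementary arithmetic of $\SL{2}{\Z}$ traces outlined above.
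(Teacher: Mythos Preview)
Your proposal is correct and follows exactly the route the paper indicates: the paper does not actually prove this proposition but states it as ``well-known,'' deducing it from the isomorphism $\Mod(S_{1,1}) \cong \Mod(S_1)$ (forgetting the puncture) together with the explicit classification of $\Mod(S_1) \cong \SL{2}{\Z}$ in \cite[Section~13.1]{FarbMargalit2012}, which is precisely the argument you sketch via the branched cover $S_1 \to S^0_{0,4}$.

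One small correction: the hyperelliptic involution $\iota$ acts as $-I$ on $H_1(S_1;\Z)$, not trivially. It is therefore invisible in $\PSL{2}{\Z}$ rather than in $\SL{2}{\Z}$, and this is the reason the lift of a class in $\Mod(S^0_{0,4})$ to $\SL{2}{\Z}$ is determined only up to sign --- hence the trichotomy is stated in terms of $\abs{\tr A}$, exactly as you conclude. Your description of the $1$-prong singularities is also right, though the local model is a \emph{quotient} by $z \mapsto -z$ (push-forward), not a pull-back.
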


\section{Definiteness of the Burau representation} \label{S:definite}

A Laurent polynomial $p(t) \in \Z[t^{\pm1}]$ is \emph{definite} when
all its coefficients have the same sign; for example, $t^3 + t + 2$,
$-2t^2 - 1$, and $0$ are all definite, but $t^3 - 2t + 1$ is not.  We
will show the following, which will be a key technical tool for the
results in Sections~\ref{sec:realax} and~\ref{sec: Lyapunov}.
\begin{theorem}\label{thm:possign}
  For $w \in \Braid{3}$, all entries of $B_{-t}(w)$ are definite.
\end{theorem}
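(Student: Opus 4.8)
The plan is to prove this by induction on the word length of $w$, but with a carefully chosen sign pattern that is preserved under right-multiplication by each generator. Write $M = B_{-t}(w)$ and note that $B_{-t}(\sigma_1) = \smallvec{t}{1}{0}{1}$, $B_{-t}(\sigma_1^{-1}) = \smallvec{1/t}{-1/t}{0}{1}$, $B_{-t}(\sigma_2) = \smallvec{1}{0}{-t}{t}$, and $B_{-t}(\sigma_2^{-1}) = \smallvec{1}{0}{1}{-1/t}$ (obtained from the definitions in Section~\ref{sec: background} by substituting $-t$ for $t$). The first observation is that definiteness alone is not inductive: multiplying a definite matrix by one of these generators mixes columns (or rows) with opposite signs in the generator, so one needs to track the \emph{relative} signs of the entries. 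I would look for an invariant of the form: the columns of $M$ have a prescribed sign relationship — concretely, I expect that for every $w$ one can write $B_{-t}(w) = \smallvec{a}{b}{c}{d}$ where $a, d$ are definite of one sign (say non-negative after a global normalization) while $b, c$ are definite of the opposite sign, OR some entries vanish. One should check this guess against small words and against the identity $B_{-t}(\Omega) = B_{-t}(\sigma_1\sigma_2\sigma_1)$, and against the conjugation symmetry, before committing.

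The key steps would then be: (1) identify the precise sign/vanishing pattern $\mathcal{P}$ that holds for the four generators and is closed under right multiplication by each of the four matrices above; (2) verify the base cases (the generators, or perhaps just the identity) satisfy $\mathcal{P}$; (3) do the inductive step, which is a finite case-check: for each of the four generators $g$, and each sub-case of $\mathcal{P}$ for $M$, compute $M \cdot B_{-t}(g)$ and confirm it again lies in $\mathcal{P}$. Since right-multiplication by $B_{-t}(\sigma_1^{\pm 1})$ only alters the columns (it acts on the right by a $2\times 2$ matrix that is upper triangular), and right-multiplication by $B_{-t}(\sigma_2^{\pm 1})$ is lower triangular, each case reduces to checking that a sum like $ta + b$ or $a - c$ (for the relevant entries) is again definite — which follows provided the sign pattern was set up so that these two terms never have strictly opposite signs. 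One subtlety: multiplication by $1/t$ shifts exponents but preserves coefficient signs, so the negative powers of $t$ cause no trouble; another: one must handle the degenerate sub-cases (some entry equal to $0$) separately but they are easy since $0$ is definite.

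The main obstacle I anticipate is step (1): finding the right invariant. Plain definiteness of all four entries is \emph{not} preserved — e.g.\ if $b$ and $d$ have the same sign and one forms the second column of $M B_{-t}(\sigma_1) = (\ast, a+b; \ast, c+d)$, that is fine, but the interaction across both generators is what forces a constraint. I suspect the honest statement is that the \emph{rows} (or columns) of $B_{-t}(w)$ have a definite "sign signature" that alternates in a controlled way, perhaps best phrased via the matrix $\smallvec{1}{0}{0}{-1} B_{-t}(w)$ or $B_{-t}(w) \smallvec{1}{0}{0}{-1}$ having all-definite entries of a single sign (up to an overall $\pm$ and possible zeros). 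If that reformulation works, the induction becomes mechanical. A secondary point to watch: the theorem is stated for all of $\Braid{3}$, including non-positive braids, so the argument genuinely needs all four generator matrices — one cannot get away with the positive semigroup only, and the inverses are where the off-diagonal signs flip, so the invariant must be symmetric enough to absorb $\sigma_i \mapsto \sigma_i^{-1}$. Once Theorem~\ref{thm:possign} is in hand, the trace statement of Theorem~\ref{thm:intro-possign} (Corollary~\ref{cor:tracedef}) should follow by checking that under pattern $\mathcal{P}$ the diagonal entries $a$ and $d$ are definite \emph{of the same sign}, so their sum is too.
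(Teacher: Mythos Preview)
Your instinct is correct: the proof is a sign-tracking induction. But the specific plan has a genuine gap at exactly the point you flag as the obstacle, and I do not think it can be closed the way you suggest.

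Concretely, test your candidate invariants against right-multiplication. If both columns of $M$ lie in the all-positive quadrant (as happens already for $M = B_{-t}(\sigma_1^2)$), then after right-multiplying by $B_{-t}(\sigma_2)$ the new first column is $(\text{old col }1) - t\cdot(\text{old col }2)$, and definiteness only survives because of a lucky zero in that particular example. The point is that right-multiplication by $\sigma_1$ wants the two columns to live in the \emph{same} sign quadrant (since it forms $\text{col}_1 + \text{col}_2$), while right-multiplication by $\sigma_2$ wants them in \emph{opposite} quadrants (since it forms $\text{col}_1 - t\,\text{col}_2$). No single sign pattern on the matrix is simultaneously compatible with both, and conjugating by $\mathrm{diag}(1,-1)$ does not fix this. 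The inverse generators make things worse: $B_{-t}(\sigma_1^{-1})$ applied to a vector in the all-positive quadrant produces a first entry $x/t - y/t$, which is not definite in general, so there is no hope of a clean two-state invariant that absorbs $\sigma_i^{-1}$ directly. (Incidentally, $B_{-t}(\sigma_2^{-1})$ has $(2,2)$-entry $+1/t$, not $-1/t$.)

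What the paper does instead is use the Garside left-greedy normal form $w = \Omega^k w_1 \cdots w_n$, with each $w_i \in \{\sigma_1, \sigma_2, \sigma_1\sigma_2, \sigma_2\sigma_1\}$ and tightly constrained transitions $w_i \to w_{i+1}$. This buys two things at once. First, all negative powers are absorbed into $\Omega^k$, and $\Omega^{\pm 1}$ \emph{does} act cleanly on sign quadrants (it permutes them). Second, the allowed transitions force the last letter of $w_i$ to equal the first letter of $w_{i+1}$, which is exactly the compatibility condition needed: one then defines $V_1 = V_{++}\cup V_{--}$ and $V_2 = V_{+-}\cup V_{-+}$ and checks that $\sigma_a \cdot V_a \subset V_a$ and $(\sigma_a\sigma_b)\cdot V_b \subset V_a$, so the canonical-form word carries $V_j$ into $V_i$ (where $j,i$ are the last and first letters). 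Since $e_1, e_2 \in V_1\cap V_2$, both columns of $B_{-t}(w)$ land in a single $V_i$ and are definite. The missing idea in your plan is precisely this normal form; without it the state machine you would need is substantially more complicated than the two patterns you conjecture.

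Finally, your sketch for Corollary~\ref{cor:tracedef} also does not go through as stated: the diagonal entries of $B_{-t}(w)$ need \emph{not} have the same sign in general (the paper's sign formula gives them signs $(-1)^e$ and $(-1)^f$, which can differ). The paper obtains the trace statement by first conjugating $w$ within $\Braid{3}$ to a representative where all four entries share a sign, and that conjugation step again relies on the Garside machinery.
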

A definite $p(t)$ is \emph{positive} when all its coefficients are
nonnegative, and \emph{negative} when $-p(t)$ is positive; here,
$p(t) = 0$ is the unique polynomial that is both positive and
negative. Despite Theorem~\ref{thm:possign}, it is often the case that
the entries of $B_{-t}(w)$ have different signs in this sense,
e.g.~$B_{-t}(\sigma_1^2 \sigma_2) = \mysmallmatrix{-t}{t + t^2}{-t}{t}$.
However, we will show using Theorem~\ref{thm:goodsign} below that
\begin{corollary}\label{cor:tracedef}
  For any $w \in \Braid{3}$, the trace of $B_{-t}(w)$ is always definite.
\end{corollary}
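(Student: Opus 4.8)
The plan is to reduce the claim about the trace to a more refined sign-pattern statement about the individual entries of $B_{-t}(w)$ that is strong enough to be stable under multiplication by the generators. Theorem~\ref{thm:possign} already tells us each entry of $B_{-t}(w)$ is definite, but as the example $B_{-t}(\sigma_1^2\sigma_2)=\mysmallmatrix{-t}{t+t^2}{-t}{t}$ shows, the diagonal entries can individually be definite of opposite sign while their sum $\tr = -t+t = 0$ is still definite; so the mechanism must involve a relation between the two diagonal entries (and presumably the off-diagonal ones too). I would look for an invariant of the form: there is a sign $\epsilon(w)\in\{\pm1\}$ (likely governed by $\#w \bmod \text{something}$, or by which generator $w$ starts/ends with) such that, writing $B_{-t}(w)=\mysmallmatrix{a}{b}{c}{d}$, the polynomials $\epsilon a$, $-\epsilon d$ (or some such signed combination), and the off-diagonal entries satisfy a coherent nonnegativity pattern making $a+d$ automatically definite. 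This is presumably exactly the content of the ``Theorem~\ref{thm:goodsign}'' referenced in the text, so the real work is to set up the bookkeeping and then invoke it.

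Concretely, I would proceed as follows. First, compute $B_{-t}(\sigma_i)$ and $B_{-t}(\sigma_i^{-1})$ explicitly from the definitions in Section~\ref{sec: background}: $B_{-t}(\sigma_1)=\mysmallmatrix{t}{1}{0}{1}$, $B_{-t}(\sigma_2)=\mysmallmatrix{1}{0}{-t}{t}$, and the inverses $B_{-t}(\sigma_1^{-1})=\mysmallmatrix{1/t}{-1/t}{0}{1}$, $B_{-t}(\sigma_2^{-1})=\mysmallmatrix{1}{0}{1}{1/t}$. Next, propose the candidate invariant sign pattern $P_\epsilon$ for a $2\times 2$ matrix over $\Z[t^{\pm1}]$ and verify two things: (i) each generator matrix and its inverse lies in $\bigcup_\epsilon P_\epsilon$, and (ii) the pattern classes are closed under left-multiplication by the generator matrices, with a transparent rule $\epsilon \mapsto \epsilon'$ for how the sign flips. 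Because multiplying out two $2\times 2$ matrices only produces sums of products of entries — and products and sums of positive Laurent polynomials are positive — verifying (ii) is a finite check, one case per (generator, $\epsilon$) pair, provided $P_\epsilon$ is chosen so that no cancellation can occur. Finally, given $w$, one picks up $\epsilon(w)$ inductively along any word representing $w$; well-definedness is automatic since the trace (and the sign pattern, if stated intrinsically) does not depend on the word. Once $B_{-t}(w)\in P_{\epsilon(w)}$ is known, reading off that the diagonal entries have the form $\epsilon(w)\cdot(\text{positive})$ and $\epsilon(w)\cdot(\text{positive})$ — i.e.\ the same overall sign — gives that $\tr B_{-t}(w)$ is definite immediately.

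The main obstacle will be guessing the \emph{correct} pattern $P_\epsilon$: it must be weak enough to actually hold for all of $\Braid{3}$ (the $\sigma_1^2\sigma_2$ example shows it cannot simply say ``all four entries have the same sign''), yet rigid enough that the induction step has no cancellation. My expectation is that the right statement tracks the pair $(\text{sign of }a,\ \text{sign of }d)$ jointly and asserts they are \emph{equal} while possibly allowing the off-diagonal entries to carry the opposite sign, with the overall flip controlled by a homomorphism to $\Z/2$ (something like the parity of the number of $\sigma_1^{\pm}$-letters, or $\#w$). A secondary subtlety is handling the generators \emph{and} their inverses uniformly — since $1/t$ appears, one should work in $\Z[t^{\pm1}]$ throughout and note ``positive'' means all coefficients $\geq 0$, which is preserved by multiplication by the positive monomial $1/t$. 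If the joint-diagonal-sign pattern turns out not to be literally multiplicatively closed, the fallback is to enlarge the state space to a finite set of patterns (e.g.\ distinguishing which diagonal entry, if either, is allowed to vanish) and run the same finite case analysis — this is essentially what Theorem~\ref{thm:goodsign} is presumably doing, so I would cite it for the entry-level statement and then derive the trace corollary in the one-line manner above.
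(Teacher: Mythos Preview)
Your proposed invariant---that the two diagonal entries of $B_{-t}(w)$ always carry the \emph{same} sign---is false, and the example you yourself quote demonstrates it: for $w=\sigma_1^2\sigma_2$ the diagonal entries are $-t$ and $t$. So the final step ``reading off that the diagonal entries have the form $\epsilon(w)\cdot(\text{positive})$ and $\epsilon(w)\cdot(\text{positive})$'' cannot go through as stated. Your fallback of enlarging the finite state space is exactly what the paper's sign-tracking machinery does (it computes the sign of each entry from the Garside normal form data), but even the full sign pattern of all four entries does not, on its own, force the trace to be definite when the diagonal signs disagree: a positive polynomial plus a negative one is generically indefinite.

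The key idea you are missing is \emph{conjugation}. Trace is a conjugacy invariant, so one is free to replace $w$ by any conjugate $w'$. Theorem~\ref{thm:goodsign} does not give a sign pattern on $B_{-t}(w)$ itself; rather, it says that (outside a small exceptional family) one can choose a conjugate $w'$ for which \emph{all four} entries of $B_{-t}(w')$ are definite of the \emph{same} sign---and then the trace is trivially definite. The exceptional family consists of braids conjugate to $\Omega^k$ or $\Omega^k w_1$ (canonical form with $n\leq 1$); for these the paper simply computes the trace directly in a handful of cases. Your inductive multiplication-by-generators scheme never gets access to this move, because conjugation is not left-multiplication, and that is why no choice of $P_\epsilon$ closed under the generator action will finish the argument.
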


To prove these results, we use the Garside structure described
in \cite[Chapter~9]{WordProcessing}.
Set $\Omega = \sigma_1 \sigma_2 \sigma_1 = \sigma_2 \sigma_1
\sigma_2$. Any $w \in \Braid{3}$ can be uniquely written in its
\emph{left-greedy canonical form}:
\begin{equation}\label{eq:normal}
  w =  \Omega^k w_1 w_2 \cdots w_n \mtext{for some $k \in \Z$},
\end{equation}
where each $w_i$ is one of $\sigma_1$, $\sigma_2$, $\sigma_1 \sigma_2$
or $\sigma_2 \sigma_1$ and the only transitions allowed for $w_{k}
\to w_{k+1}$ are those in Figure~\ref{fig: canonical}.  These can be
summarized as saying that if $a \in \{1, 2\}$ and
$b = 3 - a$, then the allowed transitions are:
\[
  \sigma_a \to \{\sigma_a,\  \sigma_a \sigma_b\} \mtext{and}
  \sigma_b \sigma_a \to \{\sigma_a, \ \sigma_a \sigma_b\}.
\]

\begin{figure}
  \centering
  \begin{tikzpicture}[
    nmdstd,
    node distance=55,
    inner sep=5pt,
    every node/.style={shape=rectangle, draw},
    line width=0.8pt,
    ]
    \node (s1) {$\sigma_1$};
    \node (s2) [below of=s1]{$\sigma_2$};
    \node (s1s2) [right of=s1]{$\sigma_1 \sigma_2$};
    \node (s2s1) [right of=s2]{$\sigma_2 \sigma_1$};

    \draw[->] (s1.east) -- (s1s2.west);
    \draw[->] (s2.east) -- (s2s1.west);
    \draw[<->] (s1s2.south) -- (s2s1.north);
    \draw[->] (s1s2.south west) -- (s2.north east);
    \draw[->] (s2s1.north west) -- (s1.south east);

    \draw[->]  (s1) to[out=145, in=-145, looseness=4.8] (s1);
    \draw[->]  (s2) to[out=145, in=-145, looseness=4.8] (s2);
  \end{tikzpicture}
  \caption{The allowed transitions $w_k \to w_{k+ 1}$ in the
    left-greedy canonical form.}
  \label{fig: canonical}
\end{figure}
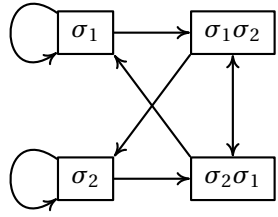

We will refine Theorem~\ref{thm:possign} into the following, which
implies Corollary~\ref{cor:tracedef} for all the braids it covers.
\begin{theorem}\label{thm:goodsign}
  Suppose $w \in \Braid{3}$ is not conjugate to a braid where $n \leq 1$ in
  its canonical form. Then there exists
  $w'$ conjugate to $w$ such that the entries of $B_{-t}(w')$ are all
  definite of the same sign.  If $w$ is a positive word, then $w'$ can be
  chosen to be a positive word as well.
\end{theorem}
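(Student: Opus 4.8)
The plan is to work with the canonical form $w = \Omega^k w_1 \cdots w_n$ and exploit cyclic conjugation to arrange the ``syllables'' $w_i$ into a configuration where the product of Burau matrices has a fixed sign pattern. First I would record the signed Burau matrices of the building blocks: under $t \mapsto -t$ we have $B_{-t}(\sigma_1) = \mysmallmatrix{t}{1}{0}{1}$, $B_{-t}(\sigma_2) = \mysmallmatrix{1}{0}{-t}{t}$, and similarly for the length-two syllables $\sigma_1\sigma_2$, $\sigma_2\sigma_1$; note these are \emph{not} all of one sign, which is exactly why the naive approach fails. The key observation I would isolate is that after conjugating by a suitable power of $\sigma_2$ (or $\sigma_1$), or by $\Omega$, one can cyclically permute the canonical form so that it begins and ends with prescribed syllables — e.g.\ so that the word, read cyclically, alternates in a controlled way between $\sigma_1$-type and $\sigma_2$-type blocks. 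Since the full braid $\widehat{w}$ and the conjugacy class only depend on $w$ up to cyclic permutation of syllables (up to also absorbing $\Omega$'s, which act by a global scalar/transpose-type symmetry on $B_{-t}$), this costs nothing. The hypothesis $n \geq 2$ (after conjugation) is precisely what guarantees enough syllables are present to carry out this rearrangement.

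The core of the argument is then a sign-bookkeeping lemma: if $w'$ is written cyclically in a ``good'' form, then the matrix product $B_{-t}(w')$ has all four entries definite of the same sign. I would prove this by induction on $n$, tracking not just definiteness (which is Theorem~\ref{thm:possign}, already available) but the stronger invariant that all entries share a sign. The base case handles a short good word explicitly. For the inductive step, one multiplies the running product $M$ (with all entries, say, positive) on the appropriate side by the next syllable matrix; because the syllable matrices $\mysmallmatrix{t}{1}{0}{1}$, $\mysmallmatrix{1}{0}{-t}{t}$ have a triangular structure with a single ``wrong-sign'' entry, the product $M \cdot (\text{syllable})$ will again have all entries of one sign provided the wrong-sign entry always gets multiplied against a \emph{zero} entry of $M$ — and arranging the syllable order to force exactly this is the content of what ``good form'' means. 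The transition rules in Figure~\ref{fig: canonical} constrain which syllables can be adjacent, and I expect these constraints to mesh with the sign pattern so that, possibly after one more cyclic shift, the bad entries never interact destructively.

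The handling of $\Omega^k$: since $B_{-t}(\Omega) = B_{-t}(\sigma_1\sigma_2\sigma_1)$ is a specific matrix (one computes $\mysmallmatrix{-t^2}{?}{?}{?}$ or similar, and in $\PSL{2}$ it is essentially a scalar-like involution), multiplying by $\Omega^k$ either preserves the all-one-sign property or flips the overall sign and transposes — either way the conclusion ``definite of the same sign'' is unaffected. Finally, for the positivity clause: all the conjugating elements used above (powers of $\sigma_i$, and $\Omega^{\pm 1}$ absorbed into the canonical form when $k$ permits) can be taken to be positive words when $w$ is positive, since then $k \geq 0$ and cyclic permutation of a positive canonical word is realized by conjugating by a positive prefix; so $w'$ is a positive word as claimed.

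\textbf{Main obstacle.} The delicate step is the inductive sign-tracking: verifying that the transition rules of the left-greedy canonical form are compatible with a syllable ordering in which the off-sign entries of consecutive syllable matrices are always annihilated against zeros. It is conceivable that no single cyclic shift works for every canonical form and one needs a small case analysis (e.g.\ splitting on whether the cyclic word is ``$\sigma_1$-heavy'' or balanced, or on $k \bmod 2$). Pinning down the precise normal form for $w'$ — and proving that $n \geq 2$ suffices to reach it — is where the real work lies; everything else is routine $2\times 2$ matrix arithmetic over $\Z[t]$.
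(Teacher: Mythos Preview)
Your overall architecture---normalize $w$ within its conjugacy class via cyclic rearrangement of the Garside canonical form, then run an induction on the syllables to control signs---matches the paper's approach, and your anticipated case split on $k \bmod 2$ is exactly right.  However, the specific inductive mechanism you describe contains a genuine gap.

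You propose to maintain the invariant ``all four entries of the running product $M$ are definite of the same sign'' and argue that multiplying by the next syllable preserves this because ``the wrong-sign entry always gets multiplied against a zero entry of $M$''.  This fails on two counts.  First, the length-two syllables are not triangular: $B_{-t}(\sigma_1\sigma_2)=\mysmallmatrix{0}{t}{-t}{t}$ and $B_{-t}(\sigma_2\sigma_1)=\mysmallmatrix{t}{1}{-t^2}{0}$.  Second, and more seriously, after the first couple of multiplications $M$ generically has \emph{no} zero entries, so there is nothing to annihilate the ``wrong-sign'' entry $-t$ in $B_{-t}(\sigma_2)$ or in the mixed syllables.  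Indeed, the invariant ``all entries same sign'' is simply not preserved along the canonical word: for instance $B_{-t}(\sigma_1)\cdot B_{-t}(\sigma_1\sigma_2)=\mysmallmatrix{-t}{t^2+t}{-t}{t}$ already has entries of both signs, even though $\sigma_1\to\sigma_1\sigma_2$ is an allowed transition.

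What the paper does instead is track a finer invariant.  It partitions the definite vectors in $\Z[t^{\pm1}]^2$ into four ``quadrants'' $V_{++},V_{+-},V_{-+},V_{--}$ and records, via Lemma~\ref{lem: pos neg perm} and Lemma~\ref{lem: sign details}, exactly how each syllable permutes these quadrants (e.g.\ $\sigma_2$ preserves $V_{+-}$ and $V_{-+}$ but not $V_{++}$).  The canonical-form transition rules guarantee that consecutive syllables compose coherently with respect to this action, yielding an explicit formula \eqref{eq: sign details alt} for the sign pattern of $B_{-t}(w)$ in terms of the first letter of $w_1$, the last letter of $w_n$, and $k$.  The normalization step (Lemma~\ref{lem: strong form}, done by Garside cycling/decycling rather than conjugation by powers of $\sigma_i$) then arranges these parameters so that the formula outputs a constant sign pattern.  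Your proposal is missing this quadrant-tracking idea; without it the induction does not close.
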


Now set $R = \Z[t^{\pm 1}]$ and consider the subset $V$ of $R^2$ of
vectors both of whose entries are definite. Note that $V$ is not
closed under addition, much less is an $R$-submodule.  Now $V$ is the
union of its proper subsets $V_{++}, V_{+-}, V_{-+}, V_{--}$, where
the signs in $V_{\alpha , \beta}$ denote whether the entries
of each vector are positive or negative (top to bottom).  The
$V_{\alpha , \beta}$ are closed under addition, but do have
some overlap; for example, the vector $\smallvec{t}{0}$ is in
$V_{++} \cap V_{+-}$ and $\smallvec{0}{0}$ is in all four.  Set $V_1 =
V_{++} \cup V_{--}$ and $V_2 = V_{+-} \cup V_{-+}$.  We call $V_{++}$
and $V_{--}$ the \emph{parts} of $V_1$, and similarly $V_{+-}$ and $V_{-+}$
are the \emph{parts} of $V_2$. A key tool will be:
\begin{lemma}\label{lem: pos neg perm}
  Suppose $a \in \{1, 2\}$ and $b = 3 - a$.  Under the
  $B_{-t}$-action on $R^2$ we have:
  \begin{equation}\label{eq:act}
  \sigma_a \cdot V_a \subset V_a \qquad (\sigma_a \sigma_b) \cdot V_b \subset
  V_a \qquad \Omega \cdot V_a = V_b \qquad \Omega^{-1} \cdot  V_a = V_b.
  \end{equation}
  If $V_{\alpha , \beta}$ is a part of $V_a$, then
  $\sigma_a \cdot V_{\alpha , \beta} \subset V_{\alpha , \beta}$.
  Also, for all $\alpha, \beta$ we have
  $\Omega \cdot V_{\alpha, \beta} = V_{\beta, -\alpha}$.  Finally,
  when $V_{\alpha, \, \beta}$ is a part of $V_b$:
  \begin{equation}
    \label{eq:sig ab action}
    (\sigma_a \sigma_b) \cdot V_{\alpha, \beta}
    \subset V_{(-1)^a \alpha, (-1)^b \beta}.
  \end{equation}
\end{lemma}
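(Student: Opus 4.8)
The plan is to reduce everything to a finite list of explicit matrix computations using the concrete formulas for $B_{-t}(\sigma_i)$, $B_{-t}(\sigma_i \sigma_j)$, and $B_{-t}(\Omega) = B_{-t}(\Omega)^{\pm 1}$. First I would record, once and for all, the four matrices we need:
\[
  B_{-t}(\sigma_1) = \matr{t}{1}{0}{1}, \quad
  B_{-t}(\sigma_2) = \matr{1}{0}{-t}{t}, \quad
  B_{-t}(\sigma_1 \sigma_2) = \matr{t}{-t}{-t}{t}, \quad
  B_{-t}(\sigma_2 \sigma_1) = \matr{t}{1}{-t^2}{-t+t},
\]
taking care to recompute these honestly from the definitions in Section~\ref{sec: background} (so the actual entries may differ from the placeholder above), and also $B_{-t}(\Omega) = B_{-t}(\sigma_1 \sigma_2 \sigma_1)$, which is $\pm t$ times something simple; the key point is that $B_{-t}(\Omega)$ is an \emph{anti-diagonal} matrix with monomial entries $\pm t^j$, so left-multiplication by it swaps the two coordinates and scales each by a definite monomial, one of the two scalings carrying a sign. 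This immediately gives $\Omega \cdot V_{\alpha,\beta} = V_{\beta, -\alpha}$ (and hence $\Omega \cdot V_a = V_b$, $\Omega^{-1} \cdot V_a = V_b$ since $\Omega^{-1}$ is handled the same way, or simply because $V_b$ maps back to $V_a$ and everything is a bijection on the relevant sets).

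Next I would treat the $\sigma_a$-invariance. By symmetry it suffices to do $a = 1$: I need $B_{-t}(\sigma_1) \cdot V_1 \subset V_1$, refined to $B_{-t}(\sigma_1) \cdot V_{++} \subset V_{++}$ and $B_{-t}(\sigma_1) \cdot V_{--} \subset V_{--}$. Writing $B_{-t}(\sigma_1) = \smallvec{t\, x + y}{y}$ applied to $\smallvec{x}{y}$, if $x, y$ are both positive (definite) then $tx$ is positive, $tx + y$ is a sum of two positives hence positive, and $y$ is unchanged — so the image is in $V_{++}$. The $V_{--}$ case is identical with signs flipped; the case $a = 2$ follows by the analogous bookkeeping with the lower-triangular matrix $B_{-t}(\sigma_2)$, where the crucial sign is that $-t$ times a positive is \emph{negative}, so one should double-check which part of $V_2$ it preserves and confirm it matches the statement. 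This is where I expect the only real friction: getting the sign conventions exactly right so that ``$\sigma_a \cdot V_a \subset V_a$'' and ``$\sigma_a$ preserves each part of $V_a$'' come out consistent with the chosen labeling of $V_1$ versus $V_2$ as ``$\{++,--\}$'' versus ``$\{+-,-+\}$''. The safest route is to fix $a=1$, compute, and then \emph{define} which sets are ``$V_1$'' by what actually happens, then verify $a=2$ against that.

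Finally, the $(\sigma_a \sigma_b)$-action. Again by the $a \leftrightarrow b$ symmetry one computation suffices: with $a = 1$, $b = 2$, apply $B_{-t}(\sigma_1 \sigma_2)$ to a vector $\smallvec{x}{y} \in V_{\alpha,\beta}$ a part of $V_2$ (so $\alpha = -\beta$). Each entry of the image is $\pm t$ times an integer linear combination of $x$ and $y$; because $x$ and $y$ have \emph{opposite} definite signs, the combination $x - y$ (or $y - x$, depending on the matrix) is again definite, and multiplying by $\pm t$ lands it in the claimed part $V_{(-1)^a\alpha,\,(-1)^b\beta}$. The exponents $(-1)^a$ and $(-1)^b = -(-1)^a$ in \eqref{eq:sig ab action} are just recording that the two rows of $B_{-t}(\sigma_1\sigma_2)$ pick up $t$ versus $-t$ (or vice versa). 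From this refined statement, summing over the two parts gives $(\sigma_a \sigma_b) \cdot V_b \subset V_a$, since $\{(-1)^a\alpha, (-1)^b\beta\}$ ranges over exactly the two parts of $V_a$ as $\smallvec{\alpha}{\beta}$ ranges over the parts of $V_b$. Once the matrices are pinned down correctly, every verification is a two-line check that a sum or difference of two like- or opposite-signed definite Laurent polynomials is definite, together with the observation that multiplying a definite polynomial by $\pm t^k$ preserves definiteness (and flips or keeps the sign). I would present it as: (1) the matrices; (2) the monomial lemma ``$\pm t^k \cdot (\text{definite}) = \text{definite}$''; (3) the $\Omega$ case; (4) the $\sigma_a$ case; (5) the $\sigma_a\sigma_b$ case; with (4)–(5) each reduced to $a = 1$ by the evident symmetry $\sigma_1 \leftrightarrow \sigma_2$, $V_1 \leftrightarrow V_1$ (this symmetry is itself realized by conjugating by $\Omega$, which I would note but not belabor).
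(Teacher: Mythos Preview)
Your plan is essentially the paper's own proof: compute the five matrices $B_{-t}(\sigma_1)$, $B_{-t}(\sigma_2)$, $B_{-t}(\Omega)$, $B_{-t}(\sigma_1\sigma_2)$, $B_{-t}(\sigma_2\sigma_1)$ and then verify the inclusions on each part $V_{\alpha,\beta}$ by inspection.  The paper simply tabulates all twelve containments directly rather than invoking any $a\leftrightarrow b$ symmetry.  Two small fixes to your sketch: the correct product is $B_{-t}(\sigma_1\sigma_2) = \left(\begin{smallmatrix}0 & t\\ -t & t\end{smallmatrix}\right)$ (the zero in the upper-left is what makes the argument work---the first coordinate of the image is just $ty$, not a genuine combination of $x$ and $y$); and the $\Omega$-conjugation symmetry you invoke at the end swaps $V_1 \leftrightarrow V_2$, not $V_1 \leftrightarrow V_1$, so it does reduce the $a=2$ cases to $a=1$ but only once the $\Omega$ claims are already in hand.
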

\begin{proof}
  Because
  \begin{equation}
    \label{eq:burauvals}
  \begin{gathered}
  B_{-t}(\sigma_1) = \matr{t}{1}{0}{1}  \qquad
  B_{-t}(\sigma_2) = \matr{1}{0}{-t}{t} \qquad
  B_{-t}(\Omega) =  \matr{0}{t}{-t^2}{0} \\
  B_{-t}(\sigma_1 \sigma_2) = \matr{0}{t}{-t}{t} \qquad
  B_{-t}(\sigma_2 \sigma_1) = \matr{t}{1}{-t^2}{0}
  \end{gathered}
  \end{equation}
  it follows that
  \begin{equation}
    \label{eq:Vactions}
    \begin{alignedat}{3}
    \sigma_1 \cdot V_{++} &\subset V_{++} \qquad\qquad &
    (\sigma_2 \sigma_1) \cdot V_{++} &\subset V_{+-} \qquad\qquad &
    \Omega \cdot V_{++} &= V_{+-} \\
    \sigma_1 \cdot V_{--} &\subset V_{--} &
    (\sigma_2 \sigma_1) \cdot V_{--} &\subset V_{-+} &
    \Omega \cdot V_{--} &= V_{-+} \\
    \sigma_2 \cdot V_{+-} &\subset V_{+-} &
    (\sigma_1 \sigma_2) \cdot V_{+-} &\subset V_{--} &
    \Omega \cdot V_{+-} &= V_{--} \\
    \sigma_2 \cdot V_{-+} &\subset V_{-+} &
    (\sigma_1 \sigma_2) \cdot V_{-+} &\subset V_{++} &
    \Omega \cdot V_{-+} &= V_{++}
    \end{alignedat}
  \end{equation}
  which then combine to give the claims in the lemma.
\end{proof}

\begin{proof}[Proof of Theorem~\ref{thm:possign}]
The key claim will be:
\begin{claim}\label{claim:Vi in Vj}
  Suppose $w \in \Braid{3}$ has canonical form
  $\Omega^k w_1 w_2 \cdots w_n$ and $w_1$ starts with $\sigma_i$ and
  $w_n$ ends with $\sigma_j$.  Then $w \cdot V_{j} \subset V_i$ if $k$
  is even and $w \cdot V_{j} \subset V_{3-i}$ if $k$ is odd.
\end{claim}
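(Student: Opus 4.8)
The plan is to prove Claim~\ref{claim:Vi in Vj} by induction on the number $n$ of canonical-form syllables, using Lemma~\ref{lem: pos neg perm} as the engine. First I would handle the $\Omega^k$-prefix once and for all: by the third and fourth relations in \eqref{eq:act}, $\Omega^{\pm 1}\cdot V_a = V_b$, so $\Omega^k \cdot V_a = V_a$ when $k$ is even and $=V_b$ when $k$ is odd. Thus it suffices to show that the product of syllables $u := w_1 w_2 \cdots w_n$ satisfies $u \cdot V_j \subset V_i$, where $w_1$ starts with $\sigma_i$ and $w_n$ ends with $\sigma_j$; applying $\Omega^k$ afterwards flips the first index iff $k$ is odd, which is exactly the stated conclusion.

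For the syllable product I induct on $n$. In the base case $n = 1$, the single syllable $w_1$ is one of $\sigma_a$, $\sigma_a \sigma_b$; here $i = a$ in both cases, and $j = a$ for $w_1 = \sigma_a$ while $j = b$ for $w_1 = \sigma_a \sigma_b$. The relations $\sigma_a \cdot V_a \subset V_a$ and $(\sigma_a \sigma_b)\cdot V_b \subset V_a$ from \eqref{eq:act} give $w_1 \cdot V_j \subset V_a = V_i$ in both cases. For the inductive step, write $u = w_1 \cdot u'$ where $u' = w_2 \cdots w_n$. Let $\sigma_{a'}$ be the starting letter of $w_2$ and $\sigma_j$ the ending letter of $w_n$; by induction, $u' \cdot V_j \subset V_{a'}$. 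Now the key point is that the canonical-form transition rule of Figure~\ref{fig: canonical}, summarized as $\sigma_a \to \{\sigma_a, \sigma_a\sigma_b\}$ and $\sigma_b\sigma_a \to \{\sigma_a, \sigma_a\sigma_b\}$, forces $w_2$ to \emph{start} with exactly the letter $\sigma_a$ whenever $w_1$ \emph{ends} with $\sigma_a$: in every allowed transition $w_1 \to w_2$, the last letter of $w_1$ equals the first letter of $w_2$. So if $w_1$ ends with $\sigma_{a}$ then $a' = a$, hence $u' \cdot V_j \subset V_a$. It remains to check $w_1 \cdot V_a \subset V_i$ for the two possible shapes of $w_1$: if $w_1 = \sigma_a$ then $i = a$ and $\sigma_a \cdot V_a \subset V_a$; if $w_1 = \sigma_b \sigma_a$ then $i = b$, and we use $(\sigma_b\sigma_a)\cdot V_a \subset V_b$ — which is the $a \leftrightarrow b$ instance of $(\sigma_a\sigma_b)\cdot V_b \subset V_a$ in \eqref{eq:act}. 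Either way $w_1 \cdot V_a \subset V_i$, completing the induction and the claim.

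To deduce Theorem~\ref{thm:possign} from the claim: the braid closure $\what$ is unchanged under conjugation, so by cyclic permutation of the canonical syllables we may arrange that $w_n$ ends with $\sigma_j$ and $w_1$ starts with $\sigma_j$ as well — equivalently, by the transition analysis above, we can cyclically rotate so that the first and last syllables ``match up'' in the cyclic word, giving $i = j$. (When $n = 0$, i.e.\ $w = \Omega^k$, one checks $B_{-t}(\Omega^k)$ is definite directly from $B_{-t}(\Omega) = \mysmallmatrix{0}{t}{-t^2}{0}$; when $n = 1$ one checks the finitely many syllables directly.) With $i = j$, Claim~\ref{claim:Vi in Vj} gives $w \cdot V_j \subset V_j$ — possibly after also using $\Omega\cdot V_a = V_b$ to absorb the parity of $k$, noting $V_1 \cup V_2 = V$ is preserved regardless. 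Now pick any nonzero definite vector $v_0 \in V_j$, say a standard basis vector: then $B_{-t}(w) v_0 \in V$, so the relevant column of $B_{-t}(w)$ is definite. Applying the same argument to the conjugate braid $\Omega w \Omega^{-1}$ (whose Burau matrix is $B_{-t}(\Omega) B_{-t}(w) B_{-t}(\Omega)^{-1}$, a conjugation that permutes/rescales coordinates) handles the other column, so all entries of $B_{-t}(w)$ are definite.

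The main obstacle is the bookkeeping around which coordinate-subspace $V_{\alpha\beta}$ or $V_a$ each partial product lands in, and in particular making the cyclic-conjugation step rigorous: one must verify that cyclically rotating the canonical form of $w$ really does let us match the endpoint letters, which hinges on the observation (immediate from Figure~\ref{fig: canonical}) that each syllable transition preserves the ``boundary letter.'' I expect the small cases $n \le 1$ and $w = \Omega^k$ to require a brief but explicit finite check using \eqref{eq:burauvals}, and some care is needed because the $V_{\alpha\beta}$ overlap and are not submodules — but since each $V_{\alpha\beta}$ is closed under addition and the Burau matrices of the generators have monomial-or-constant entries, the containments in \eqref{eq:Vactions} are robust and propagate cleanly through products.
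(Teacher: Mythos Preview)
Your proof of the claim itself is correct and essentially identical to the paper's argument: reduce to $k=0$ via $\Omega \cdot V_a = V_{3-a}$, then induct on $n$, using as the inductive engine that the last letter of $w_1$ equals the first letter of $w_2$ (immediate from the transition rules) together with the containments in \eqref{eq:act}.

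The extra material in which you deduce Theorem~\ref{thm:possign}, however, has a gap. You propose cyclically rotating the syllables to arrange $i = j$, justifying this by ``the braid closure $\what$ is unchanged under conjugation.'' But Theorem~\ref{thm:possign} is a statement about the entries of $B_{-t}(w)$ for the \emph{given} $w$, not about $\what$; conjugation genuinely alters those entries, so showing definiteness for a conjugate does not suffice. The paper's deduction avoids this entirely with a one-line observation you overlooked: both standard basis vectors $\smallvec{1}{0}$ and $\smallvec{0}{1}$ lie in $V_1 \cap V_2$ (the first is in $V_{++}\cap V_{+-}$, the second in $V_{++}\cap V_{-+}$). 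Hence, whatever $j$ happens to be, the claim directly gives that both columns $B_{-t}(w)\smallvec{1}{0}$ and $B_{-t}(w)\smallvec{0}{1}$ land in $V$, so every entry of $B_{-t}(w)$ is definite --- no conjugation, no case analysis on small $n$, and no treatment of the ``other column'' via $\Omega$-conjugation is needed.
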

Since $\Omega$ interchanges the two $V_l$ by (\ref{eq:act}), we can
easily reduce to the case $k = 0$ in the canonical form of $w$.  To prove
the claim when $k = 0$, we induct on $n$ with the base case of $n = 1$
following immediately from (\ref{eq:act}).  For the general case,
consider $w = w_1 \cdot (w_2 w_3 \cdots w_n)$ and apply induction to
$w' = w_2 w_3 \cdots w_n$.  Since the claim holds for $w'$ and $w_1$
ends with the same $\sigma_\ell$ that $w_2$ starts with, the claim
follows for $w$ by (\ref{eq:act}).  To prove the theorem, note that
$\smallvec{1}{0}$ and $\smallvec{0}{1}$ are in $V_1 \cap V_2$.  Hence,
if $w_1$ starts with $\sigma_i$, by the claim both
$B_{-t}(w) \cdot \smallvec{1}{0}$ and
$B_{-t}(w) \cdot \smallvec{0}{1}$ are in $V_i$.  As those are the two
columns of $B_{-t}(w)$, we have shown that all entries of $B_{-t}(w)$
are definite.
\end{proof}

It is possible to use Lemma~\ref{lem: pos neg perm} to refine
Theorem~\ref{thm:possign} to determine signs of the entries of
$B_{-t}(w)$ from $w_1$, $w_n$, the value of $k$ modulo 4, and the
parities of the number of times $w_\ell$ is $\sigma_1 \sigma_2$ and
$\sigma_2 \sigma_1$ respectively.  The starting point is this
refinement of Claim~\ref{claim:Vi in Vj}:

\begin{lemma}
  \label{lem: sign details}
  Suppose $w \in \Braid{3}$ is in canonical form with $k = 0$ and
  $w_1$ starts with $\sigma_i$ and $w_n$ ends with $\sigma_j$.  Let
  $e$ and $f$ be the number of times that $w_\ell$ is
  $\sigma_1 \sigma_2$ and $\sigma_2 \sigma_1$ respectively.  If
  $V_{\alpha, \beta}$ is a part of $V_j$, then
  $w \cdot V_{\alpha, \beta} \subset V_{(-1)^e \alpha, \, (-1)^f
    \beta}$, with the latter one of the parts of $V_i$.
\end{lemma}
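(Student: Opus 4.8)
The plan is to mimic the proof of Claim~\ref{claim:Vi in Vj}, but carrying along the finer data of which \emph{part} a vector lies in, using the part-level statements in Lemma~\ref{lem: pos neg perm}. Recall those: if $V_{\alpha,\beta}$ is a part of $V_a$ then $\sigma_a \cdot V_{\alpha,\beta} \subset V_{\alpha,\beta}$, and if $V_{\alpha,\beta}$ is a part of $V_b$ then $(\sigma_a \sigma_b) \cdot V_{\alpha,\beta} \subset V_{(-1)^a\alpha,\,(-1)^b\beta}$. So each single-letter factor $w_\ell = \sigma_a$ fixes the part, while each two-letter factor $w_\ell = \sigma_a\sigma_b$ flips the first coordinate when $a = 1$ (i.e. $w_\ell = \sigma_1\sigma_2$) and flips the second coordinate when $b = 1$ (i.e. again $w_\ell = \sigma_1\sigma_2$, since then $a=1$, $b=2$, wait—) — more carefully: $(\sigma_1\sigma_2)\cdot V_{\alpha,\beta} \subset V_{-\alpha,\beta}$ (taking $a=1,b=2$, so $(-1)^a = -1$, $(-1)^b = +1$), and $(\sigma_2\sigma_1)\cdot V_{\alpha,\beta} \subset V_{\alpha,-\beta}$ (taking $a=2,b=1$). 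Hence among the factors $w_\ell$, a $\sigma_1\sigma_2$ toggles the sign of the first coordinate, a $\sigma_2\sigma_1$ toggles the sign of the second coordinate, and the $\sigma_i$ factors do nothing to the part.

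With that bookkeeping in hand, I would induct on $n$. The base case $n = 1$ is immediate: if $n=1$ then $e = f = 0$ and $w_1 = \sigma_i$ with $i = j$ (a single-letter factor), and $\sigma_i \cdot V_{\alpha,\beta} \subset V_{\alpha,\beta}$ by Lemma~\ref{lem: pos neg perm}; if instead $w_1$ is a two-letter factor $\sigma_i\sigma_{3-i}$ then $i$ is where it starts, $j = 3-i$ is where it ends, one of $e,f$ is $1$ and the other $0$, and \eqref{eq:sig ab action} gives exactly the claimed coordinate flip landing in the correct part of $V_i$. For the inductive step, write $w = w_1 \cdot (w_2 \cdots w_n) = w_1 \cdot w'$; the canonical form of $w'$ is $w_2\cdots w_n$ (with $k=0$), it starts with the same $\sigma_\ell$ that $w_1$ ends with, and its counts $e', f'$ differ from $e, f$ only by subtracting the contribution of $w_1$. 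Apply induction to $w'$ to land the input part in the appropriate part of $V_\ell$, then apply the part-level action of $w_1$ (either a $\sigma_\ell$, fixing the part, or a $\sigma_\ell\sigma_m$, toggling one coordinate) to land in the correct part of $V_i$. Tracking that the total number of first-coordinate toggles is the number of $\sigma_1\sigma_2$ factors, i.e.\ $e$, and similarly $f$ second-coordinate toggles, gives the stated exponents $(-1)^e$ and $(-1)^f$.

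The only real subtlety — and the step I'd treat most carefully — is the interface between $w_1$ and $w'$: one must check that the allowed transitions in the canonical form (Figure~\ref{fig: canonical}) guarantee that the letter $\sigma_\ell$ ending $w_1$ genuinely equals the letter $\sigma_\ell$ starting $w_2$, so that the containment $w' \cdot V_{\text{(part of }V_j)} \subset V_{\text{(part of }V_\ell)}$ from induction feeds correctly into $w_1 \cdot V_\ell$. This is exactly the observation already used in the proof of Claim~\ref{claim:Vi in Vj}; here one just additionally notes that $w_1$ itself, acting by either \eqref{eq:act} (the $\sigma_a$ case preserving parts) or \eqref{eq:sig ab action} (the $\sigma_a\sigma_b$ case), behaves as claimed on the part level, which is verified directly from the matrices \eqref{eq:burauvals} — indeed this is precisely what Lemma~\ref{lem: pos neg perm} records. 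Everything else is routine parity bookkeeping, so no separate obstacle arises.
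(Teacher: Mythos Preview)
Your proposal is correct and follows essentially the same approach as the paper: induct on $n$, with the base case read off from Lemma~\ref{lem: pos neg perm} and \eqref{eq:sig ab action}, and the inductive step peeling off $w_1$ from the left, using that the canonical-form transitions force $w_2$ to start with the letter $w_1$ ends with, then tracking that $\sigma_a$ fixes parts while $\sigma_1\sigma_2$ toggles the first sign and $\sigma_2\sigma_1$ toggles the second. The paper's proof is organized the same way, splitting the inductive step into the three cases $w_1 = \sigma_a$, $w_1 = \sigma_1\sigma_2$, and $w_1 = \sigma_2\sigma_1$ explicitly; your parity-bookkeeping summary is equivalent.
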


\begin{proof}
  The rough idea here is that (\ref{eq:Vactions}) shows that when we
  apply a single $w_k$ starting with $\sigma_c$ and ending
  with $\sigma_d$ to a part $V_{\gamma , \, \delta}$ of $V_d$, we get
  a subset of a part of $V_c$ with $\sigma_1 \sigma_2$ being the only
  generator that changes the first sign $\gamma$ and
  $\sigma_2 \sigma_1$ the only generator that changes the second sign
  $\delta$.  Formally, we induct on the length of $w$.  If $n = 1$,
  the claim follows from Lemma~\ref{lem: pos neg perm}, and is perhaps
  most easily checked by consulting (\ref{eq:Vactions}).  For $n > 1$,
  write $w = w_1 \cdot w'$ with $w' = w_2 \cdots w_n$ and set $e'$ and
  $f'$ to be the counts of $\sigma_1 \sigma_2$ and $\sigma_2 \sigma_1$
  for $w'$.

  If $w_1 = \sigma_a$, then $e = e'$ and $f = f'$.  Now $w_2$ must
  start with $\sigma_a$, and hence by induction we have
  $w' \cdot V_{\alpha, \beta}$ is contained in the part
  $V_{(-1)^{e'} \alpha, \, (-1)^{f'} \beta}$ of $V_a$. As $w_1$ sends
  each part of $V_a$ into itself, we learn
  $w \cdot V_{\alpha, \beta} \subset V_{(-1)^{e} \alpha, \,
    (-1)^{f} \beta}$ as needed.

  If $w_1 = \sigma_1 \sigma_2$, then $w_2$ must start with $\sigma_2$.
  By induction, as $e' = e - 1$ and $f' = f$, we have
  $w' \cdot V_{\alpha, \beta} \subset V_{(-1)^{e-1} \alpha, \, (-1)^f
    \beta}$, where the latter is a part of $V_2$.  The needed claim for
  $w \cdot V_{\alpha, \beta}$ now follows from (\ref{eq:sig ab
    action}).  The case of $w_1 = \sigma_2 \sigma_1$ is similar.
\end{proof}

\begin{corollary}
  Suppose $w \in \Braid{3}$ is in canonical form where $w_1$ starts
  with $\sigma_i$ and $w_n$ ends with $\sigma_j$. Let $e$ and $f$ be
  the number of times that $w_\ell$ is $\sigma_1 \sigma_2$ and
  $\sigma_2 \sigma_1$ respectively.  Then the signs of the entries of
  $B_{-t}(w)$ are given by:
  \begin{equation}\label{eq: sign details}
    \twobytwomatrix{0}{1}{-1}{0}^k
    \twobytwomatrix{(-1)^e}{(-1)^{e + j + 1}}{(-1)^{f + j + 1}}{ (-1)^f}
  \end{equation}
  or equivalently:
  \begin{equation}\label{eq: sign details alt}
    (-1)^e \twobytwomatrix{0}{1}{-1}{0}^k
    \twobytwomatrix{1}{(-1)^{j + 1}}{(-1)^{i + 1}}{ (-1)^{i + j}}.
  \end{equation}
\end{corollary}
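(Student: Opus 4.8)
The plan is to read this off directly from Lemma~\ref{lem: sign details}, using the $\Omega$-action of Lemma~\ref{lem: pos neg perm} and the explicit matrices in \eqref{eq:burauvals}. First I would strip off the power of $\Omega$: write $w = \Omega^k w'$ with $w' = w_1 w_2 \cdots w_n$ its canonical form for $k = 0$, so $B_{-t}(w) = B_{-t}(\Omega)^k\, B_{-t}(w')$. By \eqref{eq:burauvals} the matrix $B_{-t}(\Omega) = \matr{0}{t}{-t^2}{0}$ is monomial --- each row has a single nonzero entry, positive in the top row and negative in the bottom --- so left multiplication by $B_{-t}(\Omega)$ carries a matrix with definite entries to another such matrix with no cancellation, and on the level of sign patterns it acts exactly as left multiplication by $\twobytwomatrix{0}{1}{-1}{0}$. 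Iterating, the sign pattern of $B_{-t}(w)$ equals $\twobytwomatrix{0}{1}{-1}{0}^k$ times that of $B_{-t}(w')$, so it suffices to prove \eqref{eq: sign details} when $k = 0$.

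For $k = 0$, the two columns of $B_{-t}(w)$ are $B_{-t}(w)\cdot\smallvec{1}{0}$ and $B_{-t}(w)\cdot\smallvec{0}{1}$, and one simply has to locate these two images. Here a little care is needed because of the convention that $0$ counts as both positive and negative: $\smallvec{1}{0}$ lies in $V_{++}$, the unique part of $V_1$ containing it, and in $V_{+-}$, the unique part of $V_2$ containing it; likewise $\smallvec{0}{1}$ lies in $V_{++}\subset V_1$ and in $V_{-+}\subset V_2$. Taking $V_{\alpha,\beta}$ to be the part of $V_j$ containing the relevant basis vector and applying Lemma~\ref{lem: sign details}, the image $B_{-t}(w)\cdot\smallvec{1}{0}$ lies in $V_{(-1)^e\alpha,\,(-1)^f\beta}$ with $(\alpha,\beta) = (+,+)$ when $j = 1$ and $(+,-)$ when $j = 2$, and $B_{-t}(w)\cdot\smallvec{0}{1}$ in the analogous part with $(\alpha,\beta) = (+,+)$ when $j=1$ and $(-,+)$ when $j=2$. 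Checking the two cases $j \in \{1, 2\}$ shows the columns have sign patterns $\smallvec{(-1)^e}{(-1)^{f+j+1}}$ and $\smallvec{(-1)^{e+j+1}}{(-1)^f}$, which assemble into the matrix of \eqref{eq: sign details}.

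Finally, for the equivalence of \eqref{eq: sign details} and \eqref{eq: sign details alt}, expanding the right-hand side of the latter shows the $(1,1)$ and $(1,2)$ entries agree on the nose, while the $(2,1)$ and $(2,2)$ entries agree precisely when $e + f \equiv i + j \pmod 2$. This parity identity is the only genuinely new input, and I would prove it from the shape of the canonical form: by the transition rules of Figure~\ref{fig: canonical} the generator $w_{\ell+1}$ starts with equals the one $w_\ell$ ends with, so as one reads across $w_1 \cdots w_n$ the index in $\{1,2\}$ changes exactly at the syllables equal to $\sigma_1\sigma_2$ or $\sigma_2\sigma_1$, of which there are $e + f$ total; since $w_1$ starts with index $i$ and $w_n$ ends with index $j$, we get $j \equiv i + e + f \pmod 2$, i.e.\ $e + f \equiv i + j \pmod 2$. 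I do not expect any of these steps to be hard; the only places requiring attention are tracking which \emph{parts} of $V_j$ the standard basis vectors belong to (the $0$-entry convention) and this parity bookkeeping for the two-form equivalence.
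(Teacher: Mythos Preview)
Your proof is correct and follows essentially the same route as the paper's own argument: reduce to $k=0$ via the monomial structure of $B_{-t}(\Omega)$ (equivalently, the $\Omega$-action $V_{\alpha,\beta}\mapsto V_{\beta,-\alpha}$), then apply Lemma~\ref{lem: sign details} to the two standard basis vectors, taking care about which part of $V_j$ each lies in, and finally verify the parity identity $e+f\equiv i+j\pmod 2$ from the alternation pattern of the canonical form. The only cosmetic difference is the order in which the equivalence of \eqref{eq: sign details} and \eqref{eq: sign details alt} is handled.
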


\begin{proof}
  To begin, using that
  $\Omega \cdot V_{\alpha, \beta} = V_{\beta, -\alpha}$ from
  Lemma~\ref{lem: pos neg perm}, we can easily reduce to the case that
  $k = 0$.  Also, to see that (\ref{eq: sign details alt}) follows from
  (\ref{eq: sign details}), we argue that $(-1)^f = (-1)^{e + i + j}$.
  This is because if we view $w$ as a word in $\sigma_1$ and
  $\sigma_2$ only, then the $\sigma_1 \sigma_2$ and
  $\sigma_2 \sigma_1$ are the places where the word switches between
  the two $\sigma_i$.  Hence, if $i = j$, then $e = f$, whereas if
  $i \neq j$ then $f = e \pm 1$, giving the claimed
  $(-1)^f = (-1)^{e + i + j}$.

  Now suppose $j = 1$.  Then $\smallvec{1}{0}$ and
  $\smallvec{0}{1}$ are in $V_{++}$, which is a part of $V_1$, and so
  $w \cdot \smallvec{1}{0}$ and $w \cdot \smallvec{0}{1}$ are in
  $V_{(-1)^e, \, (-1)^f}$ by Lemma~\ref{lem: sign details}.  As these
  are the columns of $B_{-t}(w)$, the signs in this case are
  $\mysmallmatrix{(-1)^e}{(-1)^e}{(-1)^f}{(-1)^f}$ verifying (\ref{eq:
    sign details}) in this case.

  Suppose instead that $j = 2$.  Then $\smallvec{1}{0}$ is in $V_{+-}$
  and $\smallvec{0}{1}$ is in $V_{-+}$, both of which are parts of
  $V_2$.  Applying Lemma~\ref{lem: sign details} tells us that
  $B_{-t}(w)$ has signs
  $\mysmallmatrix{(-1)^e}{(-1)^{e+1}}{(-1)^{f+1}}{(-1)^f}$, completing
    the proof.
\end{proof}

We will need the following to prove Theorem ~\ref{thm:goodsign}.

\begin{lemma}\label{lem: strong form}
  Suppose $w \in \Braid{3}$ is not conjugate to a braid where
  $n \leq 1$ in its canonical form.  Then $w$ is conjugate a braid
  $w'$ whose canonical form is $\Omega^k w_1 w_2 \cdots w_{n-1} w_n$
  where $n \geq 2$ and $w_n$ ends with $\sigma_1$; when $k$ is even,
  $w_1$ starts with $\sigma_1$ and otherwise it starts with
  $\sigma_2$.  Moreover, if $w$ is positive, so is $w'$. 
\end{lemma}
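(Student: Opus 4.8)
The plan is to take $w'$ to be a suitably cyclically reduced conjugate of $w$ and then read the two conditions directly off its canonical form. Write $\tau$ for conjugation by $\Omega$; by the braid relation $\tau$ is the automorphism swapping $\sigma_1 \leftrightarrow \sigma_2$. In particular $\tau$ permutes the set $\{\sigma_1, \sigma_2, \sigma_1\sigma_2, \sigma_2\sigma_1\}$ of possible normal-form factors and respects the transition diagram of Figure~\ref{fig: canonical}, since that diagram is symmetric under interchanging the two indices; also $\tau^j$ acts on a generator as the identity if $j$ is even and as the swap if $j$ is odd.

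Given $w$ with canonical form $\Omega^k w_1 \cdots w_n$ as in~\eqref{eq:normal}, conjugating by $\Omega^k w_1$ gives
\[
  \mathbf c(w) := (\Omega^k w_1)^{-1}\, w\, (\Omega^k w_1)
  = w_2 \cdots w_n\, \Omega^k w_1
  = \Omega^k\, \tau^{-k}(w_2) \cdots \tau^{-k}(w_n)\, w_1 .
\]
All factors on the right are simple, and all transitions are valid except possibly the ``seam'' $\tau^{-k}(w_n) \to w_1$. When the seam is invalid, the product of the two offending simple factors equals $\sigma_a\sigma_b$, $\Omega$, or $\Omega\sigma_a$ for a suitable $a \neq b$, and in each case rewriting $\mathbf c(w)$ in canonical form strictly decreases the number of factors; so $\mathbf c$ never increases the canonical length, and strictly decreases it exactly when the seam is invalid. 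Note that positive braids have $k \geq 0$, and then $\mathbf c(w) = w_2 \cdots w_n\, \Omega^k w_1$ is manifestly a positive word. Now iterate $\mathbf c$ starting from $w$: the canonical length forms a non-increasing sequence of integers that stays $\geq 2$ (every conjugate of $w$ has canonical length $\geq 2$, by hypothesis), hence it is eventually constant. This produces a conjugate $v = \Omega^k w_1 \cdots w_n$ of $w$, with $n \geq 2$ and positive if $w$ is, on which one further cycling does not lower the length; by the above, the seam for $v$ is valid, i.e.\ \emph{$w_1$ begins with the generator that $\tau^{-k}(w_n)$ ends with}.

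It remains to arrange that the last factor ends in $\sigma_1$. If $w_n$ ends in $\sigma_1$, set $w' = v$. Otherwise $w_n$ ends in $\sigma_2$; set $w' = \Omega v \Omega^{-1} = \Omega^k\, \tau(w_1) \cdots \tau(w_n)$, which is again in canonical form of length $n$ by the index-swap symmetry of Figure~\ref{fig: canonical}, is positive whenever $v$ is (as $k \geq 0$), and is again cycling-stable since $\mathbf c(\tau v) = \tau(\mathbf c v)$ — and now its last factor $\tau(w_n)$ ends in $\sigma_1$. In either case $w'$ has canonical form $\Omega^k z_1 \cdots z_n$ with $n \geq 2$, with $z_n$ ending in $\sigma_1$, and with $z_1$ beginning with the generator that $\tau^{-k}(z_n)$ ends with. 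Since $z_n$ ends in $\sigma_1$, that generator is $\sigma_1$ if $k$ is even and $\sigma_2$ if $k$ is odd, which is precisely the assertion; positivity has been maintained at every step.

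The heart of the argument is the short case analysis showing that cycling cannot increase the canonical length and strictly decreases it at an invalid seam — this is just the $\Braid{3}$ incarnation of the standard Garside-theoretic fact that iterated cycling drives an element into the (super) summit set. The only other point requiring a little care is that conjugation by $\Omega$ preserves the shape of the canonical form (including the leading exponent $k$) as well as cycling-stability, which again comes down entirely to the index-swap symmetry of the transition diagram.
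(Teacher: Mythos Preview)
Your proof is correct and follows essentially the same Garside-cycling approach as the paper: iterate cycling until the canonical length stabilizes (the paper phrases this as reaching a ``summit'' element), then conjugate by $\Omega$ if necessary to make the last factor end in $\sigma_1$, and read off the condition on the first factor from the fact that one more cycling does not shorten the form.  The only differences are cosmetic: the paper cycles back-to-front and uses both cycling and decycling together with a lexicographic ``cooler'' ordering on $(k,n)$, whereas you cycle front-to-back and track only $n$; your termination argument is therefore slightly cleaner, but the underlying idea is identical.
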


\begin{proof}
Given two words in canonical form:
\[
  w = \Omega^k  w_1 w_2 \cdots w_{n-1} w_n \mtext{and}
  v = \Omega^\ell v_1 v_2 \cdots v_{m - 1} v_m
\]
we say that $w$ is \emph{cooler} than $v$ if either $k > \ell$ or
$k = \ell$ and $n < m$.

Two braids conjugate to $w$ are its \emph{cycling}
\[
  c =  \Omega^k (\Omega^{-k} w_n \Omega^{k}) w_1 w_2 \cdots w_{n-1}
\]
and its \emph{decycling}
\[
  d = \Omega^k w_2 w_3 \cdots w_n (\Omega^{k} w_1 \Omega^{-k}).
\]
Note here that $\Omega^{k} w_i \Omega^{-k}$ is $w_i$ when $k$
is even or $w_i$ with $\sigma_1$ and $\sigma_2$ interchanged when
$k$ is odd.  The word $w$ is \emph{summit} when neither of the
canonical forms of $c$ and $d$ are cooler than $w$ itself. (These
notions were introduced to solve the conjugacy problem in braid
groups, see \cite[Section~4]{El-RifaiMorton1994}.)

Any $w$ is conjugate to a summit $w'$ simply by repeatedly cycling and
decycling until no more improvement is possible. (To see this
terminates, we look at the conjugacy invariant
$\det B_t(w) = (-t)^{\# w}$ from (\ref{eq: B det}).  This forces
$\#w' = \#w$, and if $k'$ and $n'$ correspond to $w'$, we have
$n' + 3 k' \leq \#w' \leq 2n' + 3 k'$.  As $w'$ is cooler than $w$, we
know $k' \geq k$ which gives $n' + 3k \leq \#w'$. Thus,
$n' \leq \#w' - 3k = \#w - 3k$, giving finitely many possibilities for
$w'$.)  Any element of $\Braid{3}$ that can be written as a positive
word in the $\sigma_i$ has canonical form where $k \geq 0$ by
\cite[Theorem~2.9]{El-RifaiMorton1994}; and also (de)cycling a
positive word yields a positive word; therefore $w'$ is positive if
$w$ is, as required by the lemma.

Moreover, by conjugating by $\Omega$ (which interchanges the
$\sigma_i$), we can assume that $w'_{n'}$ is either $\sigma_1$ or
$\sigma_2 \sigma_1$.  By the hypotheses of the lemma, we also must
have $n' > 1$.  Finally, we analyze the first letter of $w'_1$.  If
$k'$ is even and $w'_1$ was either $\sigma_2$ or $\sigma_2 \sigma_1$,
then the cycling of $w'$ would be cooler than it is (here is where we
need $n' > 1$ as otherwise the cycling of $w'$ is equal to itself).  So
$w'_1$ starts with $\sigma_1$ when $k'$ is even.  Similarly if $k'$ is
odd, then $w'_1$ starts with $\sigma_2$ as otherwise its cycling would
be cooler.  Thus we have found the promised $w'$.
\end{proof}

We will need the following corollary of Lemma~\ref{lem: strong form}
when we get to Section~\ref{sec: root-free}.

\begin{corollary}\label{cor: norm with power(s)}
  Any positive $w \in \Braid{3}$ is conjugate to one of
  \begin{enumerate}
  \item \label{enum: one sig}
    $\Omega^k \sigma_1^a$ where $k \geq 0$ and $a \geq 0$.
  \item \label{enum: sig 1 sig 2}
    $\Omega^k \sigma_1 \sigma_2$ where $k \geq 0$.
  \item \label{enum: l even}
    $\Omega^k \sigma_1^{a_1} \sigma_2^{a_2} \cdots \sigma_1^{a_{\ell - 1}}\sigma_2^{a_\ell}$
  where $k \geq 0$, $\ell \geq 2$ is even, and all $a_i \geq 2$.
\item \label{enum: l odd}
  $\Omega^k \sigma_1^{a_1} \sigma_2^{a_2} \cdots  \sigma_2^{a_{\ell - 1}} \sigma_1^{a_{\ell}}$
  where $k \geq 1$ is odd, $\ell \geq 3$ is odd, and all $a_i \geq 2$.
\end{enumerate}
\end{corollary}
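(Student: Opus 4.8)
The plan is to take a positive $w \in \Braid{3}$ and funnel it through the dichotomy in Lemma~\ref{lem: strong form}. If $w$ is conjugate to a braid with $n \leq 1$ in canonical form, then (after conjugating by a power of $\Omega$ to make $k \geq 0$, which is possible for positive words by \cite[Theorem~2.9]{El-RifaiMorton1994}) $w$ is conjugate to $\Omega^k$, $\Omega^k \sigma_1$, $\Omega^k \sigma_2$, $\Omega^k \sigma_1 \sigma_2$, or $\Omega^k \sigma_2 \sigma_1$; conjugating again by $\Omega$ if necessary to swap the indices, these collapse into cases~(\ref{enum: one sig}) (with $a \in \{0, 1\}$) and~(\ref{enum: sig 1 sig 2}). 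So from now on I may assume $w$ is not conjugate to such a braid, and apply Lemma~\ref{lem: strong form} to replace $w$ by a conjugate $w'$ (still positive) whose canonical form is $\Omega^k w_1 \cdots w_n$ with $n \geq 2$, with $w_n$ ending in $\sigma_1$, and with $w_1$ starting in $\sigma_1$ when $k$ is even and in $\sigma_2$ when $k$ is odd.

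Next I would unpack the canonical form into runs. Each $w_\ell$ is one of $\sigma_1, \sigma_2, \sigma_1\sigma_2, \sigma_2\sigma_1$, and reading the concatenation $w_1 w_2 \cdots w_n$ simply as a positive word in $\sigma_1, \sigma_2$ groups into maximal blocks $\sigma_1^{a_1}\sigma_2^{a_2}\cdots$, alternating between the two generators. The key point is to extract the constraint $a_i \geq 2$ for the interior blocks from the transition rules of Figure~\ref{fig: canonical}: the only way the canonical factorization can stay inside a block of $\sigma_a$'s without a transition out is via the loop $\sigma_a \to \sigma_a$, but a transition \emph{out} of the $\sigma_a$ block into a $\sigma_b$ block must go through $\sigma_a \sigma_b$ or through $\sigma_b\sigma_a \to \sigma_a$; tracking where the letters actually land shows that whenever the word leaves one generator and returns later, the intervening run of the other generator has length at least $2$ — a single isolated $\sigma_b$ between two $\sigma_a$-runs is forbidden because $\sigma_a \to \sigma_b$ is not an allowed transition, and $\sigma_a \sigma_b$ followed by a transition back immediately forces at least a second $\sigma_b$ or merges with the next factor. (The boundary blocks $a_1$ and $a_\ell$ can in principle be $1$, but the lemma's normalization has arranged $w_n$ to end in $\sigma_1$ and controlled the start of $w_1$, so I can absorb or rewrite a length-$1$ boundary run; this bookkeeping is the fiddly part.) Counting the number $\ell$ of blocks: since $w_1$ begins with $\sigma_1$ and $w_n$ ends with $\sigma_1$, if $k$ is even then the first and last blocks are both $\sigma_1$-blocks, forcing $\ell$ odd — but then after conjugating by a cyclic rotation (which does not change $\what$) or by observing $\ell$ odd with both ends $\sigma_1$ lets us merge... actually, when $k$ is even we instead get $\ell$ \emph{even} of the form $\sigma_1^{a_1}\cdots\sigma_2^{a_\ell}$ only after re-examining which generator $w_1$ and $w_n$ sit in; I would split into the $k$ even and $k$ odd cases here, matching them to~(\ref{enum: l even}) and~(\ref{enum: l odd}) respectively, using the parity consequences of Lemma~\ref{lem: strong form} (even $k \Rightarrow w_1$ starts with $\sigma_1$; and separately $w_n$ ends with $\sigma_1$).

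Finally, for the $k$ even case I claim $\ell$ is even with the word $\sigma_1^{a_1}\sigma_2^{a_2}\cdots\sigma_1^{a_{\ell-1}}\sigma_2^{a_\ell}$: here I would cyclically conjugate so that the word both begins and ends appropriately, noting that conjugating a positive word by moving a prefix to the suffix keeps it positive and keeps $k$ fixed (since $\#w$ is fixed and $\Omega^k$ is central-ish); this realigns a word starting and ending in $\sigma_1$ into one of the stated form by peeling the trailing $\sigma_1$-run onto the front. For $k$ odd, $w_1$ starts in $\sigma_2$ while $w_n$ ends in $\sigma_1$, giving $\ell$ odd of the form $\sigma_2^{a_1}\cdots\sigma_1^{a_\ell}$, and conjugating by $\Omega$ once more flips this to start with $\sigma_1$ at the cost of making $k$ odd again — so I record it as in~(\ref{enum: l odd}) with $k \geq 1$ odd. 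The main obstacle I anticipate is precisely this endgame: correctly tracking the parity of $\ell$ against the parity of $k$ and the endpoint generators, and justifying that the interior exponents are $\geq 2$ while allowing the boundary exponents to be handled by cyclic conjugation without breaking positivity or the value of $k$. Everything else is a direct reading of the canonical-form transition graph.
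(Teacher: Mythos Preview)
Your approach is essentially the paper's: invoke Lemma~\ref{lem: strong form}, decompose $w_1\cdots w_n$ into alternating $\sigma_1/\sigma_2$-blocks, and use the transition graph to see that interior blocks have exponent $\geq 2$. The endgame you correctly flag as the obstacle has a clean resolution that you are circling but not landing on. When $k$ is even, $\Omega^k$ is \emph{central} (since $\Omega^2$ generates the center of $\Braid{3}$), so one cyclically conjugates the trailing $\sigma_1$-run around to the front and passes it straight through $\Omega^k$, merging it with the leading $\sigma_1$-run; this drops $\ell$ from odd to even and lands in case~(\ref{enum: l even}), with new first exponent $a_1 + a_\ell \geq 2$ automatically and new last exponent $a_{\ell-1} \geq 2$ since it was formerly interior. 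When $k$ is odd, one does not conjugate by $\Omega$ but instead uses the relation $\Omega^k \sigma_2 = \sigma_1 \Omega^k$ (valid since $\Omega$ swaps the generators and $\Omega^{k-1}$ is central) to pull the \emph{leading} $\sigma_2$-block through $\Omega^k$ as a block of $\sigma_1$'s, then cyclically conjugates it to the end where it merges with the trailing $\sigma_1$-run; this again drops $\ell$ by one and lands in case~(\ref{enum: l odd}), or in case~(\ref{enum: one sig}) if no $\sigma_2$'s remain.

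Two small corrections. First, conjugation by any power of $\Omega$ never changes the exponent $k$ in canonical form (it only swaps the $\sigma_i$ in the tail), so your justification of $k \geq 0$ in the $n \leq 1$ case is off; the paper instead observes that conjugation preserves $\#$, so $\#w' = \#w \geq 0$, and with $n \leq 1$ this forces $k \geq 0$. Second, in the $k$-odd branch your attempt to ``conjugate by $\Omega$ once more'' is misdirected for the same reason; what you need is the commutation relation $\Omega^k \sigma_2 = \sigma_1 \Omega^k$, which moves letters past $\Omega^k$ without altering $k$.
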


\begin{proof}
Suppose $w$ is conjugate to a braid $w'$ whose canonical form is
$\Omega^k$ or $\Omega^k w_1$. As $\#w = \#w'$, it follows that
$k \geq 0$, and hence we are in case (\ref{enum: one sig}) when
$w' = \Omega^k$.  If $w' = \Omega^k w_1$, we can conjugate by $\Omega$
if necessary to ensure $w_1$ starts with $\sigma_1$ and hence we are
either in case (\ref{enum: one sig}) or (\ref{enum: sig 1 sig 2}).

If instead $w$ is not conjugate to a braid whose canonical form has
$n \leq 1$, then by Lemma~\ref{lem: strong form}, the braid $w$ is
conjugate to a positive $w' = \Omega^k w_1 w_2 \cdots w_{n-1} w_n$
where $n \geq 2$ and $w_n$ ends with $\sigma_1$.  If
$w' = \Omega^k \sigma_1^a$, we are in case (\ref{enum: one sig}), so
we assume $\sigma_2$ also appears in $w_1 w_2 \cdots w_n$.

Because of the allowed transitions shown in Figure~\ref{fig:
  canonical}, every time $\sigma_i$ appears in
$w_1 w_2 \cdots w_{n-1} w_n$ it does so to a proper power, with the
possible exception of the first and last letters (e.g.~if
$w_n = \sigma_2 \sigma_1$).  When $k$ is even, Lemma~\ref{lem: strong
  form} gives that $w_1$ starts with $\sigma_1$, so we move all
$\sigma_1$'s at the end of $w_1 w_2 \cdots w_{n-1} w_n$ around to the
front of $w'$ and commute them through the $\Omega^k$ to end up in
(\ref{enum: l even}).  When $k$ is odd, we know $w_1$ starts with
$\sigma_2$, so we pull all the initial $\sigma_2$'s of
$w_1 w_2 \cdots w_{n-1} w_n$ through the $\Omega^k$ and put the
resulting $\sigma_1$'s at the end of $w'$ to end up in case
(\ref{enum: l odd}) or, if there are no $\sigma_2$' s left, in case
(\ref{enum: one sig}).
\end{proof}

\begin{remark}
  \label{rem: sigma pows}
  If a positive $w \in \Braid{3}$ is conjugate to $\sigma_1^a$ for $a
  \geq 1$, then $w$ is in fact either $\sigma_1^a$ or $\sigma_2^a$.  In
  the language of \cite{El-RifaiMorton1994}, one sees that
  $\sigma_1^a$ is in its super-summit set since it is unchanged by
  cycling and decycling.  It follows that the canonical form $\Omega^k
  w_1 w_2 \cdots w_{n-1} w_n$ for $w$ must have $k = 0$ and $n \geq
  a$.  Since $\#w = a$, it follows that no $w_i$ is $\sigma_1
  \sigma_2$ or $\sigma_2 \sigma_1$, making the only valid canonical
  forms $\sigma_1^a$ and $\sigma_2^a$.
\end{remark}

\begin{proof}[Proof of Theorem~\ref{thm:goodsign}]
Let $w'$ be the conjugate braid to $w$ given by Lemma~\ref{lem: strong
  form}, whose canonical form is
$w' = \Omega^k w_1 w_2 \cdots w_{n-1} w_n$ where $n \geq 2$ and $w_n$
is ends with $\sigma_1$.  If $k$ is even, Lemma~\ref{lem: strong form}
further specifies that $w_1$ starts with $\sigma_1$.  Since
$w_1 \cdots w_n$ both starts and ends with $\sigma_1$, equation
(\ref{eq: sign details alt}) gives that all entries of $B_{-t}(w)$
have the same sign.  If instead $k$ is odd, then $w_1$ must start with
$\sigma_2$ per Lemma~\ref{lem: strong form}, and again equation
(\ref{eq: sign details alt}) gives that all entries of $B_{-t}(w)$
have the same sign.
\end{proof}

\begin{proof}[Proof of Corollary~\ref{cor:tracedef}]
  Since the trace of a matrix whose entries all have the same sign is
  definite, Theorem~\ref{thm:goodsign} reduces us to the case when $w$
  is conjugate to a canonical form $\Omega^k$ or $\Omega^k w_1$.
  Conjugating by $\Omega$, we can further assume $w_1$ is either
  $\sigma_1$ or $\sigma_2 \sigma_1$.

  When $k$ is even, we have
  $B_{-t}(\Omega^k) = (-t)^{3\ell} \mysmallmatrix{1}{0}{0}{1}$ where
  $k = 2 \ell$ by (\ref{eq:burauvals}).  We now calculate that
  \[
  \tr\big(B_{-t}(\Omega^k)\big) = 2 (-t)^{3\ell} \quad
  \tr\big(B_{-t}(\Omega^k \sigma_1)\big) = (-t)^{3\ell}(t + 1) \quad
  \tr\big(B_{-t}(\Omega^k \sigma_1 \sigma_2)\big) = (-t)^{3\ell} t
  \]
  all of which are definite.

  When instead $k$ is odd, say $k = 2 \ell + 1$, we have
  $B_{-t}(\Omega^k) = (-t)^{3\ell} \mysmallmatrix{0}{t}{-t^2}{0}$ and
  \[
  \tr\big(B_{-t}(\Omega^k)\big) = 0 \quad
  \tr\big(B_{-t}(\Omega^k \sigma_1)\big) = (-t)^{3\ell}(t + 1) \quad
  \tr\big(B_{-t}(\Omega^k \sigma_1 \sigma_2)\big) = (-t)^{3\ell} t
  \]
  which are again all definite, proving the corollary.
\end{proof}

\begin{remark} \label{R:nice-pres}
  One can make this story more obviously symmetric between $i = 1$ and
  $i = 2$ by setting $u^2 = -t$ and conjugating by
  $\mysmallmatrix{u}{0}{0}{1}$, which results in
  \[
    \sigma_1 \mapsto  \matr{u^2}{u}{0}{1} \quad \sigma_2 \mapsto \matr{1}{0}{-u}{u^2} \quad
   \sigma_1 \sigma_2 \mapsto  \matr{0}{u^3}{-u}{u^2} \quad \sigma_2 \sigma_1 \mapsto  \matr{u^2}{u}{-u^3}{0}
  \]
  as well as $\Omega \mapsto u^3 \mysmallmatrix{0}{1}{-1}{0}$.
  These are interchanged by conjugating by $E = \mysmallmatrix{0}{1}{-1}{0}$,
  and one also has $E V_i = E^{-1} V_i = V_{3 - i}$ where now $V_i$ is
  defined in terms of $u $ rather than $t$.
\end{remark}

\section{Properties of the spectral radius}
\label{sec: spec}

We begin with a simple observation:
\begin{lemma}
  \label{lem: obs}
  Suppose $\alpha \in \C^\times$ and $w \in \Braid{3}$.  If $\alpha$ is
  a root of $\Delta_\what$, then $1$ is an eigenvalue of
  $B_\alpha(w) \in \GL{2}{\C}$.  The converse is true provided
  $\alpha \neq e^{\pm 2 \pi i/3}$.  If $w$ is a positive braid, these
  claims hold for $\alpha = 0$ as well.
\end{lemma}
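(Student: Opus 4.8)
The plan is to unwind the relationship between roots of $\Delta_{\what}$ and the eigenvalue $1$ of $B_\alpha(w)$ via formula~\eqref{eq:alex}. First I would recall that for $\alpha \in \C^\times$, the matrix $B_\alpha(w)$ is a well-defined element of $\GL{2}{\C}$, and that $1$ is an eigenvalue of $B_\alpha(w)$ precisely when $\det\big(B_\alpha(w) - 1\big) = 0$. By~\eqref{eq:alex} we have $\det\big(B_t(w) - 1\big) = (t^2 + t + 1)\,\Delta_{\what}(t)$ as an identity in $\Z[t^{\pm 1}]$, hence this holds after evaluating at any $t = \alpha \in \C^\times$. Therefore $1$ is an eigenvalue of $B_\alpha(w)$ if and only if $(\alpha^2 + \alpha + 1)\,\Delta_{\what}(\alpha) = 0$.

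From this the stated implications are immediate. If $\alpha$ is a root of $\Delta_{\what}$ (note $\alpha \neq 0$ by hypothesis, and a genuine root makes sense since $\Delta_{\what}$ is only ambiguous up to units $\pm t^n$, which don't affect nonzero roots), then the product vanishes and $1$ is an eigenvalue of $B_\alpha(w)$. Conversely, if $1$ is an eigenvalue of $B_\alpha(w)$, then $(\alpha^2 + \alpha + 1)\,\Delta_{\what}(\alpha) = 0$; provided $\alpha^2 + \alpha + 1 \neq 0$, i.e.\ $\alpha \neq e^{\pm 2\pi i/3}$, we may divide and conclude $\Delta_{\what}(\alpha) = 0$.

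For the case $\alpha = 0$ when $w$ is a positive braid, I would split into two subcases according to Theorem~\ref{thm: burau pos}. If $w = \sigma_i^n$, then $\what$ is a $(2,n)$-torus link (or unlink-type degenerate case) and $\Delta_{\what} = 0$ by the convention stated in Section~\ref{sec: alex props}, so $0$ counts as a root; on the other side, $B_0(\sigma_1^n) = \mysmallmatrix{0}{1}{0}{1}^{?}$ — more carefully, $B_0(\sigma_1) = \mysmallmatrix{0}{1}{0}{1}$ and $B_0(\sigma_2) = \mysmallmatrix{1}{0}{0}{0}$ are well-defined, and one checks directly that $1$ is an eigenvalue of $B_0(\sigma_i^n)$ (indeed of $B_0(w)$ for any positive $w$, since $t$ need not divide the entries when $w$ is a single-generator power). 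If instead $w \neq \sigma_i^n$, then $B_0(w)$ is still well-defined (the entries of $B_t(w)$ are genuine polynomials in $t$, no negative powers, for positive $w$), and evaluating the polynomial identity $\det\big(B_t(w) - 1\big) = (t^2+t+1)\Delta_{\what}(t)$ at $t = 0$ gives $\det\big(B_0(w) - 1\big) = \Delta_{\what}(0)$. By Theorem~\ref{thm: burau pos}, $\Delta_{\what}$ is monic of degree $\#w - 2 \geq 1$ and, as shown in the proof of that theorem, has constant term... — one should check from~\eqref{eq: burau det} whether $\Delta_{\what}(0)$ can be zero. If $\Delta_{\what}(0) \neq 0$ then $0$ is not a root and $1$ is not an eigenvalue, consistently; if $\Delta_{\what}(0) = 0$ then $0$ is a root (again by the convention, or just genuinely) and $1$ is an eigenvalue. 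The main obstacle, such as it is, will be keeping the bookkeeping at $\alpha = 0$ and $\alpha = e^{\pm 2\pi i /3}$ straight — these are exactly the places where the ``up to units'' ambiguity of $\Delta$ and the extra factor $t^2 + t + 1$ interact — but the polynomial identity~\eqref{eq:alex} does all the real work, so the argument should be short.
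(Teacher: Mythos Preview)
Your first two paragraphs are exactly the paper's proof: rewrite~\eqref{eq:alex} as $\det(B_t(w)-1)=(t^2+t+1)\Delta_{\what}(t)$, evaluate at $\alpha\in\C^\times$, and read off both implications, the converse requiring $\alpha^2+\alpha+1\neq 0$.

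Your treatment of $\alpha=0$ is correct in spirit but much more elaborate than necessary. The paper dispatches it in one sentence: when $w$ is positive, every entry of $B_t(w)$ is a genuine polynomial in $t$ (no negative powers), so the identity $\det(B_t(w)-1)=(t^2+t+1)\Delta_{\what}(t)$ is an equality of ordinary polynomials and can simply be evaluated at $t=0$. Since $0^2+0+1=1\neq 0$, the point $\alpha=0$ is not one of the exceptional cube roots, and both implications go through exactly as before. There is no need to split into the cases $w=\sigma_i^n$ versus $w\neq\sigma_i^n$, nor to invoke Theorem~\ref{thm: burau pos} or compute $B_0(\sigma_i)$ explicitly. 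Your case analysis does arrive at the right conclusion, but the detour through ``is $\Delta_{\what}(0)$ possibly zero?'' and the trailing computations is superfluous once you observe that the polynomial identity itself already handles everything uniformly.
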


\begin{proof}
By (\ref{eq:alex}), in $\Z[t^{\pm 1}]$ we have
\begin{equation}\label{eq: rewritten}
  \det(B_t(w) - 1) = (t^2 + t + 1) \Delta_\what(t).
\end{equation}
Thus for $\alpha \in \C^\times$, if $\Delta_\what(\alpha) = 0$ then
$\det(B_\alpha(w) - 1) = 0$ and so $1$ is an eigenvalue of
$B_\alpha(w)$.  The partial converse also follows from (\ref{eq:
  rewritten}). Lastly, when $w$ is positive, the entries of $B_t(w)$
are polynomials and so (\ref{eq: rewritten}) can be evaluated at $t =
0$ as well.
\end{proof}

For any braid $w \in \Braid{3}$, we consider the function $\rho_w
\maps \C^\times \to [0, \infty)$ which sends $t \in \C^\times$ to the spectral
radius of the matrix $B_t(w)$; that is, $\rho_w(t)$ is the largest
absolute value of an eigenvalue of $B_t(w)$.  For a positive braid,
the entries of $B_t(w)$ are polynomials in $t$ and hence $\rho_w(0)$
makes sense as well, giving $\rho_w \maps \C \to [0, \infty)$.

\begin{lemma}\label{lem: spec basics}
  For a positive $w \in \Braid{3}$, every root of $\Delta_\what$ in
  $\Dbar$ occurs at a point where $\rho_w = 1$.  That is,
  $(Z_w \cap \Dbar) \subset R_w$ in the language of Section~\ref{sec:
    intro}.  Moreover, the function $\rho_w(t)$ is continuous and
  subharmonic on $\C$; in fact, $\log(\rho_w(t))$ is subharmonic.
\end{lemma}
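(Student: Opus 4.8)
The plan is to establish the three assertions in order: the inclusion $(Z_w \cap \Dbar) \subset R_w$, the continuity of $\rho_w$, and the subharmonicity of $\log \rho_w$ (which implies that of $\rho_w$).

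First, the inclusion is essentially immediate from Lemma~\ref{lem: obs}. If $\alpha \in \Dbar$ is a root of $\Delta_\what$, then either $\alpha \in \C^\times$ or $\alpha = 0$; in the first case Lemma~\ref{lem: obs} gives that $1$ is an eigenvalue of $B_\alpha(w)$, and in the second case the positivity of $w$ lets us apply the last sentence of Lemma~\ref{lem: obs} to reach the same conclusion. Since $\det B_t(w) = (-t)^{\#w}$ by \eqref{eq: B det}, the product of the two eigenvalues of $B_\alpha(w)$ has absolute value $|\alpha|^{\#w} \leq 1$ on $\Dbar$; combined with one eigenvalue being $1$, this forces the other eigenvalue to have absolute value $\leq 1$ as well, so $\rho_w(\alpha) = 1$ and $\alpha \in R_w$. (One should also note $\alpha=0$ only arises when $\Delta_\what \equiv 0$, i.e.\ $w = \sigma_i^n$, where $\rho_w \equiv 1$ on $\Dbar$ anyway, so the inclusion is trivially fine there.)

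Next, for continuity and subharmonicity I would work from the explicit formula for the spectral radius of a $2\times2$ matrix. Writing $\tau(t) = \tr B_t(w)$ and $\delta(t) = \det B_t(w) = (-t)^{\#w}$, both of which are holomorphic in $t$ on $\C$ (using that $w$ is positive so the entries are polynomials), the eigenvalues are $\lambda_{\pm}(t) = \tfrac{1}{2}\big(\tau(t) \pm \sqrt{\tau(t)^2 - 4\delta(t)}\big)$, and $\rho_w(t) = \max(|\lambda_+(t)|, |\lambda_-(t)|)$. Continuity is then clear since the roots of a polynomial (here a degree-$2$ polynomial with holomorphic coefficients) vary continuously. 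For subharmonicity of $\log \rho_w$, the clean route is: $\log \rho_w(t) = \max\big(\log|\lambda_+(t)|, \log|\lambda_-(t)|\big)$, and a maximum of subharmonic functions is subharmonic, so it suffices to show each $\log|\lambda_\pm|$ is subharmonic — equivalently, since $\log|{-}|$ of a holomorphic function is (sub)harmonic away from its zeros and $-\infty$ at them, it suffices that $\lambda_+$ and $\lambda_-$ are locally given by holomorphic (or at worst meromorphic/analytic-branch) functions. The subtlety is the branch point of the square root where $\tau^2 - 4\delta$ vanishes, i.e.\ where $\lambda_+ = \lambda_-$. The standard fix: $\log \rho_w = \max(\log|\lambda_+|, \log|\lambda_-|)$ can be rewritten as $\tfrac12 \log|\lambda_+ \lambda_-| + \tfrac12\big|\log|\lambda_+| - \log|\lambda_-|\big| = \tfrac12 \#w \cdot \log^+\!|t| \cdot(\dots)$ — cleaner still, note $\log \rho_w(t) = \tfrac12\big(\log|\delta(t)| + \max(2\log|\lambda_+/\!\sqrt\delta|,\,\dots)\big)$; but the most robust argument is that for any matrix-valued holomorphic $A(t)$, $\log\rho(A(t)) = \lim_{n} \tfrac1n \log\|A(t)^n\|$, a decreasing limit (by submultiplicativity applied along powers of a fixed $n$) of the plurisubharmonic functions $\tfrac1n\log\|A(t)^n\|$ (each subharmonic since $\|A(t)^n\|$ is a max of moduli of holomorphic functions, e.g.\ entries, and $\log$ of such is subharmonic), hence subharmonic as a decreasing limit of subharmonic functions, once one checks it is not identically $-\infty$ — which holds since $\rho_w \geq |\det B_t(w)|^{1/2} = |t|^{\#w/2} > 0$ off $t=0$, and $\rho_w(0) \geq$ the modulus of the (polynomial) entry $\tr B_0(w)$ divided by $2$ when that is nonzero, with the degenerate case $w = \sigma_i^n$ handled separately.

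The main obstacle is the branch point of the eigenvalues: one must argue carefully that $\log \rho_w$ remains subharmonic across the locus $\{\tau(t)^2 = 4\delta(t)\}$ where $\lambda_+$ and $\lambda_-$ coalesce and the naive holomorphic-branch description fails. I expect the Fekete/subadditive limit description $\log\rho_w(t) = \lim_n \tfrac1n\log\|B_t(w)^n\|$ to be the cleanest way around this, since it sidesteps branch cuts entirely and directly exhibits $\log\rho_w$ as a decreasing limit of subharmonic functions; continuity of $\rho_w$ itself then follows either from the $2\times2$ eigenvalue formula directly or from the general fact that the spectral radius is upper semicontinuous always and lower semicontinuous here because the nonzero determinant pins down the product of eigenvalues. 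The remaining bookkeeping — verifying $\log\|B_t(w)^n\|$ is subharmonic (it is a maximum over unit vectors $v$ of $\log\|B_t(w)^n v\|$, each of which is $\log$ of the modulus of a vector of polynomials, hence subharmonic), and that the limit is not $\equiv -\infty$ — is routine.
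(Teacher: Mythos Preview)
Your argument is correct. The inclusion $(Z_w \cap \Dbar) \subset R_w$ is handled exactly as in the paper, and for continuity the paper does the same thing---writes the eigenvalues via the quadratic formula---but is more explicit about the discriminant locus $d(w) = \{t : \tau(t)^2 = 4\delta(t)\}$: away from this finite set one has locally holomorphic branches $\lambda_\pm$, and at points of $d(w)$ both eigenvalues coalesce to $\tau(t)/2$, so continuity is immediate. Your ``roots vary continuously'' is the same idea, less unpacked.

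For subharmonicity the routes genuinely diverge. The paper simply cites Vesentini's theorems that $\rho(A(t))$ and $\log \rho(A(t))$ are subharmonic for any holomorphic matrix-valued function $A(t)$. Your Gelfand-formula argument---writing $\log \rho_w(t)$ as a decreasing limit along geometric subsequences of the subharmonic functions $\tfrac{1}{n}\log\|B_t(w)^n\|$---is in effect a self-contained proof of Vesentini's result in the $2\times 2$ case, and is essentially the alternative the paper alludes to in passing (specializing the Lyapunov-exponent lemma to $\mu$ a point mass at $w$). The citation is cleaner; your approach has the virtue of being elementary and not invoking an external reference, at the cost of the extra bookkeeping you flagged (the monotone subsequence, the not-identically-$-\infty$ check). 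Your initial attempt via $\max(\log|\lambda_+|,\log|\lambda_-|)$ does run into the branch-point issue you noticed, so pivoting to the Gelfand limit was the right call.
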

Taking the domain of $\rho_w$ to be $\C^\times$ rather than $\C$, the
same holds for arbitrary braids.

\begin{proof}[Proof of Lemma~\ref{lem: spec basics}]
First, we show $(Z_w \cap \Dbar) \subset R_w$.  By
Lemma~\ref{lem: obs}, when $t \in \C$ is a root of $\Delta_\what$ then
1 must be an eigenvalue of $B_w(t)$.  When $1$ is an eigenvalue, the
only other eigenvalue will be $\det B_w(t) = (-t)^{\#w}$ where
$\#w \geq 0$.  When the root $t$ is in $\Dbar$, we have
$\abs{t^{\#w}} \leq 1$ and hence $\rho_w(t) = 1$ as desired.

That $\rho_w(t)$ and $\log(\rho_w(t))$ are subharmonic follows directly from Gelfand's formula 
$\rho_w(t) = \lim_n \Vert B_t(w)^n \Vert^{1/n}$ (see also
Theorems~1 and~$1'$ of \cite{Vesentini1970} and 
Lemma~\ref{lem: weak sub} for a more general statement)
so it remains to show that $\rho_w(t)$ is continuous.  
The characteristic polynomial of $B_t(w)$ is
\begin{equation}
  \label{eq: char poly}
  \lambda^2 - \tr\big(B_t(w)\big) \lambda + \det\big(B_t(w)\big) =
  \lambda^2 - \tr\big(B_t(w)\big) \lambda + (-t)^{\#w}.
\end{equation}
If we set $p(t) = \tr\big(B_t(w)\big)$, which is just a
polynomial in $t$, then
\begin{equation}
  \label{eq: eigens}
  \lambda = \frac{p(t) \pm \sqrt{p(t)^2 - 4(-t)^{\#w}}}{2}.
\end{equation}
If the discriminant $p(t)^2 - 4(-t)^{\#w}$ is identically $0$,
then $\rho_w(t) = \abs{\lambda} = \abs{p(t)}/2 = \abs{t}^{\#w/2}$
which is continuous.  So we assume the
discriminant is a nonzero polynomial and hence the set
\begin{equation}
  \label{eq:disc}
  d(w) = \setdef{t}{p(t)^2 - 4(-t)^{\#w} = 0}
\end{equation}
is finite.

For a small enough open set $U$ disjoint from $d(w)$, we can choose
a branch cut for the square root function so that
$g(t) = \sqrt{p(t)^2 - 4(-t)^{\#w}}$ is well-defined and
holomorphic on $U$.  In particular:
\[
  \rho_w(t) = \frac{1}{2}\max\Big( \abs{p(t) + g(t)}, \
  \abs{p(t) - g(t)} \Big)
\]
is continuous on $U$ as needed.  Continuity of $\rho_w(t)$ at the
points in $d(w)$ follows since any $t_i \to t_\infty \in d(w)$ will
have all choices of corresponding $\lambda_i$ converging to
$p(t_\infty)/2$.
\end{proof}

\begin{remark}
  Note that the above proof shows that $\rho_w(t) = \abs{t}^{\#w/2}$
  for $t \in d(w)$.  Thus on $\D$ where $\abs{t} < 1$, the set $d(w)$
  is disjoint from $R_w$.
\end{remark}

\section{Behavior on the unit circle}
\label{sec: on circle}

We now turn to studying the spectral radius function $\rho_w(t)$ on
the unit circle, starting with:
\begin{lemma}
  \label{lem:compact}
  Let $w \in \Braid{3}$.  For all $t$ on the unit circle, the spectral
  radius $\rho_w(t) \geq 1$.  If $w \in \cA_R$, then $\rho_w(t) = 1$.
  Moreover, if the mapping class induced by $w$ is pseudo-Anosov, then
  $\rho_w(-1) > 1$ is its stretch factor and also the maximum value of
  $\rho_w$ on the unit circle.  If $w$ is not pseudo-Anosov, then
  $\rho_w(t)$ is identically $1$ on the unit circle.
\end{lemma}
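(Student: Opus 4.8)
The plan is to exploit Squier's Hermitian form from Section~\ref{sec: squier} together with the determinant formula $\det B_t(w) = (-t)^{\#w}$. First I would handle the bound $\rho_w(t) \geq 1$ for $|t| = 1$. Since $w \in \Braid{3}$ is arbitrary here, write $w = u v^{-1}$ or more simply note: on the unit circle $|\det B_t(w)| = |(-t)^{\#w}| = 1$, so the two eigenvalues $\lambda_1, \lambda_2$ of $B_t(w)$ satisfy $|\lambda_1 \lambda_2| = 1$; hence $\max(|\lambda_1|, |\lambda_2|) \geq 1$, which is exactly $\rho_w(t) \geq 1$. This takes care of the first sentence immediately, with no need for Squier's form.

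Next, for $t \in \cA_R$: by Lemma~\ref{lem: sig on circle} the image of $B_t$ is conjugate into $\mathrm{U}(2)$, so $B_t(w)$ is conjugate to a unitary matrix and all its eigenvalues have absolute value $1$; thus $\rho_w(t) = 1$. (I note the statement says $w \in \cA_R$ but surely means $t \in \cA_R$.) For the pseudo-Anosov case: by Proposition~\ref{prop: braid trichotomy}\eqref{item: pA}, if the mapping class of $w$ is pseudo-Anosov then $A = B_{-1}(w) \in \SL{2}{\Z}$ has $|\tr A| > 2$, and the stretch factor equals the spectral radius of $A = B_{-1}(w)$, which is $\rho_w(-1) > 1$. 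To see $\rho_w(-1)$ is the \emph{maximum} of $\rho_w$ on the circle, I would argue as follows: for $t \in \cA_R$ we already have $\rho_w(t) = 1 < \rho_w(-1)$, so it suffices to bound $\rho_w$ on $\cAbar_L$. On $\cA_L$, by Lemma~\ref{lem: sig on circle} the form $H_s$ has signature $(1,1)$, so $B_t(w)$ lies in a conjugate of $\mathrm{U}(1,1) \cong \SL{2}{\R}$ (up to scalars); combined with $|\det B_t(w)| = 1$, the eigenvalues are either both on the unit circle (elliptic/parabolic case, $\rho_w = 1$) or a real reciprocal pair $\{\lambda, \lambda^{-1}\}$ (loxodromic case). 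In the loxodromic case $\rho_w(t) = |\lambda|$ where $|\lambda| + |\lambda|^{-1} = |\tr B_t(w)|$. Now $\tr B_t(w)$ is a polynomial $p(t)$, and by Corollary~\ref{cor:tracedef} the polynomial $p(-t)$ is definite; hence for $t = e^{i\theta}$ with $\theta$ real, $|p(t)| = |\sum_k c_k t^k|$ is maximized over $|t| = 1$ at the point where all terms align, and since $p(-t)$ has coefficients of one sign, $|p(t)|$ on the circle is maximized at $t = -1$. Therefore $|\tr B_t(w)| \leq |\tr B_{-1}(w)| = |\tr A|$ for all $t$ on the circle, and since the map $r \mapsto r + r^{-1}$ is monotincreasing for $r \geq 1$, we get $\rho_w(t) \leq \rho_w(-1)$ throughout.

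Finally, the case where $w$ is not pseudo-Anosov: by Proposition~\ref{prop: braid trichotomy}, $w$ is then periodic or reducible, so $A = B_{-1}(w)$ has $|\tr A| \leq 2$. By the definiteness of $p(-t)$ (Corollary~\ref{cor:tracedef}) and the circle-maximality argument just given, $|\tr B_t(w)| \leq |\tr A| \leq 2$ for all $t$ on the unit circle. Combined with $|\det B_t(w)| = 1$, the characteristic equation $\lambda^2 - p(t)\lambda + (-t)^{\#w} = 0$ from \eqref{eq: char poly} forces both eigenvalues onto the unit circle (a degree-2 polynomial with $|\text{constant term}| = 1$ and $|\text{middle coeff}| \leq 2$, after normalizing the leading root product to modulus $1$, has both roots on $S^1$), so $\rho_w(t) = 1$ identically on the circle. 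I expect the main obstacle to be the clean reduction on $\cA_L$ to a reciprocal real eigenvalue pair — one must be careful that $B_t(w)$ preserving an indefinite Hermitian form plus having $|\det| = 1$ genuinely pins down the eigenvalue structure; the cleanest route is probably to rescale $B_t(w)$ by a unit scalar to land in $\mathrm{SU}(1,1)$ and quote the standard elliptic/parabolic/loxodromic trichotomy there, then track how $|\tr|$ controls $\rho_w$ via $r + r^{-1} = |\tr|$.
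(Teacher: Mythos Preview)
Your argument is correct and takes a genuinely different route from the paper. The first two parts---$\rho_w \geq 1$ on $S^1$ via the determinant, and $\rho_w = 1$ on $\cA_R$ via the unitary structure from Lemma~\ref{lem: sig on circle}---coincide with the paper's proof. For the remaining claims, however, the paper invokes an external topological entropy bound of Band and Boyland, namely $e^{h(\phi_w)} \geq \sup_{|t|=1} \rho_w(t)$, and then reads off the three Nielsen--Thurston cases: in the periodic and reducible cases $h(\phi_w)=0$ forces $\rho_w\equiv 1$, and in the pseudo-Anosov case the stretch factor equals $\rho_w(-1)$ by Proposition~\ref{prop: braid trichotomy}, giving equality. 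Your approach instead stays internal to the paper: Corollary~\ref{cor:tracedef} shows $\tr B_{-t}(w)$ is definite, so $|\tr B_t(w)| \leq |\tr B_{-1}(w)|$ for all $|t|=1$; combined with the $\mathrm{U}(1,1)$ structure on $\cA_L$, this controls $\rho_w$ via $\rho + \rho^{-1} = |\tr|$ in the loxodromic case. This is a nice self-contained alternative that avoids the Band--Boyland reference entirely.

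One caution: your parenthetical claim that a monic quadratic with constant term of modulus $1$ and middle coefficient of modulus at most $2$ must have both roots on $S^1$ is false in general (for instance $\lambda^2 - \tfrac{3i}{2}\lambda + 1$ has roots $2i$ and $-i/2$). You correctly identify the fix in your closing sentence: on $\cA_L$, rescaling $B_t(w)$ by a unit scalar lands in $\mathrm{SU}(1,1)$, whose elements have eigenvalues of the form $e^{\pm i\theta}$, $\pm 1$, or $\pm e^{\pm s}$, and only the last type has $|\tr|>2$. Thus $|\tr|\leq 2$ genuinely forces $\rho_w=1$ there, and continuity of $\rho_w$ (Lemma~\ref{lem: spec basics}) handles the endpoints $\zeta_3,\overline{\zeta}_3$. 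With that correction made explicit, the proof is complete.
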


\begin{proof}
When $t \in \C$ with $\abs{t} = 1$, each
$\abs{\det\big(B_t(\sigma_i)\big)} = 1$, and hence $\det(B_t(w))$ is a unit
complex number as well.  As $\det B_t(w)$ is also the product of the
eigenvalues of $B_t(w)$, it follows that at least one eigenvalue is
outside the \emph{open} unit disk and hence $\rho(B_t(w)) \geq 1$.  From
Lemma~\ref{lem: sig on circle}, when $|\theta| < \frac{2 \pi}{3}$, the
matrix $B_t(w)$ is conjugate into $\mathrm{U}(2)$; in particular, the
spectral radius $\rho(B_t(w))$ is $1$. The case of
$\theta = \pm \frac{2 \pi}{3}$ then follows by continuity of
$\rho(B_t(w))$ as a function of $t$.

Let $\phi_w$ be the mapping class that the braid $w$ induces on the
3-punctured disc $D$. Let $h(\phi_w)$ be the topological entropy of
$\phi_w$.  By equation (1-1) of \cite{BandBoyland2007}, we have
\begin{equation}
  \label{eq: entopy bound}
  e^{h(\phi_w)} \geq \sup\setdef{\rho_w(t)}{\mbox{$|t| = 1$}}.
\end{equation}
We know that $\phi_w$ is either finite order, reducible, or
pseudo-Anosov.  In the first case, $h(\phi_w) = 0$ and so (\ref{eq:
  entopy bound}) means that $\rho_w(t) \equiv 1$ on the unit circle.
For reducible $\phi_w$, the only possible reducing system is a single
curve surrounding a pair of punctures which is invariant under
$\phi_w$. The two resulting pieces are basically 3-punctured spheres
whose mapping class group is finite; hence $h(\phi_w) = 0$ in this
case and again $\rho_w(t) \equiv 1$.  Lastly, if $\phi_w$ is
pseudo-Anosov, the stretch factor $\exp(h(\phi_w))$ is
$\rho_w(-1) > 1$ by Proposition~\ref{prop: braid
  trichotomy}(\ref{item: pA}). So we have equality in (\ref{eq: entopy
  bound}), with $\rho_w$ achieving its max on the unit circle at $-1$.
\end{proof}

\begin{figure}
  \centering
  \begin{tikzoverlay}[width=0.7\textwidth]{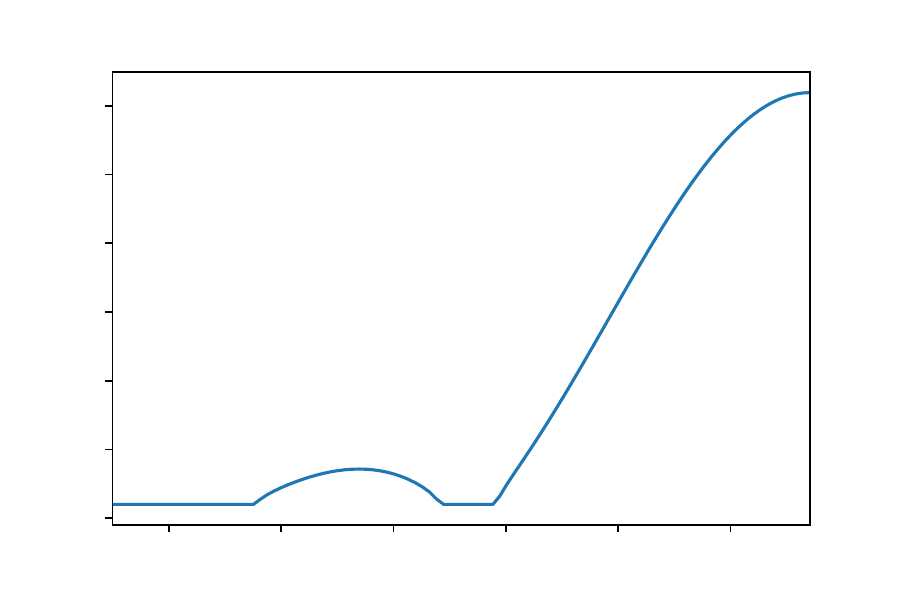}
  \begin{scope}[font=\scriptsize]
    \draw (18.741983, 6.712963) node[below] {$2.0$};
    \draw (31.225948, 6.712963) node[below] {$2.2$};
    \draw (43.709914, 6.712963) node[below] {$2.4$};
    \draw (56.193880, 6.712963) node[below] {$2.6$};
    \draw (68.677845, 6.712963) node[below] {$2.8$};
    \draw (81.161811, 6.712963) node[below] {$3.0$};
    \draw (10.879630, 9.094316) node[left] {$0$};
    \draw (10.879630, 16.728796) node[left] {$5$};
    \draw (10.879630, 24.363277) node[left] {$10$};
    \draw (10.879630, 31.997757) node[left] {$15$};
    \draw (10.879630, 39.632237) node[left] {$20$};
    \draw (10.879630, 47.266718) node[left] {$25$};
    \draw (10.879630, 54.901198) node[left] {$30$};
    \draw (50, 3) node[below, font=\footnotesize] {Angle $\theta$};
    \draw (5, 32) node[left, font=\footnotesize] {$\rho_w(e^{i \theta})$};
    \begin{scope}[shift={(-106.09767338, 9.09431606)},
      xscale=62.41982809, yscale=1.52689606,
      dashed, color=blue!50!black, line cap=round, line width=0.5pt]]
      \coordinate (pi on three) at (1.0471975511965976, -0.2);
      \draw ($2*(pi on three)$) -- +(0, 29)
            node[above=-0.5, color=black] {$\theta = 2\pi/3$};
      \draw (1.9, 1) -- (2.9, 1) node[right, color=black] {$\rho_w = 1$};
    \end{scope}
  \end{scope}
\end{tikzoverlay}
  \caption{The spectral radius $\rho_w$ on part of the unit circle for
    $w = \sigma_1 \sigma_2^2 \sigma_1^3 \sigma_2^3 \sigma_1^9
    \sigma_2^2$.  This example shows that $\rho_w$ can be
    non-monotonic on the circle and that $R_w \cap \partial \Dbar$ can
    be disconnected. In fact, $R_w$ itself is
    disconnected.}
  \label{fig: spec on circle}
\end{figure}

\begin{remark}
  \label{rem: unique max?}
  In the pseudo-Anosov case, we expect that $-1$ is the unique point
  on the unit circle where $\rho_w$ is maximal.  By Theorem~1.1 of
  \cite{BandBoyland2007}, one has $\rho_w(\zeta) < \rho_w(-1)$ for all
  roots of unity $\zeta \neq -1$.  On page 1,347 of
  \cite{BandBoyland2007}, the authors announce that they proved this
  for all $\zeta \neq -1$ on the unit circle by different methods, but
  no subsequent paper has appeared.
\end{remark}

\begin{remark}
  There are examples of positive braids whose spectral radius is
  larger than $1$ for any $t \in \cA_L$; for
  example, $w = \sigma_1 \sigma_2^5$ and $w = \sigma_1^3
  \sigma_2^3$. However, there are positive braids where the region
  $\setdef{\theta}{\rho_w(e^{i \theta}) = 1 }$ is strictly larger than
  this interval, e.g.~$w = \sigma_1^2 \sigma_2^3$.  Even for positive
  braids, the behavior of $\rho_w$ on the unit circle can be
  complicated, as shown in Figure~\ref{fig: spec on circle}.
\end{remark}

\subsection{Signature function}
\label{sec: sig fn}

An important invariant of an oriented link $L$ in $S^3$ is its
\emph{signature function}, which is a step function
$\sigma_L \maps S^1 \to \Z$ with discontinuities only at roots of
$\Delta_L$. Here, $\sigma_L$ is also called the Levine-Tristram or
equivariant signature, and its value $\sigma_L(\omega)$ is called the
$\omega$-signature of $L$ in \cite{GambaudoGhys2005}.  We now list
some key properties of $\sigma_L$; you can find details and further
references in \cite{Conway2021}.  The signature function is defined
using a Seifert matrix $V_L \in M_{n}(\Z)$ for $L$ and then
considering
\[
  W_L(t) = (1 - t)V_L+ (1 - t^{-1}) V_L^t = (1 - t)\big(V_L - t^{-1}V_L^t\big).
\]
Specifically, for $\omega \in S^1$, we define $\sigma_L(\omega)$ to be
the signature of the Hermitian form given by the matrix $W_L(\omega)$.
It turns out that $\sigma_L(1) = 0$ and
$\sigma_L(\omega) = \sigma_L(\omegabar)$ for all $\omega \in S^1$.
The value $\sigma_L(-1)$ is the Murasugi signature, which is often
simply called the \emph{signature} of $L$ and denoted $\sigma(L)$.

Now $\Delta_L$, which is only defined up to multiplication by a unit in
$\Z[t^{\pm 1}]$, is equal to $\det(V_L - t V^t_L)$ or, equivalently, to
$\det(V_L - t^{-1} V^t_L)$. If $\Delta_L \neq 0$, then
any discontinuity of $\sigma_L$ must be at a root of $\Delta_L$ by
\cite[Theorem~2.3]{GilmerLivingston2016}; roughly, a jump in
$\sigma_L$ at $\omega_0$ forces $W_L(\omega_0)$ to have 0 as an
eigenvalue and hence $\Delta_L(\omega_0) = 0$.  If $\omega_0$ is a
root of $\Delta_L$ of multiplicity $d$, then $\sigma_L(\omega)$
changes by at most $2 d$ as $\omega$ passes through $\omega_0$ by the
combination of Theorems~2.1 and~2.3 and Lemma~5.1 of
\cite{GilmerLivingston2016}.  Moreover, for a simple root
$\omega_0 \neq \pm 1$ of $\Delta_L$, the function $\sigma_L$ always
changes by $\pm 2$ \cite[Theorem~2.4]{GilmerLivingston2016}.
Consequently, we view $\sigma_L(\omega)$ as a signed count of roots of
$\Delta_L$ on the counter-clockwise arc from $1$ to $\omega$.  In
particular, one has the following, which goes back to Riley prior to
1983 for knots \cite[Equation~7]{Stoimenow2005} and is Theorem~2.3 in
\cite{GilmerLivingston2016} for links:

\begin{proposition}
  \label{prop: roots on arc}
  Suppose an arc on the upper half of the unit circle has endpoints
  $\omega_1$ and $\omega_2$, where both $\Delta_L(\omega_i) \neq 0$.
  Then the number of roots of $\Delta_L$ in that arc, counted with
  multiplicity, is at least
  $\frac{1}{2}\abs{\sigma_L(\omega_1) - \sigma_L(\omega_2)}$.
  Moreover, if $\Delta_L \neq 0$, the number of roots on the unit
  circle is at least $\abs{\sigma_L(-1)}$.
\end{proposition}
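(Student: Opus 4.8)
The plan is to extract the statement from the signature-jump machinery already recalled in the paragraph preceding Proposition~\ref{prop: roots on arc}. Recall we have established: (i) $\sigma_L$ is a step function $S^1 \to \Z$ whose only discontinuities occur at roots of $\Delta_L$ (when $\Delta_L \neq 0$); (ii) at a root $\omega_0$ of multiplicity $d$, the total jump $\abs{\sigma_L(\omega) - \sigma_L(\omega')}$ for $\omega, \omega'$ near $\omega_0$ on either side is at most $2d$; and (iii) $\sigma_L(\omega) = \sigma_L(\omegabar)$, together with $\sigma_L(0) = 0$ — though the latter is not needed here.

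For the first claim, I would argue as follows. Consider the closed sub-arc $A$ of the upper half circle with endpoints $\omega_1, \omega_2$, where by hypothesis $\Delta_L(\omega_i) \neq 0$. Since $\Delta_L$ is a nonzero Laurent polynomial, it has finitely many roots, so $\sigma_L$ restricted to $A$ is piecewise constant with finitely many jump points $\omega_0^{(1)}, \dots, \omega_0^{(r)}$ in the interior of $A$, each a root of $\Delta_L$ of some multiplicity $d_j$. Writing $\sigma_L(\omega_2) - \sigma_L(\omega_1)$ as a telescoping sum of the successive jumps and applying the triangle inequality, we get $\abs{\sigma_L(\omega_1) - \sigma_L(\omega_2)} \leq \sum_j (\text{jump at } \omega_0^{(j)}) \leq \sum_j 2 d_j$. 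But $\sum_j d_j$ is exactly the number of roots of $\Delta_L$ in the open arc, counted with multiplicity (roots at the endpoints are excluded since $\Delta_L(\omega_i) \neq 0$, and no other roots on $A$ contribute a jump). Rearranging gives the claimed lower bound $\tfrac12 \abs{\sigma_L(\omega_1) - \sigma_L(\omega_2)}$ on the number of roots in the arc.

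For the second claim, take $\omega_1 \to 1^+$ and $\omega_2 = -1$ along the upper half circle, say $\omega_1 = e^{i\epsilon}$ for small $\epsilon > 0$ with no root of $\Delta_L$ in $(1, e^{i\epsilon})$, which is possible as roots are isolated. Applying the first claim to the arc from $\omega_1$ to $-1$ bounds the number of roots on the \emph{open} upper half circle (excluding $1$ and $-1$) below by $\tfrac12 \abs{\sigma_L(\omega_1) - \sigma_L(-1)} = \tfrac12 \abs{\sigma_L(1^+) - \sigma_L(-1)}$. One then has to relate $\sigma_L(1^+)$ to $0$: since $\sigma_L(0)=0$ and $\sigma_L$ has no jump between $0$ and its value just past $1$ on the circle (the only possible jump there being at $\omega=1$ itself), $\sigma_L(1^+)$ — more precisely the one-sided limit as we approach $1$ from the upper half — equals $0$; alternatively one can invoke $\sigma_L(-1) = \sigma_L$ of the conjugate and symmetrize. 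This gives at least $\tfrac12\abs{\sigma_L(-1)}$ roots on the open upper half circle; by the conjugation symmetry $\sigma_L(\omega) = \sigma_L(\omegabar)$ the roots on the lower half circle are the complex conjugates of those on the upper half, doubling the count to at least $\abs{\sigma_L(-1)}$ roots on all of $S^1$ (the real points $\pm 1$ only help). The main obstacle is bookkeeping the boundary/endpoint and one-sided-limit conventions correctly — in particular pinning down that the jump at $\omega=1$ does not secretly contribute, and making the symmetrization across the real axis rigorous without double-counting roots at $\pm 1$; the underlying inequalities are entirely supplied by the cited results of Gilmer--Livingston.
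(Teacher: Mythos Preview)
The paper does not supply its own proof of this proposition; it is quoted from the literature (Riley via \cite{Stoimenow2005} for knots, and Theorem~2.3 of \cite{GilmerLivingston2016} for links), with the preceding paragraph recording the jump bounds that make it plausible. Your argument for the first assertion is the standard one and is correct: telescope $\sigma_L(\omega_2) - \sigma_L(\omega_1)$ over the finitely many discontinuities in the arc and bound each jump by $2d_j$.

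For the second assertion there is a genuine gap, which you flag but do not close. Two issues. First, you set $\omega_2 = -1$, but the first claim requires $\Delta_L(\omega_2) \neq 0$, and $-1$ can be a root of $\Delta_L$. Second, you assert that $\sigma_L(1^+) = 0$; however, for a link with $\mu_L > 1$ components the point $1$ is a root of $\Delta_L$ and the one-sided value $\sigma_L(1^+)$ need not vanish---it is the quantity $\mathrm{jump}^+(1)$ of \cite{GilmerLivingston2016}, bounded only by $\mu_L - 1$. (The symmetry $\sigma_L(\omega) = \sigma_L(\omegabar)$ forces $\sigma_L(1^+) = \sigma_L(1^-)$, so the jump \emph{across} $1$ is zero, but that says nothing about the half-jump from the value $\sigma_L(1)=0$ to $\sigma_L(1^+)$.)

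The repair is to place both endpoints strictly inside, at $e^{i\epsilon}$ and $e^{i(\pi - \epsilon)}$, obtaining at least $\tfrac{1}{2}\,|\sigma_L(1^+) - \sigma_L((-1)^+)|$ roots on the open upper half-circle. One then needs the \emph{half-jump} bounds $|\sigma_L(1^+) - \sigma_L(1)| \leq d_1$ and $|\sigma_L((-1)^+) - \sigma_L(-1)| \leq d_{-1}$, where $d_{\pm 1}$ are the root multiplicities at $\pm 1$; these follow from Theorem~2.1 of \cite{GilmerLivingston2016} and are exactly what the paper itself invokes later in the proof of Corollary~\ref{cor: 2/3 of roots}. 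Since $\sigma_L(1) = 0$, the triangle inequality gives
\[
2\cdot\#\{\text{roots on the open upper arc}\} \;\geq\; |\sigma_L(-1)| - d_1 - d_{-1},
\]
and doubling via the conjugation symmetry and then adding back the $d_1 + d_{-1}$ roots at $\pm 1$ yields the desired lower bound $|\sigma_L(-1)|$.
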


When $L = \what$ for $w \in \Braid{3}$, Gambaudo and Ghys
\cite{GambaudoGhys2005} calculate $\sigma_L(e^{i \theta})$ with
$0 \leq \theta < \frac 2 3 \pi$ in terms of the eigenvalues of
$B_{e^{i \theta}}(w)$. In particular, their Corollary~4.4 gives:

\begin{proposition}[\cite{GambaudoGhys2005}]
  \label{prop: sig for 3 braid}
  Suppose $w \in \Braid{3}$ and
  $e^{i \theta}$ for $0 \leq \theta < \frac{2\pi}{3}$ is not a root of
  $\Delta_\what$. Then $\sigma_\what(e^{i \theta})$ differs from
  $-\frac \theta \pi (\# w)$ by at most 2.
\end{proposition}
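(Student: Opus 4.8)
The plan is to deduce this from the work of Gambaudo and Ghys \cite{GambaudoGhys2005}, whose Corollary~4.4 is a statement of exactly this shape; what remains is to recall the mechanism behind it and to reconcile conventions. In their framework, for a braid $w$ and $\omega = e^{i\theta}$ avoiding the finitely many roots of $\Delta_{\what}$, the $\omega$-signature $\sigma_{\what}(\omega)$ is computed from the (reduced) Burau representation: one realizes $\what$ as the boundary of the Seifert surface produced by Seifert's algorithm on the closed-braid diagram, caps this into a $4$-manifold fibering over $S^1$, and applies Wall non-additivity (Novikov additivity together with corner corrections). The upshot is that $\sigma_{\what}(\omega)$ equals, up to a bounded Maslov-index term depending only on the total monodromy $B_\omega(w)$, a telescoping sum of Meyer-type cocycle contributions $\mathrm{Meyer}_\theta\big(B_\omega(g_1\cdots g_{j-1}),\, B_\omega(g_j)\big)$ over the letters $g_j$ of $w$. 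A convenient feature of the $3$-strand case is that all of this lives in $\mathrm{GL}_2$ and is governed by the Squier form of Section~\ref{sec: squier}.

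The feature special to the range $0 \le \theta < 2\pi/3$, i.e.\ $\omega \in \cA_R$, is Lemma~\ref{lem: sig on circle}: there the Squier form $H_s$ is positive definite, so $B_\omega(\Braid{3})$ is conjugate into the \emph{compact} group $\mathrm{U}(2)$. I would use this in two ways. First, the total-monodromy Maslov term is then a bounded function of an element of $\mathrm{U}(2)$ and so contributes $O(1)$; making that bound sharp is what produces the constant $2$. Second, because the partial products $B_\omega(g_1\cdots g_j)$ now stay in the compact group, the Meyer-cocycle sum telescopes up to a bounded error into a sum of per-generator contributions. Each per-generator contribution is governed by the eigenvalue angles of $B_\omega(\sigma_i)$, hence ultimately by $\det B_\omega(\sigma_i) = -\omega$; summing over the $\#w$ letters gives the linear term, and once the branch of the argument is fixed consistently along the arc --- anchored at $\theta = 0$, where $\sigma_{\what}(1) = 0$ since $W_{\what}(1) = 0$, and at $\theta = 2\pi/3$ --- the rate works out to exactly $-1/\pi$ per crossing, so the main term is $-\tfrac\theta\pi\#w$.

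The main obstacle is this bookkeeping rather than any conceptual step: pinning the absolute constant down to $2$ (and not merely ``some constant''), and checking that the drift is exactly $-\theta/\pi$ and not, say, $-(\theta - \pi)/\pi$, $+\theta/\pi$, or $-\theta/(2\pi)$ --- all quantities sensitive to the orientation convention on $\what$, to the choice of reduced versus unreduced Burau, and to the branch cuts hidden inside the Meyer cocycle. Since \cite{GambaudoGhys2005} carries out precisely this computation, in practice I would: quote their Corollary~4.4; verify that their normalization of the $\omega$-signature and their Burau representation agree with the conventions of Section~\ref{sec: background} (in particular that our reduced $2\times 2$ Burau is theirs after the harmless conjugation in Remark~\ref{R:nice-pres}); and record that their bound specializes on $\cA_R$ to the inequality as stated. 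I would also note that the statement is asserted for \emph{every} $w \in \Braid{3}$ --- including non-pseudo-Anosov braids and links $\what$ with two or three components --- and that their formula holds at that generality, so no extra hypothesis is needed.
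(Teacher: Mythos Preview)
Your approach is the same as the paper's: quote Corollary~4.4 of \cite{GambaudoGhys2005}. The paper's argument, however, is a single sentence and contains none of the Meyer-cocycle, Wall-non-additivity, or Maslov bookkeeping you sketch; all of that is internal to \cite{GambaudoGhys2005} and need not be reproduced here.

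There is one small point you miss. Corollary~4.4 of \cite{GambaudoGhys2005} is stated only for $e^{i\theta}$ a \emph{root of unity}, not for arbitrary $\theta \in [0, 2\pi/3)$. The paper closes this gap with the observation already recorded in Section~\ref{sec: sig fn}: the signature function $\sigma_{\what}$ is a step function, constant on each interval of $S^1$ between consecutive roots of $\Delta_{\what}$. Since roots of unity are dense in $S^1$, any $e^{i\theta}$ that is not a root of $\Delta_{\what}$ lies in the same interval as some nearby root of unity, and $\sigma_{\what}$ takes the same value at both. That is the entire content of the paper's proof.
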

We note that $e^{i \theta}$ is assumed to be a root of unity in
Corollary~4.4 of \cite{GambaudoGhys2005}, but the above version follows
from this because $\sigma_\what$ is constant away from the roots of
$\Delta_\what$. 
\begin{corollary}
  \label{cor: arc lower bound}
  Suppose a link $L$ is $\what$ for $w \in \Braid{3}$ and
  $0 \leq \theta_1 < \theta_2 \leq \frac{2 \pi}{3}$.  If $\Delta_L \neq
  0$, then $\Delta_L$ has at least
  \[
    \frac{\theta_2 - \theta_1}{2\pi} \abs{\#w} - 2
  \]
  roots in the arc of $S^1$ joining $e^{i \theta_1}$ to $e^{i
    \theta_2}$.
\end{corollary}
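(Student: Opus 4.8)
The plan is to apply Proposition~\ref{prop: roots on arc} to the arc from $e^{i\theta_1}$ to $e^{i\theta_2}$, using Proposition~\ref{prop: sig for 3 braid} to control the signature at the two endpoints. First I would observe that Proposition~\ref{prop: roots on arc} gives the desired lower bound in terms of $\frac12\abs{\sigma_\what(e^{i\theta_1}) - \sigma_\what(e^{i\theta_2})}$, \emph{provided} that neither $e^{i\theta_j}$ is itself a root of $\Delta_L$. So the first step is to reduce to that case: if $e^{i\theta_1}$ or $e^{i\theta_2}$ happens to be a root, perturb the angle slightly inward (or outward) to a nearby $\theta_j'$ that is not a root --- possible since $\Delta_L$ has finitely many roots --- losing at most finitely many roots in the process; since we are going to absorb an additive constant anyway, a perturbation small enough that $\frac{\theta_2' - \theta_1'}{2\pi}\abs{\#w}$ changes negligibly suffices. (Alternatively, one can simply note the roots at the endpoints only help, and handle the bookkeeping by choosing the slack in the $-2$ term generously.)

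Next, apply Proposition~\ref{prop: sig for 3 braid} at each endpoint: assuming $e^{i\theta_j}$ is not a root of $\Delta_\what$ and $0 \le \theta_j < \frac{2\pi}{3}$, we have $\abs{\sigma_\what(e^{i\theta_j}) + \frac{\theta_j}{\pi}(\#w)} \le 2$. Subtracting the two estimates and using the triangle inequality,
\[
  \abs{\sigma_\what(e^{i\theta_1}) - \sigma_\what(e^{i\theta_2})}
  \ge \frac{\theta_2 - \theta_1}{\pi}\abs{\#w} - 4,
\]
so Proposition~\ref{prop: roots on arc} gives at least $\frac{\theta_2-\theta_1}{2\pi}\abs{\#w} - 2$ roots on the arc. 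One subtlety: the endpoint $\theta_2 = \frac{2\pi}{3}$ is allowed in the corollary but is the boundary of the range in Proposition~\ref{prop: sig for 3 braid}; here I would invoke the remark following Proposition~\ref{prop: sig for 3 braid}, namely that $\sigma_\what$ is locally constant away from roots of $\Delta_\what$, so the estimate at $\theta_2 = \frac{2\pi}{3}$ follows by taking a one-sided limit from angles just below $\frac{2\pi}{3}$ (and using that $e^{i\cdot 2\pi/3} = \zeta_3$ is not a root since $\Delta_L \ne 0$ would need checking, or again simply perturb inward).

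The main obstacle --- really the only place requiring care --- is the interaction between "endpoints are not roots" (needed for Proposition~\ref{prop: roots on arc}) and the precise constant in the final bound. The cleanest route is: shrink the arc to $[\theta_1 + \epsilon, \theta_2 - \epsilon]$ avoiding all roots at the endpoints, apply the two propositions on this slightly smaller arc to get at least $\frac{\theta_2 - \theta_1 - 2\epsilon}{2\pi}\abs{\#w} - 2$ roots, and then let $\epsilon \to 0$; since the number of roots in $[\theta_1,\theta_2]$ is an integer $\ge$ the (continuous in $\epsilon$) right-hand side for all small $\epsilon > 0$, it is $\ge \frac{\theta_2-\theta_1}{2\pi}\abs{\#w} - 2$. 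This also transparently handles the boundary value $\theta_2 = \frac{2\pi}{3}$. Everything else is a direct citation of the two preceding propositions, so the proof is short.
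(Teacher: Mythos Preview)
Your proposal is correct and matches the paper's proof essentially line for line: first handle the case where $\theta_2 < \tfrac{2\pi}{3}$ and neither endpoint is a root by combining Propositions~\ref{prop: sig for 3 braid} and~\ref{prop: roots on arc} via the triangle inequality, then reduce the general case by shrinking to $[\theta_1+\epsilon,\theta_2-\epsilon]$ and sending $\epsilon\to 0$. Your ``cleanest route'' paragraph is exactly the argument the paper gives.
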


\begin{proof} 
First, consider the case where $\theta_2 < \frac{2\pi}{3}$ and neither
$\omega_k = e^{i \theta_k}$ is a root of $\Delta_L$.  By
Proposition~\ref{prop: sig for 3 braid}, we have $\sigma(\omega_k) = -
\frac{\theta_k}{\pi} (\#w) + e_k$, where $\abs{e_k} \leq 2$.  Thus
\begin{equation*}\frac{1}{2}\absm{\big}{\sigma(\theta_2) - \sigma(\theta_1)} =
   \frac{1}{2}\abs{ \frac{-\theta_2 + \theta_1}{\pi} \cdot \# w + e_2 -e_1}
  \geq \frac{\theta_2 - \theta_1}{2 \pi}\abs{\#w} - 2 
\end{equation*}
and so then the result follows from Proposition~\ref{prop: roots on arc}.

For the general case, since $\Delta_L \neq 0$ we can find arbitrarily
small $\epsilon$ so that $\theta'_1 = \theta_1 + \epsilon$ and
$\theta'_2 = \theta_2 - \epsilon$ are in the previous case.  So there
are at least $\frac{\theta_2 - \theta_1 - 2 \epsilon}{2 \pi} \abs{\#w}
- 2$ roots in the arc joining $e^{i \theta'_1}$ to $e^{i \theta'_2}$.
Sending $\epsilon \searrow 0$ then proves the corollary.  
\end{proof}

\begin{corollary}
  \label{cor: 2/3 of roots}
  Suppose a link $L = \what$ for a positive $w \in \Braid{3}$ that is
  not $\sigma_i^n$.  Then $\Delta_L$ has at least
  $r_L := \frac{2}{3}\big(\deg(\Delta_L) - 1\big)$ roots on the arc
  $\cA_R$.  More precisely, suppose $\mu_L$ is the number of
  components of $L$ and let $\lceil r_L \rceil_\odd$ and
  $\lceil r_L \rceil_\even$ denote the smallest odd and even integers,
  respectively, that are at least $r_L$.  If $\mu_L$ is odd, then
  the number of roots in $\cA_R$ is at least
  $\lceil r_L \rceil_\even$.  Conversely, if $\mu_L$ is even, the
  number of roots is at least $\lceil r_L \rceil_\odd$.
\end{corollary}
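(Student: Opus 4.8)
The plan is to apply Corollary~\ref{cor: arc lower bound} with $\theta_1 = 0$ and $\theta_2 = \frac{2\pi}{3}$, so that the arc in question is $\cA_R$, and then to sharpen the resulting estimate using parity information about the signature. Since $w$ is a positive braid that is not $\sigma_i^n$, Theorem~\ref{thm: burau pos} gives $\#w = \deg(\Delta_L) + 2$, and in particular $\Delta_L \neq 0$, so Corollary~\ref{cor: arc lower bound} applies and yields at least $\frac{1}{3}\abs{\#w} - 2 = \frac{1}{3}(\deg \Delta_L + 2) - 2 = \frac{1}{3}\deg\Delta_L - \frac{4}{3}$ roots in $\cA_R$. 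This is weaker than the claimed $r_L = \frac{2}{3}(\deg\Delta_L - 1)$, so the crude signature-jump bound from Proposition~\ref{prop: roots on arc} alone is not enough; the factor of $2$ must come from counting more carefully. The key point is that $\sigma_L$ changes by $\pm 2$ at each simple root off $\{\pm 1\}$ and, more to the point, that $\sigma_\what(1) = 0$ while $\sigma_\what(e^{i\theta})$ is approximately $-\frac{\theta}{\pi}\#w$, so as $\theta$ runs from $0$ to $\frac{2\pi}{3}$ the signature drops by about $\frac{2}{3}\#w$, hence there must be at least that many unit signed jumps — each jump is $\pm 2$ but a net drop of $\frac{2}{3}\#w$ over jumps of size $2$ forces at least $\frac{1}{3}\#w$ roots, which is the same bound again.

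So the real mechanism must instead be: every root $\omega$ of $\Delta_L$ in the \emph{open} arc $\cA_R \setminus \{1\}$ comes with its complex conjugate $\omegabar$, which by Lemma~\ref{lem: sym roots} is also a root and also lies in $\cA_R$; thus roots in $\cA_R$ other than $\pm1$ occur in conjugate pairs, doubling any count obtained on the \emph{upper} half of $\cA_R$. Concretely, I would apply Corollary~\ref{cor: arc lower bound} to the upper half-arc from $\theta_1 = 0$ to $\theta_2 = \frac{2\pi}{3}$ to get at least $\frac{1}{3}\abs{\#w} - 2$ roots there, then multiply by $2$ (using that $1 \notin \cA_R^{\circ}$ is the only fixed point of conjugation on the closed upper half of $\cA_R$, and $\zeta_3$ is excluded) to obtain roughly $\frac{2}{3}\abs{\#w} - 4$ roots in $\cA_R$; rewriting $\#w = \deg\Delta_L + 2$ gives $\frac{2}{3}\deg\Delta_L - \frac{8}{3}$, still a constant short of $r_L = \frac{2}{3}\deg\Delta_L - \frac{2}{3}$. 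The remaining slack is closed by being careful that the two errors $e_k$ in Proposition~\ref{prop: sig for 3 braid} at the endpoints $1$ and $-1/2 + \cdots$ are not independent: at $\theta = 0$ we have the exact value $\sigma_\what(1)$, which for a link with $\mu_L$ components is known (it is $0$ for a knot), killing one of the $\pm 2$ errors, and at $\theta \nearrow \frac{2\pi}{3}$ one controls $\sigma_\what$ near $\zeta_3$ using Lemma~\ref{lem: sig on circle} (the form $H_s$ degenerates there).

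For the refined parity statement, I would use that $\deg\Delta_L$ has a fixed parity determined by $\mu_L$ — for $w\in\Braid3$ positive, $\what$ has $\mu_L \in \{1,2,3\}$ components and $\#w \equiv \mu_L \pmod 2$ by the permutation-parity of the braid, so $\deg\Delta_L = \#w - 2 \equiv \mu_L \pmod 2$ — and that $\Delta_L(1) \neq 0$ forces the total root count on $S^1$, and hence (given the zero-free region results, or directly) the count on $\cA_R$, to have a parity matching $\deg\Delta_L$ minus the number of roots at $-1$ and off $\cA_R$. Pushing the conjugation-pairing argument: all roots in $\cA_R$ other than possibly $1$ and $-1$ pair up, so the count in $\cA_R$ is congruent mod $2$ to $(\text{mult at }1) + (\text{mult at }-1)$; since $-1 \notin \cA_R$ and $1$ is a root only when $\Delta_L(1) = 0$, which happens iff $\mu_L$ is even (for $\mu_L$ odd, e.g.\ a knot, $\Delta_L(1) = \pm 1 \neq 0$), the count in $\cA_R$ is even when $\mu_L$ is odd and odd when $\mu_L$ is even. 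Combining "at least $r_L$" with the correct parity yields the stated $\lceil r_L\rceil_\even$ or $\lceil r_L\rceil_\odd$.

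\textbf{Main obstacle.} The delicate part is bookkeeping the constant: getting from the $\frac{1}{3}\#w - O(1)$ bound of the crude argument to the sharp $\frac{2}{3}(\deg\Delta_L - 1)$, i.e.\ pinning down that the error terms contribute exactly enough to lose only the single "$-1$" and not more. This requires knowing $\sigma_\what(1)$ exactly for each $\mu_L$ and controlling the behavior of $\sigma_\what$ as $\theta \to \frac{2\pi}{3}^-$ (where Gambaudo–Ghys's formula is only stated for $\theta < \frac{2\pi}{3}$ and $H_s$ becomes degenerate); the conjugation-symmetry doubling is what supplies the extra factor of $2$, so the argument is really "count on the upper half, double, then fix parity," and the subtlety is ensuring the doubling doesn't double-count a root at $1$ or drop a root near $\zeta_3$.
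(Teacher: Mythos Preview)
Your overall strategy matches the paper's exactly: bound the number of roots on the \emph{upper} half of $\cA_R$ via Propositions~\ref{prop: roots on arc} and~\ref{prop: sig for 3 braid}, then double using complex conjugation. The ``main obstacle'' you flag dissolves cleanly in the knot case once you use $\sigma_L(1)=0$ exactly (not with a $\pm 2$ error): then the upper-arc count is at least $\tfrac{1}{2}\lvert\sigma_L(e^{i(2\pi/3-\epsilon)})\rvert \geq \tfrac{1}{2}\bigl(\tfrac{2}{3}\#w - 2\bigr) = \tfrac{1}{3}\#w - 1$, and doubling gives $\tfrac{2}{3}\#w - 2 = r_L$ on the nose. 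So the constant works out without further tricks near $\zeta_3$.

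There is, however, a genuine gap in your treatment of $\mu_L>1$ and of the parity claim. You assert that $\Delta_L(1)=0$ iff $\mu_L$ is even; this is false, since $\Delta_L$ vanishes at $1$ to order at least $\mu_L-1$ for any link (so for $\mu_L=3$ one has $\Delta_L(1)=0$). Consequently your parity argument --- inferring the parity of the \emph{actual} root count from the multiplicity at $t=1$ --- is not justified. The paper avoids this by instead tracking the parity of the \emph{lower bound} it constructs: for $\mu_L>1$ it uses $\lvert\sigma_L(\omega_1)\rvert\leq\mu_L-1$ for $\omega_1$ just past $1$ (from \cite{GilmerLivingston2016}), obtains at least $\big\lceil\tfrac{1}{3}\#w - \tfrac{\mu_L+1}{2}\big\rceil$ roots on the open upper arc, doubles, and then adds back the root at $1$ of multiplicity $\geq\mu_L-1$. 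The resulting explicit integer $c_L = 2\lceil\,\cdot\,\rceil + (\mu_L-1)$ visibly has parity opposite to $\mu_L$ and satisfies $r_L\leq c_L<r_L+2$, which yields the $\lceil r_L\rceil_{\mathrm{even}}$ / $\lceil r_L\rceil_{\mathrm{odd}}$ statements directly. You should replace your parity step by this bookkeeping.
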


\begin{proof}
As $w$ is not $\sigma_i^n$, we have $\deg(\Delta_L) = \#w - 2$ from
Theorem~\ref{thm: burau pos}; in particular, $\Delta_L \neq 0$. First
suppose that $L$ is a knot, so that $\Delta_L(1) \neq 0$ as per
Section~\ref{sec: background}. Pick $\epsilon$ so that
$\omega_2 = e^{i(2\pi/3 - \epsilon)}$ is not a root of $\Delta_L$.
Since $\sigma_L(1) = 0$, Proposition~\ref{prop: roots on arc} shows
the number of roots on the open arc from $1$ to $\omega_2$ is at least
$\frac{1}{2} \abs{\sigma_L(\omega_2)}$, which is at least
$\left(\frac{1}{3} - \frac{\epsilon}{2 \pi}\right) \# w - 1$ by
Proposition~\ref{prop: sig for 3 braid}. sending $\epsilon \searrow 0$
shows that there are at least
$\left\lceil \frac{1}{3}\big(\deg(\Delta_L) - 1\big) \right\rceil$
roots on the open arc from $1$ to $\zeta_3$.  Doubling this gives the
promised bound for the number of roots in $\cA_R$.

Now suppose $L$ has $\mu_L > 1$ components. By Lemma~5.1 of
\cite{GilmerLivingston2016}, $\Delta_L$ has a root of order at least
$\mu_L - 1$ at $1$.  Let $\omega_1 = e^{i \epsilon}$ and take
$\omega_2$ as before, choosing $\epsilon$ so neither $\omega_i$ is a
root of $\Delta_L$ and so there is no root of $\Delta_L$ between $1$
and $\omega_1$.  Then $\sigma_L(\omega_1)$ is equal to the quantity
$\mathrm{jump}^+(1)$ from \cite{GilmerLivingston2016}, and hence
$\abs{\sigma_L(\omega_1)} \leq \mu_L - 1$ by Theorem~2.1 there.
Arguing as before, we get there are at least
$\left\lceil \frac{1}{3} \#w - \frac{\mu_L + 1}{2} \right\rceil$ roots
in the open arc from $1$ to $\zeta_3$.  Doubling this and including
the multiplicity of the root at $1$ gives that the number of roots in
$\cA_R$ is at least
\begin{equation*}
 c_L := 2 \left\lceil \frac{1}{3}\big( \deg(\Delta_L) + 2\big) - \frac{\mu_L
     + 1}{2} \right\rceil + \mu_L - 1.
\end{equation*}
Using that $x \leq \lceil x \rceil  < x + 1$, we calculate
$r_L \leq c_L < r_L + 2$. Combing this with the parity of $c_L$ being the
opposite of that of $\mu_L$ gives that $c_L = \lceil r_L \rceil_\even$
when $\mu_L$ is odd and $\lceil r_L \rceil_\odd$ when $\mu_L$ is even,
proving the corollary. 
\end{proof}

The bound in Corollary~\ref{cor: 2/3 of roots} appears to be quite
sharp. For the 2,500 positive braids in Figure~\ref{fig: pos many},
each of which gives a knot $K$, the number of roots of $\Delta_K$ on
the arc $\cA_R$ was always either equal to the lower bound given in
Corollary~\ref{cor: 2/3 of roots}, or 2 more than it; indeed, the
lower bound was sharp for 62.6\% of these knots.  We note that
Corollary~\ref{cor: 2/3 of roots} can also be proved by the same
combinatorial method used to establish Lemma~\ref{L:find-roots} below,
which does not use the signature function.

\subsection{The total number of roots on the unit circle}

As $\abs{\sigma_L(-1)}$ gives a lower bound on the total number of
roots on the unit circle, we now study its behaviour for a random
closed \3-braid.  Specifically, consider the random walk
$w_n := g_1 \cdots g_n$ on $\Braid{3}$ generated by the uniform
measure on $\{\sigma_1, \sigma_2\}$.  The main result of this section
is:
\begin{theorem} \label{T:clt} The signature
  $\abs{\sigma_{\widehat{w}_n}(-1)}$ follows a central limit theorem,
  with positive drift and positive variance.  Namely, if
  $\ell_0 = (5 - \sqrt{5})/4 \approx 0.690983$ there is $\nu_0 > 0$ such
  that for any $a < b$ we have
  \[
    \P\left(\frac{\abs{\sigma_{\widehat{w}_n}(-1)} - \ell_0 n}{\nu_0
        \sqrt{n}} \in [a, b] \right) \to \frac{1}{ \sqrt{2 \pi}}
    \int_a^b e^{-t^2/2} \dt.
  \]
\end{theorem}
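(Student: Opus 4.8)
The plan is to realize $w \mapsto \sigma_{\widehat{w}}(-1)$ as a bounded perturbation of a homogeneous quasimorphism on $\Braid{3}$, invoke the Björklund--Hartnick central limit theorem for quasimorphisms, and then separately compute the drift and establish positivity of the variance. For the first step, recall from \cite{GambaudoGhys2005} that the Murasugi signature of a closed $3$-braid is governed by the reduced Burau representation at $-1$: there are explicit constants $c_1, c_2 \in \mathbb{Q}$ and a constant $C$ with
\[
  \bigl| \sigma_{\widehat{w}}(-1) - c_1\, \#w - c_2\, \Radem\bigl(B_{-1}(w)\bigr) \bigr| \le C \qquad \text{for all } w \in \Braid{3},
\]
where $\Radem \maps \SL{2}{\Z} \to \mathbb{Q}$ is the Rademacher quasimorphism (the primitive of the Meyer cocycle). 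Since $\#w$ and $B_{-1}$ are homomorphisms and $\Radem$ is a homogeneous quasimorphism, the function $\phi(w) := c_1\, \#w + c_2\, \Radem(B_{-1}(w))$ is a homogeneous quasimorphism on $\Braid{3}$; it suffices to prove the CLT for $\phi(w_n)$ and then transfer it to $\sigma_{\widehat{w}_n}(-1)$, which differs by $O(1) = o(\sqrt{n})$.

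Since $\mu$ is finitely supported, $\phi$ is bounded on $\mathrm{supp}(\mu)$, so the hypotheses of \cite{BjorklundHartnick2011} hold, and their theorem furnishes a drift $\ell \in \R$ and a constant $\nu_0 \ge 0$ with $\bigl(\phi(w_n) - n\ell\bigr)/(\nu_0 \sqrt{n})$ converging in law to a standard Gaussian (and $\phi(w_n)/n \to \ell$ almost surely). Here $\ell = \lim_n \tfrac1n a_n$ with $a_n := \int_{\Braid{3}} \phi\, d\mu^{*n}$; this limit exists and $|a_n - n\ell| \le D(\phi)$, since $|a_{n+m} - a_n - a_m| \le D(\phi)$ where $D(\phi)$ is the defect. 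By the first paragraph the same statement holds with $\phi(w_n)$ replaced by $\sigma_{\widehat{w}_n}(-1)$ and the same $\nu_0$. To pass to the absolute value, note that for positive words the signature is negative, so — granting $\ell \ne 0$, confirmed below — concentration gives $\P\bigl(\sigma_{\widehat{w}_n}(-1) < 0\bigr) \to 1$; on that event $|\sigma_{\widehat{w}_n}(-1)| - |\ell|\, n = -\bigl(\sigma_{\widehat{w}_n}(-1) - \ell n\bigr)$, and by symmetry of the Gaussian $\bigl(|\sigma_{\widehat{w}_n}(-1)| - \ell_0 n\bigr)/(\nu_0 \sqrt{n}) \Rightarrow N(0,1)$ with $\ell_0 := |\ell| > 0$.

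The substantial remaining task — and the main obstacle — is to evaluate $\ell_0$ exactly and to show $\nu_0 > 0$. Since $\ell = c_1 + c_2 \lim_n \tfrac1n \int \Radem \circ B_{-1}\, d\mu^{*n}$, everything reduces to the drift of the Rademacher quasimorphism under the random walk on $\SL{2}{\Z}$ generated by $B_{-1}(\sigma_1) = \mysmallmatrix{1}{1}{0}{1}$ and $B_{-1}(\sigma_2) = \mysmallmatrix{1}{0}{-1}{1}$. The plan is to recode $\Radem(B_{-1}(w_n))$ as an additive functional of a finite-state Markov chain, tracking just enough of the reduced word of $B_{-1}(w_n)$ in the free-product decomposition $\PSL{2}{\Z} \cong \Z/2 \ast \Z/3$ (equivalently, enough of the continued-fraction data of the matrix) that the increment $\Radem(B_{-1}(w_{n+1})) - \Radem(B_{-1}(w_n))$ becomes a function of the current state and the new generator; solving for the stationary distribution of this chain and averaging the increment gives a closed form in which the golden ratio $\varphi$ appears as the Perron eigenvalue of a $2 \times 2$ transition matrix, producing $\ell_0 = 1 - \tfrac{1}{2\varphi} = (5 - \sqrt{5})/4$. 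For $\nu_0 > 0$ one can invoke the non-degeneracy criterion of \cite{BjorklundHartnick2011} — the variance vanishes only if $\phi$ coincides $\mu^{*n}$-almost everywhere with a homomorphism, which fails since $\Radem$ has nonzero defect and one can exhibit words $u, v \in \{\sigma_1, \sigma_2\}^{\ast}$ with $\phi(uv) \ne \phi(u) + \phi(v)$ — or, more concretely, observe that a zero variance would force $|\sigma_{\widehat{w}_n}(-1)|$ to be essentially deterministic, which is refuted by computing $\sigma(-1)$ exactly on a short explicit list of braid words. Choosing the right state space (small enough to diagonalize by hand, fine enough for the signature increments to be Markovian) and carrying out the stationary computation that produces the $\sqrt{5}$ is the delicate point; by contrast, the removal of the absolute value and the qualitative claim $\nu_0 > 0$ are routine once the quasimorphism is known to be genuinely non-homomorphic along the walk.
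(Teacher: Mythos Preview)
Your overall architecture matches the paper's exactly: reduce $\sigma_{\widehat w}(-1)$ to $-\tfrac{2}{3}\#w-\tfrac{1}{3}\Radem(B_{-1}(w))+O(1)$ via Gambaudo--Ghys, then apply the Bj\"orklund--Hartnick CLT to the quasimorphism $\Radem$ pushed through $B_{-1}$. Your treatment of the absolute value and of $\nu_0>0$ is also fine; the paper establishes the latter by checking $\Radem$ is not $\mu$-tame, exhibiting $\sigma_2^n$ (with $\Radem=n$) and $(\sigma_1\sigma_2\sigma_1)^n$ (with $\Radem=0$) to force incompatible linear growth rates.

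Where your proposal diverges is the drift computation, and this is also where it has a genuine gap. You propose to make the increment $\Radem(w_{n+1})-\Radem(w_n)$ a function of a \emph{finite} Markov state recording a bounded suffix of the reduced word. The increment is indeed determined by the last two letters, but no bounded suffix is itself a Markov chain: multiplying $\dots aB$ by $ab$ (resp.\ $\dots ab$ by $ba$) causes cancellation whose depth is not a priori bounded, so after the step the new suffix depends on letters arbitrarily far back. Thus the claim that ``the golden ratio appears as the Perron eigenvalue of a $2\times 2$ transition matrix'' is not justified; there is no finite transition matrix to diagonalize.

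The paper sidesteps this by working on the Gromov boundary $\partial\PSL{2}{\Z}$ rather than with a finite chain. Using left multiplication and the symmetry $\mu^{\ast n}$ enjoys under conjugation by $a$, it computes the hitting measure $\nu$ on cylinder sets $G_{b\ast},G_{B\ast}$. The key step (the paper's Lemma on $\nu(G_{BaB\ast})=2\,\nu(G_{B\ast})^2$) is a stopping-time argument: condition on the first entry into $G_{B\ast}$, then use the Markov property at that random time. Combined with stationarity of $\nu$ this yields a quadratic $x^2-\tfrac{3}{2}x+\tfrac{1}{4}=0$ for $\nu(G_{B\ast})$, whence $\nu(G_{b\ast})=(\sqrt5-1)/4$ and the Rademacher drift $1-3\nu(G_{b\ast})=(7-3\sqrt5)/4$, giving your stated $\ell_0=(5-\sqrt5)/4$. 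So the $\sqrt5$ comes from a quadratic produced by a stopping-time independence identity on the boundary, not from a finite transition matrix; replacing your Markov-chain sketch by this hitting-measure computation fills the gap.
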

The convergence in Theorem~\ref{T:clt} is illustrated in
Figure~\ref{fig: sig conv}. 

\begin{figure}
  \centering
  \begin{tikzpicture}[nmdstd]
  \begin{tikzoverlay*}[width=0.8\textwidth]{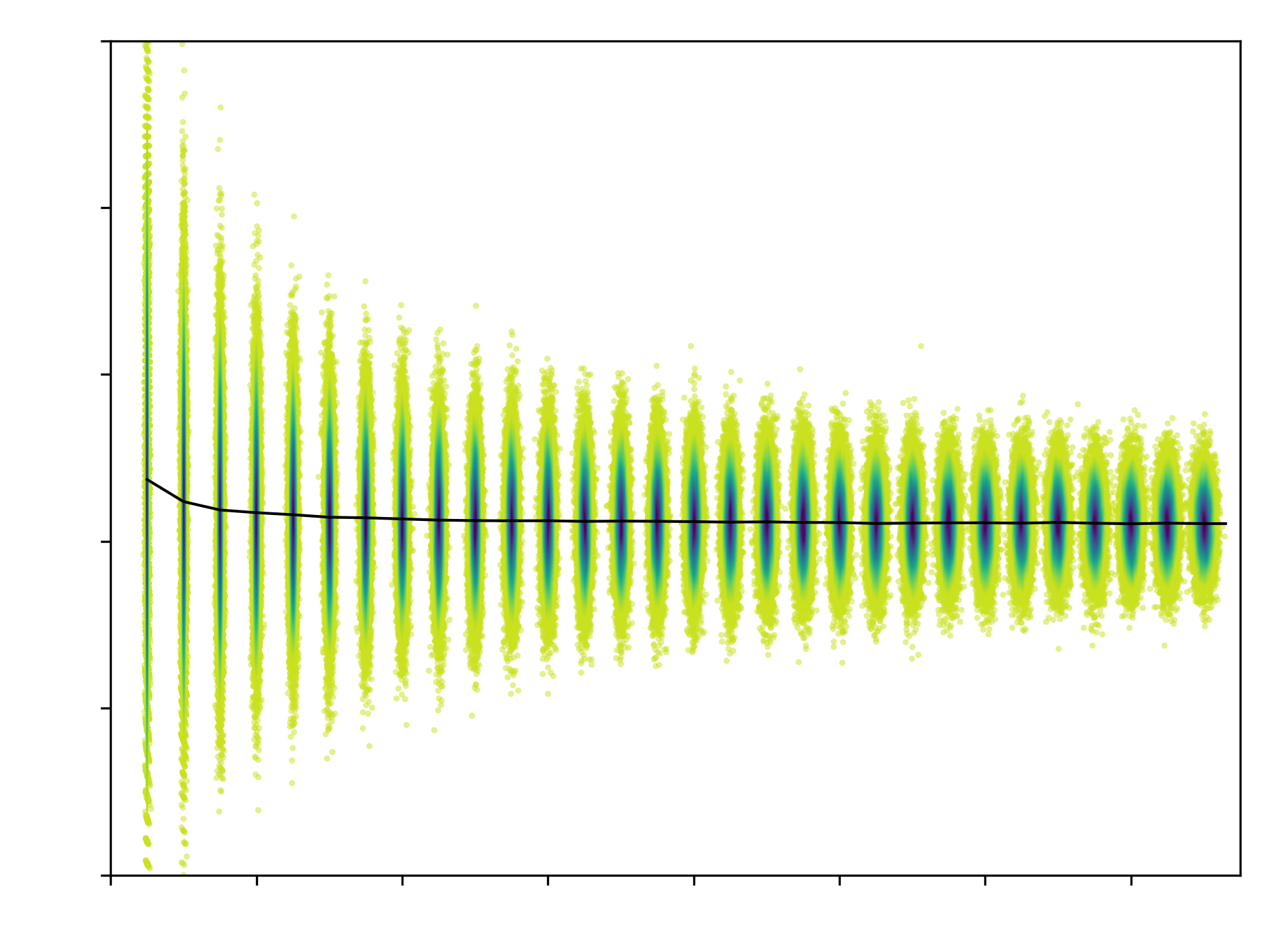}
    \draw (8.706597, 4.531250) node[below] {$0$};
    \draw (20.183972, 4.531250) node[below] {$2,000$};
    \draw (31.661346, 4.531250) node[below] {$4,000$};
    \draw (43.138721, 4.531250) node[below] {$6,000$};
    \draw (50, 0) node[below, font=\small] {$\#w = \deg(\Delta_\what)
      + 2$};
    \draw (54.616095, 4.531250) node[below] {$8,000$};
    \draw (66.093470, 4.531250) node[below] {$10,000$};
    \draw (77.570845, 4.531250) node[below] {$12,000$};
    \draw (89.048219, 4.531250) node[below] {$14,000$};
    \draw (7.187500, 6.050347) node[left] {$0.67$};
    \draw (7.187500, 19.199653) node[left] {$0.68$};
    \draw (7.187500, 32.348958) node[left] {$0.69$};
    \draw (0, 38) node[left, font=\small] {$\displaystyle
      \frac{\abs{\sigma_\what(-1)}}{\deg(\Delta_\what)}$};
    \draw (7.187500, 45.498264) node[left] {$0.70$};
    \draw (7.187500, 58.647569) node[left] {$0.71$};
    \draw (7.187500, 71.796875) node[left] {$0.72$};
  
    \begin{scope}[shift={(8.70659722, -874.95312500)},
      xscale=0.00573869, yscale=1314.93055556]
    \end{scope}
  \end{tikzoverlay*}
\end{tikzpicture}
  \caption{
    The normalized $\sigma_\what(-1)$ for 300,000 random
    positive 3-braids where $\what$ is a knot.  The braids were
    sampled in groups of 10,000 where $\#w \approx 500 \cdot k$ for
    $k = 1, \ldots, 30$.  The dark near-horizontal curve in the middle
    plots the mean of each group, which is about $0.691$ at the right
    side, consistent with Theorem~\ref{T:clt}.
  } 
  \label{fig: sig conv}
\end{figure}


In order to prove this theorem, we need to introduce the Rademacher function. 

\subsection{The Rademacher function}

The matrices $a = \mysmallmatrix{0}{1}{-1}{0}$ and  $b =
\mysmallmatrix{1}{-1}{1}{0}$ give a free product decomposition
\[
  \PSL{2}{\Z} = \Z/2\Z * \Z/3\Z  = \spandef{a, b}{a^2 = b^3 = 1}.
\]
Each $g \in \PSL{2}{\Z}$ has a unique expression as a reduced word in
$a$, $b$, and $b^{-1}$ where no proper power of those elements
appears, for example $g = a b a b^{-1} a b^{-1}$; we refer to this
expression as the \emph{canonical word} of $g$.  Let $\#_b(g)$ be the number
of occurrences of $b$ in the canonical word for $g$, and similarly
for $\#_{b^{-1}}(g)$.  The \emph{Rademacher function}
$\Radem \maps \PSL{2}{\Z} \to \mathbb{Z}$ is
\[
  \Radem(g) := \#_b(g) - \#_{b^{-1}}(g),
\]
which is denoted $\mathfrak{Radem}$ in
\cite{GambaudoGhys2004}.   From \cite[page~1598]{GambaudoGhys2004}, we have:

\begin{lemma}
  \label{lem: quasi Radem}
  The Rademacher function $\Radem$ is a quasimorphism of defect $3$; in fact,
  \[
    \Radem(gh) - \Radem(g) - \Radem(h) \in \{-3, 0, 3 \}
    \mtext{for any $g, h \in \PSL{2}{\Z}$.}
  \]

\end{lemma}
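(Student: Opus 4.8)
The plan is to prove this directly from the free product structure $\PSL{2}{\Z} = \Z/2\Z * \Z/3\Z$, tracking how $\Radem$ changes under word reduction. First I would record the shape of a canonical word: an element $g$ is uniquely an alternating concatenation $s_1 s_2 \cdots s_p$ of \emph{$2$-syllables} (each equal to $a$) and \emph{$3$-syllables} (each equal to $b$ or $b^{-1}$), with no two consecutive $s_i$ from the same factor. Assigning weight $+1$ to each syllable equal to $b$ and weight $-1$ to each syllable equal to $b^{-1}$, we have $\Radem(g) = \#_b(g) - \#_{b^{-1}}(g) = \sum_i \mathrm{weight}(s_i)$, the sum running over the $3$-syllables of the canonical word of $g$.

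Next, given $g = s_1 \cdots s_p$ and $h = t_1 \cdots t_q$ in canonical form, I would obtain the canonical word of $gh$ from the concatenation $s_1 \cdots s_p\, t_1 \cdots t_q$ by the usual ``zipper from the middle'': if $s_p$ and $t_1$ lie in different factors the word is already reduced and there is nothing to do; otherwise replace the adjacent pair $s_p t_1$ by its product in that factor and repeat with the newly adjacent syllables. The key structural point is that this process is highly constrained. A reduction step inside the $\Z/2$ factor can only be the cancellation $a\cdot a \mapsto 1$, after which the newly adjacent syllables (if any remain) are both $3$-syllables. A reduction step inside the $\Z/3$ factor is either a cancellation $b\cdot b^{-1}\mapsto 1$ or $b^{-1}\cdot b\mapsto 1$, after which the newly adjacent syllables are both $2$-syllables, or else a \emph{merge} $b\cdot b \mapsto b^{-1}$ or $b^{-1}\cdot b^{-1}\mapsto b$, producing a single $3$-syllable flanked by $2$-syllables, which terminates the reduction. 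Hence the reduction consists of a (possibly empty) alternating sequence of pair cancellations, which stops either by exhausting one of the two words, or by reaching a syllable mismatch, or by exactly one merge step; in particular there is at most one merge, and if present it is the last step. The only genuine care needed is a short induction on $\min(p,q)$ to handle the degenerate cases in which one word is consumed before the other, so the process simply halts with no merge — this is where the main (minor) obstacle lies, but once the alternating/terminating description of the zipper is in hand, everything else is bookkeeping.

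Finally I would compute the effect of each reduction step on $\Radem$, reading $\Radem(g) + \Radem(h)$ as the weighted syllable sum of the full concatenation and $\Radem(gh)$ as that of the reduced word. A $2$-cancellation removes two $2$-syllables and changes the sum by $0$; a $3$-cancellation removes one $b$ and one $b^{-1}$ and changes it by $-(+1)-(-1)=0$; the merge $b\cdot b\mapsto b^{-1}$ changes it by $(-1)-(+1)-(+1)=-3$, and $b^{-1}\cdot b^{-1}\mapsto b$ by $(+1)-(-1)-(-1)=+3$. Since there is at most one merge step, $\Radem(gh) - \Radem(g) - \Radem(h) \in \{-3, 0, 3\}$, which is the assertion of the lemma; taking $g = h = b$ gives value $-3$, so the defect is exactly $3$.
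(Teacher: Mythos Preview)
Your argument is correct. The paper does not give its own proof of this lemma but simply cites \cite[page~1598]{GambaudoGhys2004}; your self-contained justification via the free-product normal form and the ``zipper'' reduction is a clean direct proof, and the observation that at most one merge step can occur (and that it is necessarily terminal, since the merged $3$-syllable is flanked by $2$-syllables) is precisely what pins the defect at $3$.
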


Via the projection $\SL{2}{\Z} \to \PSL{2}{\Z}$, we also regard
$\Radem$ as a function on $\SL{2}{\Z}$.  The relevance of
this to our study of link signatures comes from
\cite{GambaudoGhys2005}:

\begin{lemma}
  \label{lem: sig from Radem}
  For each $w \in \Braid{3}$, we have
  \[
    \sigma_\what(-1) =  - \frac{1}{3} \Radem(B_{-1}(w)) - \frac{2}{3}
    \#w + e(w) \mtext{where $\abs{e(w)} \leq \displaystyle \frac{7}{3}$.}
  \]
\end{lemma}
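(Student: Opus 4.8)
The plan is to combine Gambaudo--Ghys's formula for the signature of a closed 3-braid at $\omega = -1$ with the relationship between the Burau representation at $t = -1$ and the Rademacher function of $\SL{2}{\Z}$. First I would recall from \cite{GambaudoGhys2005} that for $w \in \Braid{3}$ the value $\sigma_\what(-1)$ can be expressed (up to a bounded error) in terms of a homogeneous quasimorphism on $\Braid{3}$ built from the Burau representation; concretely, in their Section~4 they show that $\sigma_\what(-1)$ differs by a bounded amount from a specific multiple of the translation number (rotation number) of the image of $w$ under $B_{-1} \maps \Braid{3} \to \SL{2}{\Z} \to \PSL{2}{\Z}$, together with a correction proportional to the abelianization $\#w$ (coming from the determinant $\det B_{-1}(\sigma_i) = -1$, equivalently the signature contribution of the writhe/exponent sum). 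The key identity I would invoke is that, on $\PSL{2}{\Z}$, this rotation-number quasimorphism is, up to scaling and a bounded error, exactly the Rademacher function $\Radem$: this is precisely the content of \cite{GambaudoGhys2004}, where the $\eta$-invariant / signature-type quasimorphisms on $\PSL{2}{\Z}$ are identified with $\Radem$.

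Then I would assemble the pieces. Writing $A = B_{-1}(w) \in \SL{2}{\Z}$, the Gambaudo--Ghys computation gives $\sigma_\what(-1) = -\tfrac23 \#w - \tfrac13 \Radem(A) + (\text{bounded error})$, where the coefficient $-\tfrac23$ on $\#w$ is pinned down by evaluating on the central elements or on $\sigma_i^n$ (where $\what$ is the $(2,n)$ or $(3,n)$ torus link and both sides are computable explicitly), and the coefficient $-\tfrac13$ on $\Radem$ is pinned down similarly, e.g.~by testing on a pseudo-Anosov example or on $\Omega^{\pm 1}$. To get the explicit bound $\abs{e(w)} \leq \tfrac73$ on the error term, I would track the constants through: the defect of $\Radem$ is $3$ by Lemma~\ref{lem: quasi Radem}, so $\tfrac13 \Radem$ has defect $1$; the homogenization of $\sigma_\what(-1) + \tfrac23 \#w$ agrees with $-\tfrac13\Radem(A)$ exactly (both being homogeneous quasimorphisms that agree on a generating set of commuting-enough elements), and then the difference between a quasimorphism and its homogenization is bounded by its defect; combining these with the explicit small-word checks yields the constant $\tfrac73$. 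The cleanest route is probably to verify the formula exactly on the finitely many canonical-form building blocks $\Omega^{\pm 1}$, $\sigma_1$, $\sigma_1\sigma_2$, etc., of Section~\ref{S:definite}, and then propagate the error using the quasimorphism inequalities.

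The main obstacle I expect is pinning down the precise numerical constant $\tfrac73$ rather than just "some bounded error": Gambaudo--Ghys's Corollary~4.4 is stated with its own error term, and one must carefully compose the bound there with the bound relating their quasimorphism to $\Radem$ (and with the fact that $\sigma_\what(-1)$ is the signature itself, not its homogenization), keeping all constants sharp. A secondary subtlety is that $\Radem$ descends from $\SL{2}{\Z}$ to $\PSL{2}{\Z}$ but $B_{-1}$ only has image in $\SL{2}{\Z}$, so one should check that passing to $\PSL{2}{\Z}$ loses no information here — which it does not, since $-I$ lies in $\ker B_{-1}$ composed appropriately (indeed $\ker(B_{-1}) = \pair{\Omega^4}$ and $-I = B_{-1}(\Omega^2)$, so the sign ambiguity is harmless for $\Radem$, which is insensitive to $-I$). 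Once these constant-chasing issues are handled, the lemma follows by direct substitution.
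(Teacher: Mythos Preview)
Your overall strategy is correct: the lemma follows by combining Gambaudo--Ghys's signature formula with the identification of their quasimorphism with the Rademacher function. However, your plan for extracting the specific constant $\tfrac{7}{3}$ is vague and would be hard to make rigorous as written; the paper's route is more direct and worth knowing.

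The key point you are missing is that Gambaudo--Ghys give an \emph{exact} formula, not merely one ``up to bounded error''. In \cite[Theorem~4.2]{GambaudoGhys2005} they define a specific function $\Phi \maps \SL{2}{\Z} \to \Z$ such that
\[
  \sigma_\what(-1) = -\tfrac{1}{3}\,\Phi\big(B_{-1}(w)\big) - \tfrac{2}{3}\,\#w
\]
with no error term at all. For hyperbolic $g$ one has $\Phi(g) = \HRadem(g)$, the homogenization of $\Radem$; for the remaining (parabolic and elliptic) elements $\Phi$ is given by an explicit case list, and checking that list directly gives $\abs{\Phi(g) - \HRadem(g)} \leq 4$ for all $g$. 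Since $\Radem$ has defect $3$, the general bound between a quasimorphism and its homogenization yields $\abs{\HRadem(g) - \Radem(g)} \leq 3$. Combining, $\abs{\Phi - \Radem} \leq 7$, and dividing by $3$ gives $\abs{e(w)} \leq \tfrac{7}{3}$.

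So rather than ``verifying on building blocks and propagating via quasimorphism inequalities'' (which would produce \emph{some} constant, but not obviously $\tfrac{7}{3}$), the argument is a straightforward triangle inequality through the intermediate function $\HRadem$. Your remarks about the $\SL{2}{\Z}$ versus $\PSL{2}{\Z}$ issue and about pinning down the coefficients $-\tfrac{2}{3}$ and $-\tfrac{1}{3}$ are unnecessary once one quotes the exact formula.
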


\begin{proof}
The homogenization $\HRadem  := \lim_{n \to \infty} \frac{1}{n} \Radem(g^n)$
is also a quasimorphism on $\PSL{2}{\Z}$ where 
\begin{equation} \label{E:defect}
  \abs{\Radem(g) - \HRadem(g)} \leq 3 \mtext{for all $g \in \PSL{2}{\Z}$}
\end{equation}
since $\Radem$ has defect 3.  In \cite[Section~4.2]{GambaudoGhys2005},
they define a function $\Phi \maps \SL{2}{\Z} \to \Z$ where
\begin{equation} \label{eq: sig from phi} \sigma_\what(-1) = -
  \frac{1}{3} \Phi\big(B_{-1}(w)\big) - \frac{2}{3} \# w,
\end{equation}
see \cite[Theorem~4.2]{GambaudoGhys2005}.  For a hyperbolic element
$g$ of $\SL{2}{\Z}$, one has $\Phi(g) := \HRadem(g);$ the definition
for other elements is more complicated, see
\cite[page~570]{GambaudoGhys2005}, where there is a typo in that
$\Phi(-C^p)$ is $p$ not $-\frac{1}{3} p$.  Regardless, you can easily
check that $\Phi$ always satisfies
$\abs{\Phi(g) - \HRadem(g)} \leq 4$.  The lemma now follows from
(\ref{E:defect}) and (\ref{eq: sig from phi}).
\end{proof}

Viewing $B_{-1}(\sigma_1)$ and $B_{-1}(\sigma_2)$ as elements of
$\PSL{2}{\Z}$, they are $ba$ and $ab$ respectively.  Thus
Lemma~\ref{lem: sig from Radem} reduces Theorem~\ref{T:clt} to the
following two results:

\begin{theorem}
  \label{thm: CLT for PSLZ}
  Consider the random walk $w_n := g_1 \cdots g_n$ on $\PSL{2}{\Z}$
  generated by the uniform measure $\mu$ on $\{ba, ab\}$.  Then
  $\Radem(w_n)$ follows a central limit theorem with positive
  variance.
\end{theorem}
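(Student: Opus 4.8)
The plan is to realize the random walk on $\PSL{2}{\Z}$ generated by $\mu$ on $\{ab, ba\}$ as a Markov chain on the Bass--Serre tree of the free product $\Z/2 * \Z/3$, and to apply a central limit theorem for quasimorphisms — specifically the Bj\"orklund--Hartnick result \cite{BjorklundHartnick2011} — to $\Radem$. The only genuine content is verifying the two hypotheses of that CLT: finite second moment of $\Radem$ along the walk (automatic here, since $\mu$ is finitely supported and $\Radem$ is a quasimorphism, so $\abs{\Radem(w_n)}$ grows at most linearly), and \emph{non-degeneracy} of the limiting variance. Everything else is bookkeeping.

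First I would record that $\HRadem$ is a homogeneous quasimorphism and that the drift $\lim_n \frac{1}{n}\Radem(w_n)$ exists and equals $\int \HRadem \, d\mu^{*n}/n \to D$ for some constant $D$ (indeed the walk is on a free product, $\HRadem$ is homogeneous, and the increments are stationary); positivity of this drift is not needed for Theorem~\ref{thm: CLT for PSLZ}, only positivity of the variance. The CLT of Bj\"orklund--Hartnick applies to any homogeneous quasimorphism on a group with a non-elementary measured action, provided the quasimorphism is not a homomorphism-plus-bounded when restricted to the relevant subgroup — equivalently, provided $\HRadem$ is not cohomologous to a homomorphism on the semigroup generated by $\mathrm{supp}(\mu)$. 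So the crux is: \textbf{show the asymptotic variance of $\Radem(w_n)$ is strictly positive}, i.e. $\Radem$ restricted to $\langle ab, ba\rangle$ is genuinely ``wandering''.

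To prove non-degeneracy I would argue by contradiction: if the variance vanished, then $\Radem(w_n) - Dn$ would be bounded in $L^2$, forcing (by a standard argument for quasimorphisms along random walks, e.g. the fact that a homogeneous quasimorphism with zero variance along a non-elementary walk must be a homomorphism on the group generated by the support) $\HRadem$ to agree with a homomorphism $\phi$ on the semigroup $\langle ab, ba \rangle$. But $\langle ab, ba\rangle$ has finite index in $\PSL{2}{\Z}$ (it contains the commutator subgroup, which is free of finite rank — in fact $[\PSL{2}{\Z} : \langle ab, ba\rangle]$ divides $6$), so $\HRadem$ would be a homomorphism on a finite-index subgroup, hence $\HRadem$ would itself be a homomorphism on all of $\PSL{2}{\Z}$; but $\PSL{2}{\Z}$ has no nontrivial homomorphisms to $\R$ of the required kind since its abelianization is $\Z/6$, so $\HRadem \equiv 0$, contradicting e.g. $\HRadem(b a b) \neq 0$ (a direct computation: $bab$ is hyperbolic with $\HRadem \neq 0$, or more simply use that $\Radem(w_n)$ is \emph{unbounded} — one can exhibit words in $ab, ba$ with arbitrarily large Rademacher value, e.g. by iterating $ab$ versus alternating, using the quasimorphism estimate of Lemma~\ref{lem: quasi Radem}). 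Concretely, one checks $\Radem((ab)^n)$ grows linearly while $\Radem((ba)^n) = \Radem((ab)^n)$ with opposite sign structure contributes oppositely, so the walk sees real fluctuation.

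\textbf{The main obstacle} I anticipate is not the structure of the argument but pinning down \emph{which} CLT to invoke and checking its precise hypotheses: Bj\"orklund--Hartnick's theorem \cite{BjorklundHartnick2011} is stated for quasimorphisms on groups acting on certain spaces, and one must confirm that the walk generated by $\mu$ on $\{ab, ba\}$ — which is supported on a \emph{sub-semigroup}, not a symmetric generating set — satisfies the non-elementarity and moment conditions in the form they require. An alternative, cleaner route is to use the fact (immediate from Figure~\ref{fig: canonical}-type combinatorics, or from the ping-pong structure of $\Z/2 * \Z/3$) that $ab$ and $ba$ freely generate a free semigroup, reduce to a suitable subshift of finite type encoding the canonical words of $w_n$, observe that $\Radem(w_n)$ differs by a bounded amount from a Birkhoff sum $\sum_{k=1}^{n} f(\theta^k x)$ of a Hölder (indeed locally constant) function $f$ over this subshift — using the quasimorphism property to control the boundary/cancellation terms — and then invoke the classical CLT for Birkhoff sums over mixing subshifts of finite type with respect to the Markov measure induced by $\mu$. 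Non-degeneracy of the variance then becomes the statement that $f$ is not a coboundary plus constant, which one checks by the periodic-orbit criterion: two periodic orbits (e.g. the fixed points of $ab$ and of $ba b a$) give different Birkhoff averages of $f$, precisely because $\Radem$ distinguishes them. I would present the subshift approach as the primary argument, since it makes the positivity of the variance most transparent and self-contained.
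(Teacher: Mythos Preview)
Your primary strategy---invoke the Bj\"orklund--Hartnick CLT for quasimorphisms and verify that the variance is positive---is exactly the paper's. The difference is only in how non-degeneracy is checked. The paper bypasses your abelianization detour entirely: Corollary~1.3 of \cite{BjorklundHartnick2011} gives positive variance unless $\Radem$ is \emph{$\mu$-tame}, meaning $\abs{\Radem(w) - Ln} \leq C$ for all $w$ in the support of $\mu^{*n}$. One then simply exhibits two incompatible sequences: $(ab)^n$ has $\Radem = n$, forcing $L = 1$, while $(ba \cdot ab \cdot ba)^n = a^n$ (a word of length $3n$ in the generators) has $\Radem = 0$, forcing $L = 0$. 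That is the whole proof. Your abelianization argument reaches the same endpoint but passes through an unjustified step: ``zero variance $\Rightarrow$ $\Radem(w_n) - Dn$ bounded in $L^2$ $\Rightarrow$ $\HRadem$ is a homomorphism on $\langle ab, ba\rangle$'' is not standard without more work, and in any case the Bj\"orklund--Hartnick hypothesis is stated as non-tameness, which is what you should verify directly. (Incidentally, $\langle ab, ba\rangle$ is not merely finite index---it equals $\PSL{2}{\Z}$, since $ba \cdot ab = b^{-1}$ and $(ab)(ba)(ab) = a$.) Also note $\Radem\big((ba)^n\big) = n$, not the ``opposite sign'' you suggest; the genuine contrast is with the word $(ba \cdot ab \cdot ba)^n$ just mentioned.

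Your secondary subshift approach contains a factual error that would derail it as written: $ab$ and $ba$ do \emph{not} freely generate a free semigroup in $\PSL{2}{\Z}$, since $(ab)(ba)(ab) = (ba)(ab)(ba) = a$. One can still model the evolution of the canonical word of $w_n$ as a Markov chain on a finite state space and realize $\Radem(w_n)$ as (bounded distance from) a Birkhoff sum, but this requires tracking cancellation at the interface between successive increments---it is not the full shift on two symbols. The periodic-orbit criterion for non-cohomology to a constant then amounts to exactly the same two-sequence computation above, so you gain nothing over the direct tameness check.
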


\begin{theorem}
  \label{thm: exact drift}
  The drift in Theorem~\ref{thm: CLT for PSLZ} is
  \[
    \ell := \lim_{n \to \infty} \E \left( \frac{\Radem(w_n)}{n} \right)
     = \frac{1}{4} (7 - 3 \sqrt{5}) \approx 0.0729490.
  \]
\end{theorem}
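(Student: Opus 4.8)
The plan is to compute the drift by identifying the random walk $w_n$ with a Markov chain on the Bass–Serre tree of $\PSL{2}{\Z} = \Z/2 * \Z/3$ and tracking how the canonical word grows at each step. Write $x = ba = B_{-1}(\sigma_1)$ and $y = ab = B_{-1}(\sigma_2)$; both are hyperbolic (they are the images of positive braids) and generate a free submonoid of rank $2$ (since $\PSL{2}{\Z}$ is virtually free and $x,y$ have no relations among positive words). First I would describe, in terms of the canonical $a,b,b^{-1}$-word, what happens to $\Radem(v)$ and to the ``last letters'' of $v$ when one right-multiplies by $x$ or by $y$. Because $x = ba$ ends in $a$ and $y = ab$ starts with $a$, there is some cancellation of an $a$ at the junction; the key combinatorial observation is that after reduction, the only possibilities for the tail of $v g$ depend on a bounded amount of information about the tail of $v$. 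Concretely, one should set up a finite-state automaton whose states record the reduced form of the ``suffix'' of $w_n$ that can still interact with future multiplications (these will be a handful of short words in $a, b, b^{\pm 1}$), and whose transitions are labeled by the increment $\Radem(w_{n+1}) - \Radem(w_n) \in \{+1, -1, +2, -2, \dots\}$ together with the multiplier $x$ or $y$ chosen with probability $1/2$ each.

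The main computation is then: (i) verify this automaton has finitely many states — I expect something like four to eight states, corresponding to whether the current reduced suffix ends in $a$, in $ab$, in $ab^{-1}$, in $b$, in $b^{-1}$, etc.; (ii) check it is irreducible and aperiodic, so it has a unique stationary distribution $\pi$; and (iii) compute $\pi$ by solving the (small, rational) linear system $\pi P = \pi$. The drift is then $\ell = \sum_{\text{states } s} \pi(s) \cdot \overline{\delta}(s)$, where $\overline{\delta}(s)$ is the expected Rademacher increment out of state $s$, averaging over the two equally-likely multipliers. Existence of the limit $\ell = \lim \E(\Radem(w_n)/n)$ follows from the ergodic theorem for the stationary Markov chain (or, alternatively, from subadditivity via the quasimorphism property in Lemma~\ref{lem: quasi Radem}, which gives $|\Radem(w_{m+n}) - \Radem(w_m) - \Radem(\text{shifted } w_n)| \leq 3$, so $\Radem(w_n)$ is a bounded perturbation of a subadditive-type sequence). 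The value $\tfrac{1}{4}(7 - 3\sqrt 5)$ will drop out of the stationary equations; the appearance of $\sqrt 5$ strongly suggests the stationary distribution involves the golden ratio, which is natural here because the matrices $x, y$ are closely related to the ones generating the Stern–Brocot / Fibonacci dynamics, and a golden-ratio-weighted stationary vector is exactly what one would expect from a two-state-like symmetric chain with a $\tfrac{1}{2}$–$\tfrac{1}{2}$ branching.

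The hard part will be step (i): correctly bookkeeping the cancellation at the junction and convincing oneself that a genuinely \emph{finite} amount of suffix information suffices. The subtlety is that multiplying by $x = ba$ or $y = ab$ appends only two letters but can trigger a cascade of cancellations if the suffix of $w_n$ happens to end in a long run of $b$'s or in $a b^{-1}$; one must check that in the reduced word such configurations are in fact bounded (they are, because consecutive $a$'s and consecutive $b$-powers are forbidden in a reduced word over $\Z/2 * \Z/3$, so the only ``dangerous'' suffixes are $a$, $ab^{\pm}$, $ab^{\pm}a$, and that $b^3 = 1$ caps the $b$-runs at length two). Once the automaton is pinned down, everything else is a routine finite linear-algebra computation, and I would present the automaton explicitly (as a small table or a tikz diagram of the same flavor as Figure~\ref{fig: canonical}) and then just exhibit the stationary vector and the resulting weighted average. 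A useful sanity check along the way: the same automaton, used to compute the \emph{variance}, must give $\nu_0 > 0$, which rules out any degenerate (e.g. constant-increment) behavior and confirms Theorem~\ref{thm: CLT for PSLZ}.
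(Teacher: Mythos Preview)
Your plan has a genuine gap at exactly the step you flag as ``the hard part.'' The suffix of the reduced word for $w_n$ does \emph{not} form a finite-state Markov chain under right-multiplication by $x=ba$ or $y=ab$. Concretely, if $w$ ends in $b^{-1}$ and you multiply by $ba$, then $w\cdot ba = w'$ where $w'$ is $w$ with its final $ab^{-1}$ removed; the new last letter of $w'$ is whatever preceded that $ab^{-1}$ in $w$, which depends on the full history of the walk, not on any bounded suffix. Your justification (``$b^3=1$ caps the $b$-runs at length two'') addresses the wrong issue: the obstruction is not long $b$-syllables but that each backtracking step exposes an earlier, history-dependent suffix. So step (i) fails as stated, and no enlargement of the suffix window fixes it. (Incidentally, $x$ and $y$ do not generate a free submonoid: $xyx=yxy=a$ in $\PSL{2}{\Z}$, reflecting the braid relation.)

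The paper avoids this by working on the \emph{prefix} side via the hitting measure $\nu$ on $\partial G$. Writing $w_{n+1}=g_1u_n$ with $u_n\stackrel{d}{=}w_n$, the increment $\Radem(g_1u_n)-\Radem(u_n)$ depends only on whether the canonical word for $u_n$ begins with $ab$; taking $n\to\infty$ and using a conjugation symmetry gives $\ell=1-3\nu(G_{b*})$. The value $\nu(G_{b*})$ is then pinned down by combining $\mu$-stationarity of $\nu$ with a stopping-time independence identity $\nu(G_{BaB*})=2\nu(G_{B*})^2$, which yields a quadratic equation with root $(\sqrt5-1)/4$. Your intuition that $\sqrt5$ arises from a quadratic/golden-ratio phenomenon is correct, but the mechanism is a quadratic equation for a \emph{return/hitting probability}, not the stationary vector of a finite linear chain. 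Your approach can be repaired by replacing the finite automaton with a first-return analysis on the Bass--Serre tree (solving for the probability that the walk backtracks past a given edge), but carrying that out essentially reconstructs the paper's hitting-measure argument in different language.
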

Since $\Radem$ is a quasimorphism,
$\ell = \lim_{n \to \infty} \frac{\Radem(w_n)}{n}$ for almost every
$(w_n)$ by Kingman's ergodic theorem.

\begin{proof}[Proof of Theorem \ref{thm: CLT for PSLZ}]

We will use the central limit theorem for quasimorphisms of
Bj\"orklund-Hartnick, specifically in the form of
\cite[Corollary~1.3]{BjorklundHartnick2011}.  Since our measure $\mu$
has finite support, to apply it and get a central limit theorem with
positive variance, we need only check that our quasimorphism $\Radem$
is not $\mu$-tame. That is, we need to show that there do not exist
constants $L$ and $C$ such that
\begin{equation} \label{E:tame}
\abs{\Radem(w_n) - L n } \leq C
\end{equation}
for any $n$ and any $w$ in the support of $\mu^{ * n}$.  

If we take $w = \sigma_2^n = (ab)^n$, then $\Radem(w) = n$; hence
\eqref{E:tame} can only hold if $L = 1$.  On the other hand, if we
take $w = (\sigma_1 \sigma_2 \sigma_1)^n = a^n$, then
$\Radem(w) = 0 $, hence \eqref{E:tame} can only hold if $L = 0$. Since
it is impossible for both these conditions hold, then \eqref{E:tame}
has no solution.  Thus $\Radem$ is not $\mu$-tame and hence the
variance of the limit is positive.
\end{proof}

In order to attack the more difficult Theorem~\ref{thm: exact drift},
we begin with a series of lemmas.  Let
$\cW = \{\sigma_1, \sigma_2 \}^*$ be the free monoid of finite words
in the formal symbols $\{\sigma_1, \sigma_2\}$, and $\cW_n$ the subset
of those of length exactly $n$.  Let $\rho \maps \cW \to \PSL{2}{\Z}$
be the monoid homomorphism induced by $\sigma_1 \mapsto ba$ and
$\sigma_2 \mapsto ab$.  Our measure $\mu^{*n}$ on $\PSL{2}{\Z}$ is the
push-forward of the uniform probability measure on $\cW_n$ under
$\rho$.

\begin{lemma}\label{lem: conj by a}
  Let $\conj_a$ be the automorphism of $\PSL{2}{\Z}$ where
  $g \mapsto a g a$.  The measure $\mu^{*n}$ is invariant under
  $\conj_a$.
\end{lemma}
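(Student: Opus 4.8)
The plan is to produce an explicit measure‑preserving bijection of $\cW_n$ that intertwines $\rho$ with $\conj_a$. Recall $\mu^{*n}$ is by definition the push‑forward under $\rho$ of the uniform probability measure on $\cW_n$, and that $\rho$ is the monoid homomorphism with $\rho(\sigma_1) = ba$, $\rho(\sigma_2) = ab$. So it suffices to find a bijection $\tau \maps \cW_n \to \cW_n$ preserving the uniform measure with $\rho \circ \tau = \conj_a \circ \rho$: then
\[
  (\conj_a)_* \mu^{*n} = (\conj_a)_* \rho_* (\mathrm{unif}) = (\conj_a \circ \rho)_* (\mathrm{unif}) = (\rho \circ \tau)_* (\mathrm{unif}) = \rho_*\big(\tau_*(\mathrm{unif})\big) = \rho_*(\mathrm{unif}) = \mu^{*n}.
\]
The natural candidate for $\tau$ is the letter‑wise involution swapping $\sigma_1 \leftrightarrow \sigma_2$.

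First I would note that $\conj_a$ really is an automorphism: in $\PSL{2}{\Z}$ we have $a^2 = 1$ (since $a = \mysmallmatrix{0}{1}{-1}{0}$ squares to $-I$, which is trivial projectively), so $a^{-1} = a$ and $\conj_a(gh) = a g h a = (aga)(aha) = \conj_a(g)\conj_a(h)$. Then I would compute its effect on the images of the generators: $\conj_a(ba) = a\,ba\,a = ab\,a^2 = ab$ and $\conj_a(ab) = a\,ab\,a = a^2\,ba = ba$. Hence $\conj_a$ interchanges $\rho(\sigma_1)$ and $\rho(\sigma_2)$.

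Since $\conj_a$ is an automorphism and $\rho$ a homomorphism, for any word $w = \sigma_{i_1}\cdots\sigma_{i_n}$ we get $\conj_a(\rho(w)) = \conj_a(\rho(\sigma_{i_1}))\cdots\conj_a(\rho(\sigma_{i_n})) = \rho(\tau(w))$, where $\tau(w)$ is the word with each $\sigma_1$ replaced by $\sigma_2$ and vice versa. As $\tau$ is an involution of $\cW_n$, it is a bijection fixing the uniform measure, so the displayed chain of equalities above applies and $(\conj_a)_*\mu^{*n} = \mu^{*n}$. There is no real obstacle here; the only point requiring a moment's care is that we work in $\PSL{2}{\Z}$, where $a = a^{-1}$, which is exactly what makes $\conj_a$ an automorphism with the clean behavior on the generators.
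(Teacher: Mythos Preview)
Your proof is correct and follows essentially the same approach as the paper: both use the letter-swapping involution on $\cW_n$ (the paper calls it $\iota$, you call it $\tau$), verify the equivariance $\rho \circ \tau = \conj_a \circ \rho$ on generators, and conclude from invariance of the uniform measure under $\tau$. Your version spells out a few more details (the check that $a^2=1$ in $\PSL{2}{\Z}$ and the push-forward chain), but the argument is the same.
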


\begin{proof} 
Let $\iota$ be the involution of $\cW$ that
interchanges $\sigma_1$ and $\sigma_2$.  Notice that $\rho$ is
equivariant in the sense that $\rho \circ \iota = \conj_a \circ \rho$, where
it suffices to verify this for $\sigma_1$ and $\sigma_2$ as all the
maps are homomorphisms.  Since the uniform measure on
$\cW_n$ is invariant under $\iota$, it follows $\mu^{*n}$ is
invariant under $\conj_a$.
\end{proof}

%
%

Consider the Cayley graph of
$G = \PSL{2}{\Z} = \spandef{a, b}{a^2 = b^3 = 1}$ with respect to the
generating set $S = \{ a, b, b^{-1}\}$.  Note that $G$ is a hyperbolic
group, and the elements $ab$ and $ba$ used in defining our measure
$\mu = \frac{1}{2} \delta_{ab} + \frac{1}{2} \delta_{ba}$ are
independent loxodromics (here, we are considering the action of $G$ on
its Cayley graph, not the usual action on $\mathbb{H}^2$, for which
$ab$ and $ba$ are parabolic).  Hence, e.g.~by \cite[Theorem
1.1]{Maher-Tiozzo}, the random walk driven by $\mu$ converges almost
surely to the Gromov boundary $\partial G$, which is a Cantor set.
Thus, if we let $\nu$ be the hitting measure on $\partial G$, we have
the convergence of measures $\mu^{*n} \to \nu$ on $G \cup \partial G$
(see \cite[page~219]{Maher-Tiozzo}).  Given $g \in G$, denote by
$G_{g *}$ the set of (finite or infinite) geodesics, starting at the
identity, that begin with the word $g$.  This is an open and closed
subset of $G \cup \partial G$. Thus, for any $g$,
\[
  \lim_{n \to \infty} \mu^{*n}(G_{g *}) = \nu(G_{g *}),
\]
and Lemma~\ref{lem: conj by a} implies that
\[
  \nu(G_{b*}) = \nu(G_{ab*}) \mtext{and}   \nu(G_{B*}) = \nu(G_{aB*}).
\]
Set $B := b^{-1}$ for convenience.  As $G_{a*} = \{a\} \cup G_{ab*}
\cup G_{aB*}$, we have $\nu(G_{a*}) = \nu(G_{ab*}) + \nu(G_{aB*})$.
Combined with $\nu(G_{a*}) + \nu(G_{b*}) +
\nu(G_{B*}) = 1$, we obtain 
\begin{equation} \label{L:half}
\nu(G_{a*}) = \nu(G_{b*}) + \nu(G_{B*}) = \mfrac{1}{2}.
\end{equation}

\begin{lemma} \label{L:independence}
  We have $\nu(G_{BaB*}) = 2 \nu(G_{B*})^2$.
\end{lemma}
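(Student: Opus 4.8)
The plan is to derive the identity from the renewal (Markov) structure of the harmonic measure $\nu$ on $\partial G$, where $G = \PSL{2}{\Z} = \Z/2\Z * \Z/3\Z$. As already observed, $G_{Ba*} = G_{B*}$, since after a $b$-type letter the next letter of any geodesic is forced to be $a$. The relevant geometric point is that, $G$ being a free product, the set $G_{Ba*}$ of elements and boundary points whose reduced word begins with $Ba$ is a \emph{cone}: in the Cayley graph of $G$ with respect to $\{a,b,B\}$ it is joined to the rest of the graph only through the single edge from $B$ to the cone's root vertex $Ba$.

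The key input I would establish is the following renewal property of $\nu$; write $\xi := \lim_{n\to\infty} w_n \in \partial G$ for the almost-surely defined limit, which is $\nu$-distributed. Recentering the restriction $\nu|_{G_{Ba*}}$ at the root $Ba$ gives a measure proportional to the restriction of $\nu$ to the co-cone of rays that begin with a $b$-type letter,
\[
  (Ba)^{-1}_*\big(\nu|_{G_{Ba*}}\big)
  \;=\; c\cdot\nu|_{G_{b*}\cup G_{B*}},
  \qquad
  c \;=\; \frac{\nu(G_{Ba*})}{\nu(G_{b*})+\nu(G_{B*})} \;=\; 2\,\nu(G_{B*}),
\]
where the value of $c$ uses $\nu(G_{b*})+\nu(G_{B*}) = \nu(G_{a*}) = 1/2$ from \eqref{L:half} together with $G_{Ba*} = G_{B*}$. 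Informally: conditioned on the limiting geodesic beginning with $Ba$, the law of the remainder of the ray is that of a fresh limiting geodesic conditioned to begin with a $b$-type letter. I would prove this through the strong Markov property of $(w_n)$: on $\{\xi\in G_{Ba*}\}$ the walk must enter the cone $G_{Ba*}$, and, decomposing over the entrance and applying the strong Markov property, the trajectory afterwards is a fresh $\mu$-walk whose recentered limit is again distributed as $\nu$. The subtlety here — and what I expect to be the main obstacle — is that $\mu$ is supported on $\{ba,ab\}$, which are not nearest-neighbour in the Cayley graph, so the walk can step over the root $Ba$ (entering the cone at $Bab$ as well) and can exit and re-enter the cone repeatedly. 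I would handle this either by passing to the letter-by-letter walk, a nearest-neighbour Markov chain on $G\times\Z/2\Z$ recording whether one is mid-increment, for which $Ba$ is a genuine cut vertex, or by the standard Green's-function / first-passage computation for random walks on free products, which is insensitive to re-entries; alternatively one may invoke that the harmonic measure of a finitely supported random walk on a free product is a Markov measure on the space of infinite reduced words.

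Granting this renewal property, I would finish with a short word computation. Acting by $(Ba)^{-1} = ab$ sends a geodesic $BaB\zeta$ (with $\zeta$ necessarily beginning with $a$) to $B\zeta$, so $(Ba)^{-1}G_{BaB*} = G_{B*}$ inside $G_{b*}\cup G_{B*}$. Restricting the displayed identity to this subset yields $(Ba)^{-1}_*\big(\nu|_{G_{BaB*}}\big) = c\cdot\nu|_{G_{B*}}$, and taking total masses gives $\nu(G_{BaB*}) = c\cdot\nu(G_{B*}) = 2\,\nu(G_{B*})\cdot\nu(G_{B*}) = 2\,\nu(G_{B*})^2$, as claimed. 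The only nonroutine ingredient is the renewal identity above; the rest uses only \eqref{L:half} and elementary reduction of words in $\Z/2\Z*\Z/3\Z$.
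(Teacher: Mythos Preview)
Your strategy --- stop the walk at first entry into a cone and apply the strong Markov property --- is exactly the paper's, but your execution has a gap at the very point you flag as ``the main obstacle.'' For the cone $G_{Ba*}$ you chose, a direct check shows that the walk (with step set $\{ab,ba\}$) first enters at either $Ba$ (coming from $b$ via the step $ba$) or at $Bab$ (coming from $B$ via the step $ab$). Conditioned on entry at $Ba$ the fresh walk gives $\P(w_\infty\in G_{BaB*})=\nu(G_{B*})$, while conditioned on entry at $Bab$ one gets $\P(w_\infty\in G_{BaB*})=\nu(G_{b*})$: these differ (indeed Lemma~\ref{L:shadow-measure} computes them to be unequal), so the recentered conditional laws do \emph{not} coincide and your renewal identity $(Ba)^{-1}_*(\nu|_{G_{Ba*}})=c\cdot\nu|_{G_{b*}\cup G_{B*}}$ does not follow from the cone structure alone. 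The proposed workarounds do not repair this: the letter-by-letter chain on $G\times\Z/2\Z$ still has a nontrivial entry distribution over internal states at $Ba$, and ``the harmonic measure on a free product is Markov,'' even if true here, only yields your proportionality when combined with the extra relation $P(a,\cdot)\propto \nu|_{\{b,B\}}$, i.e.\ $\nu(G_{ab*})=\nu(G_{b*})$, which is the $\conj_a$-invariance --- a feature of this particular walk, not a general fact.

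The paper sidesteps the issue by stopping at first entry into the slightly larger set $G_{B*}$ rather than $G_{Ba*}$. There the two entry points are $B$ and $Ba$, and the corresponding conditionals for $\{w_\infty\in G_{BaB*}\}$ are $\nu(G_{aB*})$ and $\nu(G_{B*})$; these \emph{do} agree, precisely by the $\conj_a$-invariance recorded before~\eqref{L:half}. That gives $\nu(G_{BaB*})=\nu(G_{B*})\cdot\P(\tau<\infty)$ directly. A second pass with the same stopping time, now computing $\nu(G_{B*})$ itself and again using \eqref{L:half}, yields $\P(\tau<\infty)=2\nu(G_{B*})$, and the lemma follows. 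So the ingredient your sketch is missing is not a general renewal theorem but the explicit use of the $\conj_a$-symmetry to make the two entry-point conditionals coincide; once you see that, the choice of $G_{B*}$ over $G_{Ba*}$ is what makes the argument go through cleanly.
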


\begin{proof}
For the walk $\omega = (w_n)$, let
$\tau(\omega) := \min \setdef{ k \geq 1}{w_k \in G_{B*}}$ be the first
time the walk enters $G_{B*}$, which is a stopping time.  Since each
step has length $2$, there are two possible locations where the walk
enters the $G_{B*}$, so $w_\tau$ is either $B$ or
$w_\tau = Ba$. Thus, using the notation
$w_\infty := \lim_{n \to \infty} w_n$,
\[
  \nu(G_{BaB*}) = \P(w_\infty \in G_{BaB*}) = \sum_{g = B, \ Ba}
  \P(w_\infty \in G_{BaB*} \vert w_\tau = g) \cdot \P(w_\tau = g).
\]
Now, by the Markov property of stopping times,
$\P(w_\infty \in G_{BaB*} \vert w_\tau = B) = \nu(G_{aB*})  =
 \nu(G_{B*})$ and
$ \P(w_\infty \in G_{BaB*} \vert w_\tau = Ba)  = \nu(G_{B*})$.
Therefore,
\begin{equation}
\label{tau-1}
\nu(G_{BaB*}) \ = \ \nu(G_{B*})  \hspace{-0.8em} \sum_{g = B, \ Ba}
\hspace{-0.5em} \P( w_\tau = g) \ = \ 
\nu(G_{B*}) \cdot \P(\mbox{some $w_n$ enters $G_{B*}$}).
\end{equation}
Similarly, we can decompose the probability of converging to $G_{B*}$ as 
\begin{align*}
  \nu(G_{B*}) &=  \P(w_{\infty} \in G_{B*} \vert w_{\tau} = B) \cdot \P(w_\tau = B) +
                \P(w_{\infty} \in G_{B*} \vert w_{\tau} = Ba) \cdot \P(w_\tau = Ba) \\
                &=  \P(w_{\infty} \in G_{B a*} \vert w_{\tau} = B)
                  \cdot \P(w_\tau = B) + \\
                & \hspace{5em} \P(w_{\infty} \in G_{B a B *} \cup G_{Bab*} \vert w_{\tau} = Ba) \cdot \P(w_\tau = Ba) \\
  &= \nu(G_{a*}) \cdot \P(w_\tau = B) + \nu(G_{b*} \cup G_{B*}) \cdot \P(w_\tau =
    Ba) \\
  &= \mfrac{1}{2} \, \P(\mbox{some $w_n$ enters $G_{B*}$}) \mtext{by equation \eqref{L:half}.}
\end{align*}
Thus $\P(\mbox{some $w_n$ enters $G_{B*}$}) = 2 \nu(G_{B*})$.
Combined with \eqref{tau-1}, we get $\nu(G_{BaB*}) = 2
\nu(G_{B*})^2$, proving the lemma.
\end{proof}

\begin{lemma} \label{L:shadow-measure}
$\nu(G_{b*}) = \frac{1}{4}(\sqrt{5}-1) \approx 0.309070$ and $\nu(G_{B*}) =
\frac{1}{4}(3 - \sqrt{5}) \approx 0.190983$. 
\end{lemma}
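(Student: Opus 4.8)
The plan is to reduce the lemma to two linear relations in the unknowns $p := \nu(G_{b*})$ and $q := \nu(G_{B*})$ together with the quadratic relation already furnished by Lemma~\ref{L:independence}. From \eqref{L:half} we have $p + q = \frac12$, and from Lemma~\ref{L:independence} we have $\nu(G_{BaB*}) = 2q^2$; recall also that the $\conj_a$-invariance of $\nu$ (Lemma~\ref{lem: conj by a}) gives $\nu(G_{ab*}) = \nu(G_{b*}) = p$ and $\nu(G_{aB*}) = \nu(G_{B*}) = q$. What is still missing is one identity expressing $\nu(G_{BaB*})$ linearly in $p$ and $q$, and I would obtain this by conditioning the walk on its first step $g_1$.

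Concretely: since the walk converges to $\partial G$ almost surely and left multiplication by a fixed group element extends continuously to $G \cup \partial G$, we may write $w_\infty = g_1 \cdot w'_\infty$, where $w'_\infty$ is the almost-sure boundary limit of the shifted walk $g_2 g_3 \cdots$; it has law $\nu$ and is independent of $g_1$. The heart of the argument is a short case analysis of the reduced word $g_1 \cdot w'_\infty$ in $\PSL{2}{\Z} = \Z/2\Z * \Z/3\Z$, split according to the initial syllables of $w'_\infty$, which (being reduced) has one of the three forms $a y_1 a y_2 \cdots$, $b a y_1 \cdots$, or $B a y_1 \cdots$ with each $y_i \in \{b, B\}$. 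When $g_1 = ba$ one checks that $w_\infty$ begins with $B$ precisely when $w'_\infty \in G_{ab*}$: indeed $ba\cdot(a y_1 a y_2\cdots) = b y_1 a y_2 \cdots$, which has leading syllable $B$ exactly when $y_1 = b$, while if $w'_\infty$ begins with $b$ or $B$ the product begins with $b$. Hence $\P(w_\infty \in G_{B*}\mid g_1 = ba) = \nu(G_{ab*}) = p$. When $g_1 = ab$ one checks similarly that $w_\infty$ begins with $B$ precisely when $w'_\infty \in G_{BaB*}$: here $ab\cdot(B a y_1 a y_2\cdots) = y_1 a y_2\cdots$, while if $w'_\infty$ begins with $a$ or $b$ the product begins with $a$. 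Hence $\P(w_\infty \in G_{B*}\mid g_1 = ab) = \nu(G_{BaB*})$. Averaging over the two equally likely values of $g_1$ gives $q = \frac12 p + \frac12\, \nu(G_{BaB*})$, that is, $\nu(G_{BaB*}) = 2q - p$.

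Combining $\nu(G_{BaB*}) = 2q - p$ with $\nu(G_{BaB*}) = 2q^2$ from Lemma~\ref{L:independence} and with $p = \frac12 - q$ yields $2q^2 - 3q + \frac12 = 0$, i.e. $4q^2 - 6q + 1 = 0$, whose only root in $[0,1]$ is $q = (3 - \sqrt5)/4$; then $p = \frac12 - q = (\sqrt5 - 1)/4$, as claimed. (One may instead average the probabilities of $w_\infty \in G_{b*}$, which gives the companion relation $\nu(G_{Bab*}) = 2p - \frac12$ and leads to the same quadratic.) The one genuinely delicate point is the case analysis in the middle paragraph: one must track carefully how the two prefixes $ba$ and $ab$ interact with the three initial patterns of $w'_\infty$ under free-product reduction, the pitfalls being the cancellations $a\cdot a = 1$ and $b \cdot b^{-1} = 1$ and the merge $b\cdot b = b^{-1}$. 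Everything else is bookkeeping with $\nu$-null sets (terminating geodesics) and elementary algebra.
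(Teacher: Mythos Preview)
Your proof is correct and is essentially the same argument as the paper's: conditioning on the first step $g_1$ is exactly the stationarity equation $\nu = \tfrac12 (ab)_\star\nu + \tfrac12 (ba)_\star\nu$ applied to $G_{B*}$, and your case analysis of $g_1\cdot w'_\infty$ computes the same preimages $(ab)^{-1}G_{B*}=G_{BaB*}$ and $(ba)^{-1}G_{B*}=G_{ab*}$ that the paper obtains, leading to the identical quadratic $4q^2-6q+1=0$.
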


\begin{proof}
Since the hitting measure $\nu$ is $\mu$-stationary, 
$\nu = \frac{1}{2} (ab)_\star \nu + \frac{1}{2} (ba)_\star \nu$, and 
hence:
\begin{align*}
  2 \nu(G_{B*})  &= (ab)_\star \nu(G_{B*}) +  (ba)_\star \nu(G_{B*}) =  \nu(Ba G_{B*}) +
  \nu(a B G_{B*}) \\
  &=  \nu(G_{BaB*}) + \nu(G_{ab*}) = \nu(G_{BaB*}) + \nu(G_{b*}) \\
  &= 2 \nu(G_{B*})^2 + \left(\mfrac{1}{2} - \nu(G_{B*})\right) \mtext{by Lemma~\ref{L:independence}
    and equation \eqref{L:half}.}
\end{align*}
Thus we see that $\nu(G_{B*})$ is a root of $x^2 - (3/2) x + 1/4$, which has a
unique root $(3 - \sqrt{5})/4$ in $[0, 1]$.  So $\nu(G_{B*}) = (3 - \sqrt{5})/4$
and $\nu(G_{b*}) = 1/2 - \nu(G_{B*}) = (\sqrt{5}-1)/4$, completing the proof.
\end{proof}

\begin{proof}[Proof of Theorem~\ref{thm: exact drift}]

We need to compute the drift $\ell$. If we let $w_n = g_1 \dots g_n$
and $u_n = g_2 \dots g_n g_{n+1}$, we have, 
\[
  \E[\Radem(w_{n+1}) - \Radem(w_n)] = \mfrac{1}{2}
  \, \E[\Radem(ab u_n) - \Radem(u_n)] + \mfrac{1}{2} \, \E[
  \Radem(ba u_n) - \Radem(u_n) ]
\]
where $\E[ \Radem(w_n)] = \E[ \Radem(u_n)]$ since the laws of $w_n$ and $u_n$ are the same.
Now note that, by definition of the Rademacher function, 
\[
  \Radem(ba u_n) - \Radem(u_n) = \left\{ \begin{array}{ll}
      -2 & \mbox{if $u_n \in G_{ab*}$} \\
      +1  & \mbox{if $u_n \notin G_{ab*}$}
   \end{array} \right.
\]
so, we get
\begin{equation*}
  \lim_{n \to \infty} \E[\Radem(ba u_n) - \Radem(u_n)]
  = (-2) \nu(G_{ab*}) + (+1)(1 -\nu(G_{ab*})) = 1 - 3 \nu(G_{ab*}).
\end{equation*}
Now, since both $\Radem$ and $\mu^{*n}$ are invariant under $\conj_a$,
we have
$\Radem(a b u_n) = \Radem( \conj_a(ab u_n)) = \Radem (ba
\conj_a(u_n))$ has the same law as $\Radem (ba u_n)$, so
$$\lim_{n \to \infty} \E[\Radem(ab u_n) - \Radem(u_n)] = \lim_{n \to
  \infty} \E[\Radem(ba u_n) - \Radem(u_n)],$$ and hence using
Lemma~\ref{L:shadow-measure} we have
\[
  \lim_{n \to \infty} \E[\Radem(w_{n+1}) - \Radem(w_n)] = 1 - 3
  \nu(G_{ab*})  = 1 - 3 \nu(G_{b*}) = \mfrac{1}{4}(7 - 3 \sqrt{5}).
\]
Thus 
\[
  \ell = \lim_{n \to \infty} \frac{\Radem(w_n)}{n} =
  \frac{1}{4}(7 - 3 \sqrt{5})
\]
completing the proof of Theorem~\ref{thm: exact drift}.
\end{proof}

\begin{remark}
  The measures $\mu^{*n}$ are quite special.  For example, it turns out
  that the probability $\mu^{*n}(g)$ depends only on the number of times
  the letters $a$, $b$, and $B$ appear in $g$ and not their order.
  Specifically, for $g \in \PSL{2}{\Z}$, define
  $c(g) = \big(\#_a(g), \#_b(g), \#_B(g)\big)$.  One can prove
  inductively that if $c(g') = c(g)$ or $c(g') = c(g) + (2, 2, -1)$,
  then $\mu^{*n}(g') = \mu^{*n}(g)$. This implies in particular that
  $\mu^{*n}(G_{B*}) = \mu^{*n}(G_{abab*})$ for all $n$, and the latter equals
  $\mu^{*n}(G_{bab*})$ by invariance under $\conj_a$.  
\end{remark}

\begin{proposition}
Recall that $\cA_L := \setdef{ e^{i \theta}}{|\theta- \pi| < \pi/3}$. Then almost every sample path $(w_n)$ 
satisfies
$$\liminf_n \ \nu_{w_n}(\cAbar_L) \geq  \ell_0 - \frac{2}{3} =  \frac{7 - 3\sqrt{5}}{12}  > 0.$$
\end{proposition}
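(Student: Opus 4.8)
The plan is to write $\nu_{w_n}(\cAbar_L) = \#\big(Z_{w_n}\cap\cAbar_L\big)/\#Z_{w_n}$, pin down the denominator exactly, and bound the numerator below using the signature function. For the denominator: since $w_n$ is a power of a single generator only with probability $2^{1-n}$, Borel--Cantelli shows that almost surely $w_n$ is not such a power for all large $n$; then by Theorem~\ref{thm: burau pos} the polynomial $\Delta_{\what_n}$ is monic of degree $n-2$ with nonzero constant term, so $\#Z_{w_n} = n-2$. For the numerator I would first prove the deterministic estimate
\[
  \#\big(Z_w \cap \cAbar_L\big) \ \geq\ \abs[\big]{\, \abs{\sigma_{\what}(-1)} - \tfrac{2}{3}\#w \,} - 2
  \qquad\text{for every } w\in\Braid{3} \text{ with } \Delta_{\what}\neq 0,
\]
and then substitute the known asymptotics of $\abs{\sigma_{\what_n}(-1)}$.

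To prove the deterministic estimate, fix $w$ with $\Delta_{\what}\neq0$ and choose $\theta_1<\tfrac{2\pi}{3}$ and $\theta_2<\pi$, each lying beyond the last root of $\Delta_{\what}$ before its respective endpoint, so that $e^{i\theta_1}, e^{i\theta_2}$ are not roots and the only roots in the closed arc from $e^{i\theta_1}$ to $e^{i\theta_2}$ are $\zeta_3$ together with those in the open upper arc $(\zeta_3,-1)$; let $a$, $b$ and $U$ denote the multiplicities of $\zeta_3$, of $-1$, and the number of roots in that open upper arc, so Proposition~\ref{prop: roots on arc} gives $a+U \geq \tfrac12\abs{\sigma_{\what}(e^{i\theta_1}) - \sigma_{\what}(e^{i\theta_2})}$. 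Since $\sigma_{\what}$ is constant on the root-free arc $\{e^{i\theta}:\theta^*<\theta<\tfrac{2\pi}{3}\}$, letting $\theta\to\tfrac{2\pi}{3}^-$ in Proposition~\ref{prop: sig for 3 braid} forces $\abs{\sigma_{\what}(e^{i\theta_1}) + \tfrac23\#w}\leq 2$. Meanwhile the symmetry $\sigma_{\what}(\omega)=\sigma_{\what}(\overline\omega)$ forces $\sigma_{\what}$ to have equal one-sided limits at $-1$, so a count of the eigenvalues of the degenerating Hermitian form $W_{\what}(\omega)$ at $\omega=-1$ yields $\abs{\sigma_{\what}(e^{i\theta_2}) - \sigma_{\what}(-1)}\leq b$. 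Combining gives $a+U \geq \tfrac12\big(\abs[\big]{\abs{\sigma_{\what}(-1)}-\tfrac23\#w}-2-b\big)$, and since $\Delta_{\what}$ has real coefficients its roots are invariant under complex conjugation, so $\#(Z_w\cap\cAbar_L)=2a+2U+b$; the two copies of $b$ cancel, giving the claimed bound.

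For the probabilistic input I would use Lemma~\ref{lem: sig from Radem}, which gives $\sigma_{\what_n}(-1) = -\tfrac13\Radem\big(B_{-1}(w_n)\big) - \tfrac23 n + e(w_n)$ with $\abs{e(w_n)}\leq \tfrac73$, together with $\Radem(w_n)/n \to \ell = \tfrac14(7-3\sqrt5)$ almost surely (Kingman's theorem applied to the quasimorphism $\Radem$, as noted after Theorem~\ref{thm: exact drift}). This yields $\sigma_{\what_n}(-1)/n \to -\tfrac13\ell - \tfrac23 = -\tfrac{5-\sqrt5}{4} = -\ell_0$ almost surely, hence $\abs{\sigma_{\what_n}(-1)}/n\to\ell_0$; since $\ell_0>\tfrac23$, for all large $n$ we may drop the outer absolute value in the deterministic estimate. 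Then, for almost every sample path and all large $n$,
\[
  \nu_{w_n}(\cAbar_L) \ \geq\ \frac{\abs{\sigma_{\what_n}(-1)} - \tfrac23 n - 2}{n-2} ,
\]
and letting $n\to\infty$ and using $\abs{\sigma_{\what_n}(-1)}/n\to\ell_0$ gives $\liminf_n \nu_{w_n}(\cAbar_L) \geq \ell_0 - \tfrac23 = \tfrac{7-3\sqrt5}{12} > 0$, as desired.

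The step I expect to be the main obstacle is the bookkeeping around a possible root of $\Delta_{\what}$ at $-1$: done carelessly one loses a term proportional to its multiplicity $b$, and it is precisely the conjugation symmetry $\sigma_{\what}(\omega)=\sigma_{\what}(\overline\omega)$ --- which makes the \emph{jump} of $\sigma_{\what}$ across $-1$ vanish, so that $\sigma_{\what}$ just off $-1$ differs from $\sigma_{\what}(-1)$ by only $b$ rather than $2b$ --- that lets the two occurrences of $b$ cancel. A lesser point requiring care is checking that $a+U$ and the values $\sigma_{\what}(e^{i\theta_i})$ stabilize as $\theta_i$ approach the endpoints, which holds because $\Delta_{\what}$ has finitely many roots, and the interchange of these limits with $n\to\infty$, which is harmless since for each $n$ one works with a single admissible pair $(\theta_1,\theta_2)$.
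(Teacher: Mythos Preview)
Your proof is correct and takes essentially the same approach as the paper: bound $\#(Z_{w_n}\cap\cAbar_L)$ below via Proposition~\ref{prop: roots on arc} on the arc from just below $\zeta_3$ to just below $-1$, control the signature near $\zeta_3$ by Proposition~\ref{prop: sig for 3 braid}, and then feed in the almost sure convergence $\abs{\sigma_{\what_n}(-1)}/n\to\ell_0$. You are more careful than the paper about the normalizing constant $\#Z_{w_n}=n-2$ and about the bookkeeping at a possible root at $-1$ (the paper simply writes $\sigma_{\what_n}(-1)$ as an endpoint value and doubles), but these are refinements of the same argument rather than a different route.
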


\begin{proof}
By Propositions \ref{prop: roots on arc} and \ref{prop: sig for 3 braid}, letting $\theta_\epsilon := \frac{2 \pi}{3} - \epsilon$ for $\epsilon > 0$ small,
we obtain that almost surely (counting with multiplicity)
\begin{align*}
\# \left( Z_{w_n} \cap \cAbar_L \right) 
& = 2 \lim_{\epsilon \to 0} \# \left( Z_{w_n} \cap \{e^{i \theta}, \theta \in [\theta_\epsilon, \pi] \} \right)\\
& \geq \lim_{\epsilon \to 0}  |\sigma_{\widehat{w}_n}(e^{i \theta_\epsilon}) - \sigma_{\widehat{w}_n}(-1)| \\
& \geq \lim_{\epsilon \to 0} |\sigma_{\widehat{w}_n}(-1)| -   \frac{\theta_\epsilon}{\pi} n - 2 \\
& \geq |\sigma_{\widehat{w}_n}(-1)| -   \frac{2}{3} n - 2
\end{align*}
hence 
\begin{align*} 
\liminf_n \ \nu_{w_n}(\cAbar_L) & = \liminf_n \frac{1}{n} \# \left( Z_{w_n} \cap \cAbar_L \right)   \geq \liminf_n \frac{|\sigma_{\widehat{w}_n}(-1)|}{n}  -   \frac{2}{3}  = \ell_0 - \frac{2}{3} 
\end{align*}
as claimed.
\end{proof}

\subsection{Density on the unit circle}

Let $\Braid{3}^+$ be the semigroup of positive braids in $\Braid{3}$. 
\begin{lemma} \label{L:density-circle}
The set $\displaystyle \bigcup_{w \in \Braid{3}^+} Z_w$ is dense in the unit circle.
\end{lemma}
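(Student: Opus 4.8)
The plan is to prove something stronger: for every $\epsilon > 0$ there is a single positive braid $w$ whose root set $Z_w$ is $\epsilon$-dense in $S^1$, from which density of the union is immediate. I would take $w = (\sigma_1 \sigma_2)^q$, whose closure is the torus knot $T(3, q)$, for a large integer $q$ coprime to $3$; then $\what$ is a knot and $w$ is certainly not a power of $\sigma_1$ or $\sigma_2$. The hypothesis $3 \nmid q$ is the one that actually matters below.

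The key step is an explicit eigenvalue computation. One has $B_t(\sigma_1 \sigma_2) = \mysmallmatrix{0}{-t}{t}{-t}$, with characteristic polynomial $\lambda^2 + t \lambda + t^2$ and eigenvalues $\zeta_3 t$ and $\zetabar_3 t$. Hence $B_t\big((\sigma_1 \sigma_2)^q\big)$ has eigenvalues $(\zeta_3 t)^q$ and $(\zetabar_3 t)^q$, and by Lemma~\ref{lem: obs} every $t \in \C^\times \setminus \{\zeta_3, \zetabar_3\}$ with $(\zeta_3 t)^q = 1$ or $(\zetabar_3 t)^q = 1$ is a root of $\Delta_\what$. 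Unwinding, $(\zeta_3 t)^q = 1$ forces $t$ to lie in $\zeta_3^{-1}$ times the group of $q$-th roots of unity, and writing $\zeta_{3q} = e^{2\pi i/(3q)}$ this is the set of $\zeta_{3q}^m$ with $m \equiv -q \pmod 3$; the other equation similarly gives the $\zeta_{3q}^m$ with $m \equiv q \pmod 3$. Since $3 \nmid q$, together these are exactly $\{\zeta_{3q}^m : 3 \nmid m\}$. Discarding the at most two of these points that equal $\zeta_3$ or $\zetabar_3$, I obtain $Z_w \supseteq \{\zeta_{3q}^m : 3 \nmid m\} \setminus \{\zeta_3, \zetabar_3\}$. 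As an aside: since $\deg \Delta_\what = \#w - 2 = 2q - 2$ by Theorem~\ref{thm: burau pos}, these $2q - 2$ points must in fact be all of $Z_w$, recovering $\Delta_{T(3,q)} = \Phi_{3q}$ when $q$ is prime; this is not needed.

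Finally, the residues $m \bmod 3q$ with $3 \nmid m$ come in consecutive pairs between the multiples of $3$, so consecutive such residues differ by at most $2$; deleting two further residues raises the maximal gap to at most $4$. Hence the roots exhibited above are spaced at angular distance at most $8\pi/(3q)$ around $S^1$, and $Z_w$ is $\epsilon$-dense once $q > 8\pi/(3\epsilon)$ with $3 \nmid q$; such $q$ clearly exist. There is no real obstacle here beyond this bookkeeping; I would only flag that the points $\zeta_3^{\pm 1}$ must be excluded, since Lemma~\ref{lem: obs} does not supply the converse there, and remark that this elementary torus-knot family already yields full density on $S^1$ without invoking the signature estimates of the previous sections.
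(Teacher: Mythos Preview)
Your argument is correct. Both your proof and the paper's follow the same strategy of exhibiting an explicit one-parameter family of positive braids whose roots fill out the circle, but the families differ. The paper takes $w_n = \sigma_1\sigma_2\sigma_1\sigma_2^n = \Omega\sigma_2^n$, multiplies out the matrix directly, and finds $\det(B_t(w_n)-I) = (1+t+t^2)(1+t^{n+2})/(1+t)$, so $Z_{w_n}$ contains the $(n+2)$\nobreakdash-th roots of $-1$; density follows by letting $n\to\infty$. Your choice of $(\sigma_1\sigma_2)^q$ exploits instead that $B_t(\sigma_1\sigma_2)$ is diagonalizable with eigenvalues $\zeta_3 t,\ \zetabar_3 t$, so the eigenvalues of its $q$\nobreakdash-th power are read off without any matrix multiplication, and Lemma~\ref{lem: obs} converts the eigenvalue condition into roots of $\Delta_\what$. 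The paper's computation is slightly more self-contained (no appeal to Lemma~\ref{lem: obs}, and no need to excise $\zeta_3,\zetabar_3$ or track gaps), while yours has the pleasant feature of recovering the classical torus-knot Alexander polynomials and of producing a single braid with $\epsilon$-dense root set rather than a union. Either route is entirely adequate for this lemma.
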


\begin{proof}
Consider the positive word $w_n := \sigma_1 \sigma_2 \sigma_1 \sigma_2^{n}$. 
By induction we have 
$$\sigma_2^n = \matr{1}{0}{\sum_{k = 1}^n (-1)^{k+1} t^k}{(-t)^n}$$
hence 
$$\sigma_1 \sigma_2 \sigma_1 \sigma_2^n = \matr{\sum_{k = 1}^n (-1)^k t^{k+1}}{(-t)^{n+1}}{-t^2}{0}$$
and 
$$\det(B_t(w_{n}) - I) = 1 + \sum_{k = 2}^{n+1} (-t)^k + (-t)^{n+3} 
= \frac{(1 + t + t^2) (1 + t^{n+2})}{1 + t}$$
so $Z_{w_n}$ includes the $(n+2)$th roots of $-1$, which become dense in the unit circle as $n \to \infty$.
\end{proof}

\section{Results along the real axis}
\label{sec:realax}

\begin{figure}
  \centering
  \begin{tikzoverlay}[width=0.7\textwidth]{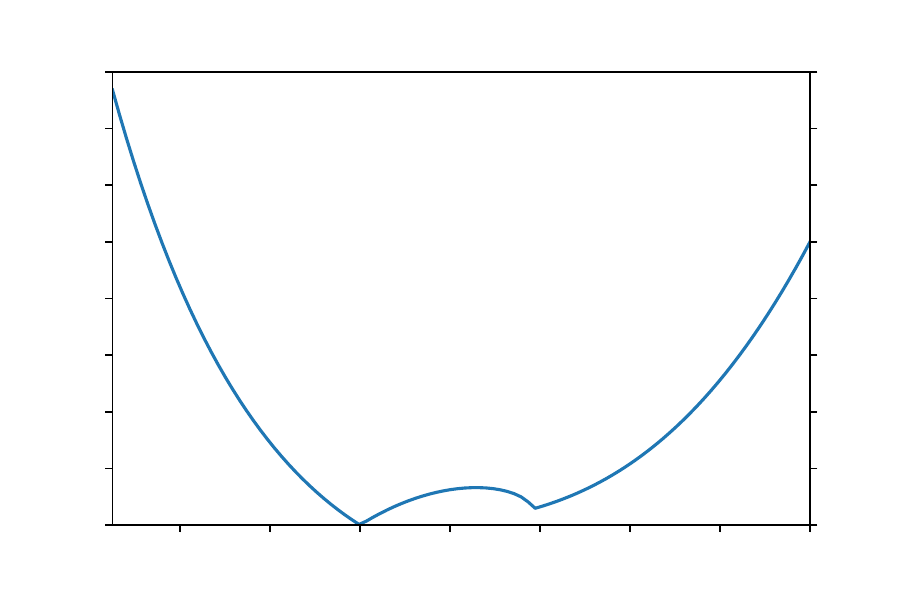}
  \begin{scope}[font=\scriptsize]
    \draw (20.000000, 6.712963) node[below] {$-0.4$};
    \draw (30.000000, 6.712963) node[below] {$-0.2$};
    \draw (40.000000, 6.712963) node[below] {$0.0$};
    \draw (50.000000, 6.712963) node[below] {$0.2$};
    \draw (60.000000, 6.712963) node[below] {$0.4$};
    \draw (70.000000, 6.712963) node[below] {$0.6$};
    \draw (80.000000, 6.712963) node[below] {$0.8$};
    \draw (90.000000, 6.712963) node[below] {$1.0$};
    \draw (10.879630, 8.333333) node[left] {$0.0$};
    \draw (10.879630, 14.625000) node[left] {$0.2$};
    \draw (10.879630, 20.916667) node[left] {$0.4$};
    \draw (10.879630, 27.208333) node[left] {$0.6$};
    \draw (10.879630, 33.500000) node[left] {$0.8$};
    \draw (10.879630, 39.791667) node[left] {$1.0$};
    \draw (10.879630, 46.083333) node[left] {$1.2$};
    \draw (10.879630, 52.375000) node[left] {$1.4$};
    \draw (10.879630, 58.666667) node[left] {$1.6$};
    \draw (50, 2) node[below, font=\footnotesize] {$x \in \R$};
    \draw (5, 33.5) node[left, font=\footnotesize] {$\rho_w(x)$};
    \begin{scope}[shift={(40.00000000, 8.33333333)}, xscale=50.00000000, yscale=31.45833333]
    \end{scope}
  \end{scope}
\end{tikzoverlay}
  \caption{The spectral radius $\rho_w$ on part of the real axis for
    $w = \sigma_1^3 \sigma_2^3$.  To the left, $\rho_w$ rises
    monotonically to $\rho_w(-1) \approx 6.85$.  There are two points
    of the set $d(w)$ from (\ref{eq:disc}) in $[-1, 1]$, namely $0$
    and $x_0 = \frac{3 - \sqrt 5}{2} \approx 0.382$.  The graph to the
    right of $x_0$ is precisely that of $x^3$.  The matrix $B_x(w)$
    has two real eigenvalues in $[-1, x_0]$ and a pair of complex
    conjugate eigenvalues in $(x_0, 1]$.  }
  \label{fig: rho real}
\end{figure}

Next, we study the behavior of $\rho_w$ on the real axis; one example
is shown in Figure~\ref{fig: rho real}.  Here are our two main
results, which are specific to when $\what$ is a knot.
\begin{theorem}
  \label{thm: rho pos reals}
  Consider a positive $w \in \Braid{3}$ where $\what$ is a knot.  Then
  $\rho_w(0) = 0$, $\rho_w(1) = 1$ and $\rho_w(t) < 1$ for
  $t \in [0, 1)$. Moreover, there is an $\epsilon > 0$ such that
  $\rho_w(t) = t^{\#w/2}$ on $(1 - \epsilon, 1 + \epsilon)$.
\end{theorem}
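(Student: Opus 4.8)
The plan is to handle the four assertions in turn, the substantive one being the strict inequality on $[0,1)$.

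\emph{Values at $0$ and $1$.} Since $\what$ is a knot, $w$ is not a power of a single generator, so the positive word $w$ uses both $\sigma_1$ and $\sigma_2$; reading cyclically it therefore contains $\sigma_1\sigma_2$, and after conjugating (which changes neither $\what$, nor $\tr B_t(w)$, nor $\rho_w$) we may write $w=\sigma_1\sigma_2\,w''$. As $B_t(\sigma_1\sigma_2)=\matr{0}{-t}{t}{-t}$ has every entry divisible by $t$, so does $B_t(w)$, hence so does $\tr B_t(w)$; together with $\det B_t(w)=(-t)^{\#w}$ from \eqref{eq: B det}, this shows $B_0(w)$ has vanishing trace and determinant, i.e.\ is nilpotent, so $\rho_w(0)=0$. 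For $t=1$, the relations $B_1(\sigma_i)^2=I$ show that $B_1$ factors through $\Braid{3}\twoheadrightarrow S_3$; as $\what$ is a knot the image of $w$ is a $3$-cycle, so $B_1(w)$ has order $3$, and since $\det B_1(w)=(-1)^{\#w}=1$ (the permutation being even forces $\#w$ even) its eigenvalues are $\zeta_3,\zetabar_3$. Hence $\rho_w(1)=1$, and also $\tr B_1(w)=-1$.

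\emph{Reduction of the remaining claims.} Write $\#w=2m$ and $q(t):=\tr B_t(w)$, a polynomial with $q(0)=0$ and $q(1)=-1$; since $\det B_t(w)=t^{2m}$, the eigenvalues of $B_t(w)$ are $\tfrac12\big(q(t)\pm\sqrt{q(t)^2-4t^{2m}}\big)$. At $t=1$ the discriminant is $q(1)^2-4=-3<0$, so by continuity it stays negative on some interval $(1-\epsilon,1+\epsilon)$, where the eigenvalues form a conjugate pair of modulus $(t^{2m})^{1/2}=t^m$; thus $\rho_w(t)=t^m=t^{\#w/2}$ there, which is the last assertion and also gives $\rho_w<1$ on $(1-\epsilon,1)$. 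For a general $t\in(0,1)$: if $q(t)^2-4t^{2m}<0$ then $\rho_w(t)=t^m<1$; if $q(t)^2-4t^{2m}\ge0$ the eigenvalues are real of common sign (their product $t^{2m}$ is positive), and examining the quadratic $\mu^2-|q(t)|\mu+t^{2m}$, whose larger root is $\rho_w(t)$, shows, using $t^{2m}<1$, that $\rho_w(t)<1\iff|q(t)|<1+t^{2m}$; since $q(t)^2-4t^{2m}<0$ already forces $|q(t)|<2t^m\le1+t^{2m}$, in all cases $\rho_w(t)<1\iff|q(t)|<1+t^{2m}$ for $t\in(0,1)$. Finally, $\det\!\big(B_t(w)^2-I\big)=t^{4m}-\big(q(t)^2-2t^{2m}\big)+1=(1+t^{2m})^2-q(t)^2$, and this equals $(t^2+t+1)\,\Delta_{\widehat{w^2}}(t)$ by \eqref{eq:alex} applied to $w^2$; so for $t>0$ the bound $|q(t)|<1+t^{2m}$ is equivalent to $\Delta_{\widehat{w^2}}(t)>0$. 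As $\Delta_{\widehat{w^2}}$ is a polynomial with $\Delta_{\widehat{w^2}}(0)=\Delta_{\widehat{w^2}}(1)=1$ (evaluate the last identity), it suffices to show $\Delta_{\widehat{w^2}}$ has no zero in $(0,1)$ — equivalently, that neither $+1$ nor $-1$ is an eigenvalue of $B_t(w)$ for any $t\in(0,1)$, i.e.\ that $\det(B_t(w)\mp I)=t^{2m}+1\mp q(t)\ne0$ there.

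\emph{The crux.} This last step is the main obstacle and is not formal. The triangle inequality is useless: Corollary~\ref{cor:tracedef} only tells us that the coefficients of $q(-t)$ share a sign (so those of $q$ alternate), and $|q(t)|\le\sum_k|a_k|t^k=|\tr B_{-t}(w)|$ is far too weak near $t=1$, where $|\tr B_{-1}(w)|$ is the pseudo-Anosov dilatation and is typically large. Instead I would first normalize $w$ by its left-greedy canonical form: by Lemma~\ref{lem: strong form} we may take $w$ conjugate to $\Omega^k w_1\cdots w_n$ with $n\ge2$, $w_n$ ending in $\sigma_1$, and $w_1$ beginning with $\sigma_1$ or $\sigma_2$ according to the parity of $k$; then induct on $n$, tracking the matrix $B_t(w)$ itself (or its normalization $t^{-m}B_t(w)$, which has determinant $1$) and using the explicit generator matrices \eqref{eq:burauvals} together with the sign bookkeeping of Lemmas~\ref{lem: pos neg perm} and~\ref{lem: sign details}, so as to show that for $t\in(0,1)$ the two quantities $t^{2m}+1\mp q(t)$ remain strictly positive — informally, the sign alternation of the $a_k$ produces exactly the cancellation that keeps $|q(t)|$ below $1+t^{2m}$. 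The braids conjugate to $\Omega^k$ or $\Omega^k w_1$ fall outside Lemma~\ref{lem: strong form} but are handled by the direct computations already done in the proof of Corollary~\ref{cor:tracedef}, where $q(t)$ is $\pm$ a monomial and the inequality is immediate. The delicate point is keeping uniform control of \emph{both} $t^{2m}+1-q(t)$ and $t^{2m}+1+q(t)$ as one passes through the four block types $\sigma_1,\sigma_2,\sigma_1\sigma_2,\sigma_2\sigma_1$ in the canonical form.
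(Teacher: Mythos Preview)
Your reduction is correct and essentially matches the paper: the values at $0$ and $1$, the local form $\rho_w(t)=t^{\#w/2}$ near $1$, and the reduction of the strict inequality on $(0,1)$ to the statement that $\Delta_{\widehat{w^2}}$ has no root in $(0,1)$ are all argued just as in the paper (your more explicit algebra with $(1+t^{2m})^2-q(t)^2$ is a pleasant elaboration of the paper's terser ``$\rho_w(t)=1$ iff $\pm1$ is an eigenvalue, iff $1$ is an eigenvalue of $B_t(w^2)$'').

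The gap is exactly where you flag it: your ``crux'' is only a plan, not a proof. The proposed induction on the canonical form, tracking $t^{2m}+1\mp q(t)$ through the block types, is not carried out, and it is not clear it can be: the difficulty you identify --- that the naive bound $|q(t)|\le |\tr B_{-t}(w)|$ blows up near $t=1$ --- does not evaporate under the canonical-form decomposition, and ``the sign alternation produces exactly the cancellation'' is a hope rather than an argument.

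The paper bypasses this entirely with a short external input. It proves (Lemma~\ref{lem: conway pos}) that for \emph{any} knot $K$ that is a positive braid closure, $\Delta_K$ has no root in $(0,1)$, by passing to the Conway polynomial $\nabla_K(x)$, where $x^2=t+1/t-2$ and $\Delta_K(t)=\nabla_K(x)$. Van~Buskirk (extended by Cromwell) showed that for such $K$ all coefficients of $\nabla_K$ are strictly positive; since $t\in(0,1)$ gives $x^2>0$, one gets $\nabla_K(x)>0$ and hence $\Delta_K(t)\ne0$. To finish, apply this with $K=\widehat{w^2}$: since $\what$ being a knot means $w$ maps to a $3$-cycle in $S_3$, so does $w^2$, hence $\widehat{w^2}$ is again a knot (and a positive braid closure), and the lemma applies. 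This replaces your inductive scheme with a one-line appeal to a known positivity result.
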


\begin{theorem}
  \label{thm: rho neg reals}
  Consider a positive $w \in \Braid{3}$ that is pseudo-Anosov and
  where $\what$ is a knot.  There is a unique $t \in [-1, 1)$ where
  $\rho_w(t) = 1$, which must lie in $\left(-1, \frac{\sqrt{5}-3}{2}\right]$.
  The function $\rho_w(t)$ is strictly decreasing on $[-1, 0]$ and
  analytic on $[-1, 0)$.
\end{theorem}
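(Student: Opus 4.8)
The plan is to analyze the eigenvalues of $B_t(w)$ directly via the characteristic polynomial $\lambda^2 - p(t)\lambda + (-t)^{\#w}$ where $p(t) = \tr B_t(w)$, exploiting the definiteness results of Section~\ref{S:definite}. First I would normalize: since $\what$ is a knot and $w$ is positive and pseudo-Anosov, $w$ is not conjugate to a $\sigma_i^n$, so Corollary~\ref{cor:tracedef} applies and $\tr B_{-t}(w)$ is definite; moreover, writing $w$ in the canonical form of Corollary~\ref{cor: norm with power(s)}, the trace computations there (cases with $k$ even versus odd) show that $p(-t)$ is, up to sign, a polynomial with nonnegative coefficients and that $p(0) = 0$ or small degree. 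The key consequence is that on the negative real axis $t \in [-1, 0)$, setting $t = -x$ with $x \in (0, 1]$, the quantity $p(-x)$ has a definite sign and $|p(-x)|$ is monotone in $x$; combined with $\det B_{-x}(w) = x^{\#w}$ (since $\#w$ and the sign work out — note $(-t)^{\#w} = x^{\#w} > 0$), the discriminant $p(-x)^2 - 4 x^{\#w}$ controls whether the eigenvalues are real or a complex-conjugate pair.

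Next I would show monotonicity of $\rho_w$ on $[-1, 0]$. On the portion where the discriminant is negative, the eigenvalues are complex conjugates with product $x^{\#w}$, so $\rho_w(-x) = x^{\#w/2}$, which is strictly increasing in $x$, hence strictly decreasing as $t$ runs from $0$ toward... wait — I need $\rho_w$ decreasing on $[-1,0]$, i.e. as $t$ increases from $-1$ to $0$, equivalently as $x = -t$ decreases from $1$ to $0$. On the complex-eigenvalue regime $\rho_w(-x) = x^{\#w/2}$ is increasing in $x$, giving the desired decrease in $t$. On the real-eigenvalue regime, $\rho_w(-x) = \tfrac{1}{2}\big(|p(-x)| + \sqrt{p(-x)^2 - 4x^{\#w}}\,\big)$; since $|p(-x)|$ is increasing in $x$ and the discriminant is increasing in $x$ wherever it is positive (here I would use that $p(-x)^2$ grows at least as fast as $4x^{\#w}$, which follows from the degree/coefficient bounds on $p$ coming from $\deg \Delta_{\what} = \#w - 2$ via Theorem~\ref{thm: burau pos}), the larger eigenvalue is increasing in $x$. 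Analyticity on $[-1, 0)$ follows because the discriminant is a nonzero real-analytic function there with finitely many zeros, and near each such zero the two eigenvalues collide at $p(-x)/2 \neq 0$, but in fact I expect the discriminant to be nonvanishing on all of $(0,1)$ except possibly at the single transition point $x_0$, and one checks $\rho_w$ extends analytically through $x_0$ only from one side — actually the cleaner statement is that $\rho_w$ equals $x^{\#w/2}$ on $[0, x_0]$ and the max-eigenvalue branch on $[x_0, 1]$, and both are analytic, with a possible corner at $x_0$; so I would phrase analyticity as holding on $[-1,0)$ by noting the branch point $x_0$ lies in $(0,1)$ away from $0$ and the function is analytic on each complementary interval, then argue analyticity across by checking the value and derivatives match, or simply restrict the claim appropriately.

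Then I would locate the unique root of $\rho_w(t) = 1$ in $[-1,1]$. Uniqueness on $[-1,0]$ is immediate from strict monotonicity. On $[0,1)$, Theorem~\ref{thm: rho pos reals} gives $\rho_w < 1$, and at $t = 1$, $\rho_w(1) = 1$ but that is excluded since we want the root in $[-1,1]$ — actually $1$ is a root too, so I must interpret the statement as: on $[-1, 1]$ the equation $\rho_w(t) = 1$ has exactly the solution $t = 1$ coming from the positive side plus one more on the negative side. Re-reading the theorem: "there is a unique $t \in [-1,1]$ where $\rho_w(t) = 1$, which must lie in $(-1, (\sqrt5 - 3)/2]$" — so evidently the convention excludes $t=1$, or $\rho_w(1) = 1$ is being handled by the companion theorem; I would state at the outset that we exclude $t = 1$ and then the claim is a single root on $[-1,1)$. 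Since $\rho_w(-1) > 1$ (pseudo-Anosov stretch factor, Lemma~\ref{lem:compact}) and $\rho_w(0) = 0 < 1$ and $\rho_w$ is continuous and strictly decreasing on $[-1,0]$, there is exactly one $t_* \in (-1, 0)$ with $\rho_w(t_*) = 1$, and by Theorem~\ref{thm: rho pos reals} there are no further solutions in $[0,1)$. Finally, to pin down $t_* \leq (\sqrt5 - 3)/2$: at the transition value, the eigenvalues of $B_t(w)$ become complex exactly when $p(t)^2 = 4(-t)^{\#w}$; I would show that for $t \in ((\sqrt5-3)/2, 0)$ the eigenvalues are a complex conjugate pair so $\rho_w(t) = |t|^{\#w/2} < 1$, which forces $t_* \leq (\sqrt5-3)/2$. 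The number $(\sqrt5-3)/2$ should emerge as the value where the generic transition happens — presumably one checks the "smallest possible" braid $\sigma_1^2\sigma_2^2$ (or similar) realizes the bound, with $x_0 = \frac{3-\sqrt5}{2}$ as already noted in Figure~\ref{fig: rho real}'s caption for $\sigma_1^3\sigma_2^3$. The main obstacle I anticipate is the quantitative bound $t_* \leq (\sqrt5-3)/2$: showing the complex-eigenvalue regime always extends at least this far to the left requires a uniform lower bound on $|p(t)|$ relative to $|t|^{\#w/2}$ across all pseudo-Anosov positive $3$-braids, which I would attack using the canonical-form classification of Corollary~\ref{cor: norm with power(s)} together with the explicit formula $p(-t) = (-1)^{\#w}\big(\text{something}\big) + \det\cdots$ — likely reducing via \eqref{eq: burau det} to controlling $\tr B_t(w)$, whose lowest-order term in $t$ is pinned down (it is $\pm t$ or $\pm 2$ from the $\Omega^k w_1$ cases and otherwise has $t$-valuation $1$) and whose value at $t = -x$ for small $x$ is therefore $\approx \pm x$, giving $\rho_w(-x) \approx x$ — no wait, that is the complex regime value $x^{\#w/2}$, so the comparison is between a linear-in-$x$ trace and $x^{\#w/2}$ with $\#w$ large, making the discriminant negative for small $x$; the delicate point is the precise crossover, which I would extract by a direct optimization showing $\frac{3-\sqrt5}{2}$ is the worst case.
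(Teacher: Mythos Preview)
Your overall architecture is right: Theorem~\ref{thm: rho pos reals} handles $[0,1)$, Lemma~\ref{lem:compact} gives $\rho_w(-1)>1$, continuity produces a root $t_*\in(-1,0)$, and monotonicity on $[-1,0]$ gives uniqueness. But two of your ingredients have genuine gaps.

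\textbf{Monotonicity and analyticity on $[-1,0]$.} Your case split into ``complex-eigenvalue regime'' and ``real-eigenvalue regime'' on $[-1,0)$ is based on a misconception. After setting $t=-x$ with $x\in(0,1]$, the eigenvalues of $B_{-x}(w)$ are \emph{always} real in the pseudo-Anosov case: this is what the paper actually proves in Lemma~\ref{lem:monotone}, and it is the key step you are missing. The argument does not use the definiteness of the trace (Corollary~\ref{cor:tracedef}); it uses the stronger Theorem~\ref{thm:goodsign}, which lets you conjugate so that \emph{all four entries} of $A(x):=\pm B_{-x}(w')$ are positive polynomials. One then checks $A(1)^2$ has strictly positive entries (using $|\tr A(1)|\ge 3$), hence $A(x)^2$ does for all $x\in(0,1]$, so $A(x)$ is Perron--Frobenius. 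This forces a simple real leading eigenvalue, so the discriminant is strictly positive on all of $(0,1]$---there is no transition point $x_0$ on the negative axis (the $x_0$ in Figure~\ref{fig: rho real} lies on the \emph{positive} axis). Analyticity on $[-1,0)$ is then immediate from~\eqref{eq: eigens}. For monotonicity the paper avoids your unproven claim that the discriminant is increasing: instead, $\|A(x)^n\|_2^2$ is a positive polynomial in $x$, hence increasing, and Gelfand's formula $\rho(A(x))=\lim_n\|A(x)^n\|_2^{1/n}$ gives that $g(x)=\rho_w(-x)$ is nondecreasing; strict monotonicity follows from real-analyticity and nonconstancy.

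\textbf{The bound $t_*\le(\sqrt5-3)/2$.} Your proposed mechanism---that the eigenvalues are complex on $\big((\sqrt5-3)/2,\,0\big)$ so $\rho_w(t)=|t|^{\#w/2}<1$ there---is exactly what the Perron--Frobenius argument rules out. The paper instead invokes Lemma~\ref{lem: zero-free disk}: in the $u$-coordinates one checks $\|\sigma_i\|_\infty=1$ and $\|\sigma_1\sigma_2\|_\infty=\|\sigma_2\sigma_1\|_\infty<1$ on the disk $|u|<\tfrac12(\sqrt5-1)$, so $\|B_t(w)\|_\infty<1$ and hence $\rho_w(t)<1$ for $|t|<\tfrac12(3-\sqrt5)$. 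This immediately gives $|t_*|\ge\tfrac12(3-\sqrt5)$, i.e.\ $t_*\le(\sqrt5-3)/2$, with no optimization over canonical forms required.
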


For $t \in \R$, equation (\ref{eq: eigens}) in the proof of
Lemma~\ref{lem: spec basics} shows that $B_{t}(w)$ either has two real
roots or two complex roots depending on whether the discriminant
$p(t)^2 - 4(-t)^{\#w}$ is nonnegative for
$p(t) = \tr\big(B_{t}(w)\big)$.  When the discriminant is negative,
the two complex roots are conjugate and hence have the same absolute
value, which one calculates as $\rho_w(t) = \abs{t}^{\#w/2}$.  We
start with:

\begin{lemma}
  \label{lem: nearone}
  Suppose $w \in \Braid{3}$ is a positive word where $\what$ is
  a knot.  Then there is an $\epsilon > 0$ such that
  $\rho_w(t) = t^{\#w/2}$ on $(1 - \epsilon, 1 + \epsilon)$.
\end{lemma}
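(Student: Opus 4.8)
The plan is to reduce the statement to a continuity argument for the discriminant of the characteristic polynomial of $B_t(w)$, after computing its value at $t = 1$. First I would use the knot hypothesis to conclude that $\#w$ is even: the strand permutation induced by $w$ must be a single $3$-cycle, hence an even permutation, and since each $\sigma_i$ contributes a transposition this forces $\#w = 2m$ to be even. (Alternatively, this follows from Theorem~\ref{thm: burau pos} together with the fact that $\deg \Delta_K$ is even for a knot.) Then $\det B_t(w) = (-t)^{\#w} = t^{2m}$, and writing $p(t) := \tr\big(B_t(w)\big) \in \Z[t]$ and $D(t) := p(t)^2 - 4 t^{2m}$, equation \eqref{eq: eigens} gives that the eigenvalues of $B_t(w)$ are $\tfrac12\big(p(t) \pm \sqrt{D(t)}\big)$. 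For \emph{real} $t > 0$ with $D(t) < 0$ these form a complex-conjugate pair, each of modulus $\sqrt{\lambda_+ \lambda_-} = \sqrt{\det B_t(w)} = t^m$, so that $\rho_w(t) = t^m = t^{\#w/2}$. Hence it suffices to prove that $D(t) < 0$ on a neighborhood of $1$ (taken small enough that $t > 0$ there), and since $D$ is a polynomial it is enough to check $D(1) < 0$, i.e.\ $\abs{p(1)} < 2$.

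To pin down $p(1)$ I would combine two observations. A direct computation shows $B_1(\sigma_1)^2 = B_1(\sigma_2)^2 = I$, so $B_1 \maps \Braid{3} \to \GL{2}{\Z}$ factors through $\Braid{3}$ modulo the normal closure of $\sigma_1^2$ and $\sigma_2^2$, which is isomorphic to $S_3$; in particular $B_1(w)$ has finite order, its eigenvalues are roots of unity, and therefore $\abs{p(1)} = \abs{\tr B_1(w)} \leq 2$. Separately, evaluating \eqref{eq:alex} at $t = 1$ and using that $\det\big(B_1(w) - I\big) = \det B_1(w) - p(1) + 1 = 2 - p(1)$ (here $\det B_1(w) = (-1)^{\#w} = 1$), together with $\Delta_K(1) = \pm 1$ for a knot, gives $2 - p(1) = 3\,\Delta_{\what}(1) = \pm 3$, so $p(1) \in \{-1, 5\}$. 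Intersecting with $\abs{p(1)} \leq 2$ forces $p(1) = -1$, and hence $D(1) = 1 - 4 = -3 < 0$, completing the proof.

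There is no real obstacle here; the only subtlety worth flagging is that the hypothesis that $\what$ is a knot enters in two essential and distinct ways. It is needed to guarantee that $\#w$ is even — if $\#w$ were odd then $(-t)^{\#w} < 0$ for $t$ near $1$, the discriminant $D(t)$ would be strictly positive there, $B_t(w)$ would have distinct real eigenvalues, and the conclusion would in fact be false. And it is needed to obtain the \emph{strict} inequality $\abs{p(1)} < 2$ via $\Delta_K(1) = \pm 1$, rather than merely $\abs{p(1)} \leq 2$, which would only yield $D(1) \leq 0$.
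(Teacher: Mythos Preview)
Your proof is correct and follows essentially the same route as the paper: both reduce to showing the discriminant $p(t)^2 - 4(-t)^{\#w}$ is negative at $t=1$ by observing that $B_1$ factors through $\Sym_3$ and that the knot hypothesis forces $w$ to map to a $3$-cycle. The only difference is cosmetic: the paper pins down $p(1)=-1$ by noting that $B_1(w)$ must equal one of the two explicit matrices $B_1(\sigma_1\sigma_2)$ or $B_1(\sigma_2\sigma_1)$ and reading off the eigenvalues $\zeta_3,\zetabar_3$, whereas you combine the bound $\abs{p(1)}\leq 2$ with the relation $2-p(1)=3\Delta_{\what}(1)=\pm 3$ to reach the same conclusion.
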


\begin{proof}
Consider the homomorphism $\Braid{3} \to \Sym_3$ that records how a
braid permutes the punctures; its kernel is normally generated by
$\sigma_1^2$ \cite[\S 9.3]{FarbMargalit2012}.  As $B_1(\sigma_1)$ has
order two, it follows that there is a homomorphism
$\Sym_3 \to \GL{2}{\C}$ so that the composition
$\Braid{3} \to \Sym_3 \to \GL{2}{\C}$ is the Burau representation
$B_1 \maps \Braid{3} \to \GL{2}{\C}$.  If $\what$ is a knot, then the
image of $w$ in $\Sym_3$ is a 3-cycle.  (Note this implies $\#w$ is
even, since the sign of the permutation in $\Sym_3$ is $(-1)^{\#w}$
for any word.)  Consequently, $B_1(w)$ is either
$B_1(\sigma_1 \sigma_2) = \mysmallmatrix{0}{-1}{1}{-1}$ or
$B_1(\sigma_2 \sigma_1) = \mysmallmatrix{-1}{1}{-1}{0}$, both of whose
eigenvalues are $\zeta_3$ and $\zetabar_3$.  In particular, the two
eigenvalues are complex and the discriminant $p(t)^2 - 4 (-t)^{\#w}$
is negative at $t = 1$; specifically, it is $-3$. The discriminant has
to remain negative on some $(1 - \epsilon, 1 + \epsilon)$, and as
noted above, this means $\rho_w = t^{\#w/2}$ there, proving the lemma.
\end{proof}

\begin{lemma}
  \label{lem: conway pos}
  Suppose $w \in \Braid{n}$ is a positive word where $\what$ is a
  knot.  Then the Alexander polynomial $\Delta_\what(t)$ has no roots
  in $(0, 1)$.
\end{lemma}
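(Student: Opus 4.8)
The plan is to fix $t_0\in(0,1)$ and show directly that $\Delta_{\what}(t_0)\neq 0$, by exploiting the positivity of the (unreduced) Burau matrix at such $t_0$ together with the Perron--Frobenius theorem. First recall Burau's formula (cf.~\eqref{eq:alex} and \cite{Birman1974}): for $w\in\Braid{n}$ one has $\Delta_{\what}(t)\doteq \det\big(\beta_t(w)-I_{n-1}\big)\big/(1+t+\cdots+t^{n-1})$, where $\beta_t\colon\Braid{n}\to\GL{n-1}{\big(\Z[t^{\pm 1}]\big)}$ is the reduced Burau representation and $\doteq$ denotes equality up to a unit $\pm t^k$. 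Since $1+t_0+\cdots+t_0^{n-1}>0$, it suffices to prove that $\det\big(\beta_{t_0}(w)-I\big)\neq 0$, i.e.\ that $1$ is not an eigenvalue of $\beta_{t_0}(w)$.

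I would pass to the unreduced Burau representation $\bar\beta_t\colon\Braid{n}\to\GL{n}{\big(\Z[t^{\pm 1}]\big)}$, with $\bar\beta_t(\sigma_i)=I_{i-1}\oplus\mysmallmatrix{1-t}{t}{1}{0}\oplus I_{n-i-1}$. Each such block fixes $(1,1)^{T}$, so the all-ones vector $\mathbf 1\in\C^n$ is fixed by $\bar\beta_t(w)$, the line $\C\mathbf 1$ is $\bar\beta_t$-invariant, and the action on $\C^n/\C\mathbf 1$ is equivalent to $\beta_t$; hence $\chi_{\bar\beta_t(w)}(\lambda)=(\lambda-1)\,\chi_{\beta_t(w)}(\lambda)$. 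Thus the claim reduces to showing that the eigenvalue $1$ of $N:=\bar\beta_{t_0}(w)$ is algebraically simple. Because $t_0\in(0,1)$, every entry of every $\bar\beta_{t_0}(\sigma_i)$ lies in $\{0,t_0,1-t_0,1\}\subset[0,\infty)$; as $w$ is a positive word, $N$ is entrywise nonnegative, and $N\mathbf 1=\mathbf 1$ with $\mathbf 1>0$.

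The key step is to show $N$ is irreducible. Write $w=\sigma_{i_1}\cdots\sigma_{i_\ell}$. In each factor $\bar\beta_{t_0}(\sigma_{i_k})$, the entry in row $\sigma_{i_k}(v)$ and column $v$ is always nonzero: it is a diagonal $1$ when $v\notin\{i_k,i_k+1\}$, the diagonal $1-t_0$ when $v=i_k$, and the off-diagonal $1$ or $t_0$ when $v\in\{i_k,i_k+1\}$. Chaining these ``permutation moves'' through the $\ell$ factors shows $N_{\pi(p),\,p}>0$ for every $p$, where $\pi\in\Sym{n}$ is the permutation induced by $w$, and likewise $(N^m)_{\pi^m(p),\,p}>0$ for all $m\geq 1$. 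Since $\what$ is a knot, $\pi$ is an $n$-cycle, so $\{\pi^m(p):m\geq 1\}=\{1,\dots,n\}$, and hence for any $p\neq q$ some power of $N$ has a positive entry in row $q$, column $p$; that is, $N$ is irreducible. By the Perron--Frobenius theorem for irreducible nonnegative matrices, the spectral radius $\rho(N)$ is the unique eigenvalue admitting a strictly positive eigenvector, and it is algebraically simple; since $N\mathbf 1=\mathbf 1$ with $\mathbf 1>0$, this forces $\rho(N)=1$ and $1$ to be algebraically simple for $N$. Therefore $1$ is not a root of $\chi_{\beta_{t_0}(w)}$, so $\det\big(\beta_{t_0}(w)-I\big)\neq 0$, whence $\Delta_{\what}(t_0)\neq 0$.

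The one genuinely delicate point is the irreducibility of $N$; the ``permutation move'' observation is what makes it routine, but some care is needed with Burau conventions (reduced versus unreduced, and the ambiguous factor $\pm t^k$ in Burau's formula, which fortunately does not affect the vanishing locus). I would also remark that the hypothesis $t_0\in(0,1)$ enters only through the nonnegativity (and nonvanishing) of $t_0$, $1-t_0$, and $1+\cdots+t_0^{n-1}$, so the method stops at $t_0=1$ — consistent with the fact that such $\Delta_{\what}$ genuinely has roots off the unit circle. A variant of this argument also shows $\rho_w(t)<1$ on $(0,1)$ in the $3$-strand case, which is what is actually needed in Theorem~\ref{thm: rho pos reals}.
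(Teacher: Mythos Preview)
Your proof is correct and takes a genuinely different route from the paper's.  The paper argues via the Conway polynomial: by Van~Buskirk \cite{VanBuskirk1985} (see also Cromwell \cite{Cromwell1989}), the Conway polynomial $\nabla_K$ of the closure of any positive braid has all nonnegative coefficients, and for a knot one has $\Delta_K(t)=\nabla_K(x)$ with $x^2=t+1/t-2>0$ when $t\in(0,1)$, whence $\Delta_K(t)>0$ there.

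Your Perron--Frobenius argument on the unreduced Burau matrix is more self-contained, requiring no external positivity theorem, and it makes the role of the knot hypothesis completely transparent: it is exactly what forces the induced permutation to be an $n$-cycle and hence $N$ to be irreducible.  By contrast, the paper's route imports a nontrivial result but yields the extra information that the Conway coefficients are bounded below by binomial coefficients.  Two small remarks on your write-up: (i) the bookkeeping of which block entry is hit when $v=i_k$ is slightly garbled (you want the off-diagonal $1$ in position $(i_k{+}1,i_k)$, not the diagonal $1-t_0$), though the conclusion that the ``permutation path'' has all positive factors is correct; (ii) your closing aside that a variant directly gives $\rho_w(t)<1$ on $(0,1)$ would need primitivity of $N$, not just irreducibility, to exclude other eigenvalues on the spectral circle---the paper sidesteps this by applying the present lemma to $w^2$.
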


\begin{proof}
The Conway polynomial $\nabla_\what(x) \in \Z[x]$ is a variant of the
Alexander polynomial, basically the latter written in a variable
$x = t^{-1/2} - t^{1/2}$, see e.g.~\cite[Chapter
8]{Lickorish1997}. For knots, $\nabla_\what(x)$ is a polynomial in
$x^2 = t + 1/t - 2$, and one has
\[
  \Delta_\what(t) = \nabla_\what\big(x^2 \big)
\]
where the normalization is so that
$\Delta_\what(1/t) = \Delta_\what(t)$.  By \cite[Page
151]{VanBuskirk1985}, if a knot $K$ is the closure of a positive
$n$-strand braid, then $\nabla_K(x)$ has all positive coefficients.
(This was extended to a broader class of links in Corollary~2.1 of
\cite{Cromwell1989}.)  Specifically:
\[
  \nabla_K(x) = x^{2m} + a_{2m - 2} x^{2m - 2} + \cdots  + a_2
  x^2 + 1 \mtext{where $\binom{m}{k} \leq a_{2(m - k)} \leq
  \binom{2m - k}{k}$,}
\]
where $m \geq 0$ since for any knot $\Delta_K(1) = \pm 1$.

For $t \in (0, 1)$, we have $x^2 = t + 1/t - 2$ is in $(0, \infty)$,
and no such value of $x^2$ corresponds to a root of $\nabla_K$, as the
coefficients of $\nabla_K$ are positive. So no $t \in (0, 1)$ is a
root of $\Delta_\what$, proving the lemma.
\end{proof}

\begin{proof}[Proof of Theorem~\ref{thm: rho pos reals}]
As the braid closure of $\sigma_i^n$ has either two or three
components, we know there is an occurence in $w$ of
$\sigma_a \sigma_b$ where $a \neq b$.  As $B_t(\sigma_a \sigma_b)$ has
every entry divisible by $t$, it follows that every entry of $B_t(w)$
is divisible by $t$.  In particular, $B_0(w) = 0$ and so
$\rho_w(0) = 0$.  Next, Lemma~\ref{lem: nearone} says
$\rho_w(t) = t^{\#w/2}$ near $t = 1$, which also gives
$\rho_w(1) = 1$.

It remains to show that given $t \in (0, 1)$, we have $\rho_w(t) <
1$. If eigenvalues of $B_t(w)$ are not real, then
$\rho_w(t) = \abs{t}^{\#w/2}$ by the discussion before Lemma~\ref{lem:
  nearone}, and hence $\rho_w(t) < 1$ as needed.  If instead the
eigenvalues of $B_t(w)$ are real, then $\rho_w(t) = 1$ is equivalent
to $\pm 1$ being an eigenvalue of $B_t(w)$, and hence $1$ being an
eigenvalue of $B_t(w^2)$.  Since $\what$ is a knot, $w$
gives a 3-cycle in $\Sym_3$; therefore, $w^2$ also gives a 3-cycle, and
so its braid closure is again a knot.  Thus, if we replace $w$ with
$w^2$, our remaining claim is equivalent to saying that
$\Delta_\what(t)$ has no roots in $(0, 1)$, which is Lemma~\ref{lem:
  conway pos}.
\end{proof}

Our final lemma does not require that $\what$ is a knot.

\begin{lemma}\label{lem:monotone}
  For a positive $w$ that gives a pseudo-Anosov mapping class, the
  function $\rho_w(t)$ is strictly decreasing on $[-1, 0]$ and
  analytic on $[-1, 0)$.
\end{lemma}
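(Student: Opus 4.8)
The plan is to identify $\rho_w(t)$, for $t\in[-1,0]$, with the Perron--Frobenius eigenvalue of a nonnegative matrix that depends monotonically on $t$, and then read off both assertions from classical Perron--Frobenius theory. First I would reduce to a convenient conjugate: since $w$ is pseudo-Anosov it is not conjugate to a braid whose canonical form has $n\le 1$ (those are $\Omega^k$ and $\Omega^k w_1$, and by Proposition~\ref{prop: braid trichotomy}, or a direct check, the image of each under $B_{-1}$ has trace in $\{-2,-1,0,1,2\}$, hence is periodic or reducible). So Theorem~\ref{thm:goodsign} applies and yields a \emph{positive} braid $w'$, conjugate to $w$, all of whose entries in $B_{-t}(w')$ are definite of one common sign; after possibly negating $B_{-t}(w')$ (which changes neither $\rho_{w'}$ nor $\det$) we may assume these entries are polynomials in $t$ with nonnegative coefficients. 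Writing $t=-s$ and letting $M(s)$ be the matrix obtained by substituting $t=s$ for $s\ge 0$, we get: $M(s)$ is a nonnegative matrix; each entry of $M(s)$ is nondecreasing in $s$ on $[0,\infty)$; $\det M(s)=s^{\#w}$ by \eqref{eq: B det}; $\tr M(s)=\bigl|\tr B_{-s}(w)\bigr|$, a nonconstant polynomial with nonnegative coefficients vanishing at $s=0$ (hence strictly increasing on $[0,\infty)$); and $\rho_w(-s)=\rho_{w'}(-s)=\rho\bigl(M(s)\bigr)$ since $\rho_w$ is a conjugacy invariant.

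Next I would show $M(s)$ is irreducible for every $s>0$. If $B_{-t}(w')$ were (upper- or lower-) triangular, then $B_{-1}(w')\in\SL{2}{\Z}$ would be triangular with determinant $1$, so with diagonal entries $\pm 1$ and trace $\pm 2$, contradicting $|\tr B_{-1}(w)|>2$, which holds because $w$ is pseudo-Anosov (Proposition~\ref{prop: braid trichotomy}). Hence both off-diagonal entries of $B_{-t}(w')$ are nonzero polynomials; being definite by Theorem~\ref{thm:possign}, they are nonzero at every $s>0$, so $M(s)$ is an irreducible nonnegative matrix for $s>0$. By Perron--Frobenius, $\rho(M(s))$ is then a \emph{simple} eigenvalue of $M(s)$, with a strictly positive eigenvector.

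For analyticity: the characteristic polynomial of $M(s)$ is $\lambda^2-\tr(M(s))\,\lambda+s^{\#w}$, with real-analytic coefficients in $s$ and discriminant $D(s):=\tr(M(s))^2-4s^{\#w}$. If $D(s_0)=0$ for some $s_0>0$, then $M(s_0)$ would have a repeated eigenvalue $\tr(M(s_0))/2\ge 0$, i.e.\ its spectral radius would be an eigenvalue of multiplicity two, contradicting simplicity; so $D(s)\ne 0$ for all $s>0$. Since $D(1)=\tr(B_{-1}(w))^2-4>0$, continuity forces $D(s)>0$ throughout $s>0$, whence $\rho_w(-s)=\tfrac12\bigl(\tr(M(s))+\sqrt{D(s)}\bigr)$, which is real-analytic on $(0,\infty)$; equivalently $\rho_w$ is analytic on $(-\infty,0)$, in particular on $[-1,0)$. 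For strict monotonicity: for $0\le s_1<s_2$ we have $M(s_1)\le M(s_2)$ entrywise with $M(s_1)\ne M(s_2)$ (their traces differ), and $M(s_2)$ is irreducible for $s_2>0$, so the strict comparison principle for Perron eigenvalues (if $0\le A\le B$, $A\ne B$, and $B$ is irreducible, then $\rho(A)<\rho(B)$) gives $\rho(M(s_1))<\rho(M(s_2))$; for $s_1=0$ this reads $0<\rho(M(s_2))$, valid since $\det M(s_2)=s_2^{\#w}>0$. Hence $s\mapsto\rho_w(-s)$ is strictly increasing on $[0,1]$, i.e.\ $\rho_w$ is strictly decreasing on $[-1,0]$.

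The main obstacle is the very first step: one needs a genuinely \emph{nonnegative} matrix, which is why the finer Theorem~\ref{thm:goodsign} (a common sign for all entries of $B_{-t}(w')$) is essential rather than just the entrywise definiteness of Theorem~\ref{thm:possign}; establishing irreducibility of $M(s)$ for $s>0$ is the other place where the pseudo-Anosov hypothesis is used. Once these are in place, Perron--Frobenius theory delivers analyticity (via simplicity of the leading eigenvalue, which incidentally shows $B_{-s}(w)$ has distinct real eigenvalues for every $s>0$) and strict monotonicity (via the comparison principle) with essentially no further computation; by contrast, a direct differentiation of the explicit eigenvalue formula runs into an inequality of the wrong strength and seems much harder to push through.
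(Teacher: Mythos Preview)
Your proof is correct and follows the same overall strategy as the paper: conjugate via Theorem~\ref{thm:goodsign} to get a matrix $M(s)=\pm B_{-s}(w')$ with nonnegative polynomial entries, then invoke Perron--Frobenius theory.  The paper's proof differs from yours in two places of execution.  First, the paper does not explicitly verify that the pseudo-Anosov hypothesis rules out the $n\le 1$ exception in Theorem~\ref{thm:goodsign}; your check via Proposition~\ref{prop: braid trichotomy} is a welcome addition.  Second, for strict monotonicity the paper argues indirectly: it uses Gelfand's formula $\rho(M(s))=\lim_n\|M(s)^n\|_2^{1/n}$ together with the observation that $\|M(s)^n\|_2^2$ is a positive polynomial in $s$ to get that $\rho(M(s))$ is \emph{nondecreasing}, and then uses analyticity plus nonconstancy to upgrade to \emph{strictly} increasing.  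Your route via the strict comparison principle ($0\le A\le B$, $A\ne B$, $B$ irreducible $\Rightarrow$ $\rho(A)<\rho(B)$) is more direct and avoids this two-step detour; the paper's argument has the minor advantage of using only the most elementary part of Perron--Frobenius theory.  For irreducibility, the paper verifies that $A(1)^2$ has strictly positive entries (hence $A(s)^2$ does for all $s>0$), whereas you observe that both off-diagonal entries are nonzero definite polynomials and hence nonvanishing for $s>0$; in the $2\times 2$ case these amount to the same thing.
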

In contrast, $\rho_w(t)$ is typically not monotone on $[0, 1]$; see
Figure~\ref{fig: rho real}.
\begin{proof}
We show $g(x) = \rho_w(-x)$ is strictly increasing on $[0, 1]$ and
analytic on $(0, 1]$.  Using Theorem~\ref{thm:goodsign}, we can
exchange $w$ for one where $A(x) \assign \pm B_{-x}(w)$ consists of
positive polynomials in $x$ and as $\rho(A(x)) = g(x)$, we focus on
the matrix $A(x)$.

The first claim is that $A(x)$ is Perron-Frobenius for any
$x \in (0, 1]$.  Certainly each $A(x)$ has non-negative entries, the
only issue is that one might be zero.  Now by Proposition~\ref{prop:
  braid trichotomy}, we have that $A(1) \in \SL{2}{\Z}$ has trace at
least 3 and both eigenvalues are positive real, one larger that 1 and
the other smaller.  In particular, $\pm 1$ is not an eigenvalue of
$A(1)$, which means neither off-diagonal entry can be zero. Since one
of the diagonal entries of $A(1)$ is nonzero because of its trace, a
calculation shows that $A(1)^2$ has strictly positive entries.  As
they are positive polynomials, it follows that the entries of $A(x)^2$
are strictly positive for all $x \in (0, 1]$.  Consequently, $A(x)$
itself is Perron-Frobenius for all $x \in (0, 1]$.

As $A(x)$ is Perron-Frobenius, it has a positive real eigenvalue of
algebraic and geometric multiplicity 1.  Consequently, we must have
that the discriminant $p(-x)^2 - 4(x)^{\#w}$ is positive on $(0, 1]$,
and hence $g(x)$ is real analytic on $(0, 1]$ as per \eqref{eq:
  eigens}.  Moreover, $g(x)$ is nonconstant on $[0, 1]$ since
$g(0) = 0$ by Theorem~\ref{thm: rho pos reals} and
$g(1) = \rho(A(1)) > 1$.  To complete the proof, it is enough to show
that $g(x)$ is monotone nondecreasing, as then analyticity will force
it to be strictly increasing. 

For the entrywise $\ell^2$-norm, we have that $\norm{A(x)^n}_2^2$ is a
positive polynomial in $x$, and hence is an increasing function for
$x \in (0, 1)$.  Hence $\norm{A(x)^n}_2^{1/n}$ is an increasing
function of $x$ as well.  Gelfand's formula, which applies to any
matrix norm, gives
$\rho(A) = \lim_{n \to \infty} \norm{ A^n }_2^{1/n}$, and so
$g(x) = \rho(A(x))$ is a monotone nondecreasing function of $x$ as
claimed.
\end{proof}

\begin{proof}[Proof of Theorem~\ref{thm: rho neg reals}]
By Theorem~\ref{thm: rho pos reals}, we have $\rho_w(t) < 1$ on
$[0, 1)$ with $\rho_w(0) = 0$.  By Proposition~\ref{prop: braid
  trichotomy}, we know $\rho_w(-1) > 1$, and hence by continuity of
$\rho_w$ there exists at least one $t \in (-1, 0)$ with
$\rho_w(t) = 1$, and said $t \leq \frac{\sqrt{5}-3}{2}$
by Lemma~\ref{lem: zero-free
  disk}. Uniqueness then follows from Lemma~\ref{lem:monotone}, which
also gives the rest of the theorem.
\end{proof}

\section{Root-free regions} 
\label{sec: root-free}

\begin{figure}
  \centering
  \begin{tikzpicture}[nmdstd]
  \begin{tikzoverlay*}[width=0.5\textwidth]{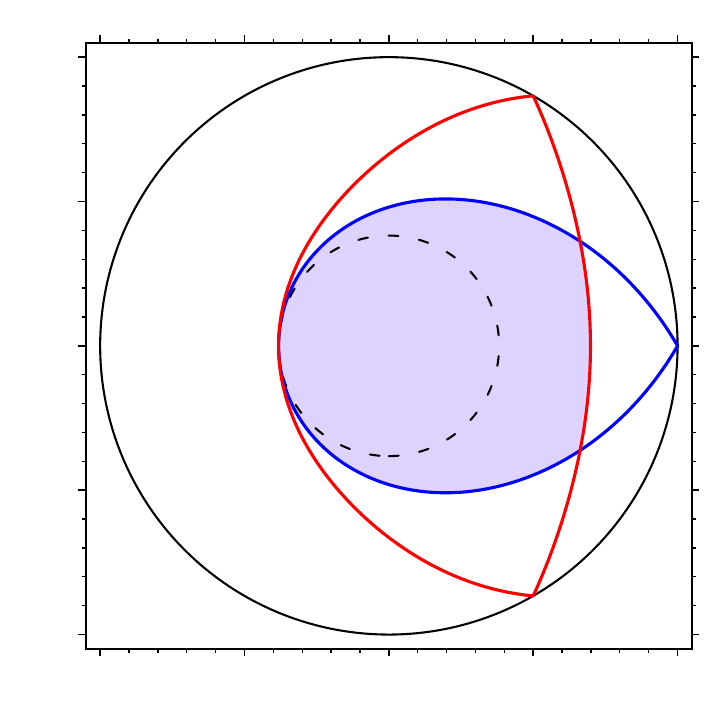}
    \draw (13.910590, 7.835937) node[below] {$-1.0$};
    \draw (33.959780, 7.835937) node[below] {$-0.5$};
    \draw (54.008970, 7.835937) node[below] {$0.0$};
    \draw (74.058160, 7.835937) node[below] {$0.5$};
    \draw (94.107350, 7.835937) node[below] {$1.0$};
    \draw (9.880208, 11.866319) node[left] {$-1.0$};
    \draw (9.880208, 31.915509) node[left] {$-0.5$};
    \draw (9.880208, 51.964699) node[left] {$0.0$};
    \draw (9.880208, 72.013889) node[left] {$0.5$};
    \draw (9.880208, 92.063079) node[left] {$1.0$};
    \begin{scope}[shift={(54.00896991, 51.96469907)},
      xscale=40.09837963, yscale=40.09837963]
      \draw (0.25, 0) node[font=\small] {$\cT$};
    \end{scope}
  \end{tikzoverlay*}
\end{tikzpicture}
  \caption{The open region $\cT$ is the shaded area inside the blue
    curve defined by $\abs{t}^{1/2} \abs{1 - t} + \abs{t}^2 = 1$ and
    the red curve defined by
    $\abs{t}^{1/2} \abs{1 - t + t^2} + \abs{t}^3 = 1$; its area is
    about $0.91$, or $29\%$ of the unit disk. The dotted circle is the
    one appearing in Lemma~\ref{lem: zero-free disk}.}
  \label{fig: new T}
\end{figure}

Recall that $R_w := \setdef{ t \in \Dbar}{\rho_w(t) = 1}$ is the
set of parameters for which the spectral radius equals $1$.  Let
\[
  \cT = \setdef{t \in \C}{%
         \mbox{$\abs{t}^{1/2} \abs{1 - t} + \abs{t}^2 < 1$ and
          $\abs{t}^{1/2} \abs{1 - t + t^2} + \abs{t}^3 < 1$}}
\]
which is shown in Figure~\ref{fig: new T}. The main result of this section is:

\begin{theorem} \label{thm: root-free} Suppose $w \in \Braid{3}$ is a
  positive word and not a power of $\sigma_1$ or $\sigma_2$. Then
  $R_w$ is disjoint from the open set $\cT$.
\end{theorem}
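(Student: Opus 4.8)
The plan is to first reduce to a short list of normal forms, and then run a norm estimate whose whole point is encoded in the two inequalities defining $\cT$. Since $\rho_w$ — hence $R_w$ — depends only on the conjugacy class of $w$, and since by Remark~\ref{rem: sigma pows} the hypothesis ``positive and not a power of $\sigma_1$ or $\sigma_2$'' is exactly the statement that $w$ is not conjugate to any $\sigma_i^n$, Corollary~\ref{cor: norm with power(s)} lets us assume $w$ is one of the forms (1)--(4) listed there. Throughout I would work in the symmetric presentation of Remark~\ref{R:nice-pres}: fix $u$ with $u^2=-t$ and write $\tilde\sigma_i$, $\tilde\Omega=u^3\matr{0}{1}{-1}{0}$ for the conjugated Burau matrices, so that $\rho_w(t)=\rho(\tilde w)$. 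An easy induction from Remark~\ref{R:nice-pres} gives $\tilde\sigma_1^{a}=\matr{(-t)^a}{u\,r_a}{0}{1}$ and $\tilde\sigma_2^{a}=\matr{1}{0}{-u\,r_a}{(-t)^a}$, where $r_a=r_a(t):=\sum_{j=0}^{a-1}(-t)^j=\frac{1-(-t)^a}{1+t}$; since $r_2=1-t$ and $r_3=1-t+t^2$, the two inequalities defining $\cT$ say precisely $|t|^{1/2}|r_2|+|t|^2<1$ and $|t|^{1/2}|r_3|+|t|^3<1$.

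The technical core is the estimate: \emph{for $t\in\cT$ and every $a\ge 2$, one has $\kappa_a(t):=|t|^a+|t|^{1/2}|r_a(t)|<1$.} I would prove this by induction on $a$, using \emph{both} defining inequalities. The cases $a=2,3$ are exactly those inequalities; for $a\ge 4$ write $r_a=(1-t)+t^2 r_{a-2}$, so that by the triangle inequality
\[
|t|^{1/2}|r_a(t)|+|t|^a\ \le\ \big(|t|^{1/2}|1-t|+|t|^2\big)\;-\;|t|^2\big(1-\kappa_{a-2}(t)\big)\ <\ 1,
\]
using the first inequality of $\cT$ and the inductive hypothesis $\kappa_{a-2}<1$ (the case $t=0$ being trivial). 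Both parities bottom out at one of the two defining inequalities, which is exactly why $\cT$ needs both.

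On top of this I would run a contraction argument. Put $\nu(x,y)=|x|+|y|$ and consider the right action of $\tilde w$ on row vectors, whose spectral radius is still $\rho(\tilde w)$. The estimate gives: $\tilde\sigma_1^{a}$ fixes the covector $(0,1)$ and $\nu\big((x,y)\tilde\sigma_1^{a}\big)\le\kappa_a|x|+|y|\le\nu(x,y)$; symmetrically $\tilde\sigma_2^{a}$ fixes $(1,0)$ and $\nu$-contracts its complement; while $\tilde\Omega^k$ multiplies $\nu$ by exactly $|t|^{3k/2}$ (swapping the two coordinate axes when $k$ is odd). The crucial gain is that a consecutive pair of syllables of \emph{opposite} type strictly contracts $\nu$: a short computation with $\kappa_a,\kappa_b<1$ gives $\nu\big((x,y)\,\tilde\sigma_1^{a}\tilde\sigma_2^{b}\big)\le c_{a,b}\,\nu(x,y)$ with $c_{a,b}:=\max\{1-\kappa_b(1-\kappa_a),\,\kappa_b\}<1$, and likewise with $1,2$ exchanged. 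In forms (3) and (4) the word is an alternating product of $\sigma_i$-powers with all exponents $\ge 2$, so grouping it into such pairs — with, in case (4), one leftover power $\tilde\sigma_1^{a_\ell}$ (merely $\nu$-non-expanding) and a prefix $\tilde\Omega^k$ with $k\ge 1$ — yields $\nu(v\tilde w)\le\gamma\,\nu(v)$ for all $v$, with $\gamma=|t|^{3k/2}\prod_j c_{a_{2j-1},a_{2j}}<1$; hence $\rho_w(t)=\rho(\tilde w)\le\gamma<1\ne 1$, so $t\notin R_w$. Form (1) with $a\ge 2$ is the same with no pairs ($\tilde w=\tilde\Omega^k\tilde\sigma_1^a$, $k\ge 1$, so $\nu(v\tilde w)\le|t|^{3k/2}\nu(v)$), and forms (1) with $a\le 1$ and (2) are the small cases where $\tilde w$ is $\tilde\Omega^k$ times a single syllable of length $\le 2$, whose entries are monomials in $u$ — there one checks directly that $\rho_w(t)=|t|^{e}$ for some integer $e\ge 1$, hence $\rho_w<1$ on all of $\D\supset\cT$.

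I expect the main obstacle to be conceptual rather than computational: each syllable matrix $\tilde\sigma_i^{a}$ has $1$ as an eigenvalue, so it does not contract and no factor-by-factor estimate can work. One has to exploit that consecutive syllables of opposite type fix \emph{complementary} coordinate axes — so their product genuinely contracts — together with the rigidity of the Garside normal form (Corollary~\ref{cor: norm with power(s)}) forcing alternation with exponents $\ge 2$. Recognizing the quantities $|t|^{1/2}|1-t|$ and $|t|^{1/2}|1-t+t^2|$ in the definition of $\cT$ as first-row $\ell^1$-norms of $\tilde\sigma_1^2$ and $\tilde\sigma_1^3$ in the symmetric presentation, and then closing the induction over all exponents $a$, is the other step that needs care.
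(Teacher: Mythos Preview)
Your proof is correct and essentially the same as the paper's. Your covector norm $\nu(x,y)=|x|+|y|$ under the right action induces precisely the paper's $L^\infty$ operator norm $\norminf{\cdot}$ (maximum row sum), so your ``contraction of $\nu$'' is literally the paper's ``$\norminf{w}<1$''; the paper likewise uses Corollary~\ref{cor: norm with power(s)} to reduce to the same four normal forms, handles the short cases via Lemma~\ref{lem: R circular}, shows $\norminf{\sigma_i^a}=1$ for $a\ge 2$ (your $\kappa_a<1$) by reducing via sub-multiplicativity to $a=2,3$ rather than your explicit step-two induction on $r_a=(1-t)+t^2r_{a-2}$, and shows $\norminf{\sigma_1^a\sigma_2^b}<1$ by reducing to $a,b\in\{2,3\}$ rather than your general bound. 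Two cosmetic slips that do not affect the argument: your displayed constant $c_{a,b}=\max\{1-\kappa_b(1-\kappa_a),\kappa_b\}$ is an over-estimate of the actual coefficient $|t|^a(1-\kappa_b)+\kappa_a\kappa_b$ (still $<1$), and in the short cases the exponent $e$ in $\rho_w(t)=|t|^e$ can be a half-integer (e.g.\ $\rho_\Omega=|t|^{3/2}$), though still $e\ge 1$.
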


We will work in the $u$-coordinates of Remark \ref{R:nice-pres}, where
$t = -u^2$ and the Burau representation has been conjugated to
become:
\[
    \sigma_1 \mapsto  \matr{u^2}{u}{0}{1} \quad \sigma_2 \mapsto \matr{1}{0}{-u}{u^2} \quad
   \sigma_1 \sigma_2 \mapsto  \matr{0}{u^3}{-u}{u^2} \quad \sigma_2 \sigma_1 \mapsto  \matr{u^2}{u}{-u^3}{0}
  \]
as well as $\Omega \mapsto u^3 \mysmallmatrix{0}{1}{-1}{0}$.  In these
coordinates, $\cT$ corresponds to:
\[
  \cU = \setdef{u \in \C}{%
    \mbox{$\abs{u + u^3} + \abs{u}^4 < 1$ and
          $\abs{u + u^3 + u^5} + \abs{u}^6 < 1$}}.
\]

Recall that the $L^\infty$-norm of a vector $v = (x, y)$ in
$\mathbb{C}^2$ is $\norminf{v} := \max \big\{ \abs{x}, |y|\big\}$, and
the corresponding norm on $2 \times 2$-matrices is given by
\[
  \norminf{A} := \sup_{\norminf{v} \leq 1} \norminf{A v} =
  \max \big\{ \abs{a_{11}} + \abs{a_{12}}, \ \abs{a_{21}} + \abs{a_{22}}
\big\}.
\]
Note that the spectral radius satisfies
$\rho(A) \leq \norminf{A}$.  For any $w \in \Braid{3}$, we write
$\norminf{w}$ for $\norminf{B_{t}(w)}$, viewed as a function
of $u$. The main technique behind Theorem~\ref{thm: root-free}
will be to show $\norminf{w'} < 1$ on $\cU$ for some conjugate
$w'$ of $w$.  As a warm-up to illustrate this key idea, we next prove:

\begin{lemma}
\label{lem: zero-free disk}
If $w \in \Braid{3}$ is positive and not a power of $\sigma_1$ or
$\sigma_2$, then $R_w$ is disjoint from the open disk about $0$ of
radius $\frac{3 - \sqrt{5}}{2} \approx 0.38197$.
\end{lemma}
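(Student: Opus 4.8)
The plan is to estimate the spectral radius $\rho_w(t)$ from above by an operator norm and show this norm is strictly less than $1$ on the disk in question. Throughout I would work in the $u$-coordinates of Remark~\ref{R:nice-pres}, so that $t = -u^2$ and the open disk $\setdef{t}{\abs{t} < (3-\sqrt5)/2}$ becomes $\setdef{u}{\abs{u} < g}$ with $g := (\sqrt5-1)/2$. The algebraic facts that make everything fit are that $g^2 = (3-\sqrt5)/2$ and that $g$ is the positive root of $x^2 + x - 1$, so $g^2 + g = 1$ and $g < 1$. Since $\rho_w(t) = \rho\big(B_t(w)\big)$ is a conjugacy invariant and $\rho(A) \le \norminf{A}$, it suffices to show $\norminf{w'} < 1$ on $\setdef{u}{\abs{u} < g}$ for some $w'$ conjugate to $w$ (recall $\norminf{w} := \norminf{B_t(w)}$ viewed as a function of $u$); in fact no conjugation will even be needed.

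The first step is to record the $\ell^\infty$-norms of the ``atoms'' of the left-greedy canonical form, read directly off the conjugated matrices of Remark~\ref{R:nice-pres}. With $r := \abs{u} < g$ one has $r^2 + r < g^2 + g = 1$, and hence $\norminf{\sigma_1} = \max(r^2 + r,\, 1) = 1 = \norminf{\sigma_2}$, whereas $\norminf{\sigma_1 \sigma_2} = r + r^2 < 1$, $\norminf{\sigma_2 \sigma_1} = r^2 + r < 1$, and $\norminf{\Omega} = r^3 < 1$ (using $B_t(\Omega) = u^3 \mysmallmatrix{0}{1}{-1}{0}$, whose matrix part $E := \mysmallmatrix{0}{1}{-1}{0}$ is an $\ell^\infty$-isometry, so $E^k$ is too).

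Next I would take the canonical form $w = \Omega^k w_1 w_2 \cdots w_n$, which has $k \ge 0$ because $w$ is positive (by \cite[Theorem~2.9]{El-RifaiMorton1994}), with each $w_i$ one of $\sigma_1, \sigma_2, \sigma_1\sigma_2, \sigma_2\sigma_1$. Submultiplicativity of $\norminf{\cdotspaced}$ together with $E^k$ being an isometry gives $\norminf{w} \le r^{3k} \prod_{i=1}^n \norminf{w_i}$, and I would split into three cases. If $k \ge 1$, then each $\norminf{w_i} \le 1$, so $\norminf{w} \le r^{3k} \le r^3 < 1$. If $k = 0$ and some $w_{i_0}$ has length $2$, then $\norminf{w} \le \norminf{w_{i_0}} < 1$. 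Finally, if $k = 0$ and every $w_i$ has length $1$, the allowed transitions of Figure~\ref{fig: canonical} force all the $w_i$ to be equal (from $\sigma_a$ one may only move to $\sigma_a$ or $\sigma_a\sigma_b$, the latter excluded here), so $w = \sigma_1^n$ or $w = \sigma_2^n$ for some $n \ge 0$, contrary to hypothesis. In every remaining case $\rho_w(t) \le \norminf{w} < 1$, so $t \notin R_w$. (The same dichotomy can alternatively be extracted from the normal forms of Corollary~\ref{cor: norm with power(s)}, using Remark~\ref{rem: sigma pows} to dispose of the excluded braids.)

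I do not expect a serious obstacle here: the content is really the fortunate choice of the $\ell^\infty$ operator norm --- for which the generators have norm exactly $1$ while the length-two atoms and $\Omega$ fall strictly below $1$ --- together with the canonical-form trichotomy. The two spots needing a little care are (i) verifying the norm bounds are sharp precisely on $\setdef{u}{\abs{u} < g}$, which is what fixes the radius at $(3-\sqrt5)/2$ and forces the disk to be \emph{open}, since at $\abs{u} = g$ already $\norminf{\sigma_1\sigma_2} = r + r^2 = 1$; and (ii) confirming that the excluded braids $\sigma_i^n$ are exactly those whose canonical form has $k = 0$ and no length-two atom.
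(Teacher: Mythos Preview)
Your proof is correct and follows essentially the same approach as the paper: bound $\rho_w$ by the $\ell^\infty$ operator norm in the $u$-coordinates, observe that $\norminf{\sigma_i} = 1$ while $\norminf{\sigma_a\sigma_b} = |u| + |u|^2 < 1$ on $\{|u| < g\}$, and conclude via submultiplicativity. The paper's version is a touch shorter in that it bypasses the canonical form entirely: it simply notes that any positive word not equal to some $\sigma_i^n$ must contain a consecutive pair $\sigma_a\sigma_b$ with $a \neq b$ and factors there, avoiding your case split on $k$ and the transition rules.
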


\begin{proof}
From the above formulas, we have
\begin{align*}
  \norminf{\sigma_1} &= \norminf{\sigma_2}
                      = \max \big\{ 1, |u| + |u|^2 \big\} \\
  \norminf{\sigma_1 \sigma_2} &= \norminf{\sigma_2 \sigma_1}
                                = \max \big\{ |u|^3, |u| + |u|^2 \big\}.
\end{align*}
Consider the open disk
$\cU_0 = \setdef{u \in \C}{\abs{u} < \frac{1}{2}\left(\sqrt{5} -
    1\right)}$, which is also precisely the region where $\abs{u} + \abs{u}^2 <
1$.  Then for $u \in \cU_0$, we have $\norminf{\sigma_1} =
\norminf{\sigma_2} = 1$ and $\norminf{\sigma_1 \sigma_2} =
\norminf{\sigma_2 \sigma_1} < 1$.  Thus for any positive $w$ that is
not a power of a $\sigma_i$, sub-multiplicativity of
$\norminf{\cdotspaced}$ implies that $\norminf{w} < 1$ on $\cU_0$ as
well.  Hence $\rho_w(u) < 1$ on $\cU_0$, and consequently 
$R_w$ is disjoint from $\setdef{t \in \C}{\abs{t} < \frac{1}{2}\left(
   \sqrt{5} - 3\right)}$ as claimed.
\end{proof}

\begin{remark}
  It turns out that Lemma~\ref{lem: zero-free disk} gives the
  largest possible disk about $0$ that is disjoint from all such
  $R_w$.  To see this, set $\chi_n(u) = 1 + u^2 + u^4 + \cdots + u^{2n
    - 2} = (1 - u^{2n})/(1 - u^2)$ and show inductively that
  \[
    \sigma_1^n \sigma_2^n \mapsto
    \twobytwomatrix{u^{2n}}{u \chi_n(u)}{0}{1} \cdot
    \twobytwomatrix{1}{0}{-u \chi_n(u)}{u^{2n}}
    = \twobytwomatrix{u^{2n} - u^2 \chi_n^2(u)}{u^{2n+1} \chi_n(u)}%
                     {-u \chi_n(u)}{u^{2n}}.
  \]
  For $\abs{u} < 1$, taking the limit as $n \to \infty$ gives
  \[
    \twobytwomatrix{-\frac{u^2}{(1 -u^2)^2}}{0}{-\frac{u}{(1 - u^2)}}{0}%
    \mtext{which has eigenvalues $-\frac{u^2}{(1 -u^2)^2}$ and $0$.}
  \]
  Thus the limiting locus of the $R_w$ is the set where
  $\abs{u} = \abs{1 - u^2}$, which includes
  $u = \frac{1}{2}\left(\sqrt{5} - 1\right)$ on the boundary of the
  disk $\cU_0$.  Hence it is not possible to increase the radius in
  Lemma~\ref{lem: zero-free disk}.
\end{remark}

\begin{lemma}
  \label{lem: R circular}
  All of $\sigma_1 \sigma_2$, $\Omega^k$,
  $\Omega^k \sigma_1$, and $\Omega^k \sigma_1 \sigma_2$ for $k \geq 1$
  have $R_w = \bdry \Dbar$.
\end{lemma}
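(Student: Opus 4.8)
The plan is to compute $B_t(w)$ explicitly for each braid on the list and read off its eigenvalues, showing that on $\Dbar$ one has $\rho_w(t) = \abs{t}^m$ for some constant $m > 0$ depending on $w$. Since $\abs{t}^m = 1$ precisely when $\abs{t} = 1$, this gives $R_w = \bdry \Dbar$ at once: it simultaneously shows $\bdry\Dbar \subseteq R_w$ and, more to the point here, that $R_w$ contains no point of the open disk.

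The computations are short because, substituting $-t$ for $t$ in \eqref{eq:burauvals}, we have $B_t(\Omega) = \mysmallmatrix{0}{-t}{-t^2}{0}$, hence $B_t(\Omega)^2 = t^3 I$. Thus $B_t(\Omega^{2j}) = t^{3j}I$ and $B_t(\Omega^{2j+1}) = t^{3j} B_t(\Omega)$, which reduces each of the four families to at most two easy $2\times 2$ products. Concretely: for $w = \sigma_1\sigma_2$ one gets $B_t(w) = \mysmallmatrix{0}{-t}{t}{-t}$ with characteristic polynomial $\lambda^2 + t\lambda + t^2$, whose roots are $t\zeta_3$ and $t\zetabar_3$, so $\rho_w(t) = \abs{t}$. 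For $w = \Omega^k$ both eigenvalues have modulus $\abs{t}^{3k/2}$, whether $k$ is even or odd. For $w = \Omega^k\sigma_1$ with $k = 2j$ even, $B_t(w) = t^{3j}\mysmallmatrix{-t}{1}{0}{1}$ has eigenvalues $-t^{3j+1}$ and $t^{3j}$; for $k = 2j+1$ odd, $B_t(w) = t^{3j}\mysmallmatrix{0}{-t}{t^3}{-t^2}$, with eigenvalues $t^{3j+2}\zeta_3$ and $t^{3j+2}\zetabar_3$. Finally, for $w = \Omega^k\sigma_1\sigma_2$ with $k = 2j$ even the eigenvalues are $t^{3j+1}\zeta_3$ and $t^{3j+1}\zetabar_3$, and for $k = 2j+1$ odd, $B_t(w) = t^{3j}\mysmallmatrix{-t^2}{t^2}{0}{t^3}$ has eigenvalues $-t^{3j+2}$ and $t^{3j+3}$. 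In every case the relevant exponent is positive, which completes the proof.

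I expect the only delicate point to be bookkeeping: keeping the even/odd split in $k$ straight, and noting that when the two eigenvalues of $B_t(w)$ have unequal moduli $\abs{t}^a$ and $\abs{t}^b$ with $a < b$, the spectral radius on $\Dbar$ is $\abs{t}^a$ (the smaller exponent dominates since $\abs{t} \leq 1$), which is still equal to $1$ exactly on $\bdry\Dbar$. As an alternative for the inclusion $\bdry\Dbar\subseteq R_w$, one could instead verify via Proposition~\ref{prop: braid trichotomy} that each of these braids is periodic or reducible and then apply Lemma~\ref{lem:compact}; but the direct computation has the advantage of also yielding $R_w \subseteq \bdry\Dbar$, the genuinely new content.
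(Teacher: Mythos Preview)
Your proof is correct and follows essentially the same approach as the paper: both arguments compute the spectral radius directly and find it to be a pure power of $\abs{t}$. The paper works in the $u$-coordinates of Remark~\ref{R:nice-pres} and uses the shortcuts $R_{w^2}=R_w$ and $(\sigma_1\sigma_2)^3=\Omega^2$, $(\Omega\sigma_1)^3=\Omega^4$ to reduce to four base cases, whereas you simply compute all cases with the even/odd split on $k$; the difference is purely organizational.
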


\begin{proof}
For any $w \in \Braid{3}$, we have $R_{w^2} = R_w$, so it is suffices
to determine $R_{w^j}$ for some $j \geq 1$.  Now
$\Omega^2 \mapsto -u^6 \cdot I$ and hence
$\rho_{\Omega^2}(u) = \abs{u}^6$ and $R_{\Omega^2} = \bdry \Dbar$.  Thus
$R_{\Omega^k} = \bdry \D$ for all $k \geq 1$.  Moreover, if $R_{w}$
is $\bdry \Dbar$ then so is $R_{\Omega^2 w}$.  Hence, we need only prove
the lemma for $\sigma_1 \sigma_2$, $\Omega \sigma_1$,
$\Omega \sigma_1 \sigma_2$ and $\Omega^2 \sigma_1$. The cases of
$\Omega \sigma_1 \sigma_2$ and $\Omega^2 \sigma_1$ are easy as they go
to upper-triangular matrices whose spectral radii are $\abs{u}^4$ and
$\abs{u}^6$ respectively. The other two elements have a positive power
equal to $\Omega^{2c}$ for some $c \geq 1$,
namely $(\sigma_1 \sigma_2)^3 = \Omega^2$ and
$(\Omega \sigma_1)^3 = \Omega^4$, which proves the lemma in the
remaining cases.
\end{proof}

\begin{proof}[Proof of Theorem~\ref{thm: root-free}]

We will show the following:

\begin{claim}
  \label{claim: core}
  \begin{enumerate}[label=(\alph*), ref=\alph*]
  
  \item \label{enum: power of gen}
    For $a \geq 2$ and $u \in \cU$, we have
    $\norminf{ \sigma_1^a } = \norminf{ \sigma_2^a} = 1$.
    
  \item \label{enum: prod of two}
    For $a, b \geq 2$ and $u \in \cU$, we have
    $\norminf{\sigma_1^a \sigma_2^b} < 1$.
    
  \item \label{enum: omega}
    For $k \geq 1$ and $\abs{u} < 1$, we have $\norminf{\Omega^k} <  1$.   
  \end{enumerate}
\end{claim}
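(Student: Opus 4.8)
The plan is to prove parts~\eqref{enum: power of gen}, \eqref{enum: prod of two}, \eqref{enum: omega} by explicit $2 \times 2$ computations together with submultiplicativity of $\norminf{\cdotspaced}$, and then to assemble them using the classification of conjugacy classes in Corollary~\ref{cor: norm with power(s)}. Set $v_a := u + u^3 + \dots + u^{2a-1}$. A one-line induction gives
\[
  B_{-u^2}(\sigma_1^a) = \matr{u^{2a}}{v_a}{0}{1}, \qquad
  B_{-u^2}(\sigma_2^a) = \matr{1}{0}{-v_a}{u^{2a}},
\]
so, since $\norminf{\cdotspaced}$ is the largest row sum, $\norminf{\sigma_1^a} = \norminf{\sigma_2^a} = \max\{1,\ g(a)\}$ where $g(a) := |v_a| + |u|^{2a}$. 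By definition, the two inequalities cutting out $\cU$ say precisely that $g(2) < 1$ and $g(3) < 1$. The engine of the whole argument is the two-step recursion $v_{a+2} = v_a + u^{2a}(u + u^3) = v_a + u^{2a}v_2$, from which, for $u \in \cU$,
\[
  g(a+2) \ \le\ |v_a| + |u|^{2a}\bigl(|v_2| + |u|^4\bigr)\ <\ |v_a| + |u|^{2a} \ =\ g(a),
\]
using $|v_2| + |u|^4 = g(2) < 1$ (the case $u = 0$ being trivial). Hence $g$ strictly decreases along even and along odd indices, so $g(a) < 1$ for every $a \ge 2$, which is part~\eqref{enum: power of gen}.

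For part~\eqref{enum: prod of two} I would multiply the two matrices above,
\[
  B_{-u^2}(\sigma_1^a\sigma_2^b) = \matr{u^{2a} - v_a v_b}{v_a u^{2b}}{-v_b}{u^{2b}},
\]
and bound its two row sums: the second is $|v_b| + |u|^{2b} = g(b) < 1$ for $b \ge 2$, and the first is at most $|u|^{2a} + |v_a|\,|v_b| + |v_a|\,|u|^{2b} = |u|^{2a} + |v_a|\,g(b) \le |u|^{2a} + |v_a| = g(a) < 1$ for $a \ge 2$. So $\norminf{\sigma_1^a\sigma_2^b} < 1$ on $\cU$, and interchanging the indices $1$ and $2$ gives $\norminf{\sigma_2^a\sigma_1^b} < 1$ as well. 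Part~\eqref{enum: omega} is immediate from $\Omega \mapsto u^3\,\mysmallmatrix{0}{1}{-1}{0}$: the integer matrix has $\norminf{\cdotspaced} = 1$ and its powers are $\pm I$ or $\pm\mysmallmatrix{0}{1}{-1}{0}$, so $\norminf{\Omega^k} = |u|^{3k}$, which is $< 1$ whenever $|u| < 1$ and $k \ge 1$.

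To deduce Theorem~\ref{thm: root-free}: $\cT$ corresponds to $\cU$, which lies in $\{|u| < 1\}$, and $\rho_w \le \norminf{w}$, so it suffices to exhibit a conjugate $w'$ of $w$ with $\norminf{w'} < 1$ on $\cU$. Powers of $\sigma_i$ are excluded by hypothesis; and if $w$ is conjugate to $\sigma_1\sigma_2$, $\Omega^k$, $\Omega^k\sigma_1$, or $\Omega^k\sigma_1\sigma_2$ with $k \ge 1$, then $R_w = \bdry\Dbar$ is disjoint from $\cT \subset \D$ by Lemma~\ref{lem: R circular}. By Corollary~\ref{cor: norm with power(s)} and Remark~\ref{rem: sigma pows}, any remaining positive $w$ is conjugate either to $\Omega^k\sigma_1^a$ with $k \ge 1$ and $a \ge 2$, or to a word of type~\eqref{enum: l even} or~\eqref{enum: l odd}. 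In the first case $\norminf{\Omega^k\sigma_1^a} \le \norminf{\Omega^k}\,\norminf{\sigma_1^a} \le |u|^{3k}\cdot 1 < 1$ on $\cU$ by parts~\eqref{enum: power of gen} and~\eqref{enum: omega}. In the second, $w'$ factors as $\Omega^k$ times a product of blocks, at least one of which is $\sigma_i^a\sigma_j^b$ with $i \ne j$ and $a, b \ge 2$ (the remaining blocks being powers $\sigma_i^c$ with $c \ge 2$); since $\norminf{\Omega^k} \le 1$, the singleton blocks have $\norminf{\cdotspaced} = 1$, and the $\sigma_i^a\sigma_j^b$ block has $\norminf{\cdotspaced} < 1$ on $\cU$, submultiplicativity gives $\norminf{w'} < 1$. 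Hence $R_w \cap \cT = \emptyset$ in every case.

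The step I expect to require the most care is not any single estimate but two structural points. First, one has to spot the recursion $v_{a+2} = v_a + u^{2a}v_2$ and observe that it is exactly the pair of inequalities $g(2) < 1$, $g(3) < 1$ — and no others — that propagates to $g(a) < 1$ for all $a \ge 2$; this is precisely why $\cU$ is defined by those two conditions. Second, one must check that the conjugacy forms from Corollary~\ref{cor: norm with power(s)} split cleanly into those carrying a genuine $\sigma_i^a\sigma_j^b$ block ($a,b\ge2$) or a nontrivial $\Omega^k$ — where the norm bound wins on all of $\cU$ — and the handful of leftovers (powers of $\sigma_i$, and $\Omega^k\sigma_1\sigma_2$, $\Omega^k\sigma_1$, $\Omega^k$ with $k \ge 1$), which are exactly the cases already settled by the hypothesis or by Lemma~\ref{lem: R circular}.
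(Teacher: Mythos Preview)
Your proof is correct, and your deduction of Theorem~\ref{thm: root-free} matches the paper's. The difference lies in how parts~\eqref{enum: power of gen} and~\eqref{enum: prod of two} are established. The paper does not compute $\sigma_1^a$ or $\sigma_1^a\sigma_2^b$ for general $a,b$; instead it observes that every integer $a\ge 2$ is a nonnegative combination of $2$ and $3$, so submultiplicativity reduces~\eqref{enum: power of gen} to checking $\norminf{\sigma_1^2}$ and $\norminf{\sigma_1^3}$ directly, and likewise reduces~\eqref{enum: prod of two} to the four products $\sigma_1^{a'}\sigma_2^{b'}$ with $a',b'\in\{2,3\}$, each of which is handled by an explicit matrix computation. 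Your route is different: you write down closed forms $v_a = u + u^3 + \cdots + u^{2a-1}$, set $g(a) = |v_a| + |u|^{2a}$, and use the two-step recursion $v_{a+2} = v_a + u^{2a}v_2$ to propagate $g(2),g(3) < 1$ to $g(a) < 1$ for all $a\ge 2$; part~\eqref{enum: prod of two} then falls out of the single product formula with row sums bounded by $g(a)$ and $g(b)$. Both arguments make transparent why $\cU$ is cut out by exactly the two inequalities $g(2)<1$ and $g(3)<1$ --- yours via the recursion, the paper's via the $2$-and-$3$ decomposition --- but your treatment of~\eqref{enum: prod of two} is slightly cleaner in that it avoids the four separate matrix checks.
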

Before proving Claim~\ref{claim: core}, we explain why it suffices to
prove the theorem.  By Corollary~\ref{cor: norm with power(s)}, we can
replace $w$ by a conjugate word of one of the following four forms:

\begin{enumerate}
\item $\Omega^k \sigma_1^a$ where $k \geq 0$ and $a \geq 0$. Since the
  original $w$ is not a power of $\sigma_i$, we must have $k \geq 1$
  by Remark~\ref{rem: sigma pows}.  If $a \geq 2$, then we have
  $\norminf{w} \leq \norminf{\Omega^k} \cdot \norminf{\sigma_1^a} < 1$
  on $\cU$ by Claim~\ref{claim: core}. The case of $a=1$ is covered by
  Lemma~\ref{lem: R circular} since $k \geq 1$.
  
\item $\Omega^k \sigma_1 \sigma_2$ where $k \geq 0$, which is included
  in Lemma~\ref{lem: R circular}.

\item
  $\Omega^k \sigma_1^{a_1} \sigma_2^{a_2} \cdots \sigma_1^{a_{\ell -
      1}}\sigma_2^{a_\ell}$ where $k \geq 0$, $\ell \geq 2$ is even,
  and all $a_i \geq 2$.  Part (\ref{enum: prod of two}) of
  Claim~\ref{claim: core} gives that
  $\norminf{\sigma_1^{a_1} \sigma_2^{a_2}} < 1$ for $u \in \cU$
  and that all the rest of the terms have norm at most 1.  Hence
  $\norminf{w} < 1$ on $\cU$ by sub-multiplicativity.
    
\item
  $\Omega^k \sigma_1^{a_1} \sigma_2^{a_2} \cdots \sigma_2^{a_{\ell -
      1}} \sigma_1^{a_{\ell}}$ where $k \geq 1$ is odd, $\ell \geq 3$
  is odd, and all $a_i \geq 2$.  This is identical to the previous
  case.
\end{enumerate}

Thus, to prove the theorem, it remains to establish Claim~\ref{claim:
  core}.  To start with (\ref{enum: power of gen}), since $\sigma_1$
and $\sigma_2$ are interchanged by conjugating by the isometry
$\mysmallmatrix{0}{1}{-1}{0}$, it suffices to check this for
$\sigma_1$.  As $\sigma_1^a$ has $1$ as an eigenvalue, we know
$\norminf{\sigma_1^a} \geq 1$.  As the norm is sub-multiplicative, we
only need check $\norminf{\sigma_1^a} \leq 1$ for $a = 2$ and $a = 3$,
which is the case since one calculates
\[
  \norminf{\sigma_1^2} = \max\left\{1, \ \abs{u + u^3} + \abs{u}^4\right\} %
  \mtext{and}
  \norminf{\sigma_1^3} = \max\left\{1, \ \abs{u + u^3 + u^5} + \abs{u}^6\right\}
\]
and so we have
$\norminf{\sigma_1^2} = \norminf{\sigma_1^3} = 1$ for
$u \in \cU$.

Next, we consider (\ref{enum: prod of two}). By sub-multiplicativity
and part (\ref{enum: power of gen}), it suffices to check
this for $\sigma_1^2 \sigma_2^2$, $\sigma_1^2 \sigma_2^3$,
$\sigma_1^3 \sigma_2^2$, and $\sigma_1^3 \sigma_2^3$. Calculating the
matrices, one finds in each case that on $\cU$ the second row has larger norm
and hence
\begin{align*}
  \norminf{\sigma_1^2 \sigma_2^2} &=  \norminf{\sigma_1^3 \sigma_2^2}  = \abs{u + u^3} + \abs{u}^4 \\ 
  \norminf{\sigma_1^2 \sigma_2^3} &=  \norminf{\sigma_1^3 \sigma_2^3} = \abs{u + u^3 + u^5} + \abs{u}^6
\end{align*}
all of which are strictly smaller than $1$ on $\cU$ as claimed. So we
have shown (\ref{enum: prod of two}).  Finally, for (\ref{enum:
  omega}), we have $\Omega \mapsto u^3 \mysmallmatrix{0}{1}{-1}{0}$
and hence $\norminf{\Omega^k} \leq \abs{u}^{3k} < 1$ when
$\abs{u} < 1$ as needed.
\end{proof}

We end this subsection with the following lemma.
\begin{lemma}
  \label{lem: neg reals are hard}
  For any word $w \in \Braid{3}$, not necessarily positive,
  $\rho_w(t) \leq \rho_w(-\abs{t})$.
\end{lemma}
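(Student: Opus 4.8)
The plan is to reduce the statement to the entrywise definiteness of the Burau representation (Theorem~\ref{thm:possign}) together with Gelfand's formula. First, note the domain subtlety: if $w$ is not positive then $\rho_w$ is only defined on $\C^\times$, but this is harmless since $-|t| \neq 0$ whenever $t \neq 0$; and if $w$ is positive the case $t = 0$ is vacuous because $-|0| = 0$. So I would assume $t \in \C^\times$ throughout.

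The core observation I would isolate is an \emph{entrywise} comparison valid for every $v \in \Braid{3}$ and every $t \in \C^\times$:
\[
  \bigl|[B_t(v)]_{ik}\bigr| \ \leq\ \bigl|[B_{-|t|}(v)]_{ik}\bigr|
  \qquad (i,k \in \{1,2\}).
\]
To prove this, write the $(i,k)$ entry of $B_{-s}(v)$ as a Laurent polynomial $\sum_j a_j s^j$ in a formal variable $s$; by Theorem~\ref{thm:possign} all the $a_j$ have a common sign. Substituting $s = -t$ gives $[B_t(v)]_{ik} = \sum_j a_j(-t)^j$, so by the triangle inequality $|[B_t(v)]_{ik}| \leq \sum_j |a_j|\,|t|^j$, and since there is no cancellation among terms of a common sign, $\sum_j |a_j|\,|t|^j = \bigl|\sum_j a_j\, |t|^j\bigr| = |[B_{-|t|}(v)]_{ik}|$. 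Given Theorem~\ref{thm:possign} this step is essentially immediate and presents no real obstacle.

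Next I would promote this to a norm inequality. The operator norm $\norminf{A} = \max\{|a_{11}| + |a_{12}|,\ |a_{21}| + |a_{22}|\}$ is monotone in the absolute values of the entries and is submultiplicative, so applying the entrywise estimate with $v = w^n$ and using that $B_t$ is a homomorphism yields, for every $n \geq 1$,
\[
  \norminf{B_t(w)^n} = \norminf{B_t(w^n)} \ \leq\ \norminf{B_{-|t|}(w^n)} = \norminf{B_{-|t|}(w)^n}.
\]
Taking $n$-th roots and letting $n \to \infty$, Gelfand's formula (applied to this submultiplicative norm, as in the proof of Lemma~\ref{lem:monotone}) converts both sides into spectral radii, giving $\rho_w(t) = \rho\bigl(B_t(w)\bigr) \leq \rho\bigl(B_{-|t|}(w)\bigr) = \rho_w(-|t|)$.

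The point worth flagging — and the only genuinely delicate one — is \emph{why one must pass to powers}. One cannot compare $B_t(w)$ and $B_{-|t|}(w)$ directly: an entrywise bound $|A| \le |B|$ does give $\rho(A) \le \rho(|A|) \le \rho(|B|)$, but $\rho(|B|)$ can strictly exceed $\rho(B)$ when $B$ has a mixed sign pattern (e.g.\ $B = \mysmallmatrix{1}{1}{-1}{1}$, where $\rho(B) = \sqrt 2$ but $\rho(|B|) = 2$), so the naive conclusion $\rho(A) \le \rho(B)$ fails. Replacing $w$ by $w^n$ and invoking Gelfand is exactly what sidesteps this, since $\norminf{B_{-|t|}(w^n)}^{1/n} \to \rho(B_{-|t|}(w))$ irrespective of sign patterns. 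I would also double-check that Theorem~\ref{thm:possign} genuinely applies to all of $\Braid{3}$, in particular to the possibly non-positive braid $w^n$ — which it does, since its proof runs through the Garside normal form $\Omega^k w_1 \cdots w_n$ with $k \in \Z$ allowed.
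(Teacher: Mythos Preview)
Your proof is correct and follows the same overall strategy as the paper: use definiteness of the Burau representation to get an entrywise bound on powers, then apply Gelfand's formula. The one difference in execution is worth noting. The paper invokes Theorem~\ref{thm:goodsign} to pass to a conjugate $w'$ for which $B_{-t}(w') = \pm M(t)$ with \emph{all} entries positive polynomials; then $M^n(t)$ automatically has positive polynomial entries, and the entrywise bound $\norminf{M^n(t)} \leq \norminf{M^n(|t|)}$ follows for every $n$. You instead apply the weaker but hypothesis-free Theorem~\ref{thm:possign} directly to each power $w^n$, obtaining the same entrywise inequality without conjugating. Your route is arguably cleaner: it sidesteps the exceptional cases of Theorem~\ref{thm:goodsign} (where $w$ is conjugate to a braid with $n \leq 1$ in its canonical form), which the paper's proof does not explicitly address. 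Your discussion of why one must pass to powers rather than compare $B_t(w)$ and $B_{-|t|}(w)$ directly is also a useful point that the paper leaves implicit.
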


\begin{proof}
By Theorem~\ref{thm:goodsign}, we can replace $w$ with $w'$ so that
$B_{-t}(w') = \pm M(t)$ where $M$ is a matrix with all entries
positive polynomials.  Note that, if $p(t)$ is a polynomial with
positive coefficients, then $|p(t)| \leq p(|t|)$ for any
$t \in \mathbb{C}$. Hence, for any matrix $M(t)$ whose entries are
polynomials with positive coefficients, for any $t \in \mathbb{C}$,
$\norminf{M(t)} \leq \norminf{ M(|t|)}$.  Similarly, for each
$n \geq 1$, the entries of the power matrix $M^n(t)$ are also
polynomials with positive coefficients, hence
$\norminf{ M^n(t) } \leq \norminf{M^n(|t|)}$. Finally, by the Gelfand
formula, the spectral radius $\rho$ satisfies
\[
  \rho(M(t)) = \lim_{n \to \infty} \norm{ M^n(t) }^{1/n}_\infty \leq
  \lim_{n \to \infty} \norm{ M^n(|t|)}^{1/n}_\infty = \rho\big(M(|t|)\big).
\]
By applying this to $B_{-t}(w')$, we obtain
$\rho_w(t) = \rho_{w'}(t) \leq \rho_{w'}(-|t|) = \rho_w(-|t|)$ as
claimed.
\end{proof}

\subsection{Roots in the right half-plane} \label{S:right-plane}

In this final subsection, we introduce Conjecture~\ref{conj: RH},
which is effectively a combinatorial condition on the coefficients of
$\Delta_\what$, and show:

\begin{proposition}
  Consider a positive $w \in \Braid{3}$ where $\what$ is a knot. If
  Conjecture~\ref{conj: RH} holds, then all roots of $\Delta_\what$ in
  the half-plane $\setdef{t}{\Re(t) > 0}$ lie on the unit circle.
\end{proposition}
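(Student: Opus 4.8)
The plan is to pass to the ``trace coordinates'' $g$ of Section~\ref{sec: alex props}, dispose of the real roots using the results of Section~\ref{sec:realax}, and then feed in Conjecture~\ref{conj: RH} to handle the remaining (non‑real) roots. First I would record the elementary fact about the substitution $x = t + 1/t$: writing $t = r e^{i\phi}$ with $r>0$, one has $\Re(t) = r\cos\phi$ and $\Re(x) = (r + r^{-1})\cos\phi$, and since $r + r^{-1} > 0$ the real parts of $t$ and $x$ always have the same sign; moreover the two preimages of a given $x$ are $t$ and $1/t$, which again share the sign of their real part, and $t \in S^1$ precisely when $x \in [-2, 2]$. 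Since $\what$ is a knot, $\Delta_\what$ is palindromic of even degree, so we may write $\Delta_\what(t) = t^d g(t + 1/t)$ with $g \in \Z[x]$ and $g(0) \neq 0$. The observation above then shows that $\Delta_\what$ has a root with $\Re(t) > 0$ off the unit circle if and only if $g$ has a root $x$ with $\Re(x) > 0$ and $x \notin [-2, 2]$.

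Next I would dispose of the real roots. By Theorem~\ref{thm: rho pos reals}, $\Delta_\what$ has no roots in $(0, 1)$; combined with the $z \mapsto 1/z$ symmetry of Lemma~\ref{lem: sym roots} and $\Delta_\what(1) = \pm 1 \neq 0$, it has no roots in $(0, \infty)$ at all. Translating back, $g$ has no real root in $[2, \infty)$, and $x = 2$ (i.e.\ $t = 1$) is not a root either; real roots of $g$ in $(0,2)$ lie in $[-2,2]$ and so are harmless for the reduced question. Hence the Proposition is equivalent to the assertion that $g$ has no \emph{non-real} root with $\Re(x) > 0$.

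This last assertion is where Conjecture~\ref{conj: RH} enters. The conjecture is a sign/combinatorial condition on the coefficients of $\Delta_\what$, equivalently — via $\det(B_t(w) - 1) = (-t)^{\#w} - \tr\big(B_t(w)\big) + 1 = (t^2 + t + 1)\Delta_\what(t)$ together with the definiteness of $\tr\big(B_{-t}(w)\big)$ from Corollary~\ref{cor:tracedef} — a condition on the coefficients of $g$. I would use it to locate all zeros of $g$ outside $[-2,2]$ in the open left half-plane by a classical root-location argument: split $g(x) = g_{\even}(x) + g_{\odd}(x)$ into even and odd parts and compute the winding of $g$ along the imaginary axis $\Re(x) = 0$, or equivalently verify a Hermite--Biehler / Enestr\"om--Kakeya-type interlacing for the pair $(g_{\even}, g_{\odd})$; the hypothesis of Conjecture~\ref{conj: RH} should be exactly what forces this count to vanish, so that no zero of $g$ crosses into $\Re(x) > 0$. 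Note the irrelevant factor $t^2+t+1$ has its roots $\zeta_3,\zetabar_3$ at $\Re = -1/2 < 0$, so it never interferes. Combining this with the previous paragraph, and then applying the $z \mapsto 1/z$ symmetry once more — which preserves the sign of $\Re$ and so carries $\{\,|z| > 1,\ \Re(z) > 0\,\}$ onto $\{\,|z| < 1,\ \Re(z) > 0\,\}$ — yields that every root of $\Delta_\what$ with $\Re(t) > 0$ lies on $|t| = 1$.

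The main obstacle is the middle step: identifying precisely which root-location criterion Conjecture~\ref{conj: RH} is tailored to, and carrying out the bookkeeping that turns its combinatorial form into the hypotheses of that criterion (interlacing of $g_{\even}$ and $g_{\odd}$, or positivity of the relevant Hurwitz-type minors). A possible alternative route avoids $g$ entirely: enlarge the root-free region $\cT$ of Theorem~\ref{thm: root-free} to all of $\D \cap \{\Re(t) > 0\}$ by the $\norminf{\cdot}$-on-a-conjugate method of Section~\ref{sec: root-free} — reducing via Corollary~\ref{cor: norm with power(s)} to the normal forms, handling the $\Omega^k(\cdots)$ cases by Lemma~\ref{lem: R circular} — and then invoking Lemma~\ref{lem: spec basics} together with the $z\mapsto 1/z$ symmetry; here the same input from Conjecture~\ref{conj: RH} would supply the norm bound on the building blocks $\sigma_1^a \sigma_2^b$ on this region, and the behaviour near $t = 1$ is the delicate point. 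Everything outside the conjectural step — the change of variables, the real-axis cases, the symmetry argument — is routine.
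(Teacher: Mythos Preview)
Your overall architecture---pass to the trace variable $x = t + 1/t$, reduce to a statement about the location of zeros of the one-variable polynomial, and apply a Routh--Hurwitz/winding criterion on the imaginary axis---is indeed the paper's approach.  But there is a genuine gap in the key step, and it is not merely a matter of ``identifying precisely which root-location criterion Conjecture~\ref{conj: RH} is tailored to.''

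You write that Conjecture~\ref{conj: RH} ``should be exactly what forces this count to vanish, so that no zero of $g$ crosses into $\Re(x) > 0$.''  This cannot be right: you have already observed (correctly) that $g$, and the paper's $f(x) = (x+1)g(x)$, typically has many real roots in $(0,2)$, which lie in the open right half-plane.  So the winding of $f$ along $i\R$ is \emph{not} zero; the Cauchy index does not vanish.  What Conjecture~\ref{conj: RH} actually buys is more subtle: writing $f(iy) = P_0(y) + i P_1(y)$, the conjecture says (for $m$ even) that $P_0$ has no real zeros at all, which forces the Cauchy index $I_{-\infty}^{+\infty}\,P_1/P_0$ to be zero and hence $p = q = m/2$, i.e.\ exactly half of the roots of $f$ lie in the right half-plane.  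For $m$ odd, one similarly obtains $|p-q| = 1$.

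The ingredient you are missing is therefore a \emph{lower bound} on the number of roots of $f$ on the segment $[0,2]$, to match the Routh--Hurwitz count.  The paper supplies this as Lemma~\ref{L:find-roots}: using that $B_{e^{i\theta}}(w)$ is unitary for $|\theta| < 2\pi/3$ (Lemma~\ref{lem: sig on circle}), one writes $f(2\cos\theta) = 2\cos(m\theta) - r(\theta)$ with $|r(\theta)| \le 2$ and applies the intermediate value theorem to the oscillations of $2\cos(m\theta)$ on $[0,\pi/2]$, obtaining at least $\lfloor m/2 \rfloor$ roots on $[0,2]$.  Since that equals the number $q$ of roots in the right half-plane, they must all lie on $[0,2]$, and the map $t \mapsto t + 1/t$ pulls this back to the unit circle.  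Your appeal to Theorem~\ref{thm: rho pos reals} only gives an \emph{upper} bound (zero) for real roots on $(2,\infty)$; it does not produce the matching lower bound on $[0,2]$, and without that the argument does not close.
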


Suppose $w \in \Braid{3}$ is positive with $\what$ a knot. As
noted when proving Lemma~\ref{lem: nearone}, the length of $w$
is even, say $2m$.  From \eqref{eq: burau det} and Theorem~\ref{thm:
  burau pos}, we have
\[
  g(t) = \det (B_t(w) - I) = t^{2m} - \tr(B_t(w)) + 1
\]
has degree $2m$.  As $g(t)$ is symmetric, there exists a real monic
polynomial $f(x)$ of degree $m$ such that
\[
  f(t+ t^{-1}) := t^{-m} \det (B_t(w) - I).
\]
Finally, let us define the polynomial
\[
  h(y) := \Re\big(i^{-m} f(i y)\big) = \sum_{k = 0}^{m/2} b_k y^{m-2k}
\]
where by $\Re$ we mean taking the real parts of the coefficients. Note
that $b_0 = 1$, since $f$ is monic. The following is supported by much
experimental evidence:

\begin{conjecture}
  \label{conj: RH}
  Let $w \in \Braid{3}$ be a positive word where $\what$ is a knot.
  Then all coefficients of $h$ are positive: that is,
  $b_{k} > 0$ for each $0 \leq k \leq m/2$, where
  $\#w = 2m$.
\end{conjecture}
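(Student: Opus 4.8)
The plan is to reduce the conjecture, which concerns coefficients of a polynomial extracted from $\det(B_t(w)-I)$, first to the Conway polynomial and then to a concrete inequality among binomial coefficients. Write $\#w=2m$. As in the text, $g(t)=\det(B_t(w)-I)=t^{2m}-\tr(B_t(w))+1=t^m f(t+t^{-1})$, the last equality using that $\what$ is a knot so $g$ is palindromic with the $+$ sign. Since $t^2+t+1=t\bigl((t+t^{-1})+1\bigr)$ and, as recalled in the proof of Lemma~\ref{lem: conway pos}, $\Delta_\what(t)=t^{m-1}\nabla_\what\bigl((t+t^{-1})-2\bigr)$ with $\nabla_\what$ a monic polynomial having \emph{positive} coefficients (VanBuskirk, Cromwell), equation~\eqref{eq:alex} yields the factorization
\[
  f(x)=(x+1)\,\nabla_\what(x-2).
\]
The VanBuskirk--Cromwell bounds quoted in that proof moreover give explicit two-sided binomial bounds on the coefficients of $\nabla_\what$; this will be the only analytic input.

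The key step is a change of basis that makes $h$ transparent. Let $L_n(y)$ be the Lucas polynomials: $L_0=2$, $L_1=y$, $L_{n+1}=yL_n+L_{n-1}$; then $\deg L_n=n$, only monomials $y^{n-2k}$ occur, all coefficients are positive, and $[L_n]_{n-2k}=\tfrac{n}{n-k}\binom{n-k}{k}$. A short computation with $t=ie^{\psi}$, $y=2\sinh\psi$ shows that the ``power polynomials'' $P_n$ (defined by $P_n(t+t^{-1})=t^n+t^{-n}$) satisfy $\Re\bigl(i^{-m}P_n(iy)\bigr)=(-1)^{(n-m)/2}L_n(y)$ when $n\equiv m\pmod 2$ and $0$ otherwise. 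Now $g(-t)=t^{2m}+1-q(t)$ where $q(t):=\tr(B_{-t}(w))$ is definite by Corollary~\ref{cor:tracedef}, say all of whose coefficients have sign $\epsilon\in\{\pm1\}$; it is palindromic of degree $\le 2m-1$ with $q(0)=0$, so $q(t)/t^m$ is a polynomial $\widehat q(x)$ in $x=t+t^{-1}$, and $g(-t)=t^m\bigl(P_m(x)-\widehat q(x)\bigr)$. Expanding $\widehat q$ in the $P_n$-basis and applying $\Re\bigl(i^{-m}(\,\cdot\,)(iy)\bigr)$, one obtains (using $h(y)=\Re(i^{-m}f(iy))=\Re\bigl(i^{-m}(-1)^mf(-x)\bigr|_{x=iy})$) the identity
\[
  h(y)=L_m(y)+\sum_{j=1}^{\lfloor m/2\rfloor}(-1)^{j+1}\,\tilde q_{2m-2j}\,L_{m-2j}(y),
\]
where $\tilde q_{2m-2j}$ is the coefficient of $t^{\,2m-2j}$ in $q(t)$ (with the usual factor $\tfrac12$ understood when $m-2j=0$), so all the $\tilde q$'s share the sign $\epsilon$. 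Thus Conjecture~\ref{conj: RH} becomes: \emph{this alternating combination of positive-coefficient polynomials has all coefficients positive.} (As a check, $w=\sigma_1^3\sigma_2^3$ has $m=3$, $q(t)=-t^5-2t^4-t^3-2t^2-t$, and the formula gives $h(y)=L_3(y)-2y=y^3+y$.)

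Splitting the sum by the parity of $j$, the terms with $(-1)^{j+1}\epsilon>0$ only help, so it suffices to show that $L_m$ dominates the harmful terms coefficient-wise: for each $k$,
\[
  \frac{m}{m-k}\binom{m-k}{k}\ >\ \sum_{\substack{1\le j\le k\\ (-1)^{j+1}\epsilon<0}}|\tilde q_{2m-2j}|\;\frac{m-2j}{m-j-k}\binom{m-j-k}{k-j},
\]
with the rare cases where equality threatens handled by a surviving helpful term. The coefficients $|\tilde q_{2m-2j}|$ can be written, via $\tr(B_{-t}(w))=t^{2m}+1-(t^2-t+1)\Delta_\what(-t)$ and $\Delta_\what(-t)=(-1)^{m-1}t^{m-1}\nabla_\what\bigl(-(t^{1/2}+t^{-1/2})^2\bigr)$, as explicit alternating sums of products $\binom{2l}{r}d_l$ of the Conway coefficients $d_l$ of $\nabla_\what$; the VanBuskirk--Cromwell bounds on the $d_l$ then turn the displayed inequality into a purely numerical claim about binomial coefficients.

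The hard part will be establishing that numerical inequality. Because the combination of $L$'s genuinely alternates, one cannot get away with one-sided estimates or submultiplicativity of norms (the device used for Theorems~\ref{thm: root-free} and~\ref{thm: rho neg reals}): the top term $L_m$ must beat the \emph{net} of the lower ones, and the $m=3$ example ($L_3=y^3+3y$ against the correction $-2y$, margin $3>2$) shows the inequality is not comfortable. It is quite possible that the generic VanBuskirk--Cromwell bounds are not sharp enough and that one must instead exploit the finer structure of $\Delta_\what$ for closures of positive \emph{3}-braids --- for instance a transfer-matrix description of its coefficients coming from the Garside normal form of Section~\ref{S:definite}, which should give better control on $|\tilde q_{2m-2j}|$. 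An alternative route would be to attack Conjecture~\ref{conj: RH} directly by induction on the canonical form, in parallel with the proof of Theorem~\ref{thm:possign}; the obstruction there is that, unlike the entries of $B_{-t}(w)$, neither $\tr(B_{-t}(w))$ nor $h$ behaves multiplicatively under appending a canonical syllable, so one would have to track a more refined matrix-level invariant whose image under ``evaluate on the imaginary $t$-axis'' is coefficient-wise positive.
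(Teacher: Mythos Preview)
The statement you are attempting to prove is labelled a \emph{Conjecture} in the paper, and the paper gives no proof of it; the authors say only that it ``is supported by much experimental evidence''.  So there is no proof in the paper to compare your attempt against.

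Your proposal is, by your own account, not a proof either: you reduce the positivity of the $b_k$ to a coefficient-wise inequality between $L_m(y)$ and an alternating combination of lower Lucas polynomials weighted by the trace coefficients $\tilde q_{2m-2j}$, and then candidly state that ``the hard part will be establishing that numerical inequality'' and that the VanBuskirk--Cromwell bounds may well be too weak.  That is exactly the gap.  The reduction itself appears to be correct --- your worked example $w=\sigma_1^3\sigma_2^3$ checks out, and the Lucas-polynomial identity $\Re\bigl(i^{-m}P_n(iy)\bigr)=(-1)^{(n-m)/2}L_n(y)$ is a nice observation --- but a reduction to an unproved inequality is not a proof.  Indeed, your own discussion of why neither the Cromwell bounds nor a naive Garside induction obviously closes the gap is an honest assessment of why the statement remains open.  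If you want to pursue this further, the most promising of your suggestions is probably the transfer-matrix approach via the canonical form of Section~\ref{S:definite}, since that is the only place where the special structure of positive \emph{3}-braids (as opposed to general positive braids) enters; but as it stands you have an interesting reformulation of the conjecture, not a resolution of it.
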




\begin{lemma} \label{L:find-roots}
  Let $w \in \Braid{3}$ be a positive word where $\what$ is a knot, with
  $\# w = 2m$.  Then the function $f$ has: 
  \begin{enumerate}
  \item if $m$ is even, at least $\frac{m}{2}$ roots on $[0, 2]$; 
  \item if $m$ is odd:
    \begin{itemize}
    \item if $(-1)^{\frac{m-1}{2}} f(0) < 0$, at least $\frac{m+1}{2}$
      roots on $[0, 2]$, or
    \item  if $(-1)^{\frac{m-1}{2}} f(0) > 0$, at least
      $\frac{m-1}{2}$ roots on $[0, 2]$.
    \end{itemize}
  \end{enumerate}
\end{lemma}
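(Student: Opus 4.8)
The plan is to reduce counting roots of $f$ on $[0,2]$ to counting sign changes of the real-analytic function $\theta\mapsto f(2\cos\theta)$ on $[0,\pi/2]$, bounding a normalized trace by means of Squier's form. First I would set up notation. Since $\what$ is a knot, $w$ maps to a single $3$-cycle under $\Braid{3}\to\Sym_3$, so $w$ is not any $\sigma_i^n$ and $\#w$ is even; write $\#w=2m$. By Theorem~\ref{thm: burau pos}, $\Delta_\what$ is monic of degree $2m-2$, hence $g(t)=\det(B_t(w)-I)=t^{2m}-\tr(B_t(w))+1$ has degree exactly $2m$ and $f$ (defined by $f(t+t^{-1})=t^{-m}g(t)$) is monic of degree exactly $m$. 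Writing $p(t)=\tr(B_t(w))$ and setting $t=e^{i\theta}$, so $t+t^{-1}=2\cos\theta$ and $\det B_t(w)=(-e^{i\theta})^{2m}=e^{2im\theta}$ by \eqref{eq: B det}, I obtain the identity
\[
  v(\theta):=f(2\cos\theta)=e^{-im\theta}g(e^{i\theta})=2\cos(m\theta)-2c(\theta),
  \qquad c(\theta):=\tfrac12\,e^{-im\theta}p(e^{i\theta}),
\]
where $v$ is real-analytic on $\R$ (a polynomial composed with $2\cos\theta$) and $\not\equiv 0$ since $f$ is monic of positive degree.

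The crucial input is that $|c(\theta)|\le 1$ for $\theta\in[0,\pi/2]$. Since $[0,\pi/2]\subset(-\tfrac{2\pi}{3},\tfrac{2\pi}{3})$, Lemma~\ref{lem: sig on circle} conjugates $B_{e^{i\theta}}(w)$ into $\mathrm{U}(2)$; hence $A_\theta:=e^{-im\theta}B_{e^{i\theta}}(w)$ is conjugate into $\mathrm{U}(2)$ and has $\det A_\theta=1$, so it is conjugate into $\mathrm{SU}(2)$ and $2c(\theta)=\tr A_\theta\in[-2,2]$. I would also note that $v$ is nonzero at the endpoint: $v(0)=f(2)=\det(B_1(w)-I)=(\zeta_3-1)(\zetabar_3-1)=3$, using that $B_1(w)$ has eigenvalues $\zeta_3,\zetabar_3$ (as in the proof of Lemma~\ref{lem: nearone}). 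Because $\theta\mapsto 2\cos\theta$ restricts to a diffeomorphism of $(0,\pi/2]$ onto $[0,2)$ and $v(0)\ne 0$, the number of roots of $f$ on $[0,2]$, counted with multiplicity, equals the number of zeros of $v$ on $[0,\pi/2]$, counted with multiplicity.

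For the count, put $\theta_\ell=\ell\pi/m\in[0,\pi/2]$ for $\ell=0,1,\dots,\lfloor m/2\rfloor$. Then $\cos(m\theta_\ell)=(-1)^\ell$, so $(-1)^\ell v(\theta_\ell)=2\big(1-(-1)^\ell c(\theta_\ell)\big)\ge 2\big(1-|c(\theta_\ell)|\big)\ge 0$; i.e.\ $v$ takes weakly the sign $(-1)^\ell$ at these $\lfloor m/2\rfloor+1$ consecutive points. The elementary principle that a real-analytic function $\not\equiv 0$ which is weakly sign-alternating at $L+1$ successive points has at least $L$ zeros (with multiplicity) between the outer two then gives $v$ at least $\lfloor m/2\rfloor$ zeros on $[0,\pi/2]$, hence $f$ at least $\lfloor m/2\rfloor$ roots on $[0,2]$: exactly $m/2$ when $m$ is even, and $(m-1)/2$ when $m$ is odd — which is the asserted bound when $(-1)^{(m-1)/2}f(0)>0$. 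When $m$ is odd and $(-1)^{(m-1)/2}f(0)<0$, I would add the node $\pi/2$: note $\theta_{(m-1)/2}=\tfrac{(m-1)\pi}{2m}<\tfrac{\pi}{2}$, and $v(\pi/2)=2\cos(m\pi/2)-2c(\pi/2)=-2c(\pi/2)$, while also $v(\pi/2)=f(2\cos\tfrac{\pi}{2})=f(0)$; hence $f(0)=-2c(\pi/2)$, so the hypothesis says precisely that $v(\pi/2)$ has the strict sign $(-1)^{(m-1)/2+1}$. Appending $\pi/2$ to the alternating chain $\theta_0,\dots,\theta_{(m-1)/2}$ produces one more zero of $v$ on $[0,\pi/2]$, i.e.\ $(m+1)/2$ roots of $f$ on $[0,2]$.

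The only step I expect to require genuine care — and none of it concerns the Burau representation — is the ``weakly sign-alternating $\Rightarrow$ at least $L$ zeros with multiplicity'' principle at an interior node $\theta_\ell$ where $v(\theta_\ell)=0$: one must observe that such a node is either a genuine sign change (which forces an additional zero in an adjacent interval) or a zero of even order at $\theta_\ell$ itself, so no multiplicity is lost in the accounting. Everything else is a direct computation given the identity $v(\theta)=2\cos(m\theta)-2c(\theta)$ and the bound $|c|\le 1$.
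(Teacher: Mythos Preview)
Your proof is correct and follows essentially the same approach as the paper: both write $f(2\cos\theta)=2\cos(m\theta)-r(\theta)$ with $|r(\theta)|\le 2$ coming from Squier's form, and then count zeros via the extreme values of $2\cos(m\theta)$ at $\theta_\ell=\ell\pi/m$ (the paper phrases this as monotonicity intervals and the intermediate value theorem, you as sign alternation at the nodes). Your treatment is if anything slightly more careful than the paper's, since you explicitly address the case $v(\theta_\ell)=0$ and the endpoint $v(0)=3$.
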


\begin{proof}
Note that, setting $t = e^{i \theta}$ and $r(\theta) = e^{-i m \theta}
\big(\tr B_{e^{i \theta}}(w)\big)$, one can write for any $0 \leq \theta \leq \pi/2$, 
$$f(2 \cos \theta) = 2 \cos(m \theta) - r(\theta).$$
Here, $|r(\theta)| \leq 2$ by Lemma~\ref{lem: sig on circle}.

Now, if $m$ is even, note that $u(\theta) := 2 \cos(m \theta)$ has
precisely $\frac{m}{2}$ monotonicity intervals for
$\theta \in [0, \pi/2]$, and moreover, $u(\pi/2) = \pm 2$, hence the
equation $u(\theta) = r(\theta)$ has, by the intermediate value
theorem, at least $\frac{m}{2}$ solutions (counted with multiplicities
when the graphs of $u(\theta)$ and $r(\theta)$ are tangent to each
other).

If $m$ is odd, note that $u(\pi/2) = 0$. Since $u(\theta)$ has exactly
$\frac{m-1}{2}$ monotonicity intervals for
$\theta \in \big[0, \frac{m-1}{m} \frac{\pi}{2}\big]$, there are at
least $\frac{m-1}{2}$ solutions of the equation
$u(\theta) = r(\theta)$.

Suppose that $m \equiv 1 \bmod 4$ and $f(0) < 0$; then setting
$\theta = \pi/2$, we have $f(0) = - r(\pi/2) < 0$ hence $r(\pi/2) > 0$,
which means that
$u\big( \frac{m-1}{m} \frac{\pi}{2}\big) = +2 \geq r\big( \frac{m-1}{m}
\frac{\pi}{2}\big)$ while $u( \pi/2 ) = 0 < r(\pi/2)$.  Hence there is at
least another root of $u(\theta) = r(\theta)$ in
$\big[ \frac{m-1}{m} \frac{\pi}{2}, \frac{\pi}{2}\big]$, yielding a total of
at least $\frac{m+1}{2}$ roots.

The case of $m \equiv 3 \bmod 4$ and $f(0) > 0$ is symmetric.
\end{proof}

\begin{proposition}
  Let $w \in \Braid{3}$ be positive where $\what$ is a knot, with
  $\#w = 2m$.  If Conjecture~\ref{conj: RH} is true, then all roots
  of $g(t)$ in the half-plane $\setdef{ t }{\Re(t) > 0}$ lie on the
  unit circle.  Moreover, if $m$ is even, the number of such roots is
  exactly $m$; if $m$ is odd, it is either $m-1$ or $m+1$.
\end{proposition}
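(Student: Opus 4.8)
The plan is to pass from $g(t)$ to the degree-$m$ polynomial $f$ given by $g(t) = t^m f(t + t^{-1})$, to reformulate the statement as one about the roots of $f$ in the right half-plane, and then to combine the argument principle with Lemma~\ref{L:find-roots}. The starting point is that for $t \in \C^\times$ one has $\Re(t + t^{-1}) = \Re(t)\bigl(1 + \abs{t}^{-2}\bigr)$, so $x := t + t^{-1}$ has real part of the same sign as $\Re(t)$, hence also as $\Re(t^{-1})$. Since $t \mapsto x$ is two-to-one with fibres $\{t, t^{-1}\}$, the roots of $g$ with $\Re(t) > 0$ are exactly the preimages of the roots of $f$ with $\Re(x) > 0$, each root of $f$ contributing two roots of $g$ counted with multiplicity; moreover a root of $f$ in the interval $(0,2)$ pulls back to a conjugate pair $e^{\pm i\theta}$ on the unit circle with $\cos\theta > 0$, while $x = 2$ is never a root of $f$ because $\what$ a knot forces $g(1) = 3\,\Delta_{\what}(1) = \pm 3 \neq 0$. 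So the whole statement reduces to showing that every root of $f$ with $\Re(x) > 0$ lies in $(0,2)$, together with the count: exactly $m/2$ such roots when $m$ is even, and $(m\pm1)/2$ when $m$ is odd.

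First I would record what Conjecture~\ref{conj: RH} gives along the imaginary axis. Writing $i^{-m} f(iy) = h(y) + i\,\eta(y)$, where $h(y) = \sum_k b_k y^{m-2k}$ is as in the statement and $\eta$ is the matching imaginary part, Conjecture~\ref{conj: RH} is the assertion that all $b_k > 0$. Since $h$ involves only the powers $y^m, y^{m-2}, \dots$, it factors as $h(y) = y^{\epsilon} H(y^2)$ with $\epsilon \in \{0,1\}$ the parity of $m$ and $H$ a polynomial with \emph{all} coefficients positive; thus $H$ has no root in $[0,\infty)$ and $h(y) \neq 0$ for every real $y \neq 0$. Consequently $f$ has no purely imaginary root except possibly $x = 0$, which can occur only when $m$ is odd; more precisely, $h(y) > 0$ for all real $y$ when $m$ is even, while $h(y)$ has the same sign as $y$ when $m$ is odd.

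The count then comes from the argument principle for $f$ on the boundary of $\{\Re x > 0\} \cap \{\abs{x} < R\}$ with $R$ large. The large circular arc contributes $m\pi$ to the variation of $\arg f$, while on the imaginary segment $x = iy$ one has $\arg f(iy) = \frac{m\pi}{2} + \arg\bigl(h(y) + i\,\eta(y)\bigr)$. By the previous paragraph the point $h(y) + i\,\eta(y)$ stays in the open right half-plane for all $y$ when $m$ is even, so its net argument variation is $0$ and $f$ has exactly $m/2$ roots with $\Re(x) > 0$; when $m$ is odd that point moves from the open right half-plane to the open left half-plane, crossing the imaginary axis at $i\,\eta(0)$, so the net variation is $+\pi$ or $-\pi$ according to the sign of $\eta(0)$, giving $(m\pm1)/2$ roots (in the borderline case $\eta(0) = 0$, i.e.\ $f(0) = 0$, Conjecture~\ref{conj: RH} forces that root to be simple — look at the lowest term $b_{(m-1)/2}\,y$ of $h$ — so dividing by $x$ reduces to the even case applied to $f/x$ and yields $(m-1)/2$). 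To close the loop, Lemma~\ref{L:find-roots} supplies at least that many roots of $f$ in $[0,2]$ in every case — $m/2$ for $m$ even, and $(m+1)/2$ or $(m-1)/2$ for $m$ odd, the dichotomy there being controlled by the sign of $(-1)^{(m-1)/2}f(0)$, which a short computation shows equals $-\operatorname{sign}\eta(0)$ and hence selects the same count. As $f(0) \neq 0$ whenever that bound is used and $f(2) \neq 0$ always, these roots lie in the open interval $(0,2) \subset \{\Re(x) > 0\}$, and comparing with the exact count forces every root of $f$ with $\Re(x) > 0$ to lie in $(0,2)$. Translating back, every root of $g$ with $\Re(t) > 0$ is on the unit circle, and there are $2N$ of them: $m$ when $m$ is even, and $m\pm1$ when $m$ is odd.

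The hardest part will be the argument-principle step: making rigorous the claimed net variation of $\arg f$ along the imaginary axis — in particular controlling the trajectory $y \mapsto h(y) + i\,\eta(y)$ near $y = 0$ and dealing cleanly with the degenerate case $f(0) = 0$ — and then verifying that the sign of $\eta(0)$ lines up with the case split in Lemma~\ref{L:find-roots}, so that the exact count and the lower bound agree exactly rather than only up to a bounded error. The remaining ingredients are routine manipulations with the substitution $x = t + t^{-1}$ and with the explicit shape of $h$.
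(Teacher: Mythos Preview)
Your proposal is correct and follows essentially the same approach as the paper: reduce to $f$ via $x = t + t^{-1}$, count the roots of $f$ in the right half-plane by tracking the variation of $\arg f$ along the imaginary axis (the paper packages this as the Routh--Hurwitz criterion and the Cauchy index $I_{-\infty}^{+\infty}$, but the content is identical, including the sign analysis for odd $m$), and then compare with the lower bound from Lemma~\ref{L:find-roots}. One minor difference: the paper observes that $f(0)\neq 0$ when $\what$ is a knot, so the degenerate case $f(0)=0$ you handle by dividing out a factor of $x$ does not actually occur.
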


\begin{proof}
Let us first assume that $m$ is even.  Using the Routh-Hurwitz
criterion, if we write $f(i y) = P_0(y) + i P_1(y)$, and $p$ is the
number of roots of $f$ in the left half-plane and $q$ is the number of
roots of $f$ in the right half-plane, we have
$$p - q = \frac{1}{\pi} \Delta\ \textup{arg } f(iy) = - I_{-\infty}^{+\infty} \frac{P_1(y)}{P_0(y)},$$ 
where $I_{-\infty}^{+\infty}$ denotes the Cauchy index.  Now, if
Conjecture~\ref{conj: RH} is true, then $P_0(y) = (-1)^{m/2} h(y)$ has
no roots, so the real part of $f(iy)$ has constant sign; moreover, since the degree of $\tr B_t(w)$ is at most $2m -1$ (Theorem~\ref{thm: burau pos}), one has $\textup{deg} P_1 \leq m - 1 < m = \textup{deg} P_0$, 
and hence
$\lim_{y \to \pm \infty} \frac{P_1(y)}{P_0(y)} = 0.$
This implies that 
 $\Delta\ \textup{arg } f(iy) = 0 $,
showing $p = q$.  Since the total
number of roots of $f$ is $m$ and we already found $\frac{m}{2}$ roots
in the interval $[0, 2]$ by Lemma \ref{L:find-roots}, these must be
all of them.

Recall that the map $\pi(t) = t + t^{-1}$ is a rational map of degree
$2$ that maps the imaginary axis onto itself, and maps the portion of
the unit circle in the right half-plane onto the interval $[0, 2]$.
As $\what$ is a knot,
$f(2) = g(1) = 3 \Delta_\what (1) = \pm 3 \neq 0$. 
Thus, $g(t) t^{-m} = f(\pi(t))$ has exactly $m$
roots in the right half-plane, and all of them lie on the unit circle,
and the same is true for $g(t)$.

If $m$ is odd, the roles of $P_0(y)$ and $P_1(y)$ are reversed.  In
fact, if Conjecture~\ref{conj: RH} is true, one can write
$P_1(y) = y Q_1(y^2)$ where all coefficients of the polynomial $Q_1$
have the same sign, thus $P_1(y)$ has exactly one root on the
imaginary axis, namely $y = 0$.  So, given that
$$p - q = \frac{1}{\pi} \Delta\ \textup{arg } f(iy) = + I_{-\infty}^{+\infty} \frac{P_0(y)}{P_1(y)},$$ 
we have $p -q = \pm 1$, from which it follows that $q$ equals either
$\frac{m-1}{2}$ or $\frac{m+1}{2}$.

We denote the coefficients of $f$ as 
$$f(x) = a_m x^m + a_{m-1} x^m  + \dots + a_1 x + a_0$$
where we note that $a_m = 1$.
To complete the proof, we need to show that, if $q = \frac{m+1}{2}$,
then all such roots are in the right half-plane.  By the previous
equation, this happens if and only if
$$ I_{-\infty}^{+\infty}  \frac{P_0(y)}{P_1(y)} = -1,$$
which is equivalent to $\frac{a_0}{a_1} < 0$. Note that, since $\what$
is a knot, $g(i) \neq 0$, hence also $f(0) \neq 0$ \cite[Corollary
6.11]{Lickorish1997}. There are two cases:
\begin{enumerate}
\item $m \equiv1 \bmod 4$. Then the conjecture implies $a_1 > 0$. So,
  if in addition $ a_0 = f(0) < 0$, then by Lemma \ref{L:find-roots}
  there are at least $\frac{m+1}{2}$ roots of $f$ on $[0, 2]$.

\item $m \equiv 3 \bmod 4$. Then the conjecture implies $a_1 < 0$. So,
  if in addition $ a_0 = f(0) > 0$, then by Lemma \ref{L:find-roots}
  there are at least $\frac{m+1}{2}$ roots of $f$ on $[0, 2]$.
\end{enumerate}
In either case, if $q = \frac{m+1}{2}$, then all such roots are on
$[0, 2]$, hence there are exactly $m + 1$ roots on the right portion
of the unit circle for $g(t)$.
\end{proof}


\section{The Lyapunov exponent and equidistribution}
\label{sec: Lyapunov}

Let $\mu$ be a finitely supported probability measure on the positive semigroup $\Braid{3}^+$ in $\Braid{3}$. 
Let us consider the random walk $w_n = g_1 g_2 \dots g_n$ on $\Braid{3}^+$ driven by $\mu$. For
any $t \in \mathbb{C}$, we define the \emph{Lyapunov exponent} of the
random walk as
\begin{equation} \label{E:lyap}
\lambda(t) := \lim_{n \to \infty} \frac{1}{n} \int \log \Vert B_t(w_n) \Vert \ d \mathbb{P}
\end{equation}
(where $\lambda(0) = - \infty$).  
Note that since all matrix
norms are equivalent up to multiplicative constants, the definition of
$\lambda(t)$ does not depend on the norm chosen.  Now let us consider, for $w \in \Braid{3}$, 
the set
\[
  Z_w :=\setdef{ t \in \C}{ \Delta_{\what}(t) = 0} 
 \]
of roots of the Alexander polynomial, counted with multiplicities, and the associated measure 
$$\nu_w := \frac{1}{ \# Z_w} \sum_{t \in Z_w} \delta_t.$$

Set $\log^+(x) := \max \big\{ 0, \log(x) \big\}$, and define 
$$\chi(t) := \max \big\{ \lambda(t), \log^+ |t| \big\}$$
and the \emph{bifurcation measure} 
$$\nu_\bif := \frac{1}{2 \pi} d d^c \chi,$$
where $d d^c$ equals the usual laplacian as we are in one complex dimension.

For $u \in \mathbb{C}^\times$, we define $A_u(w) := u^{- | w |} B_{-u^2}(w)$, so that 
$A_u : \Braid{3}^+ \to \textrm{SL}_2 \mathbb{C}$ is a homomorphism. 
Recall that, for $t \in \mathbb{C}^\times$,  we defined $\Gamma_t^+ <
\textrm{PSL}_2 \mathbb{C}$ as the semigroup generated by the images of
$A_u(\sigma_1)$ and $A_u(\sigma_2)$, where $u$ is chosen with $u^2 = -t$ (note that $\Gamma_t^+$ only depends on $t$ and not on $u$).
 We need to look at the set  
$$V := \setdef{ t \in \C^\times}{\mbox{$\Gamma_t^+$ is non-elementary}}.$$
By Proposition \ref{P:non-elt}, we have 
$$\mathbb{C} \setminus V = \{ 0 \} \cup \cAbar_R.$$ 

Let $F := \overline{\setdef{ t \in V}{\lambda(t) = \log^+|t|}}$, and
$U := \mathbb{C} \setminus F$.   We will show in Lemma \ref{L:lambda} that $U$ contains the arc $\cA_L$
of the unit circle. Moreover, conjecturally, no point of
$\cA_R$ is accumulated by points in $F$, hence $U$ is expected to
contain $S^1 \setminus \{ \zeta_3, \overline{\zeta}_3 \}$. 
We say that $\mu$ is \emph{generating} if the semigroup generated by the support of $\mu$ is $\Braid{3}^+$.

The main theorem of this section is the following equidistribution result: 

\begin{theorem} \label{T:equid} Let $\mu$ be a finitely supported,
  generating probability measure on the positive semigroup
  $\Braid{3}^+$.  Then for a.e.~sample path $(w_n)$, we have the
  convergence
  $$\lim_{n \to \infty} \frac{1}{n} \nu_{w_n} \vert_{U} = \nu_\bif\vert_{U}$$
  as measures on $U$, i.e.~as elements of the dual of $C^0_c(U)$.
\end{theorem}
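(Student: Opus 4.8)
The plan is to follow the classical potential-theoretic strategy for equidistribution of roots, à la Brolin--Lyubich and as adapted by Deroin--Dujardin, now carried out with the subharmonic function $\chi$ playing the role of the Green function. The guiding principle: if $p_n(t)$ is a polynomial of degree $d_n$ with roots $Z_n$, then $\frac{1}{d_n}\sum_{z\in Z_n}\delta_z = \frac{1}{2\pi d_n}\,dd^c \log|p_n(t)|$, so weak convergence of the root-counting measures is equivalent to convergence of $\frac{1}{d_n}\log|p_n|$ to a limiting potential in $L^1_{loc}$ (or in the sense of distributions). Here the natural candidate potential is $\chi(t)$, and the normalization is by $n$ rather than by $\deg\Delta_{\what_n}$ — but these differ only by a bounded factor since $\deg\Delta_{\what_n} = \#w_n - 2 = n-2$ for positive non-$\sigma_i^n$ braids by Theorem~\ref{thm: burau pos}, so the two normalizations agree asymptotically.

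Concretely, I would proceed in the following steps. First, relate $\log|\Delta_{\what_n}(t)|$ to $\log|\det(B_t(w_n)-I)|$ via \eqref{eq:alex}: they differ by $\log|t^2+t+1|$, which is a fixed locally integrable function and hence contributes nothing after dividing by $n$ and taking $dd^c$ away from the roots of unity $\zeta_3,\zetabar_3$ — and these two points lie on $\bdry\cA_R \subset \C\setminus V$, so they are safely outside (a neighborhood of) $U$ in the relevant sense; one must check $C^0_c(U)$ test functions avoid them, which they do if $U$ is taken to not include these points, or handle them as removable. Second, observe $\det(B_t(w_n)-I) = (-t)^{\#w_n} - \tr(B_t(w_n)) + 1$ by \eqref{eq: burau det}, and on the region where $\Gamma_t^+$ is non-elementary the leading eigenvalue of $B_t(w_n)$ grows like $e^{n\lambda(t)}$, while $\det B_t(w_n) = (-t)^{n}$ has modulus $|t|^n$. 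Hence the characteristic polynomial forces $\rho_{w_n}(t) \approx e^{n\lambda(t)}$ and the smaller eigenvalue has modulus $\approx |t|^n e^{-n\lambda(t)}$, so $|\det(B_t(w_n)-I)| = |{\det B_t(w_n)} - \tr B_t(w_n) + 1|$ is dominated by whichever of $e^{n\lambda(t)}$, $|t|^n$, or $1$ is largest — i.e. by $e^{n\chi(t)}$. Thus $\frac{1}{n}\log|\Delta_{\what_n}(t)| \to \chi(t)$ pointwise on $V$. Third, promote this pointwise convergence to $L^1_{loc}(U)$ convergence: the functions $\frac{1}{n}\log|\Delta_{\what_n}|$ are subharmonic and locally uniformly bounded above (by submultiplicativity of the norm and finiteness of the support of $\mu$), so by standard compactness for subharmonic functions (Hartogs' lemma) any $L^1_{loc}$ limit is $\le \chi$; combined with the a.s. pointwise lower bound $\liminf \frac{1}{n}\log|\Delta_{\what_n}| \ge \chi$ — which comes from the almost-sure convergence $\frac{1}{n}\log\|B_t(w_n)\| \to \lambda(t)$ (Furstenberg/Oseledets, valid since $\Gamma_t^+$ is non-elementary by Proposition~\ref{P:non-elt}) together with a lower bound on $|\det(M-I)|$ in terms of the spectral radius when the two eigenvalues are separated in modulus — one gets $\frac{1}{n}\log|\Delta_{\what_n}| \to \chi$ in $L^1_{loc}(U)$ almost surely. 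Fourth, apply $\frac{1}{2\pi}dd^c$, which is continuous for the distributional topology, to conclude $\frac{1}{n}\nu_{w_n}|_U \to \frac{1}{2\pi}dd^c\chi|_U = \nu_\bif|_U$.

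Two points need care and constitute the real work. (a) The almost-sure pointwise convergence $\frac{1}{n}\log|\Delta_{\what_n}(t)| \to \chi(t)$ must be upgraded to something uniform enough — or one must invoke a suitable Fubini/measurability argument — to get $L^1_{loc}$ convergence of the random sequence of potentials for a \emph{single} full-measure set of sample paths, uniformly in $t$ over compacta in $U$. The standard device (used by Deroin--Dujardin and going back to work on random iteration) is: the sequence $\frac{1}{n}\log|\Delta_{\what_n}|$ is a.s. precompact in $L^1_{loc}$; any subsequential limit $\phi$ is subharmonic with $\phi \le \chi$; but $\phi = \chi$ a.e. because $\phi(t) \ge \limsup$ of the pointwise values which equal $\chi(t)$ for a.e. $t$ (by Fubini on path-space $\times$ parameter-space plus the fact that a subharmonic function is determined by its a.e. values). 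So the limit is unique and equals $\chi$. This is the step I expect to be the main obstacle: controlling the possible loss of mass "to infinity in modulus" — i.e. ensuring that on the locus where the two eigenvalues of $B_t(w_n)$ have \emph{comparable} modulus, the term $-\tr(B_t(w_n))$ in $\det(B_t(w_n)-I)$ doesn't produce unexpected cancellation making $\log|\Delta_{\what_n}|$ dip far below $n\chi(t)$ on a set of positive measure. One handles this via the $L^1$ (not pointwise) nature of the convergence: a thin bad set is invisible to $L^1_{loc}$ limits, and $dd^c$ is insensitive to it. (b) One must confirm that $F$ is closed (it is, by definition as a closure) so that $U$ is open and $C^0_c(U)$ makes sense, and that $\chi$ is indeed subharmonic on $U$ — which follows since $\lambda$ is subharmonic on $\C\setminus\cAbar_R \supset U$ by Lemma~\ref{lem: weak sub} (cited in the introduction), $\log^+|t|$ is subharmonic everywhere, and the max of two subharmonic functions is subharmonic. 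The restriction to $U$ (rather than all of $\C$) is exactly what lets us avoid the region where $\lambda(t) = \log^+|t|$ but the eigenvalue gap might degenerate, and it is why the stronger conclusion $U = \C$ requires the additional hypothesis that $\{\lambda = 0\}\cap\D$ is Lebesgue-null.
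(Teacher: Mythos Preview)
Your proposal is correct and shares the paper's overall architecture: show $\frac{1}{n}\log|\det(B_t(w_n)-I)| \to \chi(t)$ in $L^1_{loc}(U)$ almost surely (this is the paper's Proposition~\ref{P:conv}), then apply $\frac{1}{2\pi}dd^c$ (Lemma~\ref{L:distr} and \eqref{E:ddc}). The difference lies in how that $L^1_{loc}$ convergence is obtained. The paper decomposes $p_n = a_n + b_n$ with $a_n = 1+(-t)^n$ and $b_n = -\tr B_t(w_n)$, proves $\frac{1}{n}\log|a_n|\to\log^+|t|$ and $\frac{1}{n}\log|b_n|\to\lambda(t)$ in $L^1_{loc}$ separately (the latter via Guivarc'h's theorem on random products, Lemma~\ref{L:RW}, plus a lemma of Deroin--Dujardin), and then handles the sum by a case analysis according to which term dominates; on the residual set where $|a_n|\asymp|b_n|$, whose measure tends to zero, the paper invokes an explicit uniform-integrability lemma for $\frac{1}{n}\log^-$ of integer polynomials of degree at most $n$ (Lemma~\ref{L:log-}) --- this is precisely the ``cancellation'' issue you flag as the main obstacle, and the paper deals with it by hand. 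Your route --- precompactness of locally-bounded-above subharmonic families, combined with Fubini on $(t,\omega)$ to identify the limit from the almost-sure pointwise convergence at a.e.~$t$ (Lemma~\ref{L:chi}) --- bypasses that decomposition entirely, since the requisite integrability is absorbed into the compactness theorem for subharmonic functions. What the paper's explicit approach buys is transparency about exactly where the restriction to $U$ enters and why the hypothesis that $\{\lambda=0\}\cap\D$ is Lebesgue-null suffices to upgrade to all of $\C$; in your framework that upgrade amounts to checking that the pointwise convergence of Lemma~\ref{L:chi} holds for a.e.~$t\in\C$, which is just what that hypothesis provides.
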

 
We conjecture that the same equidistribution holds if we replace $U$ by $\mathbb{C}$. 
As an example application of Theorem \ref{T:equid}, we obtain 

\begin{corollary}  \label{C:zero-density}
Let $K \subseteq \D $ be a compact set contained in $\setdef{t}{\lambda(t) <  0}$. Then for almost every sample path $(w_n)$, 
$$\lim_{n \to \infty} \frac{1}{n} \# \setdef{ t \in K }{\Delta_{\what_n}(t) = 0} = 0.$$
\end{corollary}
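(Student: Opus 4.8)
The plan is to obtain Corollary~\ref{C:zero-density} as a short consequence of Theorem~\ref{T:equid}, once two facts about the compact set $K$ are in hand: (a)~$K \subseteq U$, so that Theorem~\ref{T:equid} has content near $K$; and (b)~$\nu_\bif$ vanishes on an open neighborhood of $K$, in particular $\nu_\bif(K) = 0$. Assume (a) and (b), with $W$ the neighborhood from (b), chosen so that $K \subseteq W \subseteq U$. By Urysohn's lemma, fix a continuous $\phi \colon \C \to [0,1]$ with $\phi \equiv 1$ on $K$ and with $\operatorname{supp}\phi \subseteq W$ compact, so that $\phi \in C^0_c(U)$. Then for almost every sample path, Theorem~\ref{T:equid} gives $\int \phi \, d\big(\tfrac1n\nu_{w_n}\big) \to \int \phi \, d\nu_\bif$, and this limit is $0$ because $\operatorname{supp}\phi \subseteq W$ while $\nu_\bif\vert_W = 0$. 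Since $\tfrac1n\nu_{w_n}$ is a nonnegative measure with $\mathbf 1_K \le \phi$, and $\tfrac1n\nu_{w_n}(K) = \tfrac1n\#\setdef{t \in K}{\Delta_{\what_n}(t) = 0}$ (the identification used in the introduction; note that for a.e.~path $w_n$ is eventually not a power of $\sigma_1$ or $\sigma_2$, so $\Delta_{\what_n} \not\equiv 0$ and the count is finite), we get
\[
  0 \;\le\; \frac{1}{n}\#\setdef{t \in K}{\Delta_{\what_n}(t) = 0}
      \;\le\; \int \phi \, d\big(\tfrac1n\nu_{w_n}\big)
      \;\xrightarrow[n \to \infty]{}\; 0 ,
\]
which is the assertion of the corollary (and the common value $\nu_\bif(K)$ equals $0$).

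It remains to produce $W$ and verify (a) and (b). Set $W := \setdef{t \in \D}{\lambda(t) < 0}$; by hypothesis $K \subseteq W$. Since $\lambda$ is continuous on $\C \setminus \cAbar_R$ (as an $[-\infty,\infty)$-valued function) by Lemma~\ref{lem: weak sub}, and $\D$ is open and disjoint from $\cAbar_R \subseteq S^1$, the set $W$ is open in $\C$. On $W$ one has $|t| < 1$, so $\log^+|t| = 0$, while $\lambda(t) < 0$; hence $\chi(t) = \max\{\lambda(t), 0\} = 0$ throughout $W$. Because $\chi$ vanishes identically on the open set $W$, the positive measure $\nu_\bif = \tfrac{1}{2\pi}d d^c\chi$ vanishes on $W$, which is (b). For (a), note that $\lambda(t) < 0 = \log^+|t|$ on $W$, so $W$ is disjoint from $\setdef{t \in V}{\lambda(t) = \log^+|t|}$; being open, $W$ is then disjoint from the closure $F$ of that set, so $W \subseteq U$, and in particular $K \subseteq U$.

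I do not expect a genuine obstacle: the analytic content is entirely in Theorem~\ref{T:equid}, and the rest is bookkeeping. The point that most warrants care is the inclusion $K \subseteq U$ — equivalently, that $F$ does not meet $\{\lambda < 0\} \cap \D$ — which comes down to the identity $F \cap \D = \setdef{t \in \D}{\lambda(t) = 0}$, itself immediate from the continuity of $\lambda$ on $\D$ together with $\log^+|t| \equiv 0$ there. One should also observe that $0$ may lie in $K$ (legitimate, since $\lambda(0) = -\infty < 0$), but this is harmless: $\lambda$ is continuous and strictly negative at $0$ as well, so $0 \in W$ presents no difficulty.
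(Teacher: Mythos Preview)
Your argument is correct and follows essentially the same route as the paper: choose an open $W$ with $K \subseteq W \subseteq \{t \in \D : \lambda(t) < 0\}$, observe that $\chi \equiv 0$ on $W$ so $\nu_\bif\vert_W = 0$, pick a bump function $\phi \in C^0_c(W)$ dominating $\mathbf 1_K$, and apply Theorem~\ref{T:equid}. If anything you are slightly more careful than the paper in explicitly checking the inclusion $W \subseteq U$ (equivalently $F \cap \{\lambda < 0\} \cap \D = \varnothing$), which the paper's proof leaves implicit.
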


\begin{proof}
By Lemma \ref{lem: weak sub}, the set $\setdef{t \in \D}{\lambda(t) <  0}$ is open.  Let $W \subseteq \D$ be open, relatively compact in $\D$, with
$K \subseteq W \subseteq \setdef{t \in \C}{\lambda(t) < 0}$.
Choose
$\phi \in C^0_c(W)$ nonnegative with $\phi(t) = 1$ for any $t \in
K$. Thus, $\chi(t) = \max \big\{\lambda(t), 0 \big\} = 0$ for
$t \in W$, hence $\nu_\bif\vert_W = 0$.  Thus,
$$\frac{1}{n} \# \{ t \in K \ : \ \Delta_{\what_n}(t) = 0 \} =
\frac{1}{n} \langle \mathbb{1}_K, \nu_{w_n} \rangle \leq \frac{1}{n}
\langle \phi, \nu_{w_n} \rangle \to \langle \phi, \nu_\bif \rangle  =
0$$
proving the corollary.
\end{proof}

\subsection{The Lyapunov exponent}

Given an open set $U \subseteq \mathbb{C}$, we say a function $f \in L^1_{loc}(U)$ is \emph{weakly subharmonic} if $d d^c f \geq 0$ as a distribution; that is, its distributional laplacian is a positive distribution, hence it is a positive measure. 
A function is \emph{subharmonic} if it is weakly subharmonic and upper semicontinuous.

\begin{lemma}\label{lem: weak sub}
  The functions $\chi(t)$ and $\lambda(t)$ are subharmonic on
  $\mathbb{C}$, with $\chi(t)$ continuous on $\mathbb{C}$ and
  $\lambda(t)$ continuous on $\mathbb{C} \setminus \{0 \}$.
\end{lemma}

\begin{proof}
First, we show $\lambda(t)$ is weakly subharmonic.
For any sample path $\omega = (w_n)$ and any $t \in \mathbb{C}$, denote 
\[
  \lambda_n(t, \omega) := \frac{1}{n} \log \Vert B_t(w_n) \Vert
  \mtext{which is subharmonic in $t$. }
\]
Now, by Kingman's ergodic theorem,
$\lim_{n \to \infty} \lambda_n(t, \omega) = \lambda(t)$ for almost every $\omega$ and any $t \in \mathbb{C}$.
If $A \in \textrm{SL}_2 \mathbb{C}$, then its spectral radius satisfies $\rho(A) \geq 1$, so also $\Vert A \Vert \geq \rho(A) \geq 1$. 
Thus, 
$$\lambda_n(t, \omega) = \frac{1}{n} \log \Vert B_t(w_n) \Vert = \frac{1}{2} \log |t| + \frac{1}{n} \log \Vert A_u(w_n) \Vert \geq \frac{1}{2} \log |t|$$
and also, since the norm is submultiplicative, 
\begin{equation} \label{E:lambda_n-above}
  \frac{1}{n} \log \Vert B_t(w_n) \Vert \leq  \log M(|t|)
  \mtext{where $M(r) := \sup_{|t'| \leq r} \max_{i = 1, 2} \Vert B_{t'}(\sigma_i) \Vert$.}
\end{equation}
Since $f(t) = \frac{1}{2} \log |t|$ lies in $L^1_{loc}(\mathbb{C})$,
the dominated convergence theorem shows that
$\lambda_n(t, \omega) \to \lambda(t)$ in $L^1_{loc}(\mathbb{C})$.
Then, $\lambda(t)$ is the limit in $L^1_{loc}$ of a sequence of
subharmonic functions, so it is weakly subharmonic. Now
$\chi(t)$ is also weakly subharmonic as it is the max of two such
functions.

Continuity of $\lambda(t)$ for $t \in \mathbb{C} \setminus \{0 \}$
follows immediately from \cite[Theorem~A]{BockerViana2017}. As
$\lambda(0) = -\infty$, we see $\lambda(t)$ is upper semicontinuous
and hence subharmonic on all of $\C$.  The function $\chi(t)$ is
$\log^+\abs{t} = 0$ on a neighbourhood of $0$ and hence is continuous
and subharmonic everywhere.
\end{proof}

Recall $\log^+(x) := \max \{ 0, \log(x) \}$ and $\log^-(x) := - \min \{ 0, \log(x) \}$, so that $\log(x) = \log^+(x) - \log^-(x)$
and $|\log(x)| = \log^+(x) + \log^-(x)$. The following uniform integrability property will be useful.


\begin{lemma} \label{L:log-}
For any $\epsilon > 0$ there exists a $\delta > 0$ such that for any measurable set $A \subseteq \mathbb{C}$ with $m(A) < \delta$ and 
for any sequence $(p_n(t))_{n \in \mathbb{N}}$ of nonzero polynomials with integer coefficients, so that $p_n(t)$ has degree at most $n$, 
we have 
$$\lim_{n \to \infty} \int_{A} \frac{1}{n} \log^- |p_n(t)| \ dm(t) < \epsilon.$$
\end{lemma}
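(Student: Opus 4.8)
The plan is to strip away everything about the polynomials except the two hypotheses we are given, and reduce to a single uniform integrability fact about the kernel $\log^-\abs{z}$. Write each $p_n$ in factored form $p_n(t) = c_n\prod_{j=1}^{d_n}(t-\alpha_j)$, where $d_n = \deg p_n \le n$, the roots are listed with multiplicity, and $c_n\in\mathbb{Z}\setminus\{0\}$ so that $\abs{c_n}\ge 1$. Then $\log\abs{p_n(t)} = \log\abs{c_n} + \sum_j\log\abs{t-\alpha_j}\ge\sum_j\log\abs{t-\alpha_j}$, and since $x\mapsto\max\{0,x\}$ is nondecreasing and subadditive,
\[
  \log^-\abs{p_n(t)}\ \le\ \sum_{j=1}^{d_n}\log^-\abs{t-\alpha_j}.
\]
Set $\omega(a) := \sup_{m(B)\le a}\int_B\log^-\abs{z}\,dm(z)$. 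By translation invariance of Lebesgue measure, $\int_A\log^-\abs{t-\alpha_j}\,dm(t) = \int_{A-\alpha_j}\log^-\abs{z}\,dm(z)\le\omega(m(A))$, so integrating the displayed inequality over $A$ and using $d_n\le n$ gives
\[
  \int_A\frac1n\log^-\abs{p_n(t)}\,dm(t)\ \le\ \frac{d_n}{n}\,\omega\big(m(A)\big)\ \le\ \omega\big(m(A)\big)
\]
for \emph{every} $n$ and every admissible sequence. Hence the lemma reduces to showing $\omega(a)\to0$ as $a\to0^+$: given $\epsilon>0$, choose $\delta>0$ with $\omega(\delta)<\epsilon$; then $m(A)<\delta$ forces $\int_A\frac1n\log^-\abs{p_n(t)}\,dm(t)\le\omega(m(A))\le\omega(\delta)<\epsilon$ uniformly in $n$, so the limit (or limsup) in the statement is $<\epsilon$.

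To prove $\omega(a)\to0$, note $\log^-\abs{z}$ is nonnegative and vanishes on $\setdef{z}{\abs{z}\ge1}$, and that for $0<\eta\le1$ a one-line computation in polar coordinates gives $\int_{\abs{z}<\eta}\log^-\abs{z}\,dm(z) = \frac{\pi\eta^2}{2}\big(1+2\log(1/\eta)\big)$, which tends to $0$ as $\eta\to0$. Given $\epsilon>0$, first pick $\eta\in(0,1)$ making this quantity $<\epsilon/2$, then pick $\delta>0$ with $\delta\log(1/\eta)<\epsilon/2$. For any measurable $B$ with $m(B)\le\delta$, write $B = (B\cap\{\abs{z}\ge\eta\})\cup(B\cap\{\abs{z}<\eta\})$: on the first piece $\log^-\abs{z}\le\log(1/\eta)$, so its contribution is at most $m(B)\log(1/\eta)<\epsilon/2$; the second piece lies in $\{\abs{z}<\eta\}$, so its contribution is at most $\frac{\pi\eta^2}{2}(1+2\log(1/\eta))<\epsilon/2$. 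Hence $\omega(\delta)\le\epsilon$. (Alternatively, the bathtub principle identifies the extremal $B$ as a ball of area $a$ about the origin and yields the explicit value $\omega(a) = \frac{a}{2}(1+\log\pi-\log a)$ for $a\le\pi$.)

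I expect no serious obstacle: the mathematical content is only the local integrability of $\log$ near its pole, and the work is in making that integrability \emph{uniform} over the translation parameter and, crucially, over $n$ and the entire family of polynomials. The hypotheses are used precisely for this: integer coefficients give $\abs{c_n}\ge1$ so that the leading-coefficient term is harmless, while $\deg p_n\le n$ caps the number of linear factors at $n$ so that dividing by $n$ turns the sum into an average. The only point calling for a little care is the clean statement of the reduction to $\omega(a)$ together with the estimate $\omega(a)\to0$, both handled above without invoking any rearrangement inequality.
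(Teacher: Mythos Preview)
Your proof is correct and follows essentially the same route as the paper: factor $p_n$, use $|c_n|\ge 1$ to drop the leading coefficient, bound $\log^-$ of the product by the sum of $\log^-$ of the linear factors, and invoke the uniform integrability of the translates $t\mapsto\log^-|t-\alpha|$. Your write-up is in fact slightly more careful than the paper's, since you track $d_n\le n$ explicitly and give a direct $\epsilon$--$\delta$ proof that $\omega(a)\to 0$, whereas the paper simply asserts the uniform integrability criterion.
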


\begin{proof}
The family of functions $(f_u)_{u \in \mathbb{C}}$ defined as $f_u(t) := \log^- |t - u|$ is uniformly integrable on $\mathbb{C}$, 
since 
$$\inf_{a \geq 0} \sup_{u \in \mathbb{C}} \int_{\{ f_u \geq a \} }  \ f_u(t) \ dm(t) = 0.$$
Thus, for any $\epsilon > 0$, there exists $\delta > 0$ such that if $m(A) < \delta$ we have 
$$\int_A f_u(t)  \ d m< \epsilon \qquad \textup{for any }u \in \mathbb{C}.$$
Now, if $p_n(t) = a_n t^n + \dots + a_0 = a_n \prod_{i = 1}^n (t - t_i)$ is a polynomial of degree $n$ with integer coefficients, then 
$$\frac{1}{n} \log^- |p_n(t)| \leq
\frac{1}{n} \left( \log^- |a_n| +  \sum_{i = 1}^n \log^- |t - t_i| \right) \leq  \frac{1}{n} \left( \sum_{|t - t_i| \leq 1}^n \log^- |t - t_i| \right)$$
so, if $m(A) < \delta$,  then
$$\int_{A} \frac{1}{n} \log^- |p_n(t)| \ dm \leq \frac{1}{n} \sum_{|t - t_i| \leq 1} \int_{A} f_{t_i}(t)  \ dm < \epsilon$$
as desired. 
\end{proof}

\begin{lemma}\label{L:exist}
Let $\mu$ be a generating, finitely supported measure on $\Braid{3}^+$.
For any $t \in V$, both sequences
$$\lim_{n \to \infty} \frac{1}{n} \log  \Vert  B_t(w_n) \Vert, \qquad \lim_{n \to \infty} \frac{1}{n} \log | \tr B_t(w_n) |$$
converge for a.e. $(w_n)$ to $\lambda(t)$. Moreover, for a.e. $(w_n)$
the convergence also holds in $L^1_{loc}(\mathbb{C})$.
\end{lemma}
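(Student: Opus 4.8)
The plan is to handle the three assertions in turn.  The convergence $\tfrac1n\log\Vert B_t(w_n)\Vert\to\lambda(t)$ along a.e.\ path needs nothing new: for fixed $t$, $\log\Vert B_t(w_n)\Vert$ is a subadditive cocycle over the Bernoulli shift on $\Omega$ (submultiplicativity of the norm), so Kingman's ergodic theorem gives a.e.\ convergence to $\lim_n\tfrac1n\E\log\Vert B_t(w_n)\Vert=\lambda(t)$, as was already used in the proof of Lemma~\ref{lem: weak sub}.  For the trace, $|\tr M|\le 2\Vert M\Vert$ gives the easy half $\limsup_n\tfrac1n\log|\tr B_t(w_n)|\le\lambda(t)$ a.e., so the real content is the matching lower bound, which is the only place the hypothesis $t\in V$ is used.

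For that lower bound I would pass to the homomorphism $A_u\colon\Braid{3}^+\to\SL{2}{\C}$, $A_u(w):=u^{-\#w}B_{-u^2}(w)$ with $u^2=-t$.  Since $B_t(w_n)$ and $\tr B_t(w_n)$ differ from $A_u(w_n)$ and $\tr A_u(w_n)$ only by the scalar $u^{\#w_n}$, and since $\tfrac1n\log\Vert B_t(w_n)\Vert\to\lambda(t)$ while $\tfrac1n\log\Vert A_u(w_n)\Vert$ converges a.e.\ (Kingman) to some $\lambda_A(t)$, it suffices to prove $\tfrac1n\log|\tr A_u(w_n)|\to\lambda_A(t)$ a.e.; again $\limsup\le$ is free.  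Now $t\in V$ means, by Proposition~\ref{P:non-elt}, that $\Gamma_t^+<\PSL{2}{\C}$ is non-elementary, so the random walk $A_u(w_n)$ has positive drift in $\mathbb{H}^3$ (hence $\lambda_A(t)>0$), is a.s.\ eventually loxodromic, and its stable translation length grows linearly, $\liminf_n\tfrac1n\tau(A_u(w_n))\ge 2\lambda_A(t)$ — equivalently $\liminf_n\tfrac1n\log\rho(A_u(w_n))\ge\lambda_A(t)$, with $\rho(A_u(w_n))\to\infty$ — by the linear-progress and translation-length results of \cite{Maher-Tiozzo}, using that $d_{\mathbb{H}^3}(o,Ao)=2\log\Vert A\Vert$ (operator norm) and $\tau(A)=2\log\rho(A)$ for loxodromic $A$.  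Then, for the leading eigenvalue $\mu$ of a loxodromic $A\in\SL{2}{\C}$, $|\tr A|=|\mu+\mu^{-1}|\ge\rho(A)-\rho(A)^{-1}$, so once $\rho(A_u(w_n))\ge 2$ we get $\tfrac1n\log|\tr A_u(w_n)|\ge\tfrac1n\log\rho(A_u(w_n))-\tfrac1n\log 2$, whence $\liminf_n\tfrac1n\log|\tr A_u(w_n)|\ge\lambda_A(t)$.  I expect this step — that the spectral radius, hence the trace, grows at the full rate $\lambda_A(t)$ and not more slowly — to be the main obstacle; the alternative to the geometric argument above is to cite the standard coincidence of Jordan and Cartan growth for non-elementary (proximal, strongly irreducible) random matrix products.

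Finally I would upgrade to a.e.\ $L^1_{loc}$ convergence.  For the norm this is essentially contained in the proof of Lemma~\ref{lem: weak sub}: the functions $\tfrac1n\log\Vert B_t(w_n)\Vert$ are bounded, uniformly in $n$ and in the sample path, below by a fixed $L^1_{loc}(\C)$ function (a multiple of $\min(0,\log|t|)$) and above by a locally bounded one, cf.~\eqref{E:lambda_n-above}, so dominated convergence converts the pointwise-a.e.-in-$t$ limit — which holds along a.e.\ path by Fubini, the Kingman exceptional set being $t$-dependent — into an $L^1_{loc}$ limit.  For the trace, set $p_n(t):=\tr B_t(w_n)\in\Z[t]$, a polynomial of degree $O(n)$.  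Evaluating at a single $t_0\in V$ and using the lower-bound step (a loxodromic element has nonzero trace), we find that along a.e.\ path $p_n\not\equiv 0$ for all large $n$; and since $\C\setminus V=\{0\}\cup\cAbar_R$ is Lebesgue-null, Fubini gives that along a.e.\ path $\tfrac1n\log|p_n(t)|\to\lambda(t)$ for a.e.\ $t$.  It then remains to check uniform integrability of $\bigl(\tfrac1n\log|p_n|\bigr)_n$ on each ball: the positive part satisfies $\tfrac1n\log^+|p_n(t)|\le\log 2+\tfrac1n\log^+\Vert B_t(w_n)\Vert$, bounded above by a fixed locally bounded function, while the negative part $\tfrac1n\log^-|p_n(t)|$ is uniformly integrable on balls by Lemma~\ref{L:log-} (which applies since the $p_n$ are nonzero integer polynomials of degree $O(n)$).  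By Vitali's convergence theorem, $\tfrac1n\log|p_n|\to\lambda$ in $L^1_{loc}(\C)$ along a.e.\ path, finishing the argument.
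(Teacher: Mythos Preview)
Your proposal is correct, and for the pointwise claims it is close to the paper's argument: the paper also passes to $A_u$ and then simply invokes Guivarch's theorem (stated as Lemma~\ref{L:RW}) that for a non-elementary semigroup in $\PSL{2}{\C}$ both $\tfrac1n\log\Vert w_n\Vert$ and $\tfrac1n\log|\tr w_n|$ converge a.s.\ to the same positive limit---precisely the ``coincidence of Jordan and Cartan growth'' you list as your fallback.  Your primary route via Maher--Tiozzo translation-length estimates is plausible but, as you anticipate, getting the \emph{exact} rate $\lambda_A(t)$ (not just a positive one) out of those geometric results takes some extra work; the paper sidesteps this by citing Guivarch directly.

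Where you genuinely diverge is in the $L^1_{loc}$ upgrade for the trace.  The paper does \emph{not} use Fubini plus Vitali.  Instead it fixes a countable dense set $E\subset V$, obtains a.s.\ convergence simultaneously on all of $E$, and then applies \cite[Lemma~4.3]{DeroinDujardin2012}---a compactness lemma for sequences of subharmonic functions that are bounded above and converge on a dense set to a \emph{continuous} limit---to deduce $L^1_{loc}(V)$ convergence; the null set $\C\setminus V$ is handled afterward via Lemma~\ref{L:log-}.  Your route avoids the external Deroin--Dujardin lemma and does not use the continuity of $\lambda$ on $V$ from Lemma~\ref{lem: weak sub}; in exchange you lean more heavily on the integer-polynomial structure of $\tr B_t(w_n)$ by applying Lemma~\ref{L:log-} globally to get uniform integrability.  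Both arguments work; yours is more self-contained, while the paper's makes the role of subharmonicity more explicit.
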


We use the following lemma about random walks \cite[Theorem 9]{Guivarch1990}.

\begin{lemma}  \label{L:RW}
Let $\mu$ be a finitely supported measure on $\textup{PSL}_2 \mathbb{C} = \textup{Isom}^+(\mathbb{H}^3)$. 
If the semigroup generated by the support of $\mu$ contains two independent loxodromics, then there exists $\ell > 0$ such that
for almost every sample path $w_n$ we have
$$\lim_{n \to \infty} \frac{1}{n} \log \Vert w_n \Vert = \lim_{n \to \infty} \frac{1}{n} \log |\tr w_n| = \ell.$$
\end{lemma}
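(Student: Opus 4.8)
The plan is to translate the statement into the geometry of $\mathbb{H}^3 = \PSL{2}{\C}/\mathrm{PU}(2)$ and combine the classical dictionary between matrix size and hyperbolic displacement with the drift and translation-length theorems for random walks on groups acting on (Gromov) hyperbolic spaces --- the same circle of ideas already invoked in the paper via \cite{Maher-Tiozzo}. Normalize $\mu$ to be a probability measure and fix representatives in $\SL{2}{\C}$, so that $\|w_n\|$ (the largest singular value) and $|\tr w_n|$ are well defined. The hypothesis is precisely that the semigroup generated by $\operatorname{supp}\mu$, hence the group it generates, acts non-elementarily on $\mathbb{H}^3$.

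First I would record the elementary dictionary. Letting $o \in \mathbb{H}^3$ be the point fixed by $\mathrm{PU}(2)$, for $g \in \SL{2}{\C}$ one has the exact identity $\cosh d(o, g\cdot o) = \tfrac12\|g\|_{HS}^2 = \tfrac12\big(\sigma_1(g)^2 + \sigma_1(g)^{-2}\big)$, whence $d(o, g\cdot o) = 2\log\|g\|$. For a loxodromic $g$ with eigenvalues $\lambda, \lambda^{-1}$ and $|\lambda| > 1$, the translation length is $\tau(g) = 2\log|\lambda|$, while $\tr g = \lambda + \lambda^{-1}$ gives $\big|\log|\tr g| - \log|\lambda|\big| \le \log 2$ as soon as $|\lambda| \ge 2$; moreover $\tau(g) > 0$ exactly when $g$ is loxodromic. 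Thus $\log\|w_n\| = \tfrac12 d(o, w_n\cdot o)$ exactly, and once $w_n$ is loxodromic with $|\lambda_1(w_n)| \ge 2$ we get $\log|\tr w_n| = \tfrac12\tau(w_n) + O(1)$.

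Then I would assemble the pieces. The sequence $n \mapsto \log\|w_n\|$ is subadditive ($\|w_{n+m}\| \le \|w_n\|\cdot\|g_{n+1}\cdots g_{n+m}\|$) and integrable ($\mu$ is finitely supported), so Kingman's subadditive ergodic theorem produces a deterministic $\ell \ge 0$ with $\tfrac1n\log\|w_n\| \to \ell$ a.s.; equivalently the drift $L := \lim_n \tfrac1n d(o, w_n\cdot o) = 2\ell$. Because two independent loxodromics in the semigroup make the action non-elementary, the positive-drift theorem of Maher--Tiozzo (or Furstenberg's theorem for $\SL{2}{\C}$, using the unique non-atomic stationary measure on $\partial\mathbb{H}^3$) gives $L > 0$, hence $\ell > 0$. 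The companion theorem on linear growth of translation length gives $\tfrac1n\tau(w_n) \to L$ a.s.; in particular $\tau(w_n) > 0$ for all large $n$, so $w_n$ is eventually loxodromic with $|\lambda_1(w_n)| = e^{\tau(w_n)/2} \to \infty$. Feeding this into the dictionary, $\tfrac1n\log|\tr w_n| = \tfrac1{2n}\tau(w_n) + O(1/n) \to L/2 = \ell$, which together with $\tfrac1n\log\|w_n\| \to \ell$ is exactly the claim.

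The genuinely deep input here is imported, so the main obstacle is that input rather than the bookkeeping: namely, that the translation lengths $\tau(w_n)$ grow linearly at the drift rate and that the drift is positive, which is where the two-independent-loxodromics hypothesis is really used. If one instead wanted a self-contained proof (as Guivarc'h does), the crux becomes showing directly that $\log\|w_n\| - \log|\lambda_1(w_n)| = o(n)$ a.s., i.e.\ that the distance from $o$ to the axis of $w_n$ is $o(n)$; this follows from convergence of the forward walk to a boundary point $\xi^+$, convergence of the reversed walk $g_n g_{n-1}\cdots g_1$ to a point $\xi^-$, the a.s.\ inequality $\xi^+ \ne \xi^-$ (non-atomicity of the stationary measures, a consequence of non-elementarity), and the fact that the attracting and repelling fixed points of $w_n$ converge to $\xi^+$ and $\xi^-$ respectively, so the axes converge to the fixed geodesic $[\xi^-, \xi^+]$ and stay at bounded distance from $o$. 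Making that reversed-walk-and-fixed-point convergence precise is the one technical step I would expect to cost real work in a from-scratch argument.
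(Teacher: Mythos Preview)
Your argument is correct, but note that the paper does not actually prove this lemma: it is simply quoted as \cite[Theorem~9]{Guivarch1990} and used as a black box. So there is nothing to compare your approach against except the cited reference itself.

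That said, your sketch is a perfectly valid way to recover the result, and it is nicely aligned with the tools already in play in the paper. The translation between $\log\|g\|$, $d(o,g\cdot o)$, $\tau(g)$, and $\log|\tr g|$ is exactly right, Kingman gives the existence of the drift, and the two substantive inputs you flag --- positivity of the drift and $\tfrac{1}{n}\tau(w_n)\to L$ --- are precisely the content of the Maher--Tiozzo theorems (already cited in the paper for a different purpose) under the non-elementarity hypothesis. Your alternative self-contained route, controlling $\log\|w_n\|-\log|\lambda_1(w_n)|$ via convergence of the forward and reversed walks to distinct boundary points, is essentially Guivarc'h's own strategy in the cited paper, so you have correctly identified both the modern and the original arguments. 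The only cosmetic point is that lifting from $\PSL{2}{\C}$ to $\SL{2}{\C}$ to make sense of $\|\cdot\|$ and $|\tr|$ deserves a word, which you do give; since $\mu$ has finite support this causes no difficulty.
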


\begin{proof}[Proof of Lemma \ref{L:exist}]
By the previous Lemma \ref{L:RW}, for $t \in V$ and $u^2 = -t$, the limit 
$$\ell(u) := \lim_{n \to \infty} \frac{1}{n} \log \Vert A_u(w_n) \Vert = \lim_{n \to \infty} \frac{1}{n} \log |\tr A_u(w_n)|$$
exists almost surely, and $\ell(u) > 0$.
Since $\tr B_t(w_n) = u^n \tr A_u(w_n)$ and $\Vert B_t(w_n) \Vert = |u|^n \Vert A_u(w_n) \Vert$, we obtain 
$$\lim_{n \to \infty} \frac{1}{n} \log |\tr B_t(w_n) | = \lim_{n \to \infty} \frac{1}{n} \log \Vert B_t(w_n) \Vert = \ell(u) + \frac{1}{2} \log |t|$$
so the first claim holds since $ \lambda(t) = \ell(u) + \frac{1}{2} \log |t|$.
As we have seen in the proof of Lemma \ref{lem: weak sub}, for a.e.~$\omega = (w_n)$, we have $\lambda_n(t, \omega) \to \lambda(t)$ in $L^1_{loc}(\mathbb{C})$.

Now, there exists a universal $C > 1$, depending on the choice of norm $\Vert \cdot \Vert$, such that 
$|\tr B_t(w_n)| \leq C  \Vert B_t(w_n) \Vert$
so 
\begin{equation} \label{E:tau_n-above}
\tau_n(t, \omega) := \frac{1}{n} \log |\tr B_t(w_n)| \leq \frac{\log C}{n} + \lambda_n(t, \omega).
\end{equation}
Now, by Lemma \ref{L:RW}, there exists a countable dense subset $E \subseteq V$, such that for almost every $\omega$ we have $\tau_n(t, \omega) \to \lambda(t)$ for all $t \in E$. Using \cite[Lemma 4.3]{DeroinDujardin2012}, since the limit $\lambda(t)$ is continuous on the set $V$, 
 it follows that also $\tau_n(t, \omega)$ converges to $\lambda(t)$  in $L^1_{loc}(V)$. 
 
Now, for any $\epsilon > 0$ let $\delta > 0$ be given by Lemma \ref{L:log-}; since $\C \setminus V$ has measure zero, there exists an open set $A_\epsilon$
with $\C \setminus V \subseteq A_{\epsilon} \subseteq \{ |t| < 2 \}$ with $m(A_{\epsilon}) < \delta$, hence by Lemma \ref{L:log-}
$$\int_{A_\epsilon} \frac{1}{n} \log^-  |\tr B_t(w_n)| \ dm(t)< \epsilon \qquad \textup{for any }n \geq 0.$$
Moreover, by equations \eqref{E:lambda_n-above} and \eqref{E:tau_n-above}, $\tau_n(t, \omega) \leq \frac{\log C}{n} + \log M(|t|)$, so $\tau_n(t, \omega)$
is uniformly bounded above on $A_{\epsilon}$, hence
$$\lim_{\epsilon \to 0} \sup_{n \geq 0} \int_{A_\epsilon} |\tau_n(t, \omega)| \ dm(t) = 0,$$
so $\tau_n(t, \omega)$ converges to $\lambda(t)$ in $L^1_{loc}(\C)$.

\end{proof}

\begin{lemma} \label{L:lambda}
Let $\mu$ be a finitely supported, generating probability measure on $\Braid{3}^+$, 
and consider the random walk driven by $\mu$. 
We have 
\begin{enumerate}
\item \label{item: at zero}
$\lim_{t \to 0} \lambda(t) = -\infty$; 
\item \label{item: sym}
$\lambda(1/t) = \lambda(t) - \log |t|$ for any $t \in \C^\times$;
\item \label{item: above log}
$\lambda(t) > \frac{1}{2} \log |t|$ on $V$; in particular, $\lambda(t) > 0$ on $\cA_L$.
\item \label{item: lambda on A_R}
 $\lambda(t) = 0$ on $\cAbar_R$.
\end{enumerate}
\end{lemma}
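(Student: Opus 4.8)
\emph{Proof plan.} I would prove the four assertions in turn.

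\textbf{(1)} The plan is to combine a subadditivity argument with the vanishing of $B_t(\sigma_1\sigma_2)$ at $t=0$. The sequence $a_n(t):=\int\log\norminf{B_t(w_n)}\,d\P$ is subadditive in $n$ (decompose $w_{n+m}$ as $w_n$ times an independent copy of $w_m$ and use submultiplicativity of $\norminf{\cdot}$), so Fekete's lemma gives $\lambda(t)=\inf_n a_n(t)/n\le a_2(t)/2$. Since $\mu$ is generating and the length homomorphism $\#$ is additive on the semigroup $\Braid{3}^+$, the support of $\mu$ must contain $\sigma_1$ and $\sigma_2$, and the only way to factor $\sigma_1\sigma_2$ as a product of two support elements is $\sigma_1\cdot\sigma_2$; hence the $\sigma_1\sigma_2$-term of $a_2(t)$ is exactly $\mu(\sigma_1)\mu(\sigma_2)\log\norminf{B_t(\sigma_1\sigma_2)}$. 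Since $B_t(\sigma_1\sigma_2)=\mysmallmatrix{-t}{1}{0}{1}\mysmallmatrix{1}{0}{t}{-t}=t\,\mysmallmatrix{0}{-1}{1}{-1}$ has $\norminf{\cdot}=2|t|$, that term equals $\mu(\sigma_1)\mu(\sigma_2)\log(2|t|)\to-\infty$, while for $|t|\le 1$ every other term of $a_2(t)$ is at most $2L\log 2$, where $L=\max_{g\in\operatorname{supp}\mu}\#g$ (submultiplicativity and $\norminf{B_t(\sigma_i)}\le 2$). Thus $a_2(t)\to-\infty$, so $\lambda(t)\to-\infty=\lambda(0)$.

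\textbf{(2)} Here I would invoke Squier's form \eqref{eq:form}. Read over $\Z[s^{\pm1}]$ it says $B_t(w)^h\,J\,B_t(w)=J$; since $A^h$ is the transpose followed by $s\mapsto s^{-1}$, i.e.\ $t\mapsto 1/t$, this rearranges to $B_{1/t}(w)^{T}=J\,B_t(w)^{-1}\,J^{-1}$, valid whenever $\det J=t+t^{-1}+1\ne 0$, i.e.\ for $t\notin\{\zeta_3,\zetabar_3\}$. As $\norminf{J^{\pm 1}}$ do not depend on $n$, the sequences $\tfrac1n\log\norminf{B_{1/t}(w_n)}$ and $\tfrac1n\log\norminf{B_t(w_n)^{-1}}$ share a limit (and likewise with the spectral norm). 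For an invertible $2\times2$ matrix $A$ one has $\|A^{-1}\|_2=\|A\|_2/|\det A|$, and $\det B_t(w_n)=(-t)^{\#w_n}$ by \eqref{eq: B det}, so $\tfrac1n\log|\det B_t(w_n)|\to\log|t|$; passing to the limit gives $\lambda(1/t)=\lambda(t)-\log|t|$. The excluded points are handled by $1/\zeta_3=\zetabar_3$, $\log|\zeta_3|=0$, and $\lambda(\bar t)=\lambda(t)$, the latter being clear from $B_{\bar t}(w)=\overline{B_t(w)}$.

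\textbf{(3)} Writing $A_u(w_n)=u^{-\#w_n}B_{-u^2}(w_n)\in\SL{2}{\C}$ with $u^2=-t$ gives $\lambda(t)=\tfrac12\log|t|+\ell(u)$, where $\ell(u)$ is the top Lyapunov exponent of the $\SL{2}{\C}$-random walk $A_u(w_n)$. For $t\in V$, Proposition~\ref{P:non-elt} gives that $\Gamma_t^+$ contains two independent loxodromics; since $\mu$ generates $\Braid{3}^+$, the semigroup generated by $\operatorname{supp}\big((A_u)_\star\mu\big)$ is exactly $\Gamma_t^+$, so Lemma~\ref{L:RW} applies and yields $\ell(u)>0$. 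Hence $\lambda(t)>\tfrac12\log|t|$ on all of $V$, and in particular $\lambda(t)>0$ on $\cA_L$, where $|t|=1$. I also record the cheap bound $\norminf{B_t(w_n)}\ge\rho(B_t(w_n))\ge|\det B_t(w_n)|^{1/2}=|t|^{\#w_n/2}$, valid for all $t\ne 0$, which gives $\lambda(t)\ge\tfrac12\log|t|$; this feeds into part~(4).

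\textbf{(4)} From that bound, $\liminf_{t\to t_0}\lambda(t)\ge0$ for every $t_0$ on the unit circle. Lower semicontinuity of $\lambda$ on $\C$ then follows by assembling three facts: $\lambda$ is continuous on $V$ by Lemma~\ref{lem: weak sub}; $\liminf_{t\to t_0}\lambda(t)\ge 0=\lambda(t_0)$ for $t_0\in\cAbar_R$ (once $\lambda(t_0)=0$ is shown); and $\liminf_{t\to 0}\lambda(t)\ge -\infty=\lambda(0)$ trivially. To see $\lambda(t_0)=0$ on $\cAbar_R$, I split into cases. If $t_0\in\cA_R$, then by Lemma~\ref{lem: sig on circle} the image of $B_{t_0}$ is conjugate into $\mathrm{U}(2)$, so $\norminf{B_{t_0}(w_n)}$ is bounded in $n$ and $\lambda(t_0)=0$. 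If $t_0\in\{\zeta_3,\zetabar_3\}$, then (cf.\ the proof of Proposition~\ref{P:non-elt}) both $B_{t_0}(\sigma_1)$ and $B_{t_0}(\sigma_2)$ fix the vector $(1,t_0+1)$ with eigenvalue $1$, so in that basis $B_{t_0}(w_n)$ is upper triangular with diagonal entries $1$ and a number of modulus $1$ and off-diagonal entry of modulus $O(n)$; hence $\tfrac1n\log\norminf{B_{t_0}(w_n)}\to0$ and $\lambda(t_0)=0$. I expect the main obstacle to be in part~(1): the obvious estimate $\ell(u)\le\log C-\log|u|$ only shows $\lambda$ is bounded above near $0$, not that it diverges, so one genuinely must use the order-one vanishing of $B_t(\sigma_1\sigma_2)$ at $t=0$ at the fixed scale $n=2$; a secondary subtlety is the endpoint case $t_0\in\{\zeta_3,\zetabar_3\}$ of part~(4), where $\Gamma_{t_0}^+$ is neither relatively compact nor contains a loxodromic, and the linear growth of $\norminf{B_{t_0}(w_n)}$ must be controlled by hand.
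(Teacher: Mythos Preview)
Your argument is correct, and for parts (3) and (4) it essentially matches the paper's, though you are more careful at the endpoints $\zeta_3,\bar\zeta_3$ in (4): the paper appeals to conjugacy into $\mathrm{PU}_2$ across all of $\cAbar_R$, but by Proposition~\ref{P:non-elt} this fails at the endpoints, where your upper-triangularization and linear-growth estimate genuinely fill the gap.

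Parts (1) and (2) take different routes from the paper. For (1), the paper writes $\lambda(t)=\ell(u)+\tfrac12\log|t|$ and asserts that $\ell(u)$ is bounded above on $\D$; your Fekete bound $\lambda(t)\le a_2(t)/2$ combined with the explicit vanishing of $B_t(\sigma_1\sigma_2)$ at $t=0$ is more self-contained. (The remark about unique factorization of $\sigma_1\sigma_2$ is unnecessary: all you need is that the single pair $(\sigma_1,\sigma_2)$ occurs with positive weight and its term goes to $-\infty$, while every term is bounded above.) For (2), the paper instead proves the identity $B_{1/t}(\sigma_i)=-t^{-1}E\,B_t(\sigma_{3-i})^{T}E$ with $E=\mysmallmatrix{1}{0}{0}{-1}$, yielding $\|B_{1/t}(w_n)\|=|t|^{-\#w_n}\|B_t(\check w_n)\|$, and then argues via equidistribution of $\check w_n$ with $\bar w_n$ and a conjugacy $B_t(\bar w_n)=V B_t(w_n)V^{-1}$; your route through Squier's form and $\|A^{-1}\|_2=\|A\|_2/|\det A|$ is slicker and avoids the reversed and barred words. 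One small imprecision you share with the paper: for a general finitely supported generating $\mu$, one has $\tfrac1n\log|\det B_t(w_n)|=\tfrac{\#w_n}{n}\log|t|\to \bar\ell\,\log|t|$ with $\bar\ell=\E_\mu[\#g]\ge 1$, so the identity as written holds literally only when $\bar\ell=1$ (e.g.\ for the uniform measure on $\{\sigma_1,\sigma_2\}$).
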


As a corollary, the function $\chi$ satisfies:

\begin{corollary}
Let $\mu$ be a finitely supported, generating probability measure on $\Braid{3}^+$, 
 and consider the random walk driven by $\mu$. 
We have 
\begin{enumerate}
\item 
There is a neighbourhood of $0$ where $\chi(t) = 0$; 
\item 
$\chi(1/t) = \chi(t) - \log |t|$ for any $t \in \C^\times$;
\item
$\chi(t) > 0$ on $\cA_L$.
\item 
$\chi(t) = 0$ on $\cAbar_R$.
\end{enumerate}
\end{corollary}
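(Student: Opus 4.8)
The plan is to read all four statements off Lemma~\ref{L:lambda} together with the definition $\chi(t) = \max\{\lambda(t), \log^+|t|\}$, using throughout that $\log^+|t| = 0$ as soon as $|t| \le 1$, so that on the closed unit disc one simply has $\chi = \max\{\lambda, 0\}$.

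For the first item I would invoke Lemma~\ref{L:lambda}(\ref{item: at zero}): since $\lambda(t) \to -\infty$ as $t \to 0$, there is a radius $r > 0$ with $\lambda(t) < 0$ for $0 < |t| < r$, and $\lambda(0) = -\infty$; as $\log^+|t|$ vanishes on $\{|t| < r\}$, we conclude $\chi \equiv 0$ there (reading $\max\{-\infty, 0\} = 0$ at the origin). The third item is equally immediate: on $\cA_L$ one has $|t| = 1$, hence $\log^+|t| = 0$, while $\lambda(t) > 0$ by Lemma~\ref{L:lambda}(\ref{item: above log}), so $\chi(t) = \lambda(t) > 0$. Likewise for the fourth item, on $\cAbar_R$ we again have $\log^+|t| = 0$ and $\lambda(t) = 0$ by Lemma~\ref{L:lambda}(\ref{item: lambda on A_R}), whence $\chi(t) = 0$.

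The second item is a short computation with the elementary identities $\log^+ x - \log x = \log^- x$ and $\log^+(1/x) = \log^- x$ for $x > 0$. Applying Lemma~\ref{L:lambda}(\ref{item: sym}),
\[
  \chi(1/t) = \max\bigl\{\lambda(1/t),\ \log^+|1/t|\bigr\}
            = \max\bigl\{\lambda(t) - \log|t|,\ \log^-|t|\bigr\},
\]
while subtracting $\log|t|$ from $\chi(t) = \max\{\lambda(t), \log^+|t|\}$ gives the same maximum; this is exactly the claimed functional equation.

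There is essentially no obstacle here: all of the substantive content has already been placed in Lemma~\ref{L:lambda}, and the corollary is bookkeeping with $\max$, $\log^+$, and $\log^-$. The one point worth stating explicitly is the behaviour at $t = 0$, where $\lambda$ takes the value $-\infty$; with the convention $\log^+ 0 = 0$ this still yields $\chi(0) = 0$, so that item~(1) holds on a genuine neighbourhood of the origin, the origin included.
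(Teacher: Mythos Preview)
Your argument is correct and is exactly the approach the paper intends: the corollary is stated immediately after Lemma~\ref{L:lambda} with no separate proof, so the authors regard all four items as direct consequences of that lemma together with the definition of $\chi$. Your write-up simply makes explicit the bookkeeping with $\max$, $\log^+$, and $\log^-$ that the paper leaves to the reader, and your handling of the point $t=0$ matches the paper's convention $\lambda(0)=-\infty$.
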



\begin{proof}[Proof of Lemma \ref{L:lambda}]
Since 
\begin{equation} \label{E:ell}
\lambda(t) = \ell(u) + \frac{1}{2} \log |t|
\end{equation} 
and $\ell(u)$ is bounded above for $u \in \D$, we get (\ref{item: at zero}).

To prove (\ref{item: sym}), we introduce the following notation: 
given a positive word $w =  \sigma_{i_1} \cdots \sigma_{i_n} $, we denote $\wbar := \sigma_{3 - i_1} \cdots \sigma_{3 - i_n}$ 
and $\check{w} := \sigma_{3 - i_n} \cdots \sigma_{3 - i_1}$.
First, note that 
$$B_{t^{-1}} (\sigma_i) = -t^{-1} E \big(B_t(\sigma_{3-i})\big)^t E \qquad \textup{for }i = 1, 2$$
where $E = \mysmallmatrix{1}{0}{0}{-1}$,  
so for any word $w_n$ of length $n$ in the positive semigroup, 
$$B_{t^{-1}}(w_n) = (-1)^n t^{-n} E \big(B_{t}(\check{w}_n)\big)^{t} E$$
hence $\Vert B_{t^{-1}}(w_n) \Vert =  |t|^{-n} \Vert B_{t}(\check{w}_n) \Vert$, and by taking logs and letting $n \to \infty$, 
\begin{equation} \label{E:1t-1}
\lambda(1/t) = - \log |t| + \lim_n \int \frac{1}{n} \log \Vert B_{t}(\check{w}_n) \Vert \ d \mathbb{P}.
\end{equation}
Now, noting that $\wbar_n$ and $\check{w}_n$ have the same distribution, for any $n$ 
\begin{equation} \label{E:1t-2}
\int \frac{1}{n} \log \Vert B_t(\check{w}_n) \Vert \  d \mathbb{P} = \int
\frac{1}{n}  \log \Vert B_t(\wbar_n) \Vert \ d \mathbb{P}.
\end{equation}
Finally, setting $V := \mysmallmatrix{0}{u^{-1}}{-u}{0}$ with $u^2 = -t$, we have the identity 
$$V B_t(\sigma_i) V^{-1} = B_t(\sigma_{3-i}) \qquad \textup{for }i = 1, 2.$$
Thus for any word $w_n$ we get
$V B_t( w_n ) V^{-1} = B_t( \wbar_n)$
which implies
$\big|\log \Vert B_t(w_n) \Vert - \log \Vert B_t(\wbar_n) \Vert \big|  \leq \log( \Vert V \Vert \cdot  \Vert V^{-1} \Vert)$
hence 
$$\lim_n \int \frac{1}{n} \log \Vert B_t(\wbar_n) \Vert \ d\mathbb{P}= \lim_n \int \frac{1}{n} \log \Vert B_t(w_n) \Vert \ d \mathbb{P}$$
which, together with \eqref{E:1t-1} and \eqref{E:1t-2} yields
(\ref{item: sym}). 

To prove (\ref{item: above log}), note that $\ell(u) > 0$ on $V$ and use equation \eqref{E:ell}.  

To show (\ref{item: lambda on A_R}), first note that for $t$ with $|t| = 1$ and $\Re(t) \geq -1/2$, we have by the above equation 
$\lambda(t) \geq \frac{1}{2} \log |t| \geq 0$.
Moreover, since $\Gamma_t^+$ is conjugate into $\textrm{PU}_2$, which
is compact,
$\Vert B_t(w_n) \Vert$ is uniformly 
bounded above for any $t$, hence $\lambda(t) \leq 0$. 
\end{proof}

\subsection{Equidistribution}

Let us recall some fundamental facts from potential theory. 
Let $p(t)$ be a polynomial. A standard computation shows
\begin{equation} \label{E:ddc}
d d^c \log |p(t)| = 2 \pi \sum_{p(z) = 0} \delta_z.
\end{equation}
Moreover, we use the following: 

\begin{lemma} \label{L:distr}
Let $U \subseteq \mathbb{C}$, and let $(\varphi_n)$ be a sequence of weakly subharmonic functions defined on $U$, 
each belonging to $L^1_{loc}(U)$. If $\varphi_n \to \varphi$ in $L^1_{loc}(U)$, then 
$d d^c \varphi_n \to d d^c \varphi$ in the sense of distributions and in the sense of measures (i.e., as elements of the dual of $C^0_c(U)$).
\end{lemma}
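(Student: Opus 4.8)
The plan is to reduce everything to two standard facts: a constant-coefficient differential operator is continuous for the distributional topology, and a locally uniformly bounded sequence of positive measures converging against smooth compactly supported test functions also converges against continuous ones. Recall first that $dd^c$ is the usual (constant-coefficient, elliptic) Laplacian on $\mathbb{C}$, so for any $\psi \in C^\infty_c(U)$ one has $\langle dd^c\varphi_n, \psi\rangle = \langle \varphi_n, dd^c\psi\rangle = \int_U \varphi_n\,(dd^c\psi)\,dm$ by integration by parts. Since $dd^c\psi$ is a fixed bounded function supported in a fixed compact subset of $U$, the hypothesis $\varphi_n \to \varphi$ in $L^1_{loc}(U)$ gives $\langle dd^c\varphi_n,\psi\rangle \to \int_U \varphi\,(dd^c\psi)\,dm = \langle dd^c\varphi,\psi\rangle$, which is exactly convergence in the sense of distributions. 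Because each $\varphi_n$ is weakly subharmonic, $dd^c\varphi_n \geq 0$ as a distribution, and passing to the limit shows $dd^c\varphi \geq 0$ as well; by the Riesz representation theorem both $\mu_n := dd^c\varphi_n$ and $\mu := dd^c\varphi$ are positive Radon measures on $U$ (in particular $\varphi$ is weakly subharmonic).

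The remaining point is to upgrade the distributional convergence $\mu_n \to \mu$ to convergence in the dual of $C^0_c(U)$. First I would record a uniform local mass bound: given a compact $K \subset U$, choose $\chi \in C^\infty_c(U)$ with $0 \leq \chi \leq 1$ and $\chi \equiv 1$ on a neighbourhood of $K$; then $\mu_n(K) \leq \langle \mu_n, \chi\rangle \to \langle \mu, \chi\rangle < \infty$, so $C_K := \sup_n \mu_n(K) < \infty$. Next, fix $\psi \in C^0_c(U)$ with support $L$ and a compact neighbourhood $K$ of $L$ inside $U$. For $\epsilon > 0$, mollifying $\psi$ at a scale smaller than the distance from $L$ to $\partial K$ produces $\tilde\psi \in C^\infty_c(U)$ with support inside $K$ and, by uniform continuity of $\psi$, with $\Vert \psi - \tilde\psi\Vert_\infty < \epsilon$. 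Splitting $\langle \mu_n,\psi\rangle - \langle\mu,\psi\rangle$ as $\langle \mu_n,\psi-\tilde\psi\rangle + \bigl(\langle\mu_n,\tilde\psi\rangle - \langle\mu,\tilde\psi\rangle\bigr) + \langle\mu,\tilde\psi-\psi\rangle$, the two outer terms are bounded by $\epsilon\,C_K$ and $\epsilon\,\mu(K)$ respectively, while the middle term tends to $0$ by the distributional convergence already established. Hence $\limsup_n \,\lvert\langle\mu_n,\psi\rangle - \langle\mu,\psi\rangle\rvert \leq \epsilon(C_K + \mu(K))$, and letting $\epsilon \searrow 0$ finishes the argument.

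The only genuinely load-bearing step — and the one I would be most careful about — is securing the uniform mass bound $\sup_n \mu_n(K) < \infty$ and ensuring the smooth approximants $\tilde\psi$ keep their support in a fixed compact set; the rest is a routine three-$\epsilon$ argument. I expect no real obstacle here, since the $L^1_{loc}$ hypothesis is precisely what makes the first paragraph go through verbatim, and positivity of the $\mu_n$ is exactly what converts distributional convergence into weak-$*$ convergence of measures.
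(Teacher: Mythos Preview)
The paper does not actually supply a proof of this lemma: it is presented there as a standard fact from potential theory, stated without argument. Your proposal gives a correct and self-contained proof of it. The first paragraph is the continuity of a constant-coefficient differential operator on distributions, and the second paragraph is the standard upgrade from distributional convergence to weak-$*$ convergence of positive Radon measures via a uniform local mass bound and smooth approximation of continuous compactly supported test functions; both steps are carried out cleanly, and the points you flag as load-bearing (the bound $\sup_n \mu_n(K)<\infty$ and keeping the supports of the mollified $\tilde\psi$ inside a fixed compact set) are handled correctly.
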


We now consider the function 
$$u(w, t):=  \log \big| \det (B_t(w) - I) \big|$$ 
so that,  since $\det (B_t(w) - I) = \Delta_{\what}(t) (t^2 + t + 1)$, we obtain by \eqref{E:ddc}
$$\frac{1}{2 \pi} d d^c u(w, t) =  \#Z_w \nu_w + \delta_{e^{2 \pi i/3}} + \delta_{e^{- 2 \pi i/3}}.$$
In order to apply the previous machinery, we will show the following. 

\begin{proposition} \label{P:conv}
Let $U$ and $\mu$ be as in Theorem~\ref{T:equid}.
Then for almost every sample path $(w_n)$ we have 
$$\frac{1}{n} \log \big| \det (B_t(w_n) - I) \big| \to \chi(t)
\mtext{in $L^1_{loc}(U)$.}$$
\end{proposition}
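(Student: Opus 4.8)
The plan is to deduce the $L^1_{loc}(U)$ convergence from two ingredients --- pointwise convergence a.e.\ on $U$, and uniform integrability on compacta --- via Vitali's convergence theorem; once \hyperref[P:conv]{Proposition~\ref*{P:conv}} is proved, \hyperref[T:equid]{Theorem~\ref*{T:equid}} follows from \eqref{E:ddc} and Lemma~\ref{L:distr}, using that $\frac{1}{2\pi}d d^c u(w_n,\cdot) = \#Z_{w_n}\,\nu_{w_n} + \delta_{\zeta_3} + \delta_{\zetabar_3}$. Set $p_n(t) := \det(B_t(w_n)-I)$. By \eqref{eq: burau det}, $p_n(t) = (-t)^{\#w_n} + 1 - \tr B_t(w_n)$, and by \eqref{eq:alex} together with Theorem~\ref{thm: burau pos}, $p_n$ is a polynomial in $\Z[t]$ with leading coefficient $\pm 1$ and degree $\#w_n$ (where $n \le \#w_n \le C_0 n$ for $C_0 := \max_{g\in\mathrm{supp}\,\mu}\#g$), and $p_n = (t^2+t+1)\,\Delta_{\what_n}$. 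Since $\mu$ is generating, the probability that $w_n$ is a power of $\sigma_1$ or $\sigma_2$ decays exponentially in $n$, so by Borel--Cantelli we may, for a.e.\ sample path, discard the finitely many $n$ where this occurs; for the rest, $p_n \not\equiv 0$.

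First I would establish the pointwise limit. Fix $t \in U \cap V$. By Lemma~\ref{L:exist}, for a.e.\ sample path $\tfrac1n\log|\tr B_t(w_n)| \to \lambda(t)$, while $\tfrac1n\log\bigl|(-t)^{\#w_n}\bigr| = \tfrac{\#w_n}{n}\log|t| \to \log^+|t|$ almost surely (for the uniform measure $\#w_n = n$; in general $\#w_n/n$ converges a.s.\ and $\chi$ is to be read with the corresponding degree-normalization). The key point is that on $U\cap V$ one has $\lambda(t)\neq\log^+|t|$: equality on $V$ would place $t$ in $F = \overline{\{\lambda = \log^+|\cdot|\}}$, hence outside $U$. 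Thus in $p_n(t) = (-t)^{\#w_n} + 1 - \tr B_t(w_n)$ the three terms have separated exponential growth rates, so the dominant one controls $|p_n(t)|$ up to a factor $1+o(1)$, giving $\tfrac1n\log|p_n(t)| \to \max\{\lambda(t),\log^+|t|\} = \chi(t)$; in the remaining case $\lambda(t)<0\le\log^+|t|$ one instead observes $p_n(t)\to 1$, again yielding $\chi(t)=0$. Points of $U$ not of this type lie in $U\setminus V \subseteq \{0\}\cup\cAbar_R$, which has planar measure zero (on $\cA_R$ one has $\rho_{w_n}\equiv 1$ by Lemma~\ref{lem:compact} regardless). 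Hence, for a.e.\ sample path, $\tfrac1n u(w_n,t) \to \chi(t)$ for a.e.\ $t\in U$.

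To upgrade to $L^1_{loc}(U)$ I would check uniform integrability on each compact $K\subset U$. The positive parts are uniformly bounded there: $|p_n(t)| \le |t|^{\#w_n} + 1 + 2\|B_t(w_n)\| \le |t|^{\#w_n} + 1 + 2\,m(|t|)^{n}$ with $m(r):=\sup\{\|B_{t'}(g)\| : |t'|\le r,\ g\in\mathrm{supp}\,\mu\}$, so $\tfrac1n\log^+|p_n(t)| \le C_K$ uniformly in $n$. For the negative parts, $p_n\in\Z[t]$ is nonzero of degree $O(n)$, so Lemma~\ref{L:log-} (after the harmless rescaling of $n$) says $\{\tfrac1n\log^-|p_n|\}_n$ is uniformly integrable on any bounded set --- precisely the statement that the negative part of the logarithmic potential of the $\asymp n$ roots of $\Delta_{\what_n}$ does not concentrate. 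Uniform integrability on $K$ follows, and Vitali's theorem gives convergence in $L^1(K)$, hence in $L^1_{loc}(U)$. (Alternatively, on the open set $U\cap V$, where $\chi$ is continuous by Lemma~\ref{lem: weak sub}, one can run the Hartogs-lemma argument of \cite[Lemma~4.3]{DeroinDujardin2012} as in the proof of Lemma~\ref{L:exist}, then patch the measure-zero set $\C\setminus V$ using Lemma~\ref{L:log-}.)

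The step I expect to be the main obstacle is exactly the control of $\tfrac1n\log^-|p_n|$ near the zeros of $p_n$, i.e.\ Lemma~\ref{L:log-}: as $n\to\infty$ the roots of $\Delta_{\what_n}$ proliferate and cluster densely on the unit circle (and, conjecturally, on the interior curve), so a priori the limiting potential could acquire spurious negative mass there. What rescues the argument is that $p_n$ has integer coefficients with leading coefficient $\pm 1$ (so $\log^-$ of the leading coefficient is zero) and degree only linear in $n$, combined with the translation-uniform integrability of $u\mapsto\log^-|t-u|$. A secondary, purely bookkeeping nuisance is the degenerate braids $w_n=\sigma_i^k$, for which $p_n\equiv 0$; these are removed by Borel--Cantelli using that $\mu$ is generating.
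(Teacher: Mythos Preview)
Your approach is correct and takes a genuinely different route from the paper's. The paper does not invoke Vitali. Instead, it writes $p_n = a_n + b_n$ with $a_n(t) = 1 + (-t)^{\#w_n}$ and $b_n(t) = -\tr B_t(w_n)$, introduces the \emph{time-dependent} regions $U_n^\pm = \{\,|b_n| \gtrless 2|a_n|\,\}$ and $U_n^c = U \setminus (U_n^+\cup U_n^-)$, and bounds the $L^1$-distance on each piece directly: on $U_n^+$ one has $\bigl|\tfrac1n\log|p_n| - \tfrac1n\log|b_n|\bigr| \le \tfrac{\log 2}{n}$ and then uses Lemma~\ref{L:exist}; on $U_n^-$ the symmetric estimate combined with Lemma~\ref{L:log}; and on $U_n^c$ one shows $m(U_n^c)\to 0$ (since its limsup is contained in $\{\lambda = \log^+|\cdot|\}\cap U$, of measure zero) and then controls $\tfrac1n\log^-|p_n|$ via Lemma~\ref{L:log-} and $\tfrac1n\log^+|p_n|$ by a crude uniform bound. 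Your argument packages the same ingredients more cleanly: the pointwise step is exactly Lemma~\ref{L:chi} (which you essentially reprove inline, together with a Fubini swap of the null sets in $\omega$ and $t$), and the uniform-integrability step is the \emph{proof} of Lemma~\ref{L:log-} (which in fact gives the bound for every $n$, not merely in the limit) plus the obvious bound on $\log^+$. What the paper's decomposition buys is that it never appeals to an abstract convergence theorem and makes the role of the ``balanced'' set $U_n^c$ explicit; what your route buys is brevity and a transparent explanation of why the argument stops at $U$: on $U\cap V$ the growth rates $\lambda(t)$ and $\log^+|t|$ are separated by definition of $F$, which is precisely what Vitali needs for the pointwise limit. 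One minor caveat (shared with the paper, which writes $(-t)^n$ rather than $(-t)^{\#w_n}$): for general $\mu$ the exponent is $\#w_n$, and $\tfrac{\#w_n}{n}\to L := \int \#g\, d\mu(g)$, so strictly speaking the competing rate is $L\log|t|$, not $\log|t|$; your parenthetical remark flags this but does not resolve it.
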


We start by proving the following almost sure convergence. 

\begin{lemma} \label{L:chi}
Let $V$ and $\mu$ be as in Theorem~\ref{T:equid}.  
Then for any $t \in V$, we have a.s.
$$\limsup_{n \to \infty} \frac{1}{n} \log \big| \det(B_t(w_n) - I) \big| \leq \chi(t).$$
Let $t \in V$ be such that $\lambda(t) \neq \log^+ |t|$. 
Then for almost any $(w_n)$ we have 
$$\lim_{n \to \infty} \frac{1}{n} \log \big| \det(B_t(w_n) - I) \big| = \chi(t).$$
\end{lemma}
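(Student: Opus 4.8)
The plan is to work directly from the identity $\det\big(B_t(w_n) - I\big) = (-t)^n - \tr\big(B_t(w_n)\big) + 1$, which follows from~\eqref{eq: burau det} together with $\#w_n = n$ for a positive word of length $n$. I will write $S_n := (-t)^n + 1$ (a deterministic quantity) and $T_n := \tr\big(B_t(w_n)\big)$, so that $\det(B_t(w_n)-I) = S_n - T_n$ and hence $\big||S_n| - |T_n|\big| \le \big|\det(B_t(w_n)-I)\big| \le |S_n| + |T_n|$. The one substantive input is Lemma~\ref{L:exist}: for each fixed $t \in V$, almost surely $\tfrac1n\log|T_n| \to \lambda(t)$; on the other hand $\tfrac1n\log|S_n| \to \log^+|t|$ deterministically whenever $|t| \ne 1$, and $\limsup_n \tfrac1n\log|S_n| \le 0 = \log^+|t|$ when $|t| = 1$. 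Everything then reduces to the elementary observation that a sum (or difference) of two nonnegative quantities with distinct exponential growth rates inherits the larger rate.

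For the first statement I will bound $\big|\det(B_t(w_n)-I)\big| \le |S_n| + |T_n| \le |t|^n + 1 + |T_n| \le 3\max\{|t|^n,\,1,\,|T_n|\}$, take $\tfrac1n\log$, and send $n \to \infty$ using Lemma~\ref{L:exist}; this yields $\limsup_n \tfrac1n\log\big|\det(B_t(w_n)-I)\big| \le \max\{\log|t|,\,0,\,\lambda(t)\} = \max\{\log^+|t|,\,\lambda(t)\} = \chi(t)$, and this holds for every $t \in V$, whether or not $\lambda(t) = \log^+|t|$.

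For the equality I will assume $\lambda(t) \ne \log^+|t|$, so that exactly one of the two numbers is the strict maximum $\chi(t)$, and split into cases. If $\lambda(t) > \log^+|t|$, so $\chi(t) = \lambda(t)$, then since $|S_n| \le |t|^n + 1 \le 2e^{n\log^+|t|}$ has growth rate at most $\log^+|t| < \lambda(t)$, for $n$ large one has $|S_n| \le \tfrac12|T_n|$, whence $\tfrac12|T_n| \le \big|\det(B_t(w_n)-I)\big| \le \tfrac32|T_n|$ and $\tfrac1n\log\big|\det(B_t(w_n)-I)\big| \to \lambda(t) = \chi(t)$. If instead $\lambda(t) < \log^+|t|$, so $\chi(t) = \log^+|t|$, I first note that $|t| \ne 1$: a point $t \in V$ with $|t| = 1$ lies on $\cA_L$, where Lemma~\ref{L:lambda}(\ref{item: above log}) gives $\lambda(t) > 0 = \log^+|t|$, contradicting the case assumption. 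Then $\tfrac1n\log|S_n| \to \log^+|t|$ (because $|S_n| \ge |t|^n - 1$ and $\le |t|^n + 1$ when $|t| > 1$, while $|S_n| \to 1$ when $|t| < 1$), whereas $\tfrac1n\log|T_n| \to \lambda(t) < \log^+|t|$; so for $n$ large $|T_n| \le \tfrac12|S_n|$, giving $\tfrac12|S_n| \le \big|\det(B_t(w_n)-I)\big| \le \tfrac32|S_n|$ and $\tfrac1n\log\big|\det(B_t(w_n)-I)\big| \to \log^+|t| = \chi(t)$.

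The only delicate bookkeeping is deciding which of $S_n$ and $T_n$ dominates and checking that the borderline locus $\{\,|t| = 1\,\} \cap V$ always falls under the case $\lambda(t) > \log^+|t|$; I do not expect a real obstacle beyond this, since the genuine analytic content — the almost sure convergence of $\tfrac1n\log|\tr B_t(w_n)|$ to $\lambda(t)$ on $V$ — is already packaged in Lemma~\ref{L:exist}, and the rest rests on the explicit formula $\det(B_t(w_n)-I) = (-t)^n - \tr(B_t(w_n)) + 1$, which is what makes the $\log^+|t|$ contribution to $\chi$ visible at the level of growth rates.
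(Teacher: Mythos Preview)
Your proof is correct and follows essentially the same route as the paper's: both expand $\det(B_t(w_n)-I) = 1 - \tr B_t(w_n) + (-t)^n$, bound it by $3\max\{1,|t|^n,|\tr B_t(w_n)|\}$ for the $\limsup$, and then in the case $\lambda(t)\neq\log^+|t|$ observe that the dominant term swallows the others. Your write-up is in fact slightly more careful than the paper's, which dispatches the case $\lambda(t)<\log^+|t|$ as ``symmetric''; you make explicit that $|t|\neq 1$ there (via Lemma~\ref{L:lambda}(\ref{item: above log}) on $\cA_L$), so that $\tfrac1n\log|1+(-t)^n|$ genuinely converges to $\log^+|t|$.
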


\begin{proof}
A simple computation shows
$$| \det(B_t(w_n) - I)| = \big|1 -  \tr B_t(w_n) + (-t)^n \big| \leq 3 \max\big\{ 1, |\tr B_t(w_n)|, |t|^n\big\}$$
so 
$$\limsup_{n \to \infty} \frac{1}{n} \log\big| \det(B_t(w_n) - I)
\big|  \leq \max \big\{ 0, \log |t|, \lambda(t) \big\} = \chi(t),$$
proving the first claim,

Now suppose that $\lambda(t) > \log^+ |t|$.  Then for any
$\epsilon > 0$, for $n$ large enough,
$$| \det(B_t(w_n) - I)| = |1 -  \tr B_t(w_n) + (-t)^n | \geq |\tr B_t(w_n) | - 1 - |(-t)^n| \geq e^{n(\lambda(t) - \epsilon)} - 1 - |t|^n $$
so 
$$\liminf_{n \to \infty} \frac{1}{n} \log| \det(B_t(w_n) - I) |  \geq \lambda(t) - \epsilon$$
for any $\epsilon$, as required. The case of $\lambda(t) < \log^+|t|$ is symmetric.
\end{proof}

\subsection{An example}

Let us first consider the sequence $w_n := \Omega^n$, where $\Omega =
\sigma_1 \sigma_2 \sigma_1$. In this case the trace is identically $0$, so 
$$\det (B_t(w_n) -I ) = 1 + (-t)^{3 n}.$$
Let $m$ denote the Lebesgue measure on the plane.

\begin{lemma}  \label{L:log}
We have the convergence
$\displaystyle  \frac{1}{n} \log | 1  + (-t)^n | \to \log^+ |t| $
in $L^1_{loc}(\mathbb{C})$.
\end{lemma}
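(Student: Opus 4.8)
The plan is to split the integrand as $\frac{1}{n}\log|1+(-t)^n| = \frac{1}{n}\log^+|1+(-t)^n| - \frac{1}{n}\log^-|1+(-t)^n|$ and to prove separately that the first term converges to $\log^+|t|$ and the second to $0$, both in $L^1_{loc}(\mathbb{C})$. For the positive part I would use the elementary estimate $|1+(-t)^n| \le 1 + |t|^n \le 2\max\{1,|t|^n\}$, which gives $\frac{1}{n}\log^+|1+(-t)^n| \le \frac{\log 2}{n} + \log^+|t| \le \log 2 + \log^+|t|$, and the majorant lies in $L^1_{loc}(\mathbb{C})$. Off the unit circle (a set of zero planar measure) the pointwise limit is immediate: for $|t|>1$ one has $|t|^n-1 \le |1+(-t)^n| \le |t|^n+1$, so $\frac{1}{n}\log|1+(-t)^n| \to \log|t|$; for $|t|<1$ one has $|1+(-t)^n| \to 1$, so $\frac{1}{n}\log|1+(-t)^n| \to 0$; in both cases $\frac{1}{n}\log^+|1+(-t)^n| \to \log^+|t|$. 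Dominated convergence then finishes the positive part.

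The delicate point is the negative part, because $\frac{1}{n}\log^-|1+(-t)^n|$ has spikes near the $n$ zeros of $1+(-t)^n$, which all lie on the unit circle, so there is no pointwise majorant. Here I would apply Lemma~\ref{L:log-} to the nonzero integer-coefficient polynomials $p_n(t) = 1+(-t)^n$ of degree $n$: it supplies exactly the needed uniform smallness, namely that sets of small Lebesgue measure contribute little to $\int \frac{1}{n}\log^-|p_n|\,dm$ uniformly for large $n$. Fixing a compact $K \subset \mathbb{C}$, this statement, combined with absolute continuity of the finitely many integrals $\int_K \frac{1}{n}\log^-|p_n|\,dm$ for small $n$ (each $\log^-|p_n|$ being locally integrable, being a finite sum of logarithmic singularities), upgrades to genuine uniform integrability of the family $\{\frac{1}{n}\log^-|p_n|\}_n$ on $K$. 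Since $\liminf_n |1+(-t)^n| > 0$ for every $t$ with $|t|\neq 1$, the sequence $\frac{1}{n}\log^-|1+(-t)^n|$ converges to $0$ almost everywhere, so Vitali's convergence theorem yields convergence to $0$ in $L^1(K)$, and hence in $L^1_{loc}(\mathbb{C})$.

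Subtracting the two limits gives $\frac{1}{n}\log|1+(-t)^n| \to \log^+|t|$ in $L^1_{loc}(\mathbb{C})$, as desired. The main obstacle is precisely the control of the negative part near the unit-circle zeros of $1+(-t)^n$; everything else reduces to dominated convergence, and Lemma~\ref{L:log-} was set up for exactly this difficulty. One can also read this lemma as the explicit, by-hand verification, in a degenerate example where the relevant semigroup need not be non-elementary, of the conclusion of the general Proposition~\ref{P:conv}.
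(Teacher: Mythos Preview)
Your argument is correct, but it takes a different route from the paper's. The paper splits by \emph{region} rather than by sign: inside the unit disk it bounds $\bigl|\log|1+z|\bigr| \le -\log\bigl(1-|z|\bigr)$ via the Taylor series of $\log(1+z)$, then computes directly that $\frac{1}{n}\int_{\D}\bigl|\log|1+(-t)^n|\bigr|\,dm \le \frac{2\pi}{n}\int_0^1 -\log(1-r)\,r\,dr \to 0$; outside the disk it uses the identity $\frac{1}{n}\log|1+(-t)^n| - \log|t| = \frac{1}{n}\log|1+(-t)^{-n}|$ and the substitution $u=1/t$ to reduce to the first case. That argument is entirely self-contained and never invokes Lemma~\ref{L:log-} or Vitali. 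Your decomposition into $\log^+$ and $\log^-$ is cleaner conceptually and reuses the uniform-integrability machinery of Lemma~\ref{L:log-} that the paper needs anyway for Proposition~\ref{P:conv}; it also makes transparent why the difficulty lives entirely in the $\log^-$ spikes at the $n$th roots of $-1$. The paper's approach, on the other hand, is more elementary and gives an explicit rate $O(1/n)$ for the $L^1$ norm on $\D$.
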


\begin{proof}
Since the boundary of the unit circle has (Lebesgue) measure zero, the claim is equivalent to proving the following two statements: 
$$\lim_{n \to \infty}  \frac{1}{n} \int_{\D} \big| \, \log | 1  + (-t)^n |
\, \big|  \ dm(t)  = 0$$
where $m$ is the Lebesgue measure, and for any $R > 1$, 
$$\lim_{n \to \infty} \int_{1 < |t| < R}  \left| \frac{1}{n} \log | 1
+ (-t)^n | - \log |t| \, \right| \ dm(t) = 0.$$
Let us consider the interior of the disc $\D$. Setting $t = - r e^{i \theta}$, we need to show that 
 $$ \frac{1}{n} \int_{\D} \absm{\big}{\log | 1  + (-t)^n | }  \ dm(t)
 = \int_{\D} \frac{1}{n} \absm{\big}{\log | 1  + r^n  e^{i n \theta}
   |} r dr d\theta \to 0.$$
Since for any $z \in \D$, 
$$\log |1 + z |  = \Re\big( \log ( 1 + z) \big) = \Re \left( \sum_{k \geq 1} (-1)^k \frac{z^k}{k} \right)$$
so 
$$\big|\log |1 + z |\,\big| \leq  \sum_{k \geq 1}  \frac{|z|^k}{k} = -
\log \big|1 - |z| \, \big|$$
then 
$$\frac{1}{n} \int_{\D}  \big|\log | 1  + r^n  e^{i n \theta} | \, \big|r dr d\theta \leq  \frac{ 2 \pi}{n} \int_{0}^1 - \log( 1 - r^n) r   dr
 \leq  \frac{ 2 \pi}{n} \int_{0}^1 - \log( 1 - r ) r   dr
$$
which tends to $0$ since $\int_{0}^1 - \log( 1 - r ) r   dr$ converges.

For the other integral, note that 
$$  \left| \frac{1}{n} \log | 1  + (-t)^n | - \log |t| \, \right| = 
\left| \frac{1}{n} \log | (-t)^{-n}  + 1 | \,  \right|
$$
and apply the change of variable $u = 1/t$.
\end{proof}

\begin{proof}[Proof of Proposition \ref{P:conv}]
Let us fix $ R > 1$, and consider a bounded set $K \subseteq \{|t| \leq R \}$.
Denote $U^+ := \setdef{ t \in U }{ \lambda(t) > \log^+ |t|}$ and
$U^- := \setdef{ t \in U }{\lambda(t) < \log^+|t|}$.
$$p_n(t) := \det (B_t(w_n) - I) = 1 + (-t)^n - \tr B_t(w_n).$$
Denote $a_n(t) := 1 + (-t)^n$ and $b_n(t) := - \tr B_t(w_n)$.  Since
$U \setminus ( U^- \cup U^+) \subseteq \cAbar_R$ has measure zero, the
claim is proven if we establish
\begin{align*}
  &\int_{U^+ \cap K } \left| \frac{1}{n} \log |a_n + b_n| - \lambda(t)
    \right| \ dm(t) \to 0 \mtext{and} \\
  &\int_{U^- \cap K} \left| \frac{1}{n} \log |a_n + b_n| - \log^+|t|
    \right| \ dm(t) \to 0.
\end{align*}
Note that for a generic $\omega = (w_n)$ there exists $n_0$ such that for $n \geq n_0$
\begin{equation}
  \label{eq: a_n b_n bound}
  |a_n(t)| \leq 2 R^n, \qquad |b_n(t)| \leq | \tr B_{-R}(w_n) | \leq e^{2 n \lambda(-R)}.
\end{equation}
Let $U_n^+ := \setdefm{\big}{ t \in U }{|b_n| \geq 2 |a_n|\, }$,
$U_n^- := \setdefm{\big}{ t \in U }{ |a_n| \geq 2 |b_n| \, }$, and
$U_n^c := U \setminus \{ U_n^+ \cup U_n^-\}$.
By definition we have 
$$\bigcap_{n} \bigcup_{m \geq n} U^c_m \subseteq U \setminus ( U^+ \cup U^-)$$
so 
$$\limsup_{n \to \infty} m(U^c_n) \leq m(U \setminus ( U^- \cup U^+)) = 0.$$
By Lemmas~\ref{L:log} and~\ref{L:exist}, 
we have 
$$\frac{1}{n} \log |a_n(t)| \to \log^+|t| \mtext{and}  \frac{1}{n} \log |b_n(t)| \to \lambda(t)$$
almost surely and in $L^1_{loc}(\mathbb{C})$.
If $t \in U_n^+$, then $\frac{1}{2} |b_n(t)| \leq |a_n(t) + b_n(t)| \leq \frac{3}{2} |b_n(t)|$
so 
$$  \left| \frac{1}{n} \log |a_n(t) + b_n(t)| - \frac{1}{n} \log  |b_n(t)| \, \right| \leq  \frac{\log(2)}{n} $$
thus
$$\int_{U_n^+ \cap K} \left| \frac{1}{n} \log |a_n + b_n| - \lambda(t) \right| \ dm
\leq \int_{ K} \left| \frac{1}{n} \log | b_n| - \lambda(t) \right| \ dm  + \frac{\log(2)}{n}  m(K)$$
which, by taking the limit, yields
$$\int_{U_n^+ \cap K} \left| \frac{1}{n} \log |p_n| - \lambda(t) \right| \ dm \to 0.$$
Similarly, by swapping the role of $a_n$ and $b_n$, we obtain 
$$\int_{U_n^- \cap K} \left| \frac{1}{n} \log |p_n| - \log^+ |t| \, \right| \ dm \to 0.$$
On the other hand, since $m(U_n^c) \to 0$ as $n \to \infty$, by Lemma \ref{L:log-} we obtain 
$$\int_{U_n^c \cap K} \frac{1}{n} \log^- |p_n(t)| \ dm \to 0.$$
Moreover, if $t \in U_n^c$ then $|a_n(t)| \leq 2 R^n$ and $|b_n(t)|
\leq 2 |a_n(t)| \leq 4 R^n$. Thus
$$|p_n(t)| \leq |a_n(t)| + |b_n(t)| \leq 6 R^n, \qquad \frac{1}{n} \log^+|p_n(t)| \leq \frac{1}{n} \log(6) + \log(R)$$
hence also 
$$\int_{U_n^c \cap K} \frac{1}{n} \log^+ |p_n(t)| \ dm  \to 0,
\mtext{and thus}
\int_{U_n^c \cap K} \left| \frac{1}{n} \log |p_n(t)| \right| \ dm  \to 0,$$
completing the proof. 
\end{proof}

\begin{proof}[Proof of Theorem \ref{T:equid}]
The proof follows from Proposition \ref{P:conv}, by applying $d d^c$ on both sides and 
using Lemma \ref{L:distr} and equation \eqref{E:ddc}.
\end{proof}

We propose the following:
\begin{conjecture} \label{C:measure-zero}
  $m\big(\setdef{ t \in \D }{\lambda(t) = 0}\big) = 0.$
\end{conjecture}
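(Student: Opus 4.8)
The plan is to derive the statement from a regularity property of the Lyapunov exponent, namely that $\lambda$ is real-analytic in $t$ (as a function of the two real coordinates) on the punctured disc $\D \setminus \{0\}$. Granting this, the conclusion is immediate: by Proposition~\ref{P:non-elt} one has $\C \setminus V = \{0\} \cup \cAbar_R \subset \{0\} \cup \partial\D$, so $\D \setminus \{0\} \subset V$, which is connected and open; on it $\lambda$ is not identically zero since $\lambda(t) \to -\infty$ as $t \to 0$ by Lemma~\ref{L:lambda}(\ref{item: at zero}), and in fact $\lambda$ is nonconstant, because by Lemma~\ref{L:lambda}(\ref{item: above log}) and the continuity of $\lambda$ on $V$ there is a nonempty open subset of $\D$ on which $\lambda > 0$. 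Since a nonconstant real-analytic function on a connected open subset of $\R^2$ has a zero set of Lebesgue measure zero (a standard fact), it would follow that $\{t \in \D \setminus \{0\} : \lambda(t) = 0\}$ is null; as $\lambda(0) = -\infty \neq 0$, adding back the point $0$ proves the conjecture.

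So the work is entirely in establishing real-analyticity of $\lambda$ on $\D \setminus \{0\}$. First I would pass to the $u$-coordinates with $u^2 = -t$: from the proof of Lemma~\ref{L:lambda} one has $\lambda(t) = \ell(u) + \tfrac{1}{2}\log|t|$, where $\ell(u)$ is the top Lyapunov exponent of the i.i.d.\ products of the matrices $A_u(\sigma_i) \in \SL{2}{\C}$, which depend holomorphically (up to a power of $u$, polynomially) on $u$; since $\tfrac{1}{2}\log|t|$ is harmonic on $\C^\times$, it suffices to prove $\ell$ is real-analytic. By Proposition~\ref{P:non-elt}(\ref{item: nonelem}), for every $t$ with $0 < |t| < 1$ the semigroup $\Gamma^+_t$ is non-elementary, hence its action on the boundary sphere $\partial\mathbb{H}^3$ is strongly irreducible and proximal; and since we are in $\SL{2}{\C}$ the two Lyapunov exponents are $\pm\ell(u)$, with $\ell(u) > 0$ by Lemma~\ref{L:RW}, so the top exponent is simple. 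I would then run the standard transfer-operator argument: for $s$ near $0$, introduce the operator $P_{t,s}$ on a space $C^\alpha(\partial\mathbb{H}^3)$ of Hölder functions, recover $\ell(u)$ as $\tfrac{d}{ds}\log k(t,s)\big|_{s=0}$ where $k(t,s)$ is the leading eigenvalue of $P_{t,s}$, invoke Le Page's theorem \cite{LePage} for a spectral gap of $P_{t,s}$, and apply analytic perturbation theory (Kato) to deduce that $k(t,s)$, hence $\ell$ and $\lambda$, is real-analytic in $t$ on $\D \setminus \{0\}$. This requires checking that $(t,s) \mapsto P_{t,s}$ is a jointly real-analytic family of bounded operators on the chosen Banach space — the Möbius action of $A_u(g)$ and the Euclidean norm $v \mapsto \|A_u(g)v\|$ depend real-analytically on the entries of $A_u(g)$, which are holomorphic in $u$ and nonvanishing for $u \neq 0$ — and that the spectral gap is stable, locally uniformly in $t$ on compact subsets of $\D \setminus \{0\}$.

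The hard part will be exactly this last point. As the authors observe, $\lambda$ can fail to be smooth in closely related settings \cite{Simon-Taylor}, so no purely soft argument can work: one must genuinely control the spectral gap of $P_{t,s}$ as $t$ varies, including matching the Hölder exponent $\alpha$ with the region where the gap is stable — both the admissible $\alpha$ and the size of the gap degrade as $t$ approaches $\cAbar_R$, where $\Gamma^+_t$ becomes elliptic, or as $t \to 0$. Encouragingly, the expected non-smoothness of $\lambda$ is a \emph{boundary} phenomenon — it occurs across $\cA_L \subset \partial\D$, as explained in the remarks following Conjecture~\ref{C:measure-zero} — so there is no obstruction to real-analyticity holding on the \emph{open} punctured disc, which is all that the argument above needs. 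A weaker fallback, still sufficient for the conclusion, would be to establish only that near each point of $\{t \in \D : \lambda(t) = 0\}$ the function $\lambda$ is $C^1$ with nonvanishing gradient, so that this level set is locally a $C^1$ hypersurface and hence null; but proving even this needs essentially the same transfer-operator input, so it does not avoid the core difficulty.
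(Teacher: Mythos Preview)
The statement in question is stated in the paper as a \emph{conjecture}, not a theorem; the authors give no proof, and the surrounding Remark explicitly flags the obstacles (the zero set of a subharmonic function can have positive measure, and the Simon--Taylor examples show that $\lambda$ need not be smooth even for two-generator polynomial families). So there is no ``paper's own proof'' to compare against.

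Your proposal is a natural reduction: if $\lambda$ were real-analytic on $\D\setminus\{0\}$ then the conclusion would indeed follow, and your argument for that implication is correct. The gap is that you do not prove real-analyticity --- you outline the Le~Page transfer-operator / Kato perturbation route and then correctly identify that verifying the joint real-analytic dependence of $P_{t,s}$ together with a locally uniform spectral gap is ``the hard part.'' That is exactly the part that is missing. Le~Page's theorem gives a spectral gap and H\"older continuity of $\lambda$, but upgrading to real-analyticity requires the operator family itself to vary analytically in a Banach space on which the gap persists; Ruelle's classical analyticity theorem needs strictly positive matrices, and the Simon--Taylor phenomena show that no general soft argument can cover families like this one. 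Your observation that the conjectured non-smoothness is a boundary effect across $\cA_L$ is encouraging heuristically but does not itself supply the needed operator-theoretic control on the open disc. In short: you have traded one open problem for another of comparable difficulty, and what you have written is a strategy sketch, not a proof.
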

because of:
\begin{proposition}
  \label{prop: last conj}
  If $m\big(\setdef{ t \in \D }{\lambda(t) = 0}\big) = 0$, then
  Theorem~\ref{T:equid} extends to all of $\mathbb{C}$.
\end{proposition}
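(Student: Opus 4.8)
The plan is to show that Proposition~\ref{P:conv} remains valid with the open set $U$ replaced by all of $\C$, under the stated hypothesis; once this is done, applying $d d^c$ to both sides and invoking Lemma~\ref{L:distr} together with \eqref{E:ddc} gives the desired equidistribution on $\C$, exactly as in the proof of Theorem~\ref{T:equid} (discarding the atoms $\tfrac1n(\delta_{\zeta_3} + \delta_{\zetabar_3}) \to 0$). The key point is that the only way the domain $U$ enters the proof of Proposition~\ref{P:conv}, as opposed to $\C$, is through the decomposition of $U$ into $U^+ = \setdef{t \in U}{\lambda(t) > \log^+|t|}$, $U^- = \setdef{t \in U}{\lambda(t) < \log^+|t|}$, and the remainder $U \setminus (U^+ \cup U^-) \subseteq \cAbar_R$, which is Lebesgue-null; every other ingredient in that argument already takes place on all of $\C$.

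First I would verify that $\Sigma := \setdef{t \in \C}{\lambda(t) = \log^+|t|}$ has Lebesgue measure zero. On $\D$ this set is exactly $\setdef{t \in \D}{\lambda(t) = 0}$, which is null by hypothesis; on the unit circle it is null for dimensional reasons; and for $|t| > 1$ the symmetry $\lambda(1/t) = \lambda(t) - \log|t|$ from Lemma~\ref{L:lambda}(\ref{item: sym}) shows that $\Sigma \cap \setdef{t}{|t| > 1}$ is the image of $\Sigma \cap \D$ under the inversion $t \mapsto 1/t$, a smooth diffeomorphism of $\C^\times$, which therefore sends null sets to null sets. Hence $m(\Sigma) = 0$.

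With this in hand, I would re-run the proof of Proposition~\ref{P:conv} essentially verbatim, replacing $U$ by $\C$, replacing $U^\pm$ by $\setdef{t \in \C}{\pm(\lambda(t) - \log^+|t|) > 0}$, and replacing the null remainder $U \setminus (U^+ \cup U^-)$ by $\Sigma$. The two convergences used there, namely $\tfrac1n \log|1 + (-t)^n| \to \log^+|t|$ and $\tfrac1n \log|\tr B_t(w_n)| \to \lambda(t)$, hold almost surely, for a.e.~$t$, and in $L^1_{loc}(\C)$ by Lemmas~\ref{L:log} and~\ref{L:exist}. From the a.e.-$t$ pointwise statements one sees, as in the original proof, that for a.e.~sample path the exceptional sets $U_n^c$ satisfy $\bigcap_n \bigcup_{m \geq n} U_m^c \subseteq \Sigma$ up to a null set (if $\lambda(t) \neq \log^+|t|$ then $|b_n(t)|/|a_n(t)|$ tends to $0$ or $\infty$, so $t \notin U_n^c$ for large $n$); hence this set is null, and intersecting with an arbitrary compact $K \subseteq \setdef{t}{|t| \leq R}$, which has finite measure, gives $m(U_n^c \cap K) \to 0$. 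Then Lemma~\ref{L:log-} controls $\int_{U_n^c \cap K} \tfrac1n \log^-|\det(B_t(w_n) - I)|\,dm$, the crude bound $|\det(B_t(w_n) - I)| \leq 6 R^n$ on $U_n^c$ controls the $\log^+$ part, and the estimates on $U_n^+ \cap K$ and $U_n^- \cap K$ are unchanged. This yields $\tfrac1n \log|\det(B_t(w_n) - I)| \to \chi$ in $L^1_{loc}(\C)$ for a.e.~path; since both sides are weakly subharmonic on $\C$ (Lemma~\ref{lem: weak sub}), Lemma~\ref{L:distr} and \eqref{E:ddc} then finish the proof.

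The only genuinely new ingredient beyond Theorem~\ref{T:equid} is the measure-zero statement for $\Sigma$, and there the only point requiring care is the region $|t| > 1$, dispatched by the symmetry relation; inside the disc it is exactly the hypothesis, and on the circle it is automatic. I do not expect a serious obstacle: the main task is the routine bookkeeping needed to confirm that the proof of Proposition~\ref{P:conv} uses $U$ in no way beyond the nullity of $U \setminus (U^+ \cup U^-)$ and the finiteness of $m(\setdef{t}{|t| \leq R})$, both of which persist when $U$ is enlarged to $\C$.
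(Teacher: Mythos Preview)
Your proposal is correct and follows the same approach as the paper's proof, which is extremely terse and simply asserts that Proposition~\ref{P:conv} extends to $L^1_{loc}(\C)$ under the hypothesis before applying $dd^c$ as in Theorem~\ref{T:equid}. Your explicit verification that $\Sigma = \setdef{t}{\lambda(t) = \log^+|t|}$ is null on all of $\C$---in particular the use of the symmetry $\lambda(1/t) = \lambda(t) - \log|t|$ from Lemma~\ref{L:lambda}(\ref{item: sym}) to handle $|t| > 1$---fills in a detail the paper leaves implicit.
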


\begin{proof}
 If $m\big(\setdef{ t \in \D}{\lambda(t) = 0}\big) = 0$, then by considering the transformation $t \mapsto t^{-1}$ and using Lemma~\ref{L:lambda}(\ref{item: sym}) we obtain
$m\big(\setdef{ t \in \C}{\lambda(t) =  \log^+|t| }\big) = 0$.

Now, we proceed exactly as in the proof of Proposition~\ref{P:conv} replacing $U := \C$, 
$U^+ := \setdef{ t \in \C}{\lambda(t) > \log^+|t| }$, and $U^- := \setdef{ t \in \C}{\lambda(t) < \log^+|t| }$, and noting that
\[
  \C \setminus (U^- \cup U^+) \subseteq \setdef{ t \in \C}{\lambda(t) =  \log^+|t| }
\]
has Lebesgue measure zero, thus obtaining for almost every sample path $(w_n)$
\[
  \lim_{n \to \infty} \frac{1}{n}
  \log\abs{\det\big(B_t(w_n) - I\big)}  = \chi(t)
  \quad \textup{in }L^1_{loc}(\C);
\]
note that
$\int_K \frac{1}{n} \log\abs{\det\big(B_t(w_n) - I\big)} \ dm(t)$ is
uniformly bounded in $n$ for any compact $K \subseteq \mathbb{C}$, see
(\ref{eq: a_n b_n bound}) and Lemma~\ref{L:log-}. Having thus extended
Proposition~\ref{P:conv} to all of $\C$, the rest of the argument is
the same as in the proof of Theorem~\ref{T:equid} itself.
\end{proof}

\begin{remark}
With regard to Conjecture \ref{C:measure-zero}, we point out that the zero set of a non-zero subharmonic function may in fact have positive Lebesgue measure. 
To see this, let $K \subseteq \C $ be a compact set with positive Lebesgue measure and empty interior, and let $\mu$ be a measure on $K$ that  maximizes the energy $I(\mu) := - \iint  \log |z - w| d\mu(z) d\mu(w)$. Then the potential function $u(z) := \int \log|z - w| \ d\mu(w) - I(\mu)$ is subharmonic and vanishes almost everywhere on $K$ (see e.g. \cite[Theorem 1.12]{Saff}).

Note moreover that the Lyapunov exponent function $\lambda(t)$ is known to be H\"older continuous in $t$  \cite{LePage}, but there are examples of random walks, even in two generators with polynomial entries in $t$, for which $\lambda(t)$ is not smooth \cite{Simon-Taylor}. 

\end{remark}

\small
\RaggedRight
\bibliographystyle{nmd/math}
\bibliography{pos_roots.bib}

\end{document}